\newcommand{\bd}{\boldsymbol}
\def\R{{\mathbb R}}
\def\bd{\boldsymbol}
\newcommand{\cred}{\color{red}}
\newcommand{\bP}{{\mathbb P}}
\newcommand{\sF}{\mathcal F}
\newcommand{\bE}{\mathbb{E}}
\theoremstyle{definition}
\newtheorem{notation}{Notation}[section]
\newtheorem{theorem}{Theorem}[section]
\newtheorem{corollary}{Corollary}[section]
\newtheorem{definition}{Definition}[section]
\newtheorem{proposition}{Proposition}[section]
\newtheorem{remark}{Remark}[section]
\newtheorem{lemma}{Lemma}[section]
\date{}
\author[J. Kuan, K. Tawri and K. Trivisa]{Jeffrey Kuan$^1$, Krutika Tawri$^2$ and Konstantina Trivisa$^1$}
\address{\newline	$^1$ Department of Mathematics, University of Maryland, MD, USA.\\
\newline
$^2$  Department of Applied Mathematics, University of Washington, WA, USA.}
\email{jeffreyk@umd.edu (Jeffrey Kuan), ktawri@uw.edu (Krutika Tawri), trivisa@umd.edu (Konstantina Trivisa)}
\begin{document}
\title[Statistically stationary solutions to damped compressible Euler equations]{Statistically stationary solutions to the stochastic isentropic compressible Euler equations with linear damping}

\begin{abstract}
We study the long time behavior of isentropic compressible Euler equations with linear damping driven by a white-in-time noise, on a one-dimensional torus. We prove the existence of a statistically stationary solution in the class of weak martingale entropy solutions for any adiabatic constant $\gamma>1$, which satisfies an associated entropy inequality. To establish this result, we use a multi-level approximation scheme consisting of a truncation parameter $R$ and an artificial viscosity parameter $\epsilon$. {The truncated system preserves the structure of the regularized system with the artificial viscosity, thereby providing key properties such as an invariant region and non-existence of vacuum at the approximate level.}
{These properties allow us to} construct an invariant measure for the approximate system in both $R$ and $\epsilon$ associated to a Feller semigroup for the {well-posed} dynamics of the approximate system { for any $\gamma>1$}. This gives us a statistically stationary solution for the approximate problem, which we then successively pass to the limit as $R \to \infty$ and as $\epsilon \to 0$ to obtain a statistically stationary solution to the original stochastic system. Our analysis is novel, using new techniques for establishing uniform bounds on entropies of all orders, which allow us to pass to the limit in the parameters. We believe that this result is a valuable step towards further understanding the long-time statistical behavior of the stochastic Euler equations in one spatial dimension.
\end{abstract}
\maketitle

\section{Introduction}

In this manuscript, we study the stochastic compressible Euler equations with linear damping, given by
\begin{equation}\label{maineqn}
\begin{cases}
\partial_{t}\rho + \text{div}(\rho u) = 0 \\
\partial_{t}(\rho u) + \partial_{x}(\rho u^{2} + p(\rho)) = \Phi(\rho, \rho u) dW - \alpha \rho u
\end{cases}
\quad \text{ for } (t, x) \in \R^{+} \times \mathbb{T},
\end{equation}
where $\mathbb{T}$ is the one-dimensional torus (so that we impose periodic boundary conditions) and where $\rho$ and $u$ represent the density and the velocity of the compressible isentropic fluid, and hence $\rho u$ represents the momentum of the fluid. The damping is linear damping in the momentum equation, where the intensity of the damping is determined by the positive constant $\alpha > 0$. The constitutive relationship for the pressure in terms of the density is given by the power law:
\begin{equation*}
p(\rho) = \kappa \rho^{\gamma} \text{ for } \gamma > 1,
\end{equation*}
where we define the following constants in terms of $\gamma$:
\begin{equation*}
\kappa = \frac{\theta^{2}}{\gamma} \quad  \text{ and } \quad \theta = \frac{\gamma - 1}{2} > 0. 
\end{equation*}
For this system, we introduce the state variables for the fluid density $\rho$ and the fluid momentum $q$:
\begin{equation}\label{state}
\bd{U} := \begin{pmatrix} \rho \\ q \\ \end{pmatrix}, \quad \text{ for } q := \rho u,
\end{equation}
as we will often work with the momentum $q$ instead of the fluid velocity $u$ at various points throughout the analysis. {{We remark that a similar system is considered in \cite{BerthelinVovelle, LPS96} in the undamped case of $\alpha = 0$.}}

\medskip

\subsection{The stochastic noise} The stochastic noise is represented by the multiplicative noise term $\Phi(\bd{U}) dW$ in the momentum equation. Here, $\{W(t)\}_{t \ge 0}$ represents a cylindrical Wiener process with respect to a filtration $\{\mathcal{F}_{t}\}_{t \ge 0}$, taking values in a separable Hilbert space $\mathcal{U}$. Letting $\{\bd{e}_{k}\}_{k = 1}^{\infty}$ denote an orthonormal basis of $\mathcal{U}$, we can represent the cylindrical Wiener process taking values in $\mathcal{U}$ formally as
\begin{equation*}
W(t) := \sum_{k = 1}^{\infty} W_{k}(t) \bd{e}_{k},
\end{equation*}
where $\{W_{k}(t)\}_{t \ge 0}$ for positive integers $k$ is a collection of independent one-dimensional Brownian motions indexed by the positive integers $k$. 

Next, we define the noise coefficient $\Phi(\bd{U}): \mathcal{U} \to L^{2}(\mathbb{T})$ by defining its action on the orthonormal basis elements $\{\bd{e}_{k}\}_{k = 1}^{\infty}$ of $\mathcal{U}$. We define
\begin{equation}\label{gkdef}
\bd{\Phi}(\rho, q)\bd{e}_{k} := G_{k}(x, \rho, q) = \rho g_{k}(x, \rho, q),
\end{equation}
where for each $(\rho, q) \in [0, \infty) \times \mathbb{R}$, $g_{k}(x, \rho, q)$ is a continuous periodic function on $\mathbb{T}$. We make the Lipschitz assumption on the stochastic noise that
\begin{align}\label{noiseassumption}
|\nabla_{\rho, q}g_{k}(x, \rho, q)| + |g_{k}(x, \rho, q)| \le \alpha_{k} \qquad &\text{for all } (x, \rho, q) \in \mathbb{T} \times [0, \infty) \times \mathbb{R}, \\
&\text{for some positive constants $\alpha_{k}$ such that $\displaystyle \sum_{k = 1}^{\infty} \alpha_{k}^{2} \le A_{0}$.} \nonumber
\end{align}
We also introduce the notation:
\begin{equation*}
\bd{G}(x, \rho, q) := \left(\sum_{k = 1}^{\infty} |G_{k}(x, \rho, q)|^{2}\right)^{1/2},
\end{equation*}
and we note that the following bound is a direct consequence of the assumption \eqref{noiseassumption} on the noise:
\begin{equation}\label{Gbound}
|G_{k}(x, \rho, q)| \le \alpha_{k}\rho \quad \Longrightarrow \quad |\bd{G}(x, \rho, q)| \le A_{0}^{1/2}\rho.
\end{equation}

\medskip

\subsection{Summary of the system} We can express the given system in quasilinear form via a single equation, by recalling the vector $\bd{U}$ of state variables for the fluid density and the momentum from \eqref{state}. We can then rewrite the system of equations in \eqref{maineqn} as
\begin{equation}\label{eulerU}
d\bd{U} + \partial_{x}\bd{F}(\bd{U}) dt = \bd{A}(\bd{U}) dt + \bd{\Psi}(\bd{U}) dW,
\end{equation}
for an associated flux, forcing, and noise function:
\begin{equation*}
\bd{F}(\bd{U}) := \begin{pmatrix} q \\ \frac{q^{2}}{\rho} + p(\rho) \end{pmatrix}, \qquad \bd{A}(\bd{U}) = \begin{pmatrix} 0 \\ -\alpha q \\ \end{pmatrix}, \qquad \bd{\Psi}(\bd{U}) := \begin{pmatrix} 0 \\ \Phi(\bd{U}) \\ \end{pmatrix}.
\end{equation*}

\subsection{The entropy inequality and weak martingale entropy solutions} If we have sufficiently smooth functions $\eta: (\rho, {u}) \to \R$ and $H: (\rho, {u}) \to \R$ such that
\begin{equation}\label{entropyflux}
\nabla\eta(\rho, q) \nabla \bd{F}(\rho, q) = \nabla H(\rho, q)
\end{equation}
where the gradient is with respect to the variables $\rho$ and $q$, we formally have by applying It\^{o}'s formula with the functional $\bd{U} \to \eta(\bd{U})$ that
\begin{equation}\label{entropyito}
d\eta(\bd{U}) + \partial_{x} H(\bd{U}) dt + \alpha q\partial_{q} \eta(\bd{U}) dt = \partial_{q}\eta(\bd{U}) \bd{\Phi}(\bd{U}) dW + \frac{1}{2} \partial_{q}^{2} \eta(\bd{U}) G^{2}(\bd{U}) dt,
\end{equation}
where we refer to $({\eta}, H)$ satisfying the relation \eqref{entropyflux} as an \textbf{entropy-flux pair}. For weak entropy solutions, we expect the equation \eqref{entropyito} to hold in a weak sense (distributionally) with an inequality rather than equality, to account for potential increase in entropy due to the appearance of shocks. This is what is referred to as the entropy inequality. 

We will rigorously state the entropy inequality in the forthcoming definition in Definition \ref{entropydef}, which we require to hold for \textit{all entropy-flux pairs} $(\eta, H)$ satisfying \eqref{entropyflux}. However, we first discuss before stating the entropy inequality, a characterization of the specific entropy flux pairs $(\eta, H)$ that we will use for the entropy inequality, satisfying the relation \eqref{entropyflux}. Using a kinetic formulation of the system of conservation laws (see \cite{LPS96, LPT94}), it is well-known that one can use the following explicit formula to generate entropy-flux pairs $(\eta, H)$: 
\begin{equation}\label{etamformula}
\begin{split}
    &\eta(\bd{U}) = \rho c_{\lambda} \int_{-1}^{1} g(u + z\rho^{\theta}) (1 - z^{2})^{\lambda} dz, \\
&H(\bd{U}) = \rho c_{\lambda} \int_{-1}^{1} g(u + z\rho^{\theta}) (u + z\theta \rho^{\theta}) (1 - z^{2})^{\lambda} dz,
\end{split}
\end{equation}
where the constants $\theta, \lambda$ and $c_{\lambda}$ are defined, for $\gamma>1$, by
\begin{equation*}
 \theta = \frac{\gamma - 1}{2},\qquad \lambda = \frac{3 - \gamma}{2(\gamma - 1)}, \qquad c_{\lambda} = \left(\int_{-1}^{1} (1 - z^{2})^{\lambda} dz\right)^{-1},
\end{equation*}
and $g: \R \to \R$ is a function satisfying $g \in C^{2}(\R)$ is a convex function ($g''(z) \ge 0$ for all $z$). Namely, any such $C^{2}$ convex function $g: \R \to \R$ will generate an associated entropy-flux pair $(\eta, H)$ where the associated entropy $\eta(\bd{U})$ is a convex function of $U$, in the physically relevant region away from vacuum (namely $\rho > 0$ and $u \in \R$). We make the following technical assumption on the convex functions $g \in C^{2}(\R)$ that we use to generate the entropy-flux pairs, which is commonplace in the existing literature on compressible isentropic Euler equations \cite{BerthelinVovelle, LPS96}.

\begin{definition}\label{tildeG}
  \if 1 = 0 {\cred  We define the class $\mathcal{G}$ of admissible convex functions $g \in C^{2}(\R)$ to be all convex functions $g \in C^{2}(\R)$ that are \textbf{subquadratic} in the sense that for some constant $C$:
    \begin{equation}\label{subquad}
    |g(z)| \le C(1 + |z|^{2}), \qquad |g'(z)| \le C(1 + |z|), \qquad \text{ for all } z \in \R.
    \end{equation}} \fi
    We define the class $\tilde{\mathcal{G}}$ of admissible convex functions $g \in C^{2}(\R)$ to be all convex functions $g \in C^{2}(\R)$ that are \textbf{subpolynomial} in the sense that for some constant $C$ and some positive integer $m$:
    \begin{equation}\label{subpoly}
    |g(z)| \le C(1 + |z|^{2m}), \quad |g'(z)| \le C(1 + |z|^{2m - 1}), \quad |g''(z)| \le C(1 + |z|^{2m - 2}), \quad \text{ for all } z \in \R.
    \end{equation}
\end{definition}
Then, the classical definition of a weak martingale entropy solution to the stochastic compressible isentropic (damped) Euler equations is as follows, see \cite{BerthelinVovelle}:

\begin{definition}\label{entropydef}
We say that $(\rho, q)$ on a probability space $(\Omega, \mathcal{F}, \mathbb{P})$ with a filtration $\{\mathcal{F}_{t}\}_{t \ge 0}$ and a $U$-valued cylindrical Wiener process $\{W_{t}\}_{t \ge 0}$ is a \textbf{weak martingale entropy solution} to \eqref{eulerU} if the following conditions are satisfied:
\medskip
\newline \noindent 1. $\{W_{t}\}_{t \ge 0}$ is a $\mathcal{U}$-valued cylindrical Wiener process on $(\Omega, \mathcal{F}, \mathbb{P})$ with respect to the filtration $\{\mathcal{F}_{t}\}_{t \ge 0}$.
\medskip
\newline \noindent 2. $\bd{U}=(\rho, q) \in C([0, \infty); H^{-2}(\mathbb{T}))$ almost surely.
\medskip
\newline \noindent 3. $(\rho, q)$ is locally integrable on $[0, \infty) \times \mathbb{T}$ and is of finite energy in the sense that for the energy $\eta_{E}(\rho,q) := \frac{1}{2}\frac{q^{2}}{\rho} + \frac{\kappa}{\gamma - 1}\rho^{\gamma}$, {for $\gamma>1$} and for all $T > 0$:
\begin{equation*}
\mathbb{E} \|\eta_{E}(\bd{U})\|_{L^{\infty}(0, T; L^{1}(\mathbb{T}))} < \infty.
\end{equation*}
\newline \noindent 4. The term ${\Phi}(\bd{U})$ is progressively measurable with ${\Phi}(\bd{U}) \in L^{2}(\Omega \times [0, T]; L_{2}(\mathcal{U}; L^{2}(\mathbb{T})))$ for all $T > 0$, where $L_{2}(\mathcal{U}; L^{2}(\mathbb{T}))$ denotes the space of Hilbert-Schmidt operators from $\mathcal{U}$ to $L^{2}(\mathbb{T})$. 
\medskip
\newline \noindent 5. For all entropy-flux pairs $(\eta, H)$ arising from functions $g \in \tilde{\mathcal{G}}$, and for all $\varphi(x) \in C^{\infty}(\mathbb{T})$ and \textit{non-negative} $\psi(t) \in C_{c}^{\infty}(0, \infty)$:
\begin{multline}\label{entropyineq}
\int_{0}^{\infty} \left(\int_{\mathbb{T}} \eta(\bd{U}) \varphi(x) dx\right) \partial_{t}\psi(t) dt + \int_{0}^{\infty} \left(\int_{\mathbb{T}} H(\bd{U}) \partial_{x}\varphi(x) dx\right) \psi(t) dt - \int_{0}^{\infty} \left(\int_{\mathbb{T}} \alpha q\partial_{q}\eta(\bd{U}) \varphi(x) dx\right) \psi(t) dt \\
+ \int_{0}^{\infty} \left(\int_{\mathbb{T}} \partial_{q}\eta(\bd{U}) \Phi(\bd{U}) \varphi(x) dx\right) \psi(t) dW(t) + \int_{0}^{\infty} \left(\int_{\mathbb{T}} \frac{1}{2} \partial^{2}_{q} \eta(\bd{U}) G^{2}(\bd{U}) \varphi(x) dx\right) \psi(t) dt \le 0.
\end{multline}

\end{definition}

The entropy inequality \eqref{entropyineq} is required to hold for all entropy-flux pairs $(\eta, H)$ arising from the formulas \eqref{etamformula} for all convex subpolynomial $g \in \tilde{\mathcal{G}}$. However, it will be useful to consider specific functions $g$ in the entropy formula. For example, the constant function $g(z) = 1$ produces the associated entropy $\eta_0 = 1$, the linear function $g(z) = z$ produces the momentum $\eta = \rho u$, and the quadratic function $g(z) =\frac{1}{2}z^{2}$ produces the natural energy 
\begin{equation*}
\eta_{E} := \frac{1}{2} \rho u^{2} + \frac{\kappa}{\gamma - 1} \rho^{\gamma}.
\end{equation*}
In particular, we make the observation that using the entropy $\eta_{E}$ in the entropy inequality \eqref{entropyineq} with $\varphi \equiv 1$ and $\psi := \mathbbm{1}_{[0, t]}$ recovers the usual energy inequality:
\begin{equation*}
\eta_{E}(\bd{U}(t)) \le \eta_{E}(\bd{U}_{0}) + \int_{0}^{t} \left(\int_{\mathbb{T}} \Phi(\rho, \rho u )dx\right) dW(s) + \int_{0}^{t}\left(\int_{\mathbb{T}} \rho^{-1} G^{2}(\bd{U}(s)) dx\right) ds.
\end{equation*}
Therefore, we see that the entropy inequality \eqref{entropyineq} \textit{generalizes the energy inequality}, and by substituting different choices of the entropy other than $\eta_{E}$, we can obtain more information. While the formula \eqref{etamformula} admits all convex $g \in C^{2}(\R)$, it will be helpful to consider specific choices of $g$. In particular, it is useful to define \textbf{higher-order entropy-flux pairs} $(\eta_{m}, H_{m})$, which are associated with the convex functions $g(z) = z^{2m}$ for nonnegative integers $m$ via the formula \eqref{etamformula}, so that $\eta_{1} \sim 2\eta_{E}$.

\begin{remark}[Remarks on the definition of a weak martingale entropy solution]
    We remark that Condition 5 is called the \textbf{entropy inequality}, which is the distributional form of \eqref{entropyito}, expressed as an inequality to take into account the influence of shock formation on entropy. We also remark that usually, a \textit{weak martingale (entropy) solution} is defined on an interval $[0, T]$, with \textit{initial data} $\rho(0) = \rho_{0}$ and $q(0) = q_{0}$ which must be satisfied. However, we state the weak martingale solution in the context of all $t \ge 0$, $t \in [0, \infty)$ without initial data, as statistically stationary solutions (which will be the focus of our manuscript), do {not} have a notion of initial data and are also defined for all time $t \ge 0$. Namely, their main defining feature, other than satisfying the entropy inequality, is stationarity, and hence, one does not a priori provide initial data when considering stationary solutions.
\end{remark}

\begin{remark}[Remark on admissible entropies]
Note that in Condition 5, we require the entropy inequality to hold for all entropy-flux pairs $(\eta, H)$, generated from \eqref{etamformula} by subpolynomial $g \in \tilde{\mathcal{G}}$. We remark that this is a stronger condition than usually required (see for example \cite{BerthelinVovelle, LPS96}), where only entropy-flux pairs $(\eta, H)$ arising from subquadratic $g \in \mathcal{G}$ are considered. However, in our case, since our statistically stationary solution will have bounded entropies of {all} orders, we will use the broader class arising from subpolynomial $g \in \tilde{\mathcal{G}}$ for our entropy inequality. { We also note here that, although not all of these higher-order entropy bounds are required for proving our main result, they are intrinsic to the system and arise naturally as a byproduct of our analysis.}
\end{remark}

\subsection{{Energy dissipation} and statistically stationary solutions} 

Assuming that $\rho(t, x)$ and $u(t, x)$ are smooth functions, one can show immediately that $(\rho, u)$ satisfying the following energy estimate:
\begin{equation}\label{energy}
\frac{1}{2} \int_{\mathbb{T}} \rho u^{2}(t) dx + \int_{\mathbb{T}} \rho^{\gamma}(t) dx + \alpha \int_{0}^{t} \int_{\mathbb{T}} \rho u^{2} dx ds = \frac{1}{2} \int_{\mathbb{T}} \frac{q_{0}^{2}}{\rho_{0}} + \int_{\mathbb{T}} \rho_{0}^{\gamma} + \int_{0}^{t} \left(\int_{\mathbb{T}} \Phi(\rho, u) u dx\right) dW(s),
\end{equation}
where the natural energy associated to the problem is
\begin{equation*}
E(t) := \frac{1}{2} \int_{\mathbb{T}} \rho u^{2}(t) + \int_{\mathbb{T}} \rho^{\gamma}(t).
\end{equation*}
Note that in the absence of damping $(\alpha = 0)$ and in the absence of stochasticity, the deterministic dynamics are \textit{isentropic}, namely energy is conserved $E(t) = E(0)$ for classical solutions. However, in the case of stochasticity and damping, there are two direct contributions changes in the energy to the system encoded in the energy estimate \eqref{energy}:
\begin{itemize}
\item Energy added to the system from the stochastic forcing, which has the potential to increase the energy of the system in expectation.
\item Energy, dissipated from the system as a result of the damping.
\end{itemize}
In order to obtain long-time behavior, one would expect the dynamics of the system to be bounded in long time. Namely, we would want the linear damping to dissipate energy in expectation in a way that balances out the potential energy increase due to the stochasticity. As an important step towards understanding the long-time statistical behavior of the stochastic damped Euler system \eqref{maineqn}, we claim that for this system, we have a \textit{statistically stationary solution}. Heuristically, the existence of stationary solutions intuitively gives information about the long-time statistics of a stochastic dynamical system, namely one would expect that the laws of a stochastic solution in time (or the time-averaged laws) would converge weakly in long time to the law of a statistically stationary solution. 

We remark that a particularly interesting question is whether such a statistically stationary solution exists in the undamped case where $\alpha = 0$, in which case some other mechanism must be present to dissipate the energy added by the stochasticity. In this case, the formation of shocks heuristically should be the mechanism that can dissipate energy (which is accounted for by the fact that the energy balance in \eqref{energy} should really be an energy inequality, in agreement with the entropy inequality of which the energy inequality is a special case). However, this problem, while fundamentally important and mathematically interesting, is beyond the scope of this manuscript. 

\medskip

We define the notion of a \textit{statistically stationary solution} that we will consider in this manuscript. Intuitively, such a solution has statistics at all times that are the same. 
\begin{definition}\label{stationary}
    A weak martingale entropy solution $(\rho, q)$ to the main equation \eqref{maineqn} with the stochastic basis $(\Omega,\sF,\{\sF_t\}_{t\geq 0},W)$ (see Definition \ref{entropydef}) is a \textbf{statistically stationary solution} if $\rho \in C_{w}(\mathbb{R}^{+}; L^{\gamma}(\mathbb{T}))$ and $q \in C_{w}(\mathbb{R}^{+}; L^{\frac{2\gamma}{\gamma + 1}}(\mathbb{T}))$ almost surely and the law of $(\rho(t), q(t))$ in the state space $L^{\gamma}(\mathbb{T}) \times L^{\frac{2\gamma}{\gamma + 1}}(\mathbb{T})$ is independent of the time $t \ge 0$. Hence, $(\rho(s), q(s)) =_{d} (\rho(t), q(t))$ for any $s,t\geq 0$, where $=_{d}$ denotes equality in law. 
\end{definition}

We now can state the main theorem of the manuscript.

\begin{theorem}[Main theorem]\label{mainthm}
Under the assumption \eqref{noiseassumption} on the noise coefficient $\bd{\Phi}(\rho, q): \mathcal{U} \to L^{2}(\mathbb{T})$, there exists a statistically stationary weak martingale entropy solution $(\rho, q)$ with an associated stochastic basis $(\tilde{\Omega}, \tilde{\mathcal{F}}, \tilde{\mathbb{P}}, \{\tilde{\mathcal{F}}_{t}\}_{t \ge 0},\tilde W)$ to the damped compressible stochastic Euler equations in \eqref{maineqn}, for any fixed but arbitrary damping parameter $\alpha > 0$.
\end{theorem}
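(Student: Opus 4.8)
\emph{Outline of the strategy.} The plan is to obtain the stationary solution by a two-parameter vanishing-approximation scheme: an artificial viscosity $\epsilon>0$ which parabolically regularizes both conservation laws in \eqref{maineqn}, together with a truncation at level $R$ of the flux (equivalently of the velocity and of the Riemann invariants) chosen precisely so that the regularized-and-truncated system retains an invariant region in the $(\rho,q)$-plane — in particular a uniform upper bound on $\rho$ and on $|u|$ — and keeps the density strictly away from vacuum, as described in the abstract. For each fixed pair $(R,\epsilon)$ this truncated parabolic SPDE is globally well-posed, its solution depends continuously on the initial datum, and hence its transition semigroup $P^{R,\epsilon}_t$ is Feller on the (compact) invariant region. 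I would then run the Krylov--Bogoliubov procedure: the energy functional serves as a Lyapunov function, since the linear damping produces the dissipation $\alpha\int_{\mathbb{T}}\rho u^2$ which, together with the noise bound \eqref{Gbound}, controls $\bE\,\eta_E(\bd{U}(t))$ uniformly in $t$; this gives tightness of the time-averaged laws $\frac{1}{T}\int_0^T P^{R,\epsilon}_t(\bd{U}_0,\cdot)\,dt$ and thus an invariant measure $\mu_{R,\epsilon}$. Running the associated stationary Markov process yields a statistically stationary weak martingale solution $\bd{U}_{R,\epsilon}$ of the approximate system, defined for all $t\ge 0$, whose law at every time equals $\mu_{R,\epsilon}$.

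\emph{Uniform bounds on entropies of all orders.} This is the core of the argument, and the place where stationarity is exploited. For the entropies $\eta=\eta_m$ generated by $g(z)=z^{2m}$ via \eqref{etamformula}, applying It\^o's formula to $t\mapsto\int_{\mathbb{T}}\eta_m(\bd{U}_{R,\epsilon}(t))\,dx$ and taking expectations, the map $t\mapsto\bE\int_{\mathbb{T}}\eta_m(\bd{U}_{R,\epsilon}(t))\,dx$ is constant, so the vanishing of its time derivative converts the It\^o balance \eqref{entropyito} into an \emph{identity} linking the damping term $\alpha\,\bE\int_{\mathbb{T}} q\,\partial_q\eta_m$, the It\^o correction $\tfrac12\,\bE\int_{\mathbb{T}}\partial_q^2\eta_m\,G^2$, and the viscous/truncation defect terms. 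Using \eqref{noiseassumption}, the structure $G_k=\rho g_k$, and the invariant-region bounds, one closes this hierarchy inductively in $m$ to obtain $\sup_{R,\epsilon}\bE\int_{\mathbb{T}}\eta_m(\bd{U}_{R,\epsilon})\,dx<\infty$ for every $m$, and in particular uniform control of $\bE\|\eta_E(\bd{U}_{R,\epsilon})\|_{L^1(\mathbb{T})}$, $\bE\|\rho\|_{L^\gamma(\mathbb{T})}$ and $\bE\|q\|_{L^{2\gamma/(\gamma+1)}(\mathbb{T})}$, as well as the viscous dissipation bound $\epsilon\,\bE\int_0^T\!\!\int_{\mathbb{T}}|\partial_x\bd{U}_{R,\epsilon}|^2\le C$.

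\emph{Passage to the limit and main obstacle.} With these uniform bounds I would send first $R\to\infty$ and then $\epsilon\to 0$. Each limit is a stochastic compensated-compactness argument in the spirit of DiPerna and Lions--Perthame--Tadmor: the entropy bounds give weak convergence of $\bd{U}_{R,\epsilon}$ and, after a Skorokhod representation on an enlarged stochastic basis, almost sure convergence in suitable negative-order spaces; the two families of entropy--entropy-flux pairs supply the div--curl structure needed to identify weak limits of the nonlinear flux $\bd{F}(\bd{U})$, while the higher-order entropy bounds are exactly what reduces the associated Young measures to Dirac masses and rules out concentration and vacuum in the limit. The viscous terms disappear by the $O(\epsilon)$ dissipation bound, and the truncation becomes inactive because the invariant-region bounds are $R$- and $\epsilon$-uniform. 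Stationarity is preserved because the time-marginals of $\bd{U}_{R,\epsilon}$ all coincide with $\mu_{R,\epsilon}$, these measures are tight, so along a subsequence the limit again has $t$-independent law; combining this with the weak-in-time continuity forced by the equation gives a solution in the class of Definition \ref{stationary} satisfying the entropy inequality \eqref{entropyineq} for all $g\in\tilde{\mathcal{G}}$, i.e.\ a statistically stationary weak martingale entropy solution as in Definition \ref{entropydef}. I expect the hardest part to be the interplay of two issues: deriving the uniform-in-$(R,\epsilon)$ higher-order entropy estimates, which must absorb the viscous defect measures and truncation errors while relying only on stationarity and the crude noise bound \eqref{Gbound}; and carrying out the stochastic Young-measure reduction needed to pass to the limit in the nonlinear terms while \emph{retaining} statistical stationarity, since the Skorokhod construction changes the probability space and one must verify carefully that the reconstructed limit still has time-independent marginals and carries a genuine cylindrical Wiener process adapted to the new filtration.
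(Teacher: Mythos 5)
Your overall architecture coincides with the paper's: truncation $R$ plus artificial viscosity $\epsilon$ with noise regularized to respect an invariant region, Hadamard well-posedness and a Feller semigroup for fixed $(R,\epsilon)$, Krylov--Bogoliubov to get an invariant measure and an approximate stationary solution, then $R\to\infty$ followed by $\epsilon\to 0$ via Skorohod and a stochastic Young-measure/compensated-compactness reduction, with stationarity surviving each limit. Two points, however, are asserted where the actual work lies, and one of them is a genuine gap.

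The gap is in your ``uniform bounds on entropies of all orders.'' The stationarity identity for $\eta_m$ only balances $\alpha\,\bE\int_{\mathbb{T}} q\,\partial_q\eta_m$ against $\tfrac12\,\bE\int_{\mathbb{T}}\partial_q^2\eta_m\,G^2$, and by the algebraic bounds (Proposition \ref{etamboundalg}) the damping term controls $\bE\int_{\mathbb{T}}\rho\bigl(u^{2m}+\rho^{(m-1)(\gamma-1)}u^2\bigr)$ but contains \emph{no pure density moment}; the right-hand side is of order $\bE\int_{\mathbb{T}}\eta_{m-1}$, which itself contains $\bE\int_{\mathbb{T}}\rho^{1+(m-1)(\gamma-1)}$. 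So the hierarchy you describe does not close by induction on its own: at every step you need $\bE\int_{\mathbb{T}}\rho^{1+m(\gamma-1)}$, which the damping/It\^o identity never produces. Your fallback, ``the invariant-region bounds,'' cannot supply it uniformly in $\epsilon$, because the invariant region is $\Lambda_{\kappa_{\epsilon}}$ with $\kappa_\epsilon\to\infty$ as $\epsilon\to 0$ (the noise support is enlarged along the sequence $\epsilon_N$), so the resulting $L^\infty$ bounds are $\epsilon$-dependent and useless for the $\epsilon\to 0$ passage. The paper's resolution is a Bogovskii-type argument: test the stationary weak momentum balance (entropy pair $\eta=\rho u$, $H=\rho u^2+\kappa\rho^\gamma$) with $\varphi(x)=\int_0^x\bigl(\rho_\epsilon^{m(\gamma-1)}-\fint\rho_\epsilon^{m(\gamma-1)}\bigr)$, use stationarity to kill the time-derivative term, and estimate the five resulting integrals with a careful exponent bookkeeping (Proposition \ref{uniformeps2}) to obtain $\bE\int_{\mathbb{T}}\rho_\epsilon^{\gamma+m(\gamma-1)}\le C_m$ uniformly in $\epsilon$; this is what lets the induction on $m$ close. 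Without this (or an equivalent higher-integrability mechanism for the density), your limit $\epsilon\to 0$ cannot be run: the Young-measure tightness and the momentum-convergence/Dirac-reduction steps all rest on these $\epsilon$-uniform moment bounds.

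A secondary, more repairable imprecision: at the $(R,\epsilon)$ level the Krylov--Bogoliubov tightness is on the function-space state $\mathcal{X}\subset H^2\times H^2$, not on the ``compact invariant region'' in the $(\rho,u)$-plane, so the energy Lyapunov functional alone is not enough; one needs time-uniform $H^3$-type bounds coming from the parabolic smoothing and the truncation (as in Propositions \ref{densityest} and \ref{fluidapprox}) to get compact sets in $\mathcal{X}$. Also note the paper derives the $\epsilon$-uniform entropy bounds only \emph{after} the $R\to\infty$ passage (where the truncation has disappeared), rather than uniformly in $(R,\epsilon)$ simultaneously as you propose; attempting them in the presence of the cutoffs $\chi_R$ would complicate the Bogovskii step.
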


\begin{remark}\label{rem:anyp}
    Using the methodology of this work, we can obtain the existence of a statistically stationary solution in any state space $L^p(\mathbb{T})\times L^{p}(\mathbb{T})$, for any $1<p<\infty.$
\end{remark}

\begin{remark}
    Note that the system \eqref{maineqn} conserves total mass in time. So more generally, there exists at least one statistically stationary solution corresponding to every possible value of the total mass $\displaystyle \int_{\mathbb{T}} \rho(t, x) dx = M > 0$. For the purpose of the proof however, we will only consider $M = 1$, since the generalization to arbitrary $M$ is immediate.
\end{remark}

\subsection{Significance of results and literature review}

The long-time statistical behavior of stochastic physical systems is a question of inherent physical interest, in addition to being a mathematically interesting problem. Namely, in a system subject to random perturbations, individual observed outcomes of how the system evolves may appear to be disordered, yet when considering an ensemble of repeated experiments, there may be convergence of the overall statistics of the system in long time. This is a fundamental question, especially in fluid dynamics, where the long time statistical behavior of fluid flows is of particular interest for experiments and for the engineering of real-life physical systems.

The long-time statistical behavior of a stochastic system can be encoded in two ways. At a first level, one can show the existence of a \textit{statistically stationary solution}, see for example Definition \ref{stationary}, which has a law that is constant in time. Heuristically, one would expect the time-averaged statistics of a stochastic system to converge to the statistics of a statistically stationary solution. A stronger notion of stationarity is the \textit{existence of an invariant measure}, which involves showing that the random dynamics for the stochastic system are well-posed (existence, uniqueness, continuous dependence). Thus, it makes sense to describe an associated Feller semigroup to the dynamics of the stochastic system, and an invariant measure, which in this case is a probability measure on the state space whose overall statistics are unchanged by the dynamics of the random system.

The question of long-time statistical behavior of stochastic fluid systems is classical, tracing back to results on existence of statistically stationary solutions and invariant measures for the stochastic incompressible Navier-Stokes equations. In 2D, the Navier-Stokes equations have global existence and uniqueness in the deterministic case, which allows for analysis of invariant measures for the 2D stochastic Navier-Stokes equations \cite{BKL01, BKL02, EMS01, FM95, Hai02, HM06, KS12, KS00, MY02, M02, DPZ96}. This includes work on existence/uniqueness of invariant measures for 2D Navier-Stokes equations with discrete-in-time random ``kick" forces \cite{BKL01, KS12, KS00, MY02} and more general noise (such as white noise in time or cylindrical Wiener processes) \cite{BKL02, EMS01, FM95, Hai02, HM06, M02, DPZ96}, and work on exponential mixing and exponential convergence of statistics \cite{BKL02, Hai02, MY02, M02}. These results were extended to (the weaker notion of) statistically stationary solutions in 3D \cite{FlandG95, FM08, DPD03, DPD08}, but invariant measures are in general still an open problem due to the lack of well-posedness for 3D Navier-Stokes, though some approaches are able to construct transition semigroups for dynamics even in the absence of uniqueness \cite{FM08, DPD03, DPD08}. 

The extension of the analysis of long-time behavior to the case of stochastic compressible fluids is more recent. This work was made possible first from new existence results for weak solutions to stochastic compressible Navier-Stokes equations for compressible viscous fluids \cite{BFH18, BreitHofmanova, Smith, SmithTrivisa}, which were natural stochastic extensions of deterministic existence results for global weak solutions in the spirit of Leray-Hopf for (deterministic) compressible Navier-Stokes equations \cite{FeireislCompressible, LionsBook}. The work \cite{FeireislCompressible} on the deterministic compressible Navier-Stokes equations involves a four-layer approximation scheme including an artificial pressure parameter $\delta$ and an artificial viscosity parameter $\epsilon$. This methodology has been robust, and has been used to analyze existence of global in time weak martingale solutions to stochastic compressible flows \cite{BFH18, BreitHofmanova, Smith, SmithTrivisa}. 

These developments in the \textit{existence theory} for stochastic compressible fluid flows then led to the study of the \textit{long-time behavior} of stochastic compressible viscous fluid flows. In fact, a similar four-layer approximation scheme was used to show the existence of a statistically stationary solution to the stochastic compressible Navier-Stokes dynamics in \cite{BFHM19}, where an invariant measure for an approximate system is constructed and is passed through the various approximation layers. We remark however that the proof of existence of a statistically stationary solution is much more delicate than the existence proof, since one must obtain uniform-in-time estimates for the approximate system, and in addition, one can appeal only to stationarity to obtain uniform estimates at each approximation level. Namely, stationary solutions have no ``initial data", and thus one can only appeal to stationarity to define any uniform estimates. A stronger result on existence of an invariant measure was obtained for the specific case of 1D compressible Navier-Stokes equations, with a barotropic fluid for linear pressure law and a linear pressure law in \cite{TrivisaInvariant}. The challenge in this work is establishing well-posedness (existence, uniqueness, continuous dependence on initial data) for the stochastic compressible Navier-Stokes system in 1D, which involves careful a priori estimates. This well-posedness is necessary for defining an associated Feller semigroup for the stochastic compressible Navier-Stokes dynamics, which is needed to properly define a notion of invariant measure.

These results on existence of stochastic solutions and long-time statistics discussed so far have been for compressible \textit{viscous} fluids, but there have not been many developments in the study of stochastic compressible inviscid fluids.
{For a study of statistically stationary solutions or invariant measures to the incompressible Euler equations with linear damping, see \cite{BF20, B08} and for fractionally dissipated Euler equations, see \cite{CGV14}.}
In the context of inviscid {\it compressible} fluid flows, global existence of weak martingale entropy solutions, namely stochastic solutions that satisfy not just a weak formulation but a general entropy inequality, was accomplished in the work \cite{BerthelinVovelle}. This work uses important techniques from the deterministic study of inviscid compressible flows (most importantly \cite{LPS96}, where existence of weak entropy solutions is established for isentropic Euler equations in 1D, and related works \cite{Che86, DCL85, DP831, LPT94}) and invariant regions \cite{CCS77} (see also \cite{DP832, DP831} for applications to deterministic isentropic Euler equations), to obtain uniform estimates on approximate solutions and to pass to the limit in an artificial viscosity parameter using Young measure compactness results and then reducing Young measures {to a Dirac mass due to the presence of a large family of
entropies}. While some numerical evidence is provided in \cite{BerthelinVovelle} to support the existence of an invariant measure for the 1D stochastic inviscid isentropic Euler equations, to the best of our knowledge, progress has not been made on this question. Hence, we believe that the result of the current manuscript, namely the existence of a statistically stationary solution to the \textit{damped} 1D stochastic compressible Euler equations, is a significant step towards the eventual goal of showing invariant measures to the undamped stochastic compressible Euler equations.

\subsection{Algebraic bounds on the $m$-order entropies}\label{etamsection}  In this section we will find algebraic bounds for the entropy functions $\eta_m$, defined in \eqref{etamformula} arising from polynomial $g(z)=z^{2m}$, in terms of density $\rho$ and velocity $u$.

\begin{proposition}\label{etamboundalg}
    We have the following bounds for $\eta_{m}$, where the constants $c_{m}$ and $C_{m}$ depend only on $m$:
    \begin{equation*}
    c_{m}\rho(u^{2m} + \rho^{m(\gamma-1)}) \le \eta_{m}(\bd{U}) \le C_{m}\rho(u^{2m} + \rho^{m(\gamma-1)}),
    \end{equation*}
    and in addition, for some positive constants $c_{m}$ and $C_{m}$:
    \begin{equation*}
    c_{m}\rho(u^{2m} + \rho^{(m - 1)(\gamma-1)}u^{2}) \le q\partial_{q}\eta_{m}(\bd{U}) \le C_{m}\rho(u^{2m} + \rho^{(m - 1)(\gamma-1)}u^{2}),
    \end{equation*}
    \begin{equation*}
    \partial_{q}^{2}\eta_{m}(\bd{U}) \le C_{m}\rho^{-2}\eta_{m - 1}(\bd{U}).
    \end{equation*}
\end{proposition}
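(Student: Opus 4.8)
The plan is to compute $\eta_m$ explicitly from \eqref{etamformula} with $g(z)=z^{2m}$ and then read off all three bounds from the resulting polynomial expression. First I would expand the integrand by the binomial theorem, $(u+z\rho^{\theta})^{2m}=\sum_{j=0}^{2m}\binom{2m}{j}u^{2m-j}z^{j}\rho^{j\theta}$; integrating against the even weight $(1-z^{2})^{\lambda}$ over $[-1,1]$ annihilates every odd power of $z$, so, writing $\beta_{i}:=c_{\lambda}\int_{-1}^{1}z^{2i}(1-z^{2})^{\lambda}\,dz$ and using $2\theta=\gamma-1$,
\[
\eta_{m}(\bd{U})=\rho\sum_{i=0}^{m}\binom{2m}{2i}\beta_{i}\,u^{2(m-i)}\rho^{i(\gamma-1)}.
\]
Since $\gamma>1$ forces $\lambda=\tfrac{3-\gamma}{2(\gamma-1)}>-1$, each $\beta_{i}$ is a finite, strictly positive constant (and $\beta_{0}=1$ by normalization of $c_{\lambda}$), so $\eta_{m}/\rho$ is a finite, positive-coefficient combination of the monomials $u^{2(m-i)}\rho^{i(\gamma-1)}$, $i=0,\dots,m$, with all constants depending only on $m$.

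Given this, the first bound is elementary. For the lower bound I would keep only the two endpoint terms $i=0$ and $i=m$, giving $\eta_{m}\ge\rho\big(u^{2m}+\beta_{m}\rho^{m(\gamma-1)}\big)\ge c_{m}\rho\big(u^{2m}+\rho^{m(\gamma-1)}\big)$. For the upper bound, Young's inequality with conjugate exponents $\tfrac{m}{m-i},\tfrac{m}{i}$ gives $u^{2(m-i)}\rho^{i(\gamma-1)}\le u^{2m}+\rho^{m(\gamma-1)}$ for each intermediate $i$ (and trivially at the endpoints), and summing the finitely many coefficients yields $\eta_{m}\le C_{m}\rho\big(u^{2m}+\rho^{m(\gamma-1)}\big)$.

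For the derivative bounds I would use $u=q/\rho$, so that differentiating at fixed $\rho$ gives $\partial_{q}=\tfrac1\rho\partial_{u}$, hence $q\partial_{q}\eta_{m}=u\partial_{u}\eta_{m}$ and $\partial_{q}^{2}\eta_{m}=\rho^{-2}\partial_{u}^{2}\eta_{m}$. Differentiating the explicit sum termwise,
\[
u\partial_{u}\eta_{m}=\rho\sum_{i=0}^{m-1}2(m-i)\binom{2m}{2i}\beta_{i}\,u^{2(m-i)}\rho^{i(\gamma-1)},
\]
again a positive combination of monomials $u^{2(m-i)}\rho^{i(\gamma-1)}$, $i=0,\dots,m-1$; keeping the $i=0$ and $i=m-1$ terms gives the lower bound, while pulling out a factor $u^{2}$ and applying Young's inequality (now with exponents $\tfrac{m-1}{m-1-i},\tfrac{m-1}{i}$) to $u^{2(m-1-i)}\rho^{i(\gamma-1)}\le u^{2(m-1)}+\rho^{(m-1)(\gamma-1)}$ gives the matching upper bound $q\partial_{q}\eta_{m}\le C_{m}\rho\big(u^{2m}+u^{2}\rho^{(m-1)(\gamma-1)}\big)$. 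Differentiating once more,
\[
\partial_{u}^{2}\eta_{m}=\rho\sum_{i=0}^{m-1}2(m-i)\big(2(m-i)-1\big)\binom{2m}{2i}\beta_{i}\,u^{2(m-1-i)}\rho^{i(\gamma-1)},
\]
a positive-coefficient combination of exactly the same monomials $u^{2(m-1-i)}\rho^{i(\gamma-1)}$, $i=0,\dots,m-1$, that appear (with positive coefficients $\binom{2(m-1)}{2i}\beta_{i}$) in $\eta_{m-1}/\rho$; comparing coefficients termwise yields $\partial_{u}^{2}\eta_{m}\le C_{m}\eta_{m-1}$, i.e. $\partial_{q}^{2}\eta_{m}\le C_{m}\rho^{-2}\eta_{m-1}$.

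There is no real obstacle here: once the explicit polynomial form of $\eta_{m}$ is written down, everything is bookkeeping with $m$-dependent constants. The only points requiring a little care are verifying $\lambda>-1$ so that the moment constants $\beta_{i}$ are finite and positive; tracking which monomials survive each differentiation (the top index drops by one each time, which is precisely why $\partial_{u}^{2}\eta_{m}$ and $\eta_{m-1}$ are built from the same monomials); choosing the Young exponents consistently; and observing that for the last two bounds one takes $m\ge 1$, the case $m=1$ being immediate since the intermediate sums are then empty.
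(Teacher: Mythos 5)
Your proof is correct and follows essentially the same route as the paper: binomial expansion of $(u+z\rho^{\theta})^{2m}$ against the even weight, strict positivity of the moment coefficients, endpoint terms for the lower bounds, Young's inequality for the upper bounds, and termwise differentiation (via $\partial_{q}=\rho^{-1}\partial_{u}$ at fixed $\rho$, equivalent to the paper's rewriting in $(\rho,q)$) with coefficient comparison against $\eta_{m-1}$. The only difference is cosmetic indexing, and you in fact spell out the lower bound for $q\partial_{q}\eta_{m}$, which the paper leaves implicit in the positivity of the coefficients.
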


\begin{proof}
Recall that $\theta = \frac{\gamma - 1}{2}$.
We obtain by substituting $g(z) = z^{2m}$ into \eqref{etamformula} and the binomial theorem that
\begin{align}\label{expansion}
\eta_{m}(\bd{U}) &= \rho c_{\lambda} \int_{-1}^{1} (u + z\rho^{\theta})^{2m} (1 - z^{2})^{\lambda} dz = c_{\lambda} \rho \sum_{j = 0}^{2m} \left(\int_{-1}^{1} z^{2m - j} (1 - z^{2})^{\lambda} dz\right){ C^m_j} u^{j} \rho^{(2m - j) \theta} \nonumber \\
&= c_{\lambda} \rho \sum_{j = 0}^{m} \left(\int_{-1}^{1} z^{2(m - j)} (1 - z^{2})^{\lambda} dz\right) { C^m_j}  u^{2j} \rho^{2(m - j)\theta} := \sum_{j = 0}^{m} a_{m - j} { C^m_j}  u^{2j} \rho^{1 + 2(m - j)\theta},
\end{align}
where we define the (strictly positive) coefficients $\displaystyle a_{k} := c_{\lambda} \int_{-1}^{1} z^{2k}(1 - z^{2})^{\lambda} dz$. From the expansion \eqref{expansion}, we immediately deduce that $\eta_{m}(U) \ge \min(a_{0}, a_{m}) \cdot \rho(u^{2m} + \rho^{2m\theta})$, and by using Young's inequality with exponents $m/j$ and $m/(m - j)$ applied to the terms $u^{2j}$ and $\rho^{2(m - j)\theta}$ in \eqref{expansion}, we also obtain $\eta_{m}(U) \le C_{m}\rho(u^{2m} + \rho^{2m\theta})$. We can rewrite \eqref{expansion} in terms of $(\rho, q)$ using $q = \rho u$, as $\displaystyle \eta_{m}(\bd{U}) = \sum_{j = 0}^{m} a_{m - j} q^{2j} \rho^{1 + 2(m - j)\theta - 2j}$, and hence, using Young's inequality:
\begin{equation*}
q\partial_{q}\eta_{m}(\bd{U}) = \sum_{j = 1}^{m} 2ja_{m - j} \rho^{1 + 2(m - j)\theta} u^{2j} \leq C_m \rho(u^{2m} + \rho^{2(m - 1)\theta}u^{2}),
\end{equation*}
\begin{equation*}
\partial_{q}^{2}\eta_{m}(\bd{U}) = \frac{1}{\rho^{2}}\sum_{j = 0}^{m - 1} (2j + 2)(2j + 1) a_{m - j - 1} \rho^{2((m - 1) - j)\theta + 1}u^{2j} \le C_{m}\rho^{-2} \eta_{m - 1}(\bd{U}).
\end{equation*}
\end{proof}
\if 1 = 0
Next, we derive a similar algebraic identity for the flux functions $H_{m}$, associated to the functions $g(z) = z^{2m}$. {\cred delete following}
{\cred
\begin{proposition}
    The flux functions $H_{m}$ satisfy the following algebraic bounds for constants $c_{m}$ and $C_{m}$ depending only on $m$: 
    \begin{equation*}
    c_{m}\rho|u|\Big(u^{2m} + \rho^{(\gamma - 1)m}\Big) \le |H_{m}(\bd{U})| \le C_{m}\rho|u|\Big(u^{2m} + \rho^{(\gamma - 1)m}\Big).
    \end{equation*}
\end{proposition}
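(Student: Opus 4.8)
The plan is to mirror the proof of Proposition~\ref{etamboundalg} almost verbatim. First I would substitute $g(z) = z^{2m}$ into the flux formula in \eqref{etamformula}, giving
\[
H_{m}(\bd{U}) = \rho c_{\lambda}\int_{-1}^{1} (u + z\rho^{\theta})^{2m}(u + z\theta\rho^{\theta})(1 - z^{2})^{\lambda}\, dz,
\]
expand $(u + z\rho^{\theta})^{2m}$ by the binomial theorem, and multiply through by the factor $(u + z\theta\rho^{\theta})$. This produces two sums; upon integrating against $(1 - z^{2})^{\lambda}$ over $[-1,1]$ only even powers of $z$ survive, which selects the even-index terms coming from the $u$-part and the odd-index terms coming from the $z\theta\rho^{\theta}$-part.

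Second, collecting the surviving contributions by the exponent of $z$ I would obtain the algebraic identity
\[
H_{m}(\bd{U}) = \rho u \sum_{i=0}^{m} b_{i}\, u^{2(m - i)}\rho^{2i\theta}, \qquad b_{i} := \Big(\tbinom{2m}{2i} + \theta\,\tbinom{2m}{2i-1}\Big) a_{i},
\]
with the convention $\tbinom{2m}{-1} = 0$ and the strictly positive constants $a_{k} = c_{\lambda}\int_{-1}^{1} z^{2k}(1 - z^{2})^{\lambda}\,dz$ introduced in the proof of Proposition~\ref{etamboundalg}. Since $\theta > 0$, every $b_{i}$ is strictly positive, and since every power of $u$ appearing in $H_{m}$ is odd, $H_{m}(\bd{U})$ has the same sign as $u$; hence $|H_{m}(\bd{U})| = \rho|u|\sum_{i=0}^{m} b_{i} u^{2(m-i)}\rho^{2i\theta}$.

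Third, I would bound the remaining polynomial in $u^{2}$ and $\rho^{2\theta}$ exactly as in Proposition~\ref{etamboundalg}. For the lower bound, retain only the extreme terms $i = 0$ and $i = m$ to get $\sum_{i} b_{i} u^{2(m-i)}\rho^{2i\theta} \ge \min(b_{0}, b_{m})\,(u^{2m} + \rho^{2m\theta})$. For the upper bound, apply Young's inequality with exponents $m/(m-i)$ and $m/i$ to each intermediate monomial $u^{2(m-i)}\rho^{2i\theta}$, yielding $\sum_{i} b_{i} u^{2(m-i)}\rho^{2i\theta} \le C_{m}(u^{2m} + \rho^{2m\theta})$. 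Recalling $2m\theta = m(\gamma - 1)$ then gives the claimed two-sided bound with constants depending only on $m$.

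The computation is routine and I do not expect a genuine obstacle; the only point requiring care is the parity bookkeeping in the binomial expansion, namely confirming that after multiplying by $(u + z\theta\rho^{\theta})$ the nonzero contributions to the coefficient of $z^{2i}$ come precisely from $\tbinom{2m}{2i}$ and $\theta\,\tbinom{2m}{2i-1}$, so that the combined coefficients $b_{i}$ are strictly positive (not merely nonnegative) and the lower bound is not lost.
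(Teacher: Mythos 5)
Your proof is correct and follows essentially the same route as the paper's: substitute $g(z)=z^{2m}$ into the flux formula in \eqref{etamformula}, expand, use parity under the integral against $(1-z^{2})^{\lambda}$ to keep only even powers of $z$, and bound the resulting positive-coefficient polynomial in $u^{2}$ and $\rho^{2\theta}$ from below by its extreme terms and from above by Young's inequality. You are in fact somewhat more explicit than the paper, which merely observes that the expansion agrees with that of $(u+z\rho^{\theta})^{2m+1}$ up to constant multiples and asserts $|H_{m}(\bd{U})|\sim\rho|u|\big(u^{2m}+\rho^{(\gamma-1)m}\big)$; your verification that each combined coefficient $b_{i}$ is strictly positive (so that $H_{m}$ has the sign of $u$ and no cancellation occurs) is precisely the bookkeeping needed to make the two-sided bound rigorous.
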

}
\begin{proof}
    For $\theta = \frac{\gamma - 1}{2}$ we compute using \eqref{etamformula}:
    \begin{equation*}
    H(\bd{U}) = \rho c_{\lambda} \int_{-1}^{1} g(u + z\rho^{\theta}) (u + z\theta \rho^{\theta}) (1 - z^{2})^{\lambda} dz. 
    \end{equation*}
    Note that for $g(z) = z^{2m}$, the expansion of $g(u + z\rho^{\theta}) (u + z\theta \rho^{\theta})$ is the same as the expansion of $(u + z\rho^{\theta})^{2m + 1}$ up to constant multiples on the coefficients, due to the extra coefficient of $\theta$. Hence, for some constants $a_{k}$:
    \begin{equation*}
    H_{m}(\bd{U}) = \sum_{k = 0}^{2m + 1} \alpha_{k} u^{2m + 1 - k} \rho^{1 + \theta k} \left(\int_{-1}^{1} z^{k} (1 - z^{2})^{\lambda}_{} dz\right) = \sum_{k = 0}^{m} \alpha_{k} u^{2(m - k) + 1} \rho^{1 + 2\theta k}.
    \end{equation*}
    Hence, we conclude that
    \begin{equation*}
    |H_{m}(\bd{U})| \sim \rho |u| \Big(u^{2m} + \rho^{(\gamma-1) m} \Big)
    \end{equation*}
\end{proof}
\fi
It will also be important to deduce certain algebraic bounds on the higher moments of the state variables $\bd{U} = (\rho, q)$. For this, we recall the following algebraic identities that bound higher powers of the density and momentum in terms of the higher entropies $\eta_{m}$ corresponding to $g(z) = z^{2m}$. This will allow us to relate bounds on the higher-order entropies to higher integrability ($L^{s}$ norms) of the lower-order entropies. The following lemma is from Lemma 3.12 from \cite{BerthelinVovelle}.

\begin{lemma}\label{momentidentity}
    Let $(\eta_{m}, H_{m})$ denote the entropy-flux pair corresponding to the convex function $g(z) = z^{2m}$ for nonnegative integers $m$. Then, for any $s \ge 1$,
    \begin{equation*}
    |\eta_{m}(\bd{U})|^{s} \le C(m, s, p) \Big(\eta_{0}(\bd{U}) + \eta_{p}(\bd{U})\Big) \quad \text{ for } p \ge ms + \frac{s - 1}{(\gamma-1)},
    \end{equation*}
    \begin{equation*}
    |H_{m}(\bd{U})|^{s} \le C(m, s, p) \Big(\eta_{0}(\bd{U}) + \eta_{p}(\bd{U})\Big) \quad \text{ for } p \ge \left(m + \frac{1}{2}\right)s + \frac{s - 1}{(\gamma-1)},
    \end{equation*}
    for a constant $C(m, s, p)$ depending only on $s \ge 1$, and the nonnegative integers $m$ and $p$. 
\end{lemma}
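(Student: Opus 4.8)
The plan is to reduce the whole statement to the elementary two-sided algebraic bounds already recorded in Proposition \ref{etamboundalg}, together with a companion pointwise bound for the flux functions $H_m$, and then to finish by a bookkeeping application of Young's inequality. From Proposition \ref{etamboundalg} (with $\theta=\frac{\gamma-1}{2}$) we have $\eta_0(\bd{U})=\rho$, $0\le\eta_m(\bd{U})\le C_m\rho\big(u^{2m}+\rho^{m(\gamma-1)}\big)$ for every $m\ge 0$, and, taking $m=p$ there, $\eta_p(\bd{U})$ comparable up to $p$-dependent constants to $\rho u^{2p}+\rho^{1+p(\gamma-1)}$. Since $x\mapsto x^s$ is convex for $s\ge 1$, we have $(a+b)^s\le 2^{s-1}(a^s+b^s)$ for $a,b\ge 0$, so raising the $\eta_m$-bound to the power $s$ gives
\begin{equation*}
|\eta_m(\bd{U})|^s\le C(m,s)\big(\rho^s u^{2ms}+\rho^{s+ms(\gamma-1)}\big),
\end{equation*}
and it suffices to bound each of the two monomials by $C(m,s,p)\big(\eta_0(\bd{U})+\eta_p(\bd{U})\big)$.

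Second, I would establish the analogous bound for $H_m$. Plugging $g(z)=z^{2m}$ into the formula \eqref{etamformula} for $H$ and expanding $(u+z\rho^{\theta})^{2m}(u+z\theta\rho^{\theta})$ in powers of $u$ and $z\rho^{\theta}$, only even powers of $z$ survive integration against $(1-z^2)^{\lambda}$, so $H_m(\bd{U})=\sum_{k=0}^{m}b_k\,u^{2(m-k)+1}\rho^{1+2k\theta}$ for constants $b_k$ depending only on $m$ (and $\lambda$). By the triangle inequality and Young's inequality applied to the intermediate terms, $|u|^{2(m-k)}\rho^{2k\theta}\le\frac{m-k}{m}u^{2m}+\frac{k}{m}\rho^{2m\theta}$, hence
\begin{equation*}
|H_m(\bd{U})|\le C_m\,\rho|u|\big(u^{2m}+\rho^{m(\gamma-1)}\big),\qquad\text{so}\qquad |H_m(\bd{U})|^s\le C(m,s)\big(\rho^s|u|^{(2m+1)s}+\rho^{s+ms(\gamma-1)}|u|^s\big).
\end{equation*}

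Third comes the Young's inequality step, which has the same structure for all four monomials. Set $X:=\rho u^{2p}$, $Y:=\rho^{1+p(\gamma-1)}$ and $Z:=\rho=\eta_0(\bd{U})$, so that $X+Y$ is comparable to $\eta_p(\bd{U})$. Each monomial above has the form $\rho^a(u^2)^b$ with $a,b\ge 0$, and I would write it as $X^\alpha Y^\beta Z^\delta$ by matching the $u^2$-exponent, $p\alpha=b$, and the $\rho$-exponent, $1+p(\gamma-1)\beta=a$ (using $\alpha+\beta+\delta=1$), i.e. $\alpha=b/p$, $\beta=\frac{a-1}{p(\gamma-1)}$, $\delta:=1-\alpha-\beta$. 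A direct computation gives $\alpha+\beta=\frac1p\big(ms+\frac{s-1}{\gamma-1}\big)$ for the two $\eta_m$-monomials $(a,b)=(s,ms)$ and $(a,b)=(s+ms(\gamma-1),0)$, and $\alpha+\beta=\frac1p\big((m+\tfrac12)s+\frac{s-1}{\gamma-1}\big)$ for the two $H_m$-monomials $(a,b)=(s,(m+\tfrac12)s)$ and $(a,b)=(s+ms(\gamma-1),\tfrac s2)$. Thus the hypotheses on $p$ in the lemma are exactly the conditions $\delta\ge 0$; since then $\alpha,\beta,\delta\ge 0$ with $\alpha+\beta+\delta=1$, Young's inequality (with exponents $1/\alpha,1/\beta,1/\delta$, dropping any vanishing exponent) yields $\rho^a(u^2)^b=X^\alpha Y^\beta Z^\delta\le\alpha X+\beta Y+\delta Z\le X+Y+Z\le C\big(\eta_0(\bd{U})+\eta_p(\bd{U})\big)$. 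Summing the finitely many monomials gives the claim.

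The argument is essentially bookkeeping; the only points needing care are (i) checking that $\alpha,\beta,\delta$ are nonnegative precisely under the stated thresholds on $p$ — which is where the quantities $ms+\frac{s-1}{\gamma-1}$ and $(m+\tfrac12)s+\frac{s-1}{\gamma-1}$ enter — and (ii) the degenerate case $\delta=0$, where one uses two-exponent rather than three-exponent Young. The role of the $\eta_0(\bd{U})$ term on the right-hand side is exactly to absorb the residual $Z=\rho$ factor when $\delta>0$, and the only inputs beyond Proposition \ref{etamboundalg} are the expansion of $H_m$ and the comparability $\eta_p(\bd{U})\sim\rho u^{2p}+\rho^{1+p(\gamma-1)}$.
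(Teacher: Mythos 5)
Your argument is correct. Note, however, that the paper does not prove this lemma at all: it simply imports it as Lemma 3.12 of \cite{BerthelinVovelle}, so there is no internal proof to compare against, and what you have written is a self-contained substitute. Your route is consistent with the toolkit the paper does develop: the two-sided bound $\eta_p(\bd{U})\sim\rho\big(u^{2p}+\rho^{p(\gamma-1)}\big)$ from Proposition \ref{etamboundalg}, the even-power expansion of $H_m$ (which appears, in commented-out form, in the source and matches your $H_m=\sum_k b_k\,u^{2(m-k)+1}\rho^{1+2k\theta}$), and a weighted AM--GM step. The bookkeeping checks out: writing each monomial $\rho^a(u^2)^b$ as $X^\alpha Y^\beta Z^\delta$ with $X=\rho u^{2p}$, $Y=\rho^{1+p(\gamma-1)}$, $Z=\rho$ forces $\alpha=b/p$, $\beta=\frac{a-1}{p(\gamma-1)}$, and in all four cases $\alpha+\beta$ equals $\frac1p\big(ms+\frac{s-1}{\gamma-1}\big)$ resp.\ $\frac1p\big((m+\tfrac12)s+\frac{s-1}{\gamma-1}\big)$, so the stated thresholds on $p$ are exactly $\delta\ge 0$; nonnegativity of $\beta$ uses $a\ge 1$, which holds since $s\ge1$. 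The only cosmetic caveats are the degenerate cases you already flag ($\delta=0$, or $\alpha=0$, and $m=0$ where the intermediate Young step in the $H_m$ expansion is vacuous), none of which affect the conclusion.
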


Finally, we will establish the following uniform algebraic bound on the entropy dissipation.

\begin{proposition}\label{dissipationalg}
    Suppose that we have that for some deterministic $\bd{U} := (\rho, q)$ which is a function from $\mathbb{T}$ to $[0, \infty) \times \mathbb{T}$ and for some nonnegative integer $m$:
    \begin{equation*}
    \int_{\mathbb{T}} \langle D^{2}\eta_{m + 1}(\bd{U}) \partial_{x}\bd{U}, \partial_{x}\bd{U} \rangle dx \le C. 
    \end{equation*}
    Then, for a constant $C_{m}$ depending only on $m$:
    \begin{equation*}
    \int_{\mathbb{T}} \Big(u^{2m} + \rho^{2m\theta}\Big)\rho^{\gamma - 2}(\partial_{x}\rho)^{2} dx \le C_{m}\int_{\mathbb{T}} \langle D^{2}\eta_{m + 1}(\bd{U}) \partial_{x}\bd{U}, \partial_{x}\bd{U} \rangle dx,
    \end{equation*}
    \begin{equation*}
    \int_{\mathbb{T}} \Big(u^{2m} + \rho^{2m\theta}\Big) \rho(\partial_{x}u)^{2} dx \le C_{m} \int_{\mathbb{T}} \langle D^{2}\eta_{m + 1}(\bd{U}) \partial_{x}\bd{U}, \partial_{x}\bd{U} \rangle dx.
    \end{equation*}
\end{proposition}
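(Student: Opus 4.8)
The plan is to reduce both inequalities to a single \emph{pointwise} estimate: for $\bd{U}=(\rho,q)$ with $\rho>0$ (the vacuum‑free regime in which this proposition is applied, so that $u=q/\rho$ is defined),
\[
\langle D^{2}\eta_{m+1}(\bd{U})\,\partial_{x}\bd{U},\partial_{x}\bd{U}\rangle\;\ge\;c_{m}\,\big(u^{2m}+\rho^{2m\theta}\big)\Big(\rho^{\gamma-2}(\partial_{x}\rho)^{2}+\rho\,(\partial_{x}u)^{2}\Big)
\]
for a constant $c_{m}>0$ depending only on $m$ (and $\gamma$); integrating over $\mathbb{T}$ then gives both statements at once, the hypothesis $\int_{\mathbb{T}}\langle D^{2}\eta_{m+1}(\bd{U})\partial_{x}\bd{U},\partial_{x}\bd{U}\rangle\,dx\le C$ only serving to make the right–hand sides finite. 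To prove the pointwise bound I would start from the kinetic representation \eqref{etamformula}, which writes $\eta_{m+1}(\rho,q)=c_{\lambda}\rho\int_{-1}^{1}(u+z\rho^{\theta})^{2m+2}(1-z^{2})^{\lambda}\,dz$ with $u=q/\rho$ (one could instead differentiate the polynomial expansion of $\eta_{m+1}$ from the proof of Proposition~\ref{etamboundalg}, but the kinetic route keeps the algebra manageable), and compute its Hessian from this formula.

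Passing to the variables $(\rho,u)$, setting $\tilde\eta(\rho,u):=\eta_{m+1}(\rho,\rho u)$, and using $\partial_{x}q=u\,\partial_{x}\rho+\rho\,\partial_{x}u$, the form $\langle D^{2}\eta_{m+1}\,\partial_{x}\bd{U},\partial_{x}\bd{U}\rangle$ becomes the quadratic form in $(\partial_{x}\rho,\partial_{x}u)$ with matrix
\[
M=\begin{pmatrix}\partial^{2}_{\rho\rho}\tilde\eta & \partial^{2}_{\rho u}\tilde\eta-\tfrac1\rho\partial_{u}\tilde\eta\\[2pt]\partial^{2}_{\rho u}\tilde\eta-\tfrac1\rho\partial_{u}\tilde\eta & \partial^{2}_{uu}\tilde\eta\end{pmatrix},
\]
the correction $-\tfrac2\rho\partial_{u}\tilde\eta\,\partial_{x}\rho\,\partial_{x}u$ arising from the second derivatives of $u=q/\rho$. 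Differentiating the kinetic formula under the integral, the two occurrences of $\int_{-1}^{1}g'(u+z\rho^{\theta})(1-z^{2})^{\lambda}dz$ cancel in the off‑diagonal entry, leaving $c_{\lambda}\theta\rho^{\theta}I_{1}$; and after integrating by parts in $z$ via $z(1-z^{2})^{\lambda}=-\tfrac1{2(\lambda+1)}\partial_{z}(1-z^{2})^{\lambda+1}$ (boundary terms vanish since $\lambda+1>0$) together with the algebraic identity $2(\lambda+1)=\tfrac{1+\theta}{\theta}$, the entry $\partial^{2}_{\rho\rho}\tilde\eta$ collapses to $c_{\lambda}\theta^{2}\rho^{2\theta-1}I_{0}$. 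Here $I_{j}:=\int_{-1}^{1}g''(u+z\rho^{\theta})\,z^{j}(1-z^{2})^{\lambda}dz$ with $g(z)=z^{2m+2}$, so $M=c_{\lambda}\left(\begin{smallmatrix}\theta^{2}\rho^{2\theta-1}I_{0}&\theta\rho^{\theta}I_{1}\\ \theta\rho^{\theta}I_{1}&\rho I_{0}\end{smallmatrix}\right)$; writing $\partial_{x}\rho=\rho^{1-\theta}v$ with $v=\rho^{\theta-1}\partial_{x}\rho$ the form equals $c_{\lambda}\rho\big[I_{0}(\theta v)^{2}+2I_{1}(\theta v)\partial_{x}u+I_{0}(\partial_{x}u)^{2}\big]$, and note $\rho v^{2}=\rho^{\gamma-2}(\partial_{x}\rho)^{2}$.

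It remains to bound the bracket from below. Since $g''(u+z\rho^{\theta})=(2m+2)(2m+1)(u+z\rho^{\theta})^{2m}\ge0$, the $I_{j}$ are moments of the nonnegative measure $(u+z\rho^{\theta})^{2m}(1-z^{2})^{\lambda}\,dz$, so $I_{1}^{2}\le I_{0}I_{2}\le I_{0}^{2}$ by Cauchy–Schwarz; hence the least eigenvalue of $\left(\begin{smallmatrix}I_{0}&I_{1}\\ I_{1}&I_{0}\end{smallmatrix}\right)$ is $I_{0}-|I_{1}|\ge\tfrac12(I_{0}-I_{2})$, and the bracket is $\ge\tfrac12\min(\theta^{2},1)(I_{0}-I_{2})\big(v^{2}+(\partial_{x}u)^{2}\big)$. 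Finally $I_{0}-I_{2}=(2m+2)(2m+1)\int_{-1}^{1}(u+z\rho^{\theta})^{2m}(1-z^{2})^{\lambda+1}dz$; expanding the binomial and discarding the odd‑in‑$z$ terms leaves $\sum_{j=0}^{m}\binom{2m}{2j}u^{2m-2j}\rho^{2j\theta}\!\int_{-1}^{1}z^{2j}(1-z^{2})^{\lambda+1}dz$, all of whose terms are nonnegative and whose $j=0$ and $j=m$ terms are positive multiples of $u^{2m}$ and of $\rho^{2m\theta}$, so $I_{0}-I_{2}\ge c_{m}(u^{2m}+\rho^{2m\theta})$. Chaining the estimates yields the pointwise inequality, and integration over $\mathbb{T}$ concludes. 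I expect the Hessian computation to be the main obstacle: one must spot the cancellation of the $\int g'$ terms and verify that the surviving structure constant is exactly $\theta^{2}$ — the whole argument rests on $2(\lambda+1)=(1+\theta)/\theta$ — after which the coercivity of the bracket is entirely elementary.
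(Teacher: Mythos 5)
Your proof is correct, and I checked the key computations: with $\tilde\eta(\rho,u)=\eta_{m+1}(\rho,\rho u)$ the reduced quadratic form is indeed $\tilde\eta_{\rho\rho}(\partial_x\rho)^2+2\bigl(\tilde\eta_{\rho u}-\tfrac1\rho\tilde\eta_u\bigr)\partial_x\rho\,\partial_x u+\tilde\eta_{uu}(\partial_x u)^2$; the $\int g'$ terms do cancel in the off-diagonal entry; the identity $2(\lambda+1)=(1+\theta)/\theta$ holds (both equal $(\gamma+1)/(\gamma-1)$), so the integration by parts in $z$ collapses $\tilde\eta_{\rho\rho}$ to $c_\lambda\theta^2\rho^{2\theta-1}I_0$; and the moment inequalities $I_1^2\le I_0I_2\le I_0^2$ give $I_0-|I_1|\ge\tfrac12(I_0-I_2)$, with $I_0-I_2\gtrsim_m u^{2m}+\rho^{2m\theta}$ after the binomial expansion. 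Note, however, that the paper does not prove this proposition at all: its ``proof'' is a citation to Proposition 3.14 and Corollary 3.15 of \cite{BerthelinVovelle} with constants tracked, so what you have written is a self-contained verification of exactly the pointwise coercivity statement that the paper outsources. The underlying mechanism is the same (the kinetic representation of the Hessian and its control by moments of $g''$ against $(1-z^2)^\lambda dz$), but your packaging — passing to $(\rho,u)$ variables, the explicit $2(\lambda+1)=(1+\theta)/\theta$ simplification, and the elementary eigenvalue bound — is a clean direct route that avoids quoting the reference. Two minor remarks: your constant also depends on $\gamma$ through $\lambda$ and $\theta$ (harmless, since $\gamma$ is fixed throughout the paper and the paper's own constants such as $c_\lambda$ carry the same dependence), and your caveat about the vacuum set is appropriate, since in the paper the proposition is only invoked for the $\epsilon$-level solutions where $\rho>0$, and at points with $\rho=0$ both sides of the pointwise inequality vanish anyway.
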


\begin{proof}
    This is a direct consequence of the algebraic computations in Proposition 3.14 and Corollary 3.15 in \cite{BerthelinVovelle}, if one keeps track of the constants in the proofs.
\end{proof}

\subsection{Algebraic bounds on general entropy functions} The entropy inequality \eqref{entropyineq} is required to hold for all general entropy-flux pairs $(\eta, H)$ generated by general convex functions $g \in C^{2}(\R)$ via \eqref{entropyineq} which are in $\tilde{\mathcal{G}}$. However,  because of the subpolynomial bound on admissible functions $g \in \tilde{\mathcal{G}}$ in \eqref{subpoly}, a general entropy can be bounded above by the entropies of the form $\eta_{m}$ that we defined for the special functions $g(z) = z^{2m}$. {{In contrast to the previous Section \ref{etamsection} which derives entropy bounds specifically for entropy functions of the form $\eta_m$, we derive some algebraic bounds on \textit{general} entropies $\eta$ (for arbitrary $g \in \tilde{\mathcal{G}}$) in terms of the special entropies $\eta_{m}$ in this section, to justify this reasoning.}}

First, we derive a bound on the general entropies $\eta$ for $g \in \tilde{\mathcal{G}}$ under the assumption of bounded densities. This will be useful for passing to the limit in approximation levels where we still have some uniform control over the maximum value of the density.

\begin{proposition}\label{generaletabound}
    For a general $g \in \tilde{\mathcal{G}}$ satisfying \eqref{subpoly} for some positive integer $m$, there exists a constant $C_{g, M}$ depending only on $g$ and $M$ such that for all $(\rho, q)$ satisfying:
    \begin{equation*}
    0 < \rho \le M, \qquad \left|\frac{q}{\rho}\right| \le M
    \end{equation*}
    for some positive constant $M > 0$, we have the bounds:
    \begin{equation}\label{CgM}
    |\eta(\bd{U})| \le C_{g, M} \rho, \qquad |\nabla_{\rho, q}\eta(\bd{U})| \le C_{g, M}, \qquad \rho |\nabla^{2}_{\rho, q} \eta(\bd{U})| \le C_{g, M}.
    \end{equation}
\end{proposition}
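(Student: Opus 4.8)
The plan is to differentiate the representation formula \eqref{etamformula} directly, after rewriting it in the conservative variables $(\rho,q)$, and to exploit the fact that under the hypotheses $0<\rho\le M$ and $|q/\rho|\le M$ the argument of $g$ ranges over a \emph{fixed compact interval}. Writing $u=q/\rho$, we have
\[
\eta(\rho,q)=\rho\, c_{\lambda}\int_{-1}^{1} g\!\left(\tfrac{q}{\rho}+z\rho^{\theta}\right)(1-z^{2})^{\lambda}\,dz,
\]
and for $|z|\le 1$ the quantity $w:=\tfrac{q}{\rho}+z\rho^{\theta}$ satisfies $|w|\le M_{1}:=M+M^{\theta}$ (using $\theta>0$ and $\rho\le M$). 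Since $g\in C^{2}(\R)$ — the subpolynomial bound \eqref{subpoly} is not even needed for this proposition, only continuity — the quantities $g,g',g''$ are bounded on $[-M_{1},M_{1}]$ by some $\Lambda=\Lambda(g,M)$; and since $c_{\lambda}\int_{-1}^{1}(1-z^{2})^{\lambda}\,dz=1$ by the definition of $c_{\lambda}$ (the weight being integrable, as $\lambda>-1$ for all $\gamma>1$), any weighted average $c_{\lambda}\int_{-1}^{1}f(w(z))(1-z^{2})^{\lambda}\,dz$ of a function $f$ with $|f|\le\Lambda$ on the range of $w$ is again bounded by $\Lambda$. The first bound $|\eta(\bd U)|\le\Lambda\,\rho$ is then immediate.

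For the derivative bounds I would differentiate under the integral sign — legitimate because, for $\rho$ in a neighborhood of any $\rho_{0}>0$ and $q$ bounded, the integrand and its $(\rho,q)$-derivatives are continuous in $(\rho,q,z)$ and dominated in $z$ by a constant multiple of the integrable weight $(1-z^{2})^{\lambda}$. Using $\partial_{q}u=1/\rho$ and $\partial_{\rho}u=-u/\rho$, one gets
\[
\partial_{q}\eta=c_{\lambda}\!\int_{-1}^{1}\! g'(w)(1-z^{2})^{\lambda}\,dz,\qquad
\partial_{\rho}\eta=\frac{\eta}{\rho}+I,\quad I:=c_{\lambda}\!\int_{-1}^{1}\! g'(w)\big(-u+z\theta\rho^{\theta}\big)(1-z^{2})^{\lambda}\,dz,
\]
so $|\partial_{q}\eta|\le\Lambda$, and since $|u|\le M$, $\rho^{\theta}\le M^{\theta}$ and $|\eta|/\rho\le\Lambda$, also $|\partial_{\rho}\eta|\le C_{g,M}$; this gives the gradient bound. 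For the Hessian, a further differentiation yields $\rho\,\partial_{q}^{2}\eta=c_{\lambda}\int_{-1}^{1}g''(w)(1-z^{2})^{\lambda}\,dz$ and $\rho\,\partial_{\rho}\partial_{q}\eta=c_{\lambda}\int_{-1}^{1}g''(w)\big(-u+z\theta\rho^{\theta}\big)(1-z^{2})^{\lambda}\,dz$, both bounded by $C_{g,M}$, while differentiating $\partial_{\rho}\eta=\eta/\rho+I$ once more gives the clean identity $\rho\,\partial_{\rho}^{2}\eta=I+\rho\,\partial_{\rho}I$ with $\rho\,\partial_{\rho}I=c_{\lambda}\int_{-1}^{1}\big[g''(w)(-u+z\theta\rho^{\theta})^{2}+g'(w)(u+z\theta^{2}\rho^{\theta})\big](1-z^{2})^{\lambda}\,dz$, again bounded by $C_{g,M}$. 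Collecting these estimates gives $\rho\,|\nabla^{2}_{\rho,q}\eta(\bd U)|\le C_{g,M}$, which is the last assertion of \eqref{CgM}.

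The only delicate point — and essentially the whole content of the proposition — is the behavior near vacuum: each $\rho$-differentiation produces a factor $\rho^{-1}$ (from $\partial_{\rho}u=-u/\rho$) or $\rho^{\theta-1}$ (from $\partial_{\rho}\rho^{\theta}$), which blows up as $\rho\to 0$ when $\theta<1$. The computations above are arranged precisely so that every such negative power is cancelled: in the gradient estimate the product $\rho\cdot(-u/\rho+z\theta\rho^{\theta-1})=-u+z\theta\rho^{\theta}$ stays bounded, and in the Hessian estimate the explicit weight $\rho$ in \eqref{CgM} absorbs exactly the extra $\rho^{-1}$ generated by the second $\rho$-derivative. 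Thus the $\rho$-weighting in \eqref{CgM} is exactly calibrated, no cancellation beyond this bookkeeping is needed, and once these $\rho$-powers are tracked carefully the write-up is entirely routine. (An alternative, less direct route is to use \eqref{subpoly} to dominate $\eta$, $\nabla\eta$, $\rho\nabla^{2}\eta$ by the corresponding quantities built from $g(z)=z^{2m}$ and to invoke the algebraic bounds of Proposition \ref{etamboundalg}, but the direct differentiation above is cleaner.)
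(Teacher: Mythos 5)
Your proposal is correct and follows essentially the same route as the paper: the paper likewise differentiates the kernel formula \eqref{etamformula} explicitly in $(\rho,q)$ and concludes the bounds \eqref{CgM} from the boundedness of $g,g',g''$ on the compact range of the argument $\tfrac{q}{\rho}+z\rho^{\theta}$, exactly as you do (your bookkeeping of the cancelled $\rho^{-1}$ factors just spells out what the paper summarizes as ``follow immediately''). Your remark that only $g\in C^{2}(\R)$, not the subpolynomial bound \eqref{subpoly}, is used here is consistent with the paper's argument.
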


\begin{proof}
For $\displaystyle \theta = \frac{\gamma - 1}{2}$ we compute the first derivatives of the entropy using \eqref{etamformula} as follows:
    \begin{equation*}
    \partial_{\rho}\eta(\bd{U}) = c_{\lambda} \int_{-1}^{1} g\left(\frac{q}{\rho} + z\rho^{\theta}\right)(1 - z^{2})^{\lambda} dz + c_{\lambda} \int_{-1}^{1}g'\left(\frac{q}{\rho} + z\rho^{\theta}\right) \left(\theta z\rho^{\theta} - \frac{q}{\rho}\right) (1 - z^{2})^{\lambda} dz,
    \end{equation*}
    \begin{equation}\label{dqeta}
    \partial_{q}\eta(\bd{U}) = c_{\lambda} \int_{-1}^{1} g'\left(\frac{q}{\rho} + z\rho^{\theta}\right) (1 - z^{2})^{\lambda} dz.
    \end{equation}
    For the second derivatives of the entropy, we compute:
    \begin{equation*}
    \partial_{\rho}^{2}\eta(\bd{U}) = c_{\lambda}\theta(1 - \theta) \rho^{\theta - 1} \int_{-1}^{1} g'\left(\frac{q}{\rho} + z\rho^{\theta}\right) z(1 - z^{2})^{\lambda} dz + c_{\lambda} \rho \int_{-1}^{1} g''\left(\frac{q}{\rho} + z\rho^{\theta}\right) \left(\theta z\rho^{\theta - 1} - \frac{q}{\rho^{2}}\right)^{2} (1 - z^{2})^{\lambda} dz.
    \end{equation*}
    \begin{equation*}
    \partial_{q}\partial_{\rho}\eta(\bd{U}) = c_{\lambda} \int_{-1}^{1} g''\left(\frac{q}{\rho} + z\rho^{\theta}\right) \left(\theta z\rho^{\theta - 1} - \frac{q}{\rho^{2}}\right) (1 - z^{2})^{\lambda} dz 
    \end{equation*}
    \begin{equation}\label{d2qeta}
    \partial_{q}^{2}\eta(\bd{U}) = c_{\lambda} \rho^{-1} \int_{-1}^{1} g''\left(\frac{q}{\rho} + z\rho^{\theta}\right) (1 - z^{2})^{\lambda} dz.
    \end{equation}
    Using the fact that $g(z), g'(z), g''(z)$ are continuous functions on $\R$, and hence are bounded on compact subsets of $\R$, we have that the inequalities \eqref{CgM} follow immediately.
\end{proof}

For later approximation level passages, where we do not have uniform bounds on the maximum of the density, we have the following more general bound.

\begin{proposition}\label{generaletaH}
    Suppose that $g \in \tilde{\mathcal{G}}$ satisfies \eqref{subpoly} for some positive integer $m$. Then, for some constant $C_{g}$ depending on $g$ and for all $\bd{U} := (\rho, q) \in [0, \infty) \times \R$:
    \begin{equation}\label{etaH}
    |\eta(\bd{U})| \le C_{g}\Big(\eta_{0}(\bd{U}) + \eta_{m}(\bd{U})\Big) \quad \text{ and } \quad |H(\bd{U})| \le C_{g}\Big(\eta_{0}(\bd{U}) + \eta_{m + 1}(\bd{U})\Big).
    \end{equation}
\end{proposition}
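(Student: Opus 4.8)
The plan is to bound a general entropy $\eta$ (and its flux $H$) arising from $g \in \tilde{\mathcal{G}}$ pointwise by the special entropies $\eta_0$ and $\eta_m$ (resp.\ $\eta_{m+1}$), exploiting the explicit integral representation \eqref{etamformula} together with the subpolynomial growth \eqref{subpoly}. First I would use the monotonicity of the integral representation: since $|g(z)| \le C(1 + |z|^{2m})$ and $g$ is continuous, one has
\begin{equation*}
|\eta(\bd{U})| = \left|\rho c_{\lambda} \int_{-1}^{1} g(u + z\rho^{\theta})(1 - z^{2})^{\lambda}\, dz\right| \le C\rho c_{\lambda} \int_{-1}^{1} \Big(1 + |u + z\rho^{\theta}|^{2m}\Big)(1 - z^{2})^{\lambda}\, dz.
\end{equation*}
The constant term integrates to $C\rho = C\eta_0(\bd{U})$ (since $\eta_0 = \rho$). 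For the remaining term, I would observe that the integral $\rho c_{\lambda}\int_{-1}^{1} |u + z\rho^{\theta}|^{2m}(1 - z^{2})^{\lambda}\, dz$ is, up to an absolute constant, exactly the quantity obtained by substituting $g(z) = |z|^{2m}$ into \eqref{etamformula}, and since $|u + z\rho^{\theta}|^{2m} = (u + z\rho^{\theta})^{2m}$ for even powers, this is literally $\eta_m(\bd{U})$ (the odd-power cross terms vanish under the symmetric integration $\int_{-1}^1 z^{2k-1}(1-z^2)^\lambda dz = 0$, as already used in the expansion \eqref{expansion}). This gives the first bound in \eqref{etaH} with $C_g$ depending only on the constant $C$ in \eqref{subpoly} and on $c_\lambda, \lambda$.

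For the flux bound, I would proceed analogously starting from the representation $H(\bd{U}) = \rho c_{\lambda} \int_{-1}^{1} g(u + z\rho^{\theta})(u + z\theta\rho^{\theta})(1 - z^{2})^{\lambda}\, dz$. Using $|g(w)| \le C(1 + |w|^{2m})$ with $w = u + z\rho^\theta$, and crudely bounding $|u + z\theta\rho^\theta| \le |u| + \rho^\theta \le |u + z\rho^\theta| + 2\rho^\theta$ on $z \in [-1,1]$ (or more simply $|u + z\theta\rho^\theta| \lesssim |u + z\rho^\theta| + \rho^\theta$), one gets
\begin{equation*}
|H(\bd{U})| \le C\rho c_{\lambda} \int_{-1}^{1} \Big(1 + |u + z\rho^{\theta}|^{2m}\Big)\Big(|u + z\rho^{\theta}| + \rho^{\theta}\Big)(1 - z^{2})^{\lambda}\, dz.
\end{equation*}
Expanding the product, every resulting integrand is dominated by $1 + |u + z\rho^\theta|^{2m+1} + \rho^{(2m+1)\theta}$ up to constants (using $\rho^\theta \cdot |u+z\rho^\theta|^{2m} \le |u+z\rho^\theta|^{2m+1} + \rho^{(2m+1)\theta}$ by Young). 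I would then match $\rho c_\lambda \int_{-1}^1 |u+z\rho^\theta|^{2m+1}(1-z^2)^\lambda dz$ against $\eta_{m+1}(\bd{U})$: here the odd total power means $|u + z\rho^\theta|^{2m+1}$ is not a polynomial, but $|u + z\rho^\theta|^{2m+1} \le |u + z\rho^\theta|^{2m+2} + 1 \le (u + z\rho^\theta)^{2m+2} + 1$, and $\rho c_\lambda \int_{-1}^1 (u + z\rho^\theta)^{2m+2}(1-z^2)^\lambda dz = \eta_{m+1}(\bd{U})$ by \eqref{etamformula} with $g(z) = z^{2m+2}$. Combined with the leftover $\eta_0$ contributions, and since $\rho^{(2m+1)\theta + 1} \le C(\eta_0 + \eta_{m+1})$ by the lower bound in Proposition \ref{etamboundalg}, this yields $|H(\bd{U})| \le C_g(\eta_0(\bd{U}) + \eta_{m+1}(\bd{U}))$.

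The only mild subtlety — and the step I expect to require the most care — is handling the absolute values so as to stay inside the class of \emph{even-power} special entropies $\eta_m$, since \eqref{etamformula} with $g(z) = z^{2m}$ produces a polynomial while $|w|^{2m+1}$ does not; the trick above ($|w|^{2m+1} \le w^{2m+2} + 1$, then absorb the $+1$ into $\eta_0$) resolves this cleanly, at the cost of the index shift from $m$ to $m+1$ in the flux bound, which is exactly what is stated. Everything else is bookkeeping of absolute constants and repeated use of Young's inequality together with the two-sided algebraic bounds of Proposition \ref{etamboundalg}. One should double-check that $\eta_m$ as defined (via $g(z) = z^{2m}$) indeed satisfies $\eta_m(\bd{U}) = \rho c_\lambda \int_{-1}^1 (u + z\rho^\theta)^{2m}(1-z^2)^\lambda dz \ge c_m \rho(u^{2m} + \rho^{2m\theta}) \ge c_m \rho|u|^{2m}$ and $\ge c_m\rho^{2m\theta+1}$, which is precisely Proposition \ref{etamboundalg}; this is what lets us absorb every monomial $\rho^{1 + j}|u|^k$ appearing after expansion into $\eta_0 + \eta_m$ (or $\eta_0 + \eta_{m+1}$).
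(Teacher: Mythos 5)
Your proposal is correct and follows essentially the same route as the paper's proof: bound $|g|$ via \eqref{subpoly} inside the integral representation \eqref{etamformula}, then compare with $\eta_{0}$, $\eta_{m}$, $\eta_{m+1}$ using Young's inequality and the algebraic bounds of Proposition \ref{etamboundalg}. The only (harmless) difference is cosmetic: you identify $\rho c_{\lambda}\int_{-1}^{1}|u+z\rho^{\theta}|^{2m}(1-z^{2})^{\lambda}\,dz$ directly with $\eta_{m}(\bd{U})$ and use $|w|^{2m+1}\le w^{2m+2}+1$ for the flux, whereas the paper first expands binomially into monomials $|u|^{j}\rho^{(2m-j)\theta}$ and then applies Young's inequality.
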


\begin{proof}
Using \eqref{etamformula} and the subpolynomial bound \eqref{subpoly} for $g$ and $\displaystyle \theta = \frac{\gamma - 1}{2}$ we have,
    \begin{align*}
    |\eta(\bd{U})| &\le C\rho \int_{-1}^{1} (1 - z^{2})^{\lambda} dz + C \rho \sum_{j = 0}^{2m}  \left(\int_{-1}^{1} |z|^{2m - j} (1 - z^{2})^{\lambda} dz\right) C^{2m}_{j} |u|^{j} \rho^{(2m - j)\theta} \\
    &\le C\rho + C\rho\sum_{j = 0}^{2m} b_{2m - j} C^{2m}_{j} |u|^{j}\rho^{(2m - j)\theta},
    \end{align*}
    where $C^{m}_{j}$ are binomial coefficients and $\displaystyle 
    b_{2m - j} := \int_{-1}^{1} |z|^{2m - j} (1 - z^{2})^{\lambda} dz$. Using Young's inequality with exponents $2m/j$ and $2m/(2m - j)$, we obtain that,
    \begin{equation*}
    |\eta(\bd{U})| \le C\rho + C\rho \Big(u^{2m} + \rho^{2m\theta}\Big) = C\rho + C\rho\Big(u^{2m} + \rho^{m(\gamma - 1)}\Big) \le C\Big(\eta_{0}(\bd{U}) + \eta_{m}(\bd{U})\Big),
    \end{equation*}
    where we used the fact that $\eta_{0}(\bd{U}) = \rho$, and the bounds in Proposition \ref{etamboundalg}. Similarly, for $H(\bd{U})$, corresponding to $g \in \tilde{\mathcal{G}}$ satisfying \eqref{subpoly}, we estimate using \eqref{subpoly} and the formula in \eqref{etamformula} that
    \begin{align*}
    |H(\bd{U})| &\le \rho c_{\lambda} \int_{-1}^{1} (|u| + |z|\theta \rho^{\theta}) (1 - z^{2})^{\lambda} dz + C_{\lambda, \theta} \rho \int_{-1}^{1} \Big(|u| + |z\rho^{\theta}|\Big)^{2m + 1} (1 - z^{2})^{\lambda} dz \\
    &\le C\Big(\rho|u| + \rho^{\frac{\gamma + 1}{2}}\Big) + C_{\lambda, \theta} \rho \sum_{j = 0}^{2m + 1} \left(\int_{-1}^{1} |z|^{2m + 1 - j}(1 - z^{2})^{\lambda} dz\right) |u|^{j} \rho^{(2m + 1 - j)\theta} \\
    &\le C\Big(\rho + \rho|u| + \rho^{\gamma}\Big) + C\rho\Big(|u|^{2m + 1} + \rho^{(2m + 1)\theta}\Big),
    \end{align*}
    by a similar Young's inequality argument. Since $\theta = \frac{\gamma - 1}{2}$, we have by Young's inequality that
    \begin{equation*}
    \rho|u| \le C\Big(\rho + \rho|u|^{2m}\Big), \qquad \rho\Big(|u|^{2m + 1} + \rho^{(2m + 1)\theta}\Big) \le C\rho\Big(1 + |u|^{2(m + 1)} + \rho^{(m + 1)(\gamma - 1)}\Big),
    \end{equation*}
    which gives the bound for $H(\bd{U})$ in \eqref{etaH} via Proposition \ref{etamboundalg}.
    
\end{proof}

We also obtain similar algebraic bounds for other terms involving entropies that will appear in the weak formulation.

\begin{proposition}\label{otheretaH}
    Suppose that $g \in \tilde{\mathcal{G}}$ satisfies \eqref{subpoly} for some positive integer $m$. Then, for some constant $C_{g}$ depending on $g$ and for all $\bd{U} := (\rho, q) \in [0, \infty) \times \R$:
    \begin{align*}
    |q\partial_{q}\eta(\bd{U})| &\le C_{g}\Big(\eta_{0}(\bd{U}) + \eta_{m}(\bd{U})\Big), \qquad |G(\bd{U})\partial_{q}\eta(\bd{U})| \le C_{g}\Big(\eta_{0}(\bd{U}) + \eta_{m}(\bd{U})\Big),\\
    &|G^{2}(\bd{U})\partial_{q}^{2}\eta(\bd{U})| \le C_{g}\Big(\eta_{0}(\bd{U}) + \eta_{m - 1}(\bd{U})\Big).
    \end{align*}
    
\end{proposition}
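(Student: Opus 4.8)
The plan is to mimic the proof of Proposition \ref{generaletaH}, exploiting the explicit integral formulas \eqref{dqeta} and \eqref{d2qeta} for $\partial_q\eta$ and $\partial_q^2\eta$ together with the subpolynomial bounds \eqref{subpoly} on $g', g''$ and the already-established algebraic bounds of Proposition \ref{etamboundalg}. The three quantities to estimate are $q\partial_q\eta(\bd U)$, $G(\bd U)\partial_q\eta(\bd U)$, and $G^2(\bd U)\partial_q^2\eta(\bd U)$; in all cases the strategy is the same: plug the subpolynomial bound into the integral representation, expand $(|u| + |z|\rho^\theta)$ to a suitable power via the binomial theorem, integrate in $z$, and then apply Young's inequality to separate the mixed $u$--$\rho$ monomials, finally converting the resulting $\rho(u^{2k} + \rho^{k(\gamma-1)})$ expressions into $\eta_k$ via Proposition \ref{etamboundalg}.

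First I would handle $q\partial_q\eta$. From \eqref{dqeta} and $|g'(z)| \le C(1 + |z|^{2m-1})$ we get $|\partial_q\eta(\bd U)| \le C(1 + (|u| + \rho^\theta)^{2m-1}) \le C(1 + |u|^{2m-1} + \rho^{(2m-1)\theta})$ after a binomial expansion and $z$-integration. Multiplying by $|q| = \rho|u|$ gives $|q\partial_q\eta| \le C\rho(|u| + |u|^{2m} + |u|\rho^{(2m-1)\theta})$, and Young's inequality on the cross term $|u|\rho^{(2m-1)\theta}$ (exponents $2m$ and $2m/(2m-1)$) together with $|u| \le C(1 + |u|^{2m})$ yields $|q\partial_q\eta| \le C\rho(1 + |u|^{2m} + \rho^{m(\gamma-1)}) \le C(\eta_0 + \eta_m)$ by Proposition \ref{etamboundalg} and $\eta_0 = \rho$. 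For $G(\bd U)\partial_q\eta$, the only change is that by \eqref{Gbound} we have $|G(\bd U)| \le A_0^{1/2}\rho$, so $|G(\bd U)\partial_q\eta| \le C\rho(1 + |u|^{2m-1} + \rho^{(2m-1)\theta})$; since the $u$-power $2m-1 < 2m$ and the $\rho$-power $(2m-1)\theta < m(\gamma-1)$ (after a Young step to absorb into higher powers, using $\rho^{(2m-1)\theta} \le C(1 + \rho^{m(\gamma-1)})$ only if the exponent comparison holds — one should instead simply note $\rho(1 + |u|^{2m-1} + \rho^{(2m-1)\theta}) \le C\rho(1 + |u|^{2m} + \rho^{m(\gamma-1)})$ by Young on each term separately), we again land on $C(\eta_0 + \eta_m)$.

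For the last bound, $G^2(\bd U)\partial_q^2\eta(\bd U)$, I would use \eqref{d2qeta}: $|\partial_q^2\eta(\bd U)| \le C\rho^{-1}(1 + (|u| + \rho^\theta)^{2m-2}) \le C\rho^{-1}(1 + |u|^{2m-2} + \rho^{(m-1)(\gamma-1)})$. Multiplying by $|G(\bd U)|^2 \le A_0\rho^2$ gives $|G^2\partial_q^2\eta| \le C\rho(1 + |u|^{2(m-1)} + \rho^{(m-1)(\gamma-1)}) \le C(\eta_0 + \eta_{m-1})$, again by Proposition \ref{etamboundalg} (and $\eta_0 = \rho$ absorbs the constant term). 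I do not expect any genuine obstacle here — the proof is a routine variant of Proposition \ref{generaletaH}. The only point requiring a little care is that $\partial_q^2\eta$ carries a factor $\rho^{-1}$, so the bound would blow up near vacuum were it not multiplied by $G^2 \sim \rho^2$; the statement is phrased precisely so that the $\rho^{-1}$ is cancelled, which is why we estimate the product $G^2\partial_q^2\eta$ rather than $\partial_q^2\eta$ alone. Likewise one must be slightly careful that when $m=1$ the term $\eta_{m-1}=\eta_0=\rho$ still makes sense, which it does, and when $m-2$ or $2m-2$ equals $0$ the binomial expansion degenerates harmlessly to a constant.
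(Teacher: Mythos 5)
Your proposal is correct and follows essentially the same route as the paper's proof: bound $\partial_q\eta$ and $\partial_q^2\eta$ via \eqref{dqeta}, \eqref{d2qeta} and the subpolynomial bounds \eqref{subpoly}, use $|G(\bd U)|\le A_0^{1/2}\rho$ from \eqref{Gbound}, raise the mixed powers by Young's inequality, and conclude with Proposition \ref{etamboundalg}. The small hedge you flag on the $G\partial_q\eta$ term resolves exactly as you say (termwise $|u|^{2m-1}\le C(1+|u|^{2m})$, $\rho^{(2m-1)\theta}\le C(1+\rho^{2m\theta})$), which is also what the paper does.
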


\begin{proof}
    This proceeds similarly to the previous proof of Proposition \ref{generaletaH}, except we use \eqref{dqeta} and \eqref{d2qeta}. Using the bounds on $|g'(z)|$ and $|g''(z)|$ in \eqref{subpoly}, the binomial theorem, and Young's inequality: 
    \begin{align*}
    |\partial_{q}\eta(\bd{U})| &\le c_{\lambda} \int_{-1}^{1} |g'(u + z\rho^{\theta})| (1 - z^{2})^{\lambda} dz \\
    &\le C \int_{-1}^{1} (1 - z^{2})^{\lambda} + C\sum_{j = 0}^{2m - 1} \left(\int_{-1}^{1} |z|^{2m - 1 - j}(1 - z^{2})^{\lambda} dz\right) C^{2m - 1}_{j} |u|^{j} \rho^{(2m - 1 - j)\theta} \\
    &\le C\Big(1 + |u|^{2m - 1} + \rho^{(2m - 1)\theta}\Big).
    \end{align*}
    \begin{align*}
    |\partial_{q}^{2}\eta(\bd{U})| &\le c_{\lambda}\rho^{-1} \int_{-1}^{1} |g''(u + z\rho^{\theta})| (1 - z^{2})^{\lambda} dz \\
    &\le C \rho^{-1} \int_{-1}^{1} (1 - z^{2})^{\lambda} + C\rho^{-1}\sum_{j = 0}^{2m - 2} \left(\int_{-1}^{1} |z|^{2m - 1 - j}(1 - z^{2})^{\lambda} dz\right) C^{2m - 1}_{j} |u|^{j} \rho^{(2m - 1 - j)\theta} \\
    &\le C\rho^{-1}\Big(1 + |u|^{2m - 2} + \rho^{(2m - 2)\theta}\Big).
    \end{align*}

    We then compute using Young's inequality, the bound \eqref{Gbound} on $G(\bd{U})$, and Proposition \ref{etamboundalg} that
    \begin{equation*}
    |q\partial_{q}\eta(\bd{U})| \le C\rho\Big(|u| + |u|^{2m} + \rho^{(2m - 1)\theta}|u|\Big) \le C\rho\Big(1 + |u|^{2m} + \rho^{2m\theta}\Big) \le C\Big(\eta_{0}(\bd{U}) + \eta_{m}(\bd{U})\Big),
    \end{equation*}
    \begin{equation*}
    |G(\bd{U})\partial_{q}\eta(\bd{U})| \le C\rho\Big(1 + |u|^{2m - 1} + \rho^{(2m - 1)\theta}\Big) \le C\rho\Big(1 + |u|^{2m} + \rho^{2m\theta}\Big) \le C\Big(\eta_{0}(\bd{U}) + \eta_{m}(\bd{U})\Big),
    \end{equation*}
    \begin{equation*}
    |G^{2}(\bd{U})\partial_{q}^{2}\eta(\bd{U})| \le C\rho\Big(1 + |u|^{2m - 2} + \rho^{(2m - 2)\theta}\Big) \le C\Big(\eta_{0}(\bd{U}) + \eta_{m - 1}(\bd{U})\Big).
    \end{equation*}
\end{proof}

\section{The approximate system}

In this section, we define an approximate system for which we can find an \textit{invariant measure} using a standard tightness and time-averaging procedure. For an invariant measure to exist for the approximate system, we require strong well-posedness properties for the approximate system: namely existence, uniqueness, and continuous dependence. Hence, we will have to use a sufficient number of approximations and truncations in order to have a suitable approximate system. For this, we use two parameters: (1) a truncation parameter $R$ and (2) an artificial viscosity parameter $\epsilon$. With each parameter, we also appropriately regularize the noise coefficients, defined in \eqref{gkdef}, {in order to obtain high-order derivative estimates for the approximate solutions. Moreover, the noise coefficient approximation is also compactly supported which lets us obtain uniform $L^\infty$ bounds for the approximate solutions.} In particular, we consider a regularized noise coefficient $\bd{\Phi}^{R, \epsilon_{N}}(\bd{U}_{R, \epsilon}): \mathcal{U} \to L^{2}(\mathbb{T})$, where analogously to \eqref{gkdef}, we now define
\begin{equation*}
\bd{\Phi}^{R, \epsilon_{N}}(\rho, q) \bd{e}_{k} := G_{k}^{R, \epsilon_{N}}(x, \rho, q) = \rho g_{k}^{R, \epsilon_{N}}(x, \rho, q),
\end{equation*}
and we assume that, compared to \eqref{noiseassumption}, the noise satisfies stronger assumptions, which we will discuss in Section \ref{noiseregularize}. Here, we note that to define the regularized noise, it will be easier to consider a discrete sequence of artificial viscosity parameters $\{\epsilon_{N}\}_{N = 1}^{\infty}$ with $\epsilon_{N} \searrow 0$, which is why we use the notation of $\epsilon_{N}$ in the approximate system. \textit{Often, for ease of notation, we will omit the explicit sequence dependence on $N$ in the sequence $\epsilon_{N} \to 0$, and just use the parameter $\epsilon > 0$.} 

We will first state the approximate system in Section \ref{approxsystemsection}. Then, we define the noise coefficient approximations in Section \ref{noiseregularize}, and we establish Hadamard well-posedness for the approximate system in Sections \ref{existenceapprox} and \ref{continuousdependence}.

\subsection{Statement of the approximate system.}\label{approxsystemsection} Let $\chi_R \in C_c^{\infty}(\mathbb{R})$ be a smooth function such that $\chi_R(s) = 1$ for $s \le \frac{R}2$, ${\chi_R(s)} = 0$ for $s > R$, and $\chi$ is strictly decreasing on the interval $[\frac{R}2, R]$. For technical reasons, we also choose $\chi_{R}$ so that $\sqrt{\chi_{R}}$ is also smooth and compactly supported on $\R$ (see the proof of Lemma \ref{diffest}, where this assumption is used). Furthermore, given spatial functions $q \in H^{2}(\mathbb{T})$ and $\rho \in H^{2}(\mathbb{T})$, define the truncation
\begin{equation}\label{truncatedq}
[q]_{R} := \chi_R\Big(\|\rho^{-1}\|_{L^{\infty}(\mathbb{T})}\Big) \chi_R\Big(\|q\|_{H^{2}(\mathbb{T})} \Big)q.
\end{equation}
At times, for shorthand, we will use the abbreviation
\begin{equation}\label{chi}
\chi_{R}(\rho, q) := \chi_{R}\Big(\|\rho^{-1}\|_{L^{\infty}(\mathbb{T})}\Big) \chi_{R}\Big(\|q\|_{H^{2}(\mathbb{T})}\Big),
\end{equation}
so that
\begin{equation*}
[q]_{R} = \chi_{R}(\rho, q) q.
\end{equation*}

Then, we consider the following approximate system for $(t, x) \in \mathbb{R}^{+} \times \mathbb{T}$:
\begin{equation}\label{approxsystem}
\begin{cases}
    \partial_{t}\rho + \partial_{x}([q]_{R}) = \epsilon \Delta \rho, \\
    \displaystyle dq + \partial_{x}\left(\frac{[q]_{R}q}{\rho}\right) + \chi_{R}(\rho, q) \partial_{x}(\kappa \rho^{\gamma}) = \chi_{R}(\rho, q)\bd{\Phi}^{R, \epsilon}(\rho, q) dW -\alpha q + \epsilon \Delta q.
\end{cases}
\end{equation}
The goal will be to show existence of an invariant measure on an appropriate phase space to this system. This will involve showing well-posedness (existence, uniqueness, and continuous dependence), and obtaining uniform bounds for the fluid density and fluid velocity uniformly in time.
\subsection{Approximations of the noise coefficient}\label{noiseregularize}

We now discuss the noise coefficients and their approximations at the various levels.

\medskip

\noindent \textbf{Original noise coefficient.} We recall from \eqref{gkdef} and \eqref{noiseassumption} that the noise coefficient $\bd{\Phi}(\bd{U}): \mathcal{U} \to L^{2}(\mathbb{T})$ acts on orthonormal basis elements $\{\bd{e}_{k}\}_{k = 1}^{\infty}$ of $\mathcal{U}$ via
\begin{equation*}
\bd{\Phi}(\rho, q) \bd{e}_{k} := G_{k}(x, \rho, q) = \rho g_{k}(x, \rho, q),
\end{equation*}
where for each positive integer $k$ and for each $(\rho, q) \in [0, \infty) \times \R$, $g_{k}(x, \rho, q)$ is a continuous real-valued function on $\mathbb{T}$. We have the growth assumption:
\begin{equation*}
|g_{k}(x, \rho, q)| + |\nabla_{\rho, q}g_{k}(x, \rho, q)| \le \alpha_{k}, \qquad \text{ for all } (x, \rho, q) \in \mathbb{T} \times [0, \infty) \times \mathbb{R},
\end{equation*}
for some constants $\alpha_{k}$ satisfying:
\begin{equation}\label{A0}
\sum_{k = 1}^{\infty} \alpha_{k}^{2} = A_{0} < \infty.
\end{equation}
Hence,
\begin{equation*}
|\bd{G}(x, \rho, q)| \le A_{0}^{1/2}\rho, \qquad \text{ for } \bd{G}(x, \rho, q) := \left(\sum_{k = 1}^{\infty} |G_{k}(x, \rho, q)|^{2}\right)^{1/2}.
\end{equation*}

\medskip

\noindent \textbf{The regularized $\epsilon$-level noise coefficient.} At the $\epsilon$ level, the goal is to localize the noise coefficient so that it is compactly supported on some invariant region $\Lambda_{\kappa}$ for some $\kappa$ related to $\epsilon > 0$. For the $\epsilon$-level approximation, we will hence truncate the noise so that it is compactly supported in an invariant region. 
We define the following sets that will later be important as invariant regions for the approximate system (see Section \ref{uniforminvariant}):
\begin{align}\label{invariant}
\Lambda_{\kappa} := &\{(\rho, u) \in [0, \infty) \times \mathbb{R} : -\kappa \le z \le w \le \kappa\} \\
= &\{(\rho, u) \in [0, \infty) \times \mathbb{R} : 0 \le \rho \le \kappa^{1/\theta}, -\kappa + \rho^{\theta} \le u \le \kappa - \rho^{\theta}\} \nonumber,
\end{align}
and
\begin{equation*}
\tilde{\Lambda}_{\kappa} := \{(\rho, q) \in [0, \infty) \times \mathbb{R} : 0 \le \rho \le \kappa^{1/\theta}, -\rho(\kappa - \rho^{\theta}) \le q \le \rho(\kappa - \rho^{\theta})\},
\end{equation*}
where we note that $(\rho, u) \in \Lambda_{\kappa}$ if and only if $(\rho, q) \in \tilde{\Lambda}_{\kappa}$. Note that just for the current construction of the regularized noise, we will distinguish between the sets $\Lambda_{\kappa}$ in the $(\rho, u)$ plane and the sets $\tilde{\Lambda}_{\kappa}$ in the $(\rho, q)$ plane, but we will later denote both by $\Lambda_{\kappa}$ for notational simplicity.

The goal will be to localize the noise coefficient $\bd{\Phi}(\rho, q)$ to the sets $\tilde{\Lambda}_{\kappa}$ in the $(\rho, q)$ state space, on the $\epsilon$ level. To define the localization of the noise on these sets, we define the following compactly supported function. Let $T(x, y): \R^{2} \to \R$ be a compactly supported function such that 
\begin{equation}\label{Tprops}
    T \text{ is smooth, radially symmetric, and strictly decreasing radially on $1/2 \le |(x, y)| \le 1$},
\end{equation}
\begin{equation*}
    T(x, y) = 1 \text{ for } |(x, y)| \le 1/2,  \quad T(x, y) = 0 \text{ for } |(x, y)| \ge 1.
\end{equation*}

For each positive integer $N$, define the open ball of radius $2N$ centered at $(\rho, q) = (2N + 1/N, 0)$:
\begin{equation}\label{BNdef}
B_{N} := \{(\rho, q) \in [0, \infty) \times \R : \|(\rho, q) - (2N + 1/N, 0)\| \le 2N\},
\end{equation}
and note that these open ball $\{B_{N}\}_{N \in \mathbb{Z}^{+}}$ increase to all of $(0, \infty) \times \R$. There exists an associated increasing sequence of positive real numbers $\{\kappa_{N}\}_{N = 1}^{\infty}$ such that
\begin{equation}\label{kappaN}
B_{N} \subset \tilde{\Lambda}_{\kappa_{N}}.
\end{equation}
We then define the following localized noise coefficient functions, with support in $\overline{B_{N}}$:
\begin{equation}\label{noiseeps}
\begin{split}
   & g_{k}^{\epsilon_{N}}(x, \rho, q) = T\Big(N^{-1}(\rho - (2N + 1/N), q)\Big) g_{k}(x, \rho, q), \\ 
   &\text{ and hence } \ \ \bd{\Phi}^{\epsilon}(\rho, q) \bd{e}_{k} := G^{\epsilon_{N}}_{k}(x, \rho, q) = \rho g^{\epsilon}_{k}(x, \rho, q).
\end{split}
\end{equation}
for any sequence of positive $\{\epsilon_{N}\}_{N = 1}^{\infty}$ strictly decreasing to zero as $N \to \infty$.

Importantly, the regularized noise coefficient has the following essential property, due to \eqref{kappaN}:
\begin{equation}\label{compactsupp}
\text{supp}\Big(\bd{\Phi}^{\epsilon_{N}}(x, \rho, q)\Big) \subset \mathbb{T} \times \tilde{\Lambda}_{\kappa_{N}}, \qquad \text{ for all positive integers $N$}.
\end{equation}
This is because by construction,
\begin{equation}\label{tildeBN}
\text{supp}\Big(\bd{\Phi}^{\epsilon_{N}}(x, \rho, q)\Big) \subset \mathbb{T} \times \tilde{B}_{N},
\end{equation}
where
\begin{equation}\label{tildeBNdef}
\tilde{B}_{N} := \{(\rho, q) \in [0, \infty) \times \mathbb{R} : \|(\rho, q) - (2N + 1/N, 0)\| \le N\} \subset B_{N} \subset \tilde{\Lambda}_{\kappa_{N}}.
\end{equation}
We remark that sometimes, we will will be imprecise with our notation and denote the property \eqref{compactsupp} by:
\begin{equation*}
\text{supp}\Big(\bd{\Phi}^{\epsilon_{N}}(x, \rho, q)\Big) \subset \mathbb{T} \times \Lambda_{\kappa_{N}},
\end{equation*}
in terms of the invariant region $\Lambda_{\kappa_{N}}$ in the $(\rho, u)$ plane, as we actually mean that the values of $(\rho, u)$, and not the values of $(\rho, q)$, which are in the support of $\bd{\Phi}^{\epsilon_N}(x, \rho, q)$ lie in $\Lambda_{\kappa}$. 

We can verify from the formula \eqref{noiseeps} and the properties of the compactly supported function $T$ in \eqref{Tprops}, that we still have the following properties:
\begin{equation*}
|g_{k}^{\epsilon}(x, \rho, q)| \le \alpha_{k}, \qquad |\nabla_{\rho, q} g^\epsilon_{k}(x, \rho, q)| \le \alpha_{k}(1 + N^{-1}),
\end{equation*}
for the same constants $\alpha_{k}$ as in \eqref{noiseassumption}, and hence, for the same constant $A_{0}$ in \eqref{A0}:
\begin{equation}\label{Gepsbound}
|\bd{G}^{\epsilon}(x, \rho, q)| \le A_{0}^{1/2}\rho, \qquad \text{ for } \bd{G}^{\epsilon}(x, \rho, q) := \left(\sum_{k = 1}^{\infty} |G_{k}^{\epsilon}(x, \rho, q)|^{2}\right)^{1/2}.
\end{equation}
Furthermore, using the fact that the region $\tilde{\Lambda}_{\kappa_{N}}$ is bounded in the $(\rho, q)$ plane depending on $\kappa_{N}$, we hence have 
\begin{equation}\label{GN}
|G^{\epsilon}_{k}(x, \rho, q)| + |\nabla_{\rho, q} G^{\epsilon}_{k}(x, \rho, q)| \le C_{N}\alpha_{k},
\end{equation}
for a constant $C_{N}$ depending only on $N$.

\medskip

\noindent \textbf{The regularized $\epsilon$-$R$ noise coefficient.} {{At the $R$ level, we additionally regularize the $\epsilon$-level noise coefficient via convolution, so that the $\epsilon$-$R$ level approximations of the noise coefficients have spatial derivatives of all orders, which will help us show well-posedness for our $\epsilon$-$R$ approximate system.}} For this approximation, let $\overline{\zeta}(z): \R \to \R$ be a standard smooth nonnegative convolution kernel with support in $[-1, 1]$ with integral equal to one. Then, define
\begin{equation*}
\zeta_{\alpha}(x, \rho, q) = \frac{1}{\alpha^{3}} \overline{\zeta}\left(\frac{x}{\alpha}\right) \overline{\zeta}\left(\frac{\rho}{\alpha}\right) \overline{\zeta}\left(\frac{q}{\alpha}\right).
\end{equation*}
Define for positive integers $R \ge 1$:
\begin{equation}\label{GRepsconv}
G^{R,\epsilon_{N}}_{k}(x, \rho, u) = G^{\epsilon_{N}}_{k}(x, \rho, u) * \zeta_{N/R} 1_{k \le R},
\end{equation}
for positive integers $R$. Since $G^{\epsilon_{N}}_{k}(x, \rho, q)$ is compactly supported on $\mathbb{T} \times \tilde{B}_{N}$ where $\tilde{B}_{N}$ as in \eqref{tildeBNdef} is the ball of radius $N$ centered at $(2N + 1/N, 0)$, we note that this convolution is well-defined (extend $G^{R_{\epsilon}}_{k}(x, \rho, q)$ by zero for $\rho \le 0$) and 
\begin{equation*}
\text{supp}\Big(G^{R, \epsilon_{N}}_{k}(x, \rho, q)\Big) \subset \mathbb{T} \times B_{N},
\end{equation*}
where $B_{N}$ is the ball of radius $2N$ centered at $(2N + 1/N, 0)$, as in \eqref{BNdef}. Then, define
\begin{equation*}
\bd{\Phi}^{R, \epsilon_{N}}(x, \rho, q) \bd{e}_{k} = G^{R, \epsilon_{N}}_{k}(x, \rho, q),
\end{equation*}
and note that
\begin{equation}\label{suppReps}
\text{supp}\Big(\bd{\Phi}^{R, \epsilon}(x, \rho, q)\Big) \subset \mathbb{T} \times \Lambda_{\kappa_{\epsilon}},
\end{equation}
by \eqref{kappaN}. Then, for each positive integer $m$, there exists a constant $C(m, R, N)$ such that
\begin{equation}\label{noiseRepsbound}
|\nabla_{\rho, q}^{m} G^{R,\epsilon}_{k}(x, \rho, q)| \le C(m, R, N)\alpha_{k}.
\end{equation}
By \eqref{GN} and the properties of the convolution in \eqref{GRepsconv}, note that this constant is independent of $R$ when $m = 0, 1$, namely:
\begin{equation}\label{alphakeps}
|G_k^{R, \epsilon}(x, \rho, q)| + |\nabla_{\rho, q} G^{R, \epsilon}_{k}(x, \rho, q)| \le C_{N}\alpha_{k},
\end{equation}
for a constant $C_{N}$ depending only on $N$. Hence, for the same constant $A_{0}$ in \eqref{A0}:
\begin{equation}\label{A0eps}
|\bd{G}^{R,\epsilon}(x, \rho, q)| \le A_{0}^{1/2}\rho, \qquad \text{ for } \bd{G}^{R,\epsilon}(x, \rho, q) := \left(\sum_{k = 1}^{\infty} |G_{k}^{R,\epsilon}(x, \rho, q)|^{2}\right)^{1/2}.
\end{equation}
\subsection{Existence and uniqueness for the approximate system and preliminary estimates}\label{existenceapprox}

To show existence of an invariant measure for the approximate system \eqref{approxsystem}, we will need a notion of Hadamard well-posedness for this system (global existence, uniqueness, and continuous dependence). In this subsection, we consider existence and uniqueness of the approximate system \eqref{approxsystem} for initial data $(\rho_0, u_0)$ satisfying $\displaystyle \int_{\mathbb{T}} \rho_{0} dx = 1$ with $\rho > 0$, in terms of the following phase space:
\begin{equation}\label{path}
\mathcal{X} := \left\{(\rho, q) \in H^{2}(\mathbb{T}) \times H^{2}(\mathbb{T}) : \int_{\mathbb{T}} \rho(x) dx = 1 \text{ and } \rho \ge \frac{1}{R}\right\},
\end{equation}
where the function space $\mathcal{X}$ is endowed with the usual norm of $H^{2}(\mathbb{T}) \times H^{2}(\mathbb{T})$. We verify that with respect to this phase space $\mathcal{X}$, we have existence, uniqueness, and continuous dependence for the approximate system \eqref{approxsystem}, which will allow us to form a Feller semigroup $\mathcal{P}_{t}$ (see Section \ref{fellersemigroup}) for the evolution of solutions to \eqref{approxsystem}. We first show the following existence and uniqueness result.

\begin{proposition}\label{existunique}
    Given $\bd{U}_0=(\rho_{0}, q_{0}) \in \mathcal{X}$ { and a probability space $(\Omega,\mathcal{F},\bP)$ along with a filtration $(\sF_t)_{t\geq 0}$ and an $(\sF_t)_{t\geq 0}$-Wiener process $W$}, there exists a unique (classical) pathwise solution $\bd{U}^{R,\epsilon}=(\rho^{R,\epsilon}, q^{R,\epsilon})$ to the approximate problem \eqref{approxsystem} in the class ${ L^2}(\Omega;C(0, { \infty}; \mathcal{X}))$ that satisfies, for all nonnegative test functions $\varphi \in C^{2}(\mathbb{T})$ and  $\psi \in C^{\infty}_{c}(0, \infty)$, and all entropy-entropy flux pair $(\eta, H)$, the following equation $\bP$-almost surely:
\begin{multline}\label{entropyRepsilon}
\int_{0}^{\infty} \left(\int_{\mathbb{T}} \eta(\bd{U}^{R,\epsilon}(t)) \varphi(x) dx\right) \partial_{t}\psi(t) dt + \int_{0}^{\infty} \chi_R(\rho^{R,\epsilon},q^{R,\epsilon}) \left(\int_{\mathbb{T}} H(\bd{U}^{R,\epsilon})\partial_{x}\varphi(x) dx\right) \psi(t) dt \\
- \int_{0}^{\infty} \left(\int_{\mathbb{T}} \alpha q^{R,\epsilon} \partial_{q}\eta(\bd{U}^{R,\epsilon}) \varphi(x) dx\right) \psi(t) dt + \int_{0}^{\infty} \left(\int_{\mathbb{T}} \partial_{q}\eta(\bd{U}^{R,\epsilon}) \bd{\Phi}(\bd{U}^{R,\epsilon}) \varphi(x) dx\right) \psi(t) dW(t) \\
+ \int_{0}^{\infty} \left(\int_{\mathbb{T}} \frac{1}{2} \partial_{q}^{2} \eta(\bd{U}^{R,\epsilon}) G^{2}(\bd{U}^{R,\epsilon}) \varphi(x) dx\right) \psi(t) =  \epsilon \int_{0}^{\infty} \left(\int_{\mathbb{T}} \langle D^{2}\eta(\bd{U}^{R,\epsilon})\partial_{x}\bd{U}^{R,\epsilon}, \partial_{x}\bd{U}^{R,\epsilon} \rangle \varphi(x) dx\right) \psi(t) dt \\
- \epsilon \int_{0}^{\infty} \left(\int_{\mathbb{T}} \eta(\bd{U}^{R,\epsilon}) \partial^{2}_{x}\varphi dx\right) \psi(t) dt. 
\end{multline}

\end{proposition}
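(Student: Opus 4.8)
The plan is to regard \eqref{approxsystem}, at fixed $R$ and $\epsilon$, as a quasilinear parabolic stochastic PDE in which the artificial viscosity $\epsilon\partial_x^2$ supplies parabolicity and the truncation renders every nonlinearity harmless: on the phase space $\mathcal{X}$ of \eqref{path}, $H^2(\mathbb{T})$ is a Banach algebra embedded in $C^1(\mathbb{T})$, the scalar factor $\chi_R(\rho,q)$ of \eqref{chi} vanishes as soon as $\|\rho^{-1}\|_{L^\infty}>R$ or $\|q\|_{H^2}>R$ (and one has $\|[q]_R\|_{H^2}\le R$ unconditionally), and $\rho\mapsto\rho^\gamma$ is smooth away from $\rho=0$; moreover, by the indicator $\mathbf{1}_{k\le R}$ in \eqref{GRepsconv}, the driving noise $\chi_R\bd{\Phi}^{R,\epsilon}(\rho,q)\,dW=\sum_{k=1}^R\chi_R G_k^{R,\epsilon}(\cdot,\rho,q)\,dW_k$ is \emph{finite-dimensional}, with coefficients smooth in $(x,\rho,q)$ and bounded together with all derivatives by \eqref{noiseRepsbound}. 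So the system is, at this level, quite benign.

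\textbf{Local well-posedness.} First I would pass to the mild formulation using the heat semigroup $e^{\epsilon t\partial_x^2}$ on $H^2(\mathbb{T})$ and run a Banach fixed-point argument in $L^2(\Omega;C([0,T];\mathcal{X}))$ for $T=T(R,\epsilon)$ small. The flux contribution $\partial_x\big(\chi_R(q^2/\rho+\kappa\rho^\gamma)\big)$ loses one derivative but is recovered by the parabolic smoothing bound $\|e^{\epsilon t\partial_x^2}f\|_{H^2}\le C(\epsilon t)^{-1/2}\|f\|_{H^1}$; all nonlinear terms and the (finite-dimensional, smooth) diffusion coefficient are globally Lipschitz and of linear growth on $\mathcal{X}$ thanks to the truncation, so the contraction closes on a short interval. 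Parabolic bootstrapping then upgrades the mild solution to one that is classical (the coefficients are smooth and $\rho\ge 1/R$ gives instantaneous $x$-regularization for $t>0$), and pathwise uniqueness on $[0,T]$ follows by subtracting two solutions living in $\mathcal{X}$ and applying Gronwall in $H^2$, again using the Lipschitz bounds granted by the truncation.

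\textbf{Globalization and invariance of $\mathcal{X}$.} To extend the local solution to all of $[0,\infty)$ without leaving $\mathcal{X}$, two facts are needed. First, integrating the density equation over $\mathbb{T}$ gives $\int_{\mathbb{T}}\rho(t)\,dx\equiv\int_{\mathbb{T}}\rho_0\,dx=1$, since $\partial_x([q]_R)$ and $\epsilon\partial_x^2\rho$ both have zero mean; a coupled parabolic energy estimate for $(\|\rho\|_{H^2}^2,\|q\|_{H^2}^2)$ — in which the parabolic dissipation absorbs the extra $x$-derivative from the fluxes, the bound $\|\partial_x[q]_R\|_{H^1}\le R$ forces control of $\|\rho\|_{H^2}$, which in turn (with $\|q\|_{H^2}\le R$, $\|\rho^{-1}\|_{L^\infty}\le R$ where $\chi_R\neq0$) controls the source terms in the $q$-equation, and the Burkholder inequality handles the finite-dimensional martingale part — yields, via Gronwall, $\mathbb{E}\sup_{[0,T]}\|\bd{U}^{R,\epsilon}\|_{H^2\times H^2}^2<\infty$ for every $T$ (indeed uniformly in $T$, which will be needed later), so there is no finite-time blow-up. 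Second, and this is the structural point for which the truncation is designed, the lower bound $\rho\ge 1/R$ is preserved: if $\min_x\rho$ were to fall below $1/R$, let $t_0$ be the last time it equals $1/R$ before doing so; on the interval just after $t_0$ one has $\|\rho^{-1}\|_{L^\infty}>R$, hence $\chi_R(\rho,q)=0$ there, so the density equation reduces to the heat equation $\partial_t\rho=\epsilon\partial_x^2\rho$ and the maximum principle (equivalently, order-preservation of $e^{\epsilon t\partial_x^2}$, for which the constant $1/R$ is a subsolution) forces $\min_x\rho(t)\ge\min_x\rho(t_0)=1/R$ — a contradiction. Combining the two facts, the unique classical solution extends to $L^2(\Omega;C(0,\infty;\mathcal{X}))$. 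I expect that making this hitting-time/maximum-principle argument fully rigorous in the presence of the nonlocal, time-dependent coefficient $\chi_R$, together with carefully closing the coupled $H^2$-estimate while tracking the $\partial_x(\rho^\gamma)$ and $\partial_x(q^2/\rho)$ contributions, will be the main work of the proof.

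\textbf{The entropy identity.} Finally, for any entropy-flux pair $(\eta,H)$ satisfying \eqref{entropyflux} (with $\eta$ of the subpolynomial type, so the growth bounds of Propositions \ref{generaletaH} and \ref{otheretaH} apply and all integrals are finite) and any $\varphi\in C^2(\mathbb{T})$, I would apply It\^o's formula to $t\mapsto\int_{\mathbb{T}}\eta(\bd{U}^{R,\epsilon}(t))\varphi\,dx$. Since the truncated flux is $\chi_R(\rho,q)\bd{F}(\bd{U})$ with $\chi_R$ spatially constant, the entropy-flux relation \eqref{entropyflux} gives $\nabla\eta(\bd{U})\cdot\partial_x\big(\chi_R\bd{F}(\bd{U})\big)=\chi_R\,\partial_x H(\bd{U})$; integrating by parts against $\varphi$, collecting the damping and It\^o-correction terms produced by the $q$-equation, and rewriting the viscous contribution via $\varphi\,\nabla\eta(\bd{U})\cdot\partial_x^2\bd{U}=\partial_x^2\varphi\,\eta(\bd{U})-\varphi\,\langle D^2\eta(\bd{U})\partial_x\bd{U},\partial_x\bd{U}\rangle$ modulo an exact $x$-derivative, one arrives at the pointwise-in-$t$ balance, which after multiplying by $\psi(t)$, integrating in $t$, and integrating by parts in time becomes exactly \eqref{entropyRepsilon} — with equality, since no shocks occur at the viscous level.
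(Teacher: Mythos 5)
The paper offers no argument of its own for this proposition: it simply declares the proof identical to Theorem 3.2 of \cite{BerthelinVovelle}, whose strategy (parabolic fixed point for the truncated viscous system, then It\^o's formula for the entropy balance) is essentially the one you lay out, so your route is the natural way to fill in what the paper outsources to a citation. Your observations that the noise is finite-dimensional with smooth bounded coefficients (by \eqref{GRepsconv} and \eqref{noiseRepsbound}), that the lower bound $\rho\ge 1/R$ is preserved because $\chi_{R}$ vanishes exactly when $\min_{x}\rho<1/R$ so the density equation degenerates to the heat equation past the hitting time (this is Proposition \ref{rholowerbound} in the paper), and that the It\^o computation produces \eqref{entropyRepsilon} with the $\chi_{R}$ factor riding on the flux term because $\chi_{R}$ is spatially constant, are all correct.

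The one step that would fail as literally written is the claim that the truncation renders every nonlinearity globally Lipschitz and of linear growth on $\mathcal{X}$, so that the contraction ``closes'' outright and uniqueness is a plain Gronwall argument. The cutoff \eqref{chi} controls only $\|\rho^{-1}\|_{L^{\infty}(\mathbb{T})}$ and $\|q\|_{H^{2}(\mathbb{T})}$; it does \emph{not} control $\|\rho\|_{H^{2}(\mathbb{T})}$, and the terms $\chi_{R}\,\partial_{x}(q^{2}/\rho)$ and $\chi_{R}\,\partial_{x}(\kappa\rho^{\gamma})$ contain $\partial_{x}\rho$ and $\partial_{x}^{2}\rho$ against negative powers of $\rho$, so they are only locally Lipschitz in $\rho$ with respect to $H^{2}$. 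This is precisely why the paper's difference estimates (Lemma \ref{diffest}, Lemma \ref{nonlineardiff}, Lemma \ref{noisedifflemma}) carry the extra hypothesis $\max\big(\|\rho\|_{H^{2}(\mathbb{T})},\|\tilde{\rho}\|_{H^{2}(\mathbb{T})}\big)\le M$, which is then supplied by the pathwise a priori bound of Lemma \ref{rhoH2} (a consequence of $\|\partial_{x}[q]_{R}\|_{H^{1}(\mathbb{T})}\le C_R$). So your fixed point must be run locally, on a ball of $H^{2}\times H^{2}$ with the nonlinearities extended by zero off $\{\inf\rho\ge 1/R\}$ (iterates need not remain in $\mathcal{X}$), and uniqueness and global extension then combine the Lemma \ref{rhoH2}-type density bound with your hitting-time argument; your globalization paragraph in fact contains these ingredients, but the ``globally Lipschitz'' shortcut should be replaced by this two-step structure. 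Separately, the parenthetical claim that $\mathbb{E}\sup_{[0,T]}\|\bd{U}^{R,\epsilon}\|^{2}_{H^{2}\times H^{2}}$ is bounded uniformly in $T$ is an over-claim and is not needed here: at this stage only $T$-dependent bounds hold (Lemma \ref{rhoH2}); the uniform-in-time control used later in the paper comes from the invariant-region and stationarity arguments, not from the initial-value estimates.
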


\begin{proof}
The proof is identical to that of Theorem 3.2 in \cite{BerthelinVovelle}.
\end{proof}
Next, we verify some useful preliminary estimates on the global unique solution to the approximate system, which will be useful for the upcoming proof of continuous dependence. Specifically, we prove the following preliminary a priori estimate for the density. 
\begin{lemma}\label{rhoH2}
    Let $(\rho_{}^{R,\epsilon}, q_{}^{R,\epsilon})$ be the unique solution to \eqref{approxsystem} in $C(0, T; H^2(\mathbb{T}))$ for initial data $(\rho_0, q_0) \in \mathcal{X}$. Then,
    for some (deterministic) constant $C_{R, \epsilon, T}$ depending only on $R$, $\epsilon$, and the final time $T$, we have the following almost sure bound:
    \begin{equation*}
    \|\rho_{}^{R,\epsilon}\|_{C(0, T; H^{2}(\mathbb{T}))}^{2} \le \|\rho_0\|_{H^{2}(\mathbb{T})}^{2} + C_{R, \epsilon, T}. 
    \end{equation*}

\end{lemma}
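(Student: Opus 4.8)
The essential observation is that the density equation in \eqref{approxsystem} is, pathwise, a \emph{linear deterministic} parabolic equation for $\rho^{R,\epsilon}$: the noise enters only the momentum equation, and $[q^{R,\epsilon}]_R$ depends on $(\rho^{R,\epsilon},q^{R,\epsilon})$ only through the scalar factor $\chi_R(\rho^{R,\epsilon},q^{R,\epsilon}) \in [0,1]$. Moreover this source is uniformly bounded in $H^2(\mathbb{T})$: since $0\le\chi_R\le 1$ and $\chi_R(s)=0$ for $s>R$, one has $s\chi_R(s)\le R$ for every $s\ge 0$, so that, pointwise in $(t,\omega)$,
\begin{equation*}
\|[q^{R,\epsilon}]_R\|_{H^2(\mathbb{T})} = \chi_R\big(\|(\rho^{R,\epsilon})^{-1}\|_{L^\infty(\mathbb{T})}\big)\,\chi_R\big(\|q^{R,\epsilon}\|_{H^2(\mathbb{T})}\big)\,\|q^{R,\epsilon}\|_{H^2(\mathbb{T})} \le R .
\end{equation*}
Thus, writing $f:=-\partial_x([q^{R,\epsilon}]_R)$, we have $\|f(t)\|_{H^1(\mathbb{T})}\le R$ for all $t$, $\mathbb{P}$-a.s., and $\rho^{R,\epsilon}$ solves the heat equation $\partial_t\rho^{R,\epsilon}-\epsilon\partial_x^2\rho^{R,\epsilon}=f$ on $\mathbb{T}$ with datum $\rho_0\in H^2(\mathbb{T})$.

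The plan is then a standard $H^2$ energy estimate for this heat equation, arranged so that every derivative produced by the source is moved (via one integration by parts on the torus) onto $\rho^{R,\epsilon}$, where it is absorbed by the parabolic dissipation. Computing $\tfrac{d}{dt}\|\rho^{R,\epsilon}\|_{H^2(\mathbb{T})}^2$ with $\|\cdot\|_{H^2}^2=\sum_{j=0}^2\|\partial_x^j\cdot\|_{L^2}^2$, the principal-part terms give $-2\epsilon\sum_{j=1}^{3}\|\partial_x^j\rho^{R,\epsilon}\|_{L^2}^2$, while the source terms, after integration by parts, become $2\sum_{j=1}^{3}\langle\partial_x^j\rho^{R,\epsilon},\partial_x^{j-1}[q^{R,\epsilon}]_R\rangle$ (note $\partial_x^{j-1}[q^{R,\epsilon}]_R\in L^2$ for $j\le 3$ since $[q^{R,\epsilon}]_R$ inherits the $H^2$-regularity of $q^{R,\epsilon}$). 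Bounding each such term by Young's inequality with weight $\epsilon$ and using $\|[q^{R,\epsilon}]_R\|_{H^2}\le R$ yields
\begin{equation*}
\frac{d}{dt}\|\rho^{R,\epsilon}(t)\|_{H^2(\mathbb{T})}^2 + \epsilon\sum_{j=1}^{3}\|\partial_x^j\rho^{R,\epsilon}(t)\|_{L^2(\mathbb{T})}^2 \le \frac{R^2}{\epsilon}.
\end{equation*}
Dropping the dissipation, integrating over $[0,t]$ and taking the supremum over $t\in[0,T]$ gives the claimed bound with $C_{R,\epsilon,T}=R^2T/\epsilon$; since the right-hand side of the differential inequality is deterministic, the estimate holds $\mathbb{P}$-almost surely. (Note we did not use $\int_{\mathbb{T}}\rho^{R,\epsilon}=1$ nor the lower bound $\rho^{R,\epsilon}\ge 1/R$ here, although both hold by Proposition~\ref{existunique}.)

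The only mildly delicate point is the rigorous justification of the energy identity, which involves $\partial_x^3\rho^{R,\epsilon}$ whereas Proposition~\ref{existunique} directly furnishes only $\rho^{R,\epsilon}\in C(0,T;H^2(\mathbb{T}))$. This is handled by the standard maximal $L^2$-regularity/parabolic smoothing for the linear heat equation with source $f\in L^\infty(0,T;H^1(\mathbb{T}))$ and datum in $H^2(\mathbb{T})$: for a.e.\ $t$ one has $\rho^{R,\epsilon}(t)\in H^3(\mathbb{T})$ with $\rho^{R,\epsilon}\in L^2(0,T;H^3(\mathbb{T}))$ and $\partial_t\rho^{R,\epsilon}\in L^2(0,T;H^1(\mathbb{T}))$, so that $t\mapsto\|\rho^{R,\epsilon}(t)\|_{H^2}^2$ is absolutely continuous and the chain rule applies; equivalently, one may run the estimate on a spectral (Galerkin) truncation of the heat equation and pass to the limit, or argue directly from the Duhamel formula $\rho^{R,\epsilon}(t)=e^{\epsilon t\partial_x^2}\rho_0+\int_0^t e^{\epsilon(t-s)\partial_x^2}f(s)\,ds$ using $\|e^{\epsilon t\partial_x^2}\|_{H^2\to H^2}\le 1$ and $\|e^{\epsilon\tau\partial_x^2}h\|_{H^2}\lesssim(1+(\epsilon\tau)^{-1/2})\|h\|_{H^1}$ (this last route reproduces the bound up to harmless universal constants). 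I expect this regularity bookkeeping, rather than the estimate itself, to be the only point needing attention: the substance is simply that the $R$-truncation forces the parabolic source term to be $H^2$-bounded by $R$, pathwise.
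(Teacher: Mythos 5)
Your proposal is correct and follows essentially the same route as the paper: the paper likewise applies $\partial_x^i$ for $i=0,1,2$ to the continuity equation, tests with $\partial_x^i\rho^{R,\epsilon}$, integrates by parts to move one derivative off the truncated momentum, and absorbs the cross term with Young's inequality using $\|[q^{R,\epsilon}]_R\|_{H^2(\mathbb{T})}\le R$, yielding the same deterministic bound $C_{R,\epsilon,T}\sim R^2T/\epsilon$. Your summed $H^2$ formulation and the extra remarks on justifying the $\partial_x^3$-level identity (maximal regularity/Galerkin/Duhamel) are harmless additions to the same argument.
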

\begin{proof}
By taking zero, one, and two spatial derivatives of the continuity equation in \eqref{dampedtruncate}, we obtain for $i = 0, 1, 2$:
\begin{equation*}
\partial_{t}(\partial_{x}^{i}\rho_{}^{R,\epsilon}) + \chi_{R}(\rho_{}^{R,\epsilon}, q_{}^{R,\epsilon}) \partial_{x}^{i + 1} q_{}^{R,\epsilon} = \epsilon \partial_{x}^{i + 2} \rho_{}^{R,\epsilon} 
\qquad \text{ on } [0,T]\times \mathbb{T}. 
\end{equation*}
By testing with $\partial_{x}^{i}\rho_{}^{R,\epsilon}$ and integrating by parts, we obtain:
\begin{equation*}
\int_{\mathbb{T}} (\partial_{x}^{i}\rho_{}^{R,\epsilon})^{2}(t) 
+ \epsilon \int_{0}^{t} \int_{\mathbb{T}} |\partial_{x}^{i + 1} \rho_{}^{R,\epsilon}|^{2} = \int_{\mathbb{T}} (\partial_{x}^{i}\rho_{0})^{2} + \int_{0}^{t} \int_{\mathbb{T}} \chi_{R}(\rho, q) \partial^{i}_{x} q_{}^{R,\epsilon} \partial_{x}^{i + 1}\rho_{}^{R,\epsilon}.
\end{equation*}
By estimating
\begin{align*}
\left|\int_{0}^{t} \int_{\mathbb{T}} \chi_{R}(\rho_{}^{R,\epsilon}, q_{}^{R,\epsilon}) \partial_{x}^{i}q_{}^{R,\epsilon} \partial_{x}^{i + 1}\rho_{}^{R,\epsilon}\right| \le \frac{\epsilon}{2} \int_{0}^{t} \int_{\mathbb{T}} (\partial_{x}^{i + 1}\rho_{}^{R,\epsilon})^{2} + C(\epsilon) \int_{0}^{t} \int_{\mathbb{T}} |\chi_{R}(\rho_{}^{R,\epsilon}, q_{}^{R,\epsilon})\partial_{x}^{i}q_{}^{R,\epsilon}|^{2} \\
\le \frac{\epsilon}{2} \int_{0}^{t} \int_{\mathbb{T}} (\partial^{i + 1}_{x}\rho_{}^{R,\epsilon})^{2} + C(\epsilon, R)t, 
\end{align*}
since $\|\chi(\rho_{}^{R,\epsilon}, q_{}^{R,\epsilon}) q_{}^{R,\epsilon}\|_{H^{2}(\mathbb{T})} \le R$ by the definition of the truncation in \eqref{chi}. Thus,
\begin{equation*}
\int_{\mathbb{T}} (\partial_{x}^{i}\rho_{}^{R,\epsilon})^{2}(t) 
+ \frac{\epsilon}{2} \int_{0}^{t} \int_{\mathbb{T}} |\partial_{x}^{i + 1} \rho_{}^{R,\epsilon}|^{2} \le \int_{\mathbb{T}} (\partial_{x}^{i}\rho_{0})^{2} + C(\epsilon, R)t
\end{equation*}
almost surely, from which we obtain the desired (almost sure) estimate from Gronwall's inequality. 

\end{proof}

We also have the following minimum principle for initial data $(\rho_0, q_0) \in \mathcal{X}$, which we recall from the definition of the phase space $\mathcal{X}$ in \eqref{path} must satisfy $\rho_{0} \ge \frac1{R}$. This result is important because it shows that the unique solution $(\rho^{R,\epsilon}, q^{R,\epsilon})$ to \eqref{approxsystem} in $C(0, T; H^{2}(\mathbb{T}))$ for initial data $(\rho_0, q_0) \in \mathcal{X}$ is more specifically in $C(0, T; \mathcal{X})$ since by the definition of the phase space $\mathcal{X}$ in \eqref{path}, {functions $(\rho^{R,\epsilon}, q^{R,\epsilon})$ in $\mathcal{X}$ must satisfy the pointwise lower bound for the density $\rho^{R,\epsilon} \ge \frac1{R}$. This also importantly gives us uniform control in time on the density $\rho^{R, \epsilon}$ away from vacuum}. 

\begin{proposition}\label{rholowerbound}
    Let $\rho^{R,\epsilon} \in C(0, T; H^{2}(\mathbb{T}))$ and $q^{R,\epsilon} \in C(0, T; H^{2}(\mathbb{T}))$ satisfy
    \begin{equation}\label{continuityR}
    \partial_{t} \rho^{R,\epsilon} + \partial_{x}([q^{R,\epsilon}]_{R}) = \epsilon \Delta \rho^{R,\epsilon}, \qquad \text{ on } \mathbb{T}, 
    \end{equation}
    for initial data $\rho(0) = \rho_{0} \in \mathcal{X}$. Then, $\rho^{R,\epsilon}(t, \cdot) \ge \frac1{R}$ for all $t \in [0, T]$, that is we have $\rho^{R,\epsilon} \in C(0, T; \mathcal{X})$.
\end{proposition}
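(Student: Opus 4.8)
The plan is to exploit the structure of the truncation in \eqref{continuityR}. Since the cutoff $\chi_R(\rho,q)=\chi_R\big(\|\rho^{-1}\|_{L^\infty(\mathbb{T})}\big)\,\chi_R\big(\|q\|_{H^2(\mathbb{T})}\big)$ from \eqref{chi} depends on the spatial variable only through norms, we may write $[q^{R,\epsilon}(t)]_R=c(t)\,q^{R,\epsilon}(t)$, where $c(t):=\chi_R(\rho^{R,\epsilon}(t),q^{R,\epsilon}(t))\in[0,1]$ is a scalar, constant in $x$. Thus \eqref{continuityR} is the linear parabolic equation
\[
\partial_t\rho^{R,\epsilon}=\epsilon\,\Delta\rho^{R,\epsilon}-c(t)\,\partial_x q^{R,\epsilon}\qquad\text{on }[0,T]\times\mathbb{T}.
\]
The key observation is that whenever $m(t):=\min_{x\in\mathbb{T}}\rho^{R,\epsilon}(t,x)\le\tfrac1R$ (and $\rho^{R,\epsilon}(t)>0$), we have $\|(\rho^{R,\epsilon}(t))^{-1}\|_{L^\infty(\mathbb{T})}\ge R$, and since $\chi_R\equiv 0$ on $[R,\infty)$ this forces $c(t)=0$; on such times the continuity equation degenerates to the pure heat equation $\partial_t\rho^{R,\epsilon}=\epsilon\Delta\rho^{R,\epsilon}$, which obeys a minimum principle.

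From here I would run a continuity-in-time contradiction argument. Since $\rho^{R,\epsilon}\in C(0,T;H^2(\mathbb{T}))\hookrightarrow C([0,T]\times\mathbb{T})$, the function $m$ is continuous on $[0,T]$, and $m(0)=\min_{\mathbb{T}}\rho_0\ge\tfrac1R$ because $\rho_0\in\mathcal{X}$. Suppose $J:=\{t\in(0,T]:m(t)<\tfrac1R\}$ is nonempty and let $(a,b)$ be a connected component of $J$; then $m(a)=\tfrac1R$ (it is $\le\tfrac1R$ as a right-limit of points of $J$, and $\ge\tfrac1R$ since $a\notin J$ or $a=0$), so $c(a)=0$ and $c\equiv 0$ on $(a,b)$. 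Hence $\rho^{R,\epsilon}$ solves $\partial_t\rho^{R,\epsilon}=\epsilon\Delta\rho^{R,\epsilon}$ on $[a,b)\times\mathbb{T}$ (distributionally, hence classically by parabolic smoothing), and representing the solution through the positive, mass-one periodic heat kernel $K_{\epsilon(t-a)}$ gives, for $t\in[a,b)$,
\[
\rho^{R,\epsilon}(t,x)=\int_{\mathbb{T}}K_{\epsilon(t-a)}(x-y)\,\rho^{R,\epsilon}(a,y)\,dy\ \ge\ m(a)=\tfrac1R,
\]
contradicting $m<\tfrac1R$ on $(a,b)$. Therefore $J=\emptyset$, i.e. $m(t)\ge\tfrac1R$ for all $t\in[0,T]$, which is exactly $\rho^{R,\epsilon}\in C(0,T;\mathcal{X})$. (Equivalently one may argue at a spatial minimizer $\bar x$ of $\rho^{R,\epsilon}(t,\cdot)$, where $\partial_x\rho^{R,\epsilon}=0$ and $\partial_x^2\rho^{R,\epsilon}\ge 0$, so $\partial_t\rho^{R,\epsilon}(t,\bar x)=\epsilon\partial_x^2\rho^{R,\epsilon}(t,\bar x)\ge 0$ whenever $c(t)=0$; thus the lower right Dini derivative of $m$ is nonnegative on $\{m\le\tfrac1R\}$, and a function that is nondecreasing there cannot dip below its initial value $\tfrac1R$.)

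The step requiring the most care is the bookkeeping in the contradiction argument, together with a mild circularity: to deduce $c(t)=0$ from $m(t)<\tfrac1R$ one needs $\rho^{R,\epsilon}(t)>0$ so that $\|(\rho^{R,\epsilon}(t))^{-1}\|_{L^\infty(\mathbb{T})}=1/m(t)$. I would resolve this by running the entire argument on the maximal subinterval $[0,T_0)$ on which $m>0$ — which is nonempty since $m(0)\ge\tfrac1R>0$ and $m$ is continuous — obtaining $m\ge\tfrac1R$ on $[0,T_0)$, and then noting that continuity forces $m(T_0)\ge\tfrac1R>0$, so in fact $T_0=T$ (a value $T_0<T$ would contradict $m(T_0)=0$) and the bound holds on all of $[0,T]$. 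One also uses that $J$ need not be an interval, so one localizes to a connected component $(a,b)$ and verifies that $c$ vanishes on the half-open interval $[a,b)$, invoking $m(a)=\tfrac1R$, $\|(\rho^{R,\epsilon}(a))^{-1}\|_{L^\infty(\mathbb{T})}=R$, and $\chi_R(R)=0$. The remaining ingredients — the reduction to a scalar coefficient $c(t)$, the vanishing of $\chi_R$ for large argument, and the minimum principle for the periodic heat equation — are elementary.
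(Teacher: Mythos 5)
Your proof is correct, and it reaches the conclusion by a route that differs in technique from the paper's, though both rest on the same structural observation: because the cutoff \eqref{chi} depends on $(\rho,q)$ only through norms, the factor multiplying the flux in \eqref{continuityR} is a scalar $c(t)\in[0,1]$ that vanishes as soon as $\|(\rho^{R,\epsilon}(t))^{-1}\|_{L^{\infty}(\mathbb{T})}\ge R$, i.e.\ as soon as the spatial minimum of the density reaches $1/R$ (given positivity). The paper exploits this in an integrated, Stampacchia-type way: it rewrites the equation in terms of $\rho^{R,\epsilon}-R^{-1}$, multiplies by $\mathrm{sgn}^{+}(R^{-1}-\rho^{R,\epsilon})$, integrates in space--time, kills the flux term by the vanishing of the cutoff, controls the diffusion term via the Kato-type inequality $\int_{\mathbb{T}}\Delta(R^{-1}-\rho^{R,\epsilon})\,\mathrm{sgn}^{+}(R^{-1}-\rho^{R,\epsilon})\le 0$ (citing \cite{FN17}), and concludes $\int_{\mathbb{T}}(\rho^{R,\epsilon}-R^{-1})^{-}=0$; the possible breakdown of $\|\rho^{-1}\|_{L^{\infty}}$ when the density becomes nonpositive is handled there by a first-touching-time argument at the level $-1/R$. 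You instead argue pointwise in time on $m(t)=\min_{x}\rho^{R,\epsilon}(t,x)$: on each connected component of $\{m<1/R\}$ the scalar coefficient $c$ vanishes identically, so the equation degenerates to the pure heat equation and positivity of the periodic heat semigroup (equivalently, the minimum principle) restores $m\ge 1/R$ there, a contradiction; the circularity between positivity of $\rho^{R,\epsilon}$ and the identity $\|(\rho^{R,\epsilon})^{-1}\|_{L^{\infty}(\mathbb{T})}=1/m$ is resolved by your maximal-interval continuation, which plays exactly the role of the paper's $-1/R$ device. Your route dispenses with the weak sign-testing and the Kato inequality, replacing them with the explicit semigroup representation and uniqueness of strong solutions in $C([a,b);H^{2}(\mathbb{T}))$, at the cost of the component/continuation bookkeeping (including the use of $\chi_{R}(R)=0$, which indeed follows from smoothness of $\chi_{R}$ and $\chi_{R}(s)=0$ for $s>R$); both arguments are complete.
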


\begin{proof}
\if 1 = 0
    To verify this minimum principle claim, we use a standard perturbation technique. We first consider, for the given $q \in C(0, T; H^{2}(\mathbb{T}))$, a solution $\rho_{\delta}^{R,\epsilon} \in C(0, T; H^{2}(\mathbb{T}))$ to the following equation:
    \begin{equation}\label{deltacontinuity}
    \partial_{t}\rho_{\delta}^{R,\epsilon} + \chi_{R}\Big(\|\rho_{\delta}^{R,\epsilon}^{-1}\|_{L^{\infty}(\mathbb{T})}\Big) \chi_{R}\Big(\|q\|_{H^{2}(\mathbb{T})}\Big) \partial_{x}q = \epsilon \Delta \rho_{\delta}^{R,\epsilon} + \delta \rho_{\delta}^{R,\epsilon}, \qquad \text{ on } [0, T] \times \mathbb{T}.
    \end{equation}
    Using usual well-posedness arguments,  a solution $\rho_{\delta}^{R,\epsilon} \in C(0, T; H^{2}(\mathbb{T}))$ with such a regularity exists.

    \medskip

    \noindent \textbf{Step 1. Show that the solution to the perturbed equation $\rho_{\delta}^{R,\epsilon} > 0$.} 
    \fi 


   {{Note that the initial data $\rho_0 \in \mathcal{X}$.}} Now observe that \eqref{approxsystem}$_1$ can be written as
    \begin{equation}\label{shifted}
    \partial_{t}(\rho^{R,\epsilon} - R^{-1}) + \partial_{x}([q^{R,\epsilon}]_{R}) = \epsilon \Delta (\rho^{R,\epsilon} - R^{-1}).
    \end{equation}
First, we claim for any $t\in[0,T]$, that
 $\chi_{R}\Big(\|(\rho^{R,\epsilon}(t))^{-1}\|_{L^{\infty}(\mathbb{T})}\Big) \cdot \text{sgn}^{+}(R^{-1} - \rho^{R,\epsilon}(t)) = 0$. 
Here $\text{sgn}^{+}(x)=1$ if $x\geq0$ and is equal to 0 if $x<0$. 
Notice that this claimed equality is true if for all times we have $\rho^{R,\epsilon}(t)> -\frac1{R}$.
Hence, for a contradiction, we assume that for a fixed but arbitrary $\omega\in\Omega$ there exists a time $\tau(\omega)<T$ such that $\tau(\omega)$ is the first time when $\rho^{R,\epsilon}(\tau(\omega))=-\frac1{R}$ and hence the first time when $\chi_{R}\Big(\|(\rho^{R,\epsilon}(\tau))^{-1}\|_{L^{\infty}(\mathbb{T})}\Big) \cdot \text{sgn}^{+}(R^{-1} - \rho^{R,\epsilon}(\tau)) \neq 0$. Note that since $\rho^{R,\epsilon}\in C([0,T]\times\mathbb{T})$ and since $\rho_0\geq \frac1R$, we must have $0<\tau(\omega)$ for every $\omega\in \Omega$.
 
    Now we multiply \eqref{shifted} by $\text{sgn}^{ +}({R}^{-1} - \rho^{R,\epsilon})$ and integrate in space and then in time on $[0,\tau(\omega)]$ to obtain:
    \begin{equation*}
    \int_{\mathbb{T}} (\rho^{R,\epsilon} - R^{-1})^{-}(\tau) = \epsilon \int_{0}^{\tau} \int_{\mathbb{T}} \Delta(R^{-1} - \rho^{R,\epsilon}) \cdot \text{sgn}^+(R^{-1} - \rho^{R,\epsilon}),
    \end{equation*}
    since $\rho_0 \ge 1/R$, so that $\displaystyle \int_{\mathbb{T}} (\rho_0 - R^{-1})^{-} = 0$, and since by assumption
    $\chi_{R}\Big(\|(\rho^{R,\epsilon})^{-1}\|_{L^{\infty}(\mathbb{T})}\Big) \cdot \text{sgn}^+(R^{-1} - \rho^{R,\epsilon}) = 0$ for $t<\tau(\omega)$. Since $\rho^{R,\epsilon} - R^{-1} \in C(0, T; H^{2}(\mathbb{T}))$, we have that $\displaystyle \epsilon \int_{0}^{\tau} \int_{\mathbb{T}} \Delta (R^{-1} - \rho^{R,\epsilon}) \text{sgn}^+(R^{-1} - \rho^{R,\epsilon}) \le 0$ by pg.~64 of \cite{FN17}. So we have that $\displaystyle \int_{\mathbb{T}} (\rho^{R,\epsilon} - R^{-1})^{-}(\tau) = 0$ and thus $\rho^{R,\epsilon} \ge 1/R$ for all $(t, x) \in [0, \tau(\omega)] \times \mathbb{T}$. This contradicts our assumption.
\end{proof}
    \if 1 = 0
    
    \noindent \textbf{Step 2. Show that the solution to the perturbed equation $\rho_{\delta}^{R,\epsilon} \ge 1/R$.} We can then verify that the solution $\rho_{\delta}^{R,\epsilon} \in C(0, T; H^{2}(\mathbb{T}))$ to \eqref{deltacontinuity} satisfies $\rho_{\delta}^{R,\epsilon} \ge 1/R$ using a minimum principle argument. Namely, assume $\rho_{\delta}^{R,\epsilon}$ and $q$ are smooth (since for the general case, we can use an approximation argument to regularize $\rho_{\delta}^{R,\epsilon}$ and $q$ and pass to a limit). We can argue by contradiction to show that $\rho_{\delta}^{R,\epsilon}(t, \cdot) \ge 1/R$, where we note that since $\rho_{0} \in \mathcal{X}$, we have that the initial data $\rho_{0} \ge 1/R$. Assume for contradiction that $\rho_{\delta}^{R,\epsilon}$ attains a strict minimum at $(t_0, x_0) \in [0, T] \times \mathbb{T}$ for some $t_0 > 0$. Then, $\chi_{R}\Big(\|\rho_{\delta}^{R,\epsilon}(t_0)^{-1}\|_{L^{\infty}(\mathbb{T})}\Big) = 0$, $\Delta \rho_{\delta}^{R,\epsilon}(t_0, x_0) \ge 0$, and $-\delta \rho_{\delta}^{R,\epsilon}(t_0, x_0) < 0$ since $\rho_{\delta}^{R,\epsilon} > 0$ on $[0, T]$. Therefore, using the equation \eqref{deltacontinuity}, we have that $\partial_{t}\rho_{\delta}^{R,\epsilon}(t_0, x_0) > 0$, which contradicts that $(t_0, x_0)$ is a minimum. Thus, $\rho_{\delta}^{R,\epsilon}(t, \cdot) \ge 1/R$ for all $t \in [0, T]$
    
    \medskip
    
    \noindent \textbf{Step 3. Show that the solution $\rho$ to the original equation $\rho \ge 1/R$.} We claim that for the original equation where $\delta = 0$, we have that $\rho \ge 1/R$ also for $[0, T] \times \mathbb{T}$, and to show this, we use the fact that $\rho_{\delta}^{R,\epsilon} \ge 1/R$ on $[0, T] \times \mathbb{T}$ and pass to a limit as $\delta \to 0$. Consider the solution $\rho_{\delta}^{R,\epsilon} \in C(0, T; \mathcal{X})$ to the following perturbed equation:
    \begin{equation*}
    \partial_{t}\rho_{\delta}^{R,\epsilon} + \chi_{R}\Big(\|\rho_{\delta}^{R,\epsilon}\|_{L^{\infty}(\mathbb{T})}^{-1}\Big) \chi_{R}\Big(\|q\|_{H^{2}(\mathbb{T})}\Big)\partial_{x}q = \epsilon \Delta \rho_{\delta}^{R,\epsilon} + \delta \rho_{\delta}^{R,\epsilon}.
    \end{equation*}
    We then compare this to the given solution $\rho$ to the original equation:
    \begin{equation*}
    \partial_{t}\rho + \chi_{R}\Big(\|\rho\|_{L^{\infty}(\mathbb{T})}^{-1}\Big) \chi_{R}\Big(\|q\|_{H^{2}(\mathbb{T})}\Big) \partial_{x} q = \epsilon \rho.
    \end{equation*}
    Here, we consider $q \in C(0, T; \mathcal{X})$ to be given a priori, and we note that $q$ is the same in both the perturbed and original equation.

    We can estimate the difference $\rho - \rho_{\delta}^{R,\epsilon}$ by noting that for $j = 0, 1$ (by differentiating the equations once for $j = 1$), we have that:
    \begin{equation*}
    \partial_{t}\partial_{x}^{j}(\rho - \rho_{\delta}^{R,\epsilon}) + \Big[\chi_{R}\Big(\|\rho^{-1}\|_{L^{\infty}(\mathbb{T})}\Big) - \chi_{R}\Big(\|\rho_{\delta}^{R,\epsilon}^{-1}\|_{L^{\infty}(\mathbb{T})}\Big)\Big] \chi_{R}\Big(\|q\|_{H^{2}(\mathbb{T})}\Big) \partial_{x}^{j + 1}q = \epsilon \partial_{x}^{j + 2} (\rho - \rho_{\delta}^{R,\epsilon}) - \delta \partial_{x}^{j} \rho_{\delta}^{R,\epsilon}.
    \end{equation*}
    We can then test this equation by $\partial_{j}(\rho - \rho_{\delta}^{R,\epsilon})$ for $j = 0, 1$ and sum the result. We obtain the following terms:
    \begin{itemize}
        \item \textbf{Term 1.} We use the fact that $\chi_{R}$ is smooth and $\|\rho\|_{L^{\infty}(\mathbb{T})} > 0$ (need a continuity argument for this) to obtain:
        \begin{align*}
        \Big|\chi_{R}\Big(\|\rho^{-1}\|_{L^{\infty}(\mathbb{T})}\Big) - \chi_{R} \Big(\|\rho_{\delta}^{R,\epsilon}^{-1}\|_{L^{\infty}(\mathbb{T})}\Big)\Big| &\le C_{R}\Big|\|\rho^{-1}\|_{L^{\infty}(\mathbb{T})} - \|\rho_{\delta}^{R,\epsilon}^{-1}\|_{L^{\infty}(\mathbb{T})}\Big| \\
        &\le C_{R}\|\rho^{-1} - \rho_{\delta}^{R,\epsilon}^{-1}\|_{L^{\infty}(\mathbb{T})} \le C_{R}\|\rho - \rho_{\delta}^{R,\epsilon}\|_{H^{1}(\mathbb{T})}.
        \end{align*}
        Therefore,
        \begin{align*}
        \Bigg|\int_{0}^{t} \int_{\mathbb{T}} \Big[\chi_{R}&\Big(\|\rho^{-1}\|_{L^{\infty}(\mathbb{T})}\Big) - \chi_{R}\Big(\|\rho_{\delta}^{R,\epsilon}^{-1}\|_{L^{\infty}(\mathbb{T})}\Big)\Big] \chi_{R}\Big(\|q\|_{H^{2}(\mathbb{T})}\Big) \partial^{j + 1}_{x} q \partial^{j}_{x}(\rho - \rho_{\delta}^{R,\epsilon})\Bigg| \\
        &\le C_{R}\int_{0}^{t} \|\rho - \rho_{\delta}^{R,\epsilon}\|_{H^{1}(\mathbb{T})}^{2} \|q\|_{H^{2}(\mathbb{T})} ds \le C_{R} \|q\|_{C(0, T; H^{2}(\mathbb{T}))} \int_{0}^{t} \|\rho - \rho_{\delta}^{R,\epsilon}\|_{H^{1}(\mathbb{T})}^{2}.
        \end{align*}
        \item \textbf{Term 3.} We estimate that
        \begin{equation*}
        \int_{0}^{t} \int_{\mathbb{T}} \delta \partial_{x}^{j} \rho_{\delta}^{R,\epsilon} \partial_{x}^{j}(\rho - \rho_{\delta}^{R,\epsilon}) \le \frac{\epsilon}{2} \int_{0}^{t} \int_{\mathbb{T}} |\partial_{x}(\rho - \rho_{\delta}^{R,\epsilon})|^{2} + C(\epsilon) \int_{0}^{t} \int_{\mathbb{T}} |\delta \partial_{x}^{j} \rho_{\delta}^{R,\epsilon}|^{2}.
        \end{equation*}
    \end{itemize}
    Therefore, we obtain:
    \begin{align}\label{deltaineq1}
    \|(\rho - \rho_{\delta}^{R,\epsilon})(t)\|_{H^{1}(\mathbb{T})}^{2} &+ \frac{\epsilon}{2} \int_{0}^{t} \int_{\mathbb{T}} \Big(|\partial_{x}(\rho - \rho_{\delta}^{R,\epsilon})(s)|^{2} + |\partial_{x}^{2}(\rho - \rho_{\delta}^{R,\epsilon})(s)|^{2}\Big) dx ds \\
    &\le C(\epsilon) \delta^{2} \|\rho_{\delta}^{R,\epsilon}\|_{L^{2}(0, T; H^{1}(\mathbb{T}))}^{2} + C_{R}\|q\|_{C(0, T; H^{1}(\mathbb{T}))} \int_{0}^{t} \|(\rho - \rho_{\delta}^{R,\epsilon})(s)\|_{H^{1}(\mathbb{T})}^{2} ds.
    \end{align}
    By usual energy estimate arguments, we can show that for $0 \le \delta \le 1$, $\|rho_{\delta}\|_{L^{2}(0, T; H^{1}(\mathbb{T}))}^{2} \le C_{R, T}$ for a constant depending only on $R$ and $T$, using Gronwall's inequality. Therefore, by using Gronwall's inequality in \eqref{deltaineq1}, we conclude that
    \begin{equation*}
    \|\rho - \rho_{\delta}^{R,\epsilon}\|_{C(0, T; H^{1}(\mathbb{T}))}^{2} \to 0, \qquad \text{ as } \delta \to 0.
    \end{equation*}
    So $\|\rho - \rho_{\delta}^{R,\epsilon}\|_{C([0, T] \times \mathbb{T}} \to 0$, and since $\rho_{\delta}^{R,\epsilon} \ge 1/R$ on $[0, T] \times \mathbb{T}$, we have $\rho \ge 1/R$ on $[0, T] \times \mathbb{T}$ also.
    \fi

\subsection{Continuous dependence on the initial condition of the approximate system.}\label{continuousdependence}

To complete the verification of Hadamard well-posedness for the approximate system, we finally show the following continuous dependence result for the approximate system \eqref{approxsystem}. 

\begin{proposition}\label{continuous}
    Let $(\rho, q)$ and $(\tilde{\rho}, \tilde{q})$ be two solutions in $C(0, T; \mathcal{X})$ to \eqref{approxsystem} with initial data $(\rho_{0}, q_{0}) \in \mathcal{X}$ and $(\tilde{\rho}_{0}, \tilde{q}_{0}) \in \mathcal{X}$. Then, for all $t \in [0, T]$,
    \begin{equation*}
    \mathbb{E}\|(\rho(t), q(t)) - (\tilde{\rho}(t), \tilde{q}(t))\|_{C(0, T; \mathcal{X})} \le C(T, R, \epsilon) \|(\rho_0, q_0) - (\tilde{\rho}_0, \tilde{q}_0)\|_{\mathcal{X}},
    \end{equation*}
    where the constant $C(T, R, \epsilon)$ depends only on $T$, $R$, and $\epsilon$.
\end{proposition}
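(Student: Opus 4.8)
The plan is to run an It\^o/energy estimate for the difference of the two solutions in the $H^2(\mathbb{T})\times H^2(\mathbb{T})$ topology of $\mathcal X$, in which the artificial viscosity $\epsilon\Delta$ does the decisive work of absorbing the one-derivative loss coming from the nonlinear flux and the pressure. Set $\bar\rho := \rho-\tilde\rho$ and $\bar q := q-\tilde q$; subtracting the two copies of \eqref{approxsystem} gives
\begin{equation*}
\partial_t\bar\rho + \partial_x\big([q]_R - [\tilde q]_R\big) = \epsilon\Delta\bar\rho ,
\end{equation*}
\begin{multline*}
d\bar q + \partial_x\!\Big(\tfrac{[q]_R\, q}{\rho} - \tfrac{[\tilde q]_R\,\tilde q}{\tilde\rho}\Big)dt + \Big(\chi_R(\rho,q)\,\partial_x(\kappa\rho^\gamma) - \chi_R(\tilde\rho,\tilde q)\,\partial_x(\kappa\tilde\rho^\gamma)\Big)dt \\
= \Big(\chi_R(\rho,q)\bd{\Phi}^{R,\epsilon}(\rho,q) - \chi_R(\tilde\rho,\tilde q)\bd{\Phi}^{R,\epsilon}(\tilde\rho,\tilde q)\Big)dW - \alpha\bar q\,dt + \epsilon\Delta\bar q\,dt .
\end{multline*}
I would then apply It\^o's formula to $t\mapsto\|\bar q(t)\|_{H^2}^2$ and ordinary calculus to $t\mapsto\|\bar\rho(t)\|_{H^2}^2$ (the latter is a pathwise quantity, the continuity equation carrying no stochastic forcing), using $\sum_{i=0}^{2}\|\partial_x^i\cdot\|_{L^2(\mathbb{T})}^2$ as the $H^2$-norm. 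For each $i\in\{0,1,2\}$, testing $\partial_x^i$ of the two difference equations against $\partial_x^i\bar\rho$, resp.\ $\partial_x^i\bar q$, and integrating by parts on $\mathbb{T}$, the viscous terms produce the good negative contributions $-\epsilon\|\partial_x^{\,i+1}\bar\rho\|_{L^2}^2$ and $-\epsilon\|\partial_x^{\,i+1}\bar q\|_{L^2}^2$, while the It\^o correction in the momentum equation produces $\sum_{k}\big\|\partial_x^i\big(\chi_R(\rho,q)G^{R,\epsilon}_k(\cdot,\rho,q) - \chi_R(\tilde\rho,\tilde q)G^{R,\epsilon}_k(\cdot,\tilde\rho,\tilde q)\big)\big\|_{L^2}^2$.

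The core of the argument will be to show that every remaining term is Lipschitz in $(\bar\rho,\bar q)$ with respect to the $\mathcal X$-norm. For this one uses: (i) $\rho,\tilde\rho\ge 1/R$ (Proposition~\ref{rholowerbound}) together with the a~priori $H^2$-bounds on $\rho,\tilde\rho$ (Lemma~\ref{rhoH2}) and on $q,\tilde q$ (Proposition~\ref{existunique}) on $[0,T]$, which give $\|\rho^{-1}\|_{H^2}\le C_R(1+\|\rho\|_{H^2})^2$ and uniform $L^\infty$-control of $\rho$ and $q$; (ii) the Banach-algebra structure of $H^2(\mathbb{T})$ in one dimension, so that Moser-type composition estimates make $(\rho,q)\mapsto[q]_R$, $(\rho,q)\mapsto\tfrac{[q]_R q}{\rho}$ and $(\rho,q)\mapsto\chi_R(\rho,q)\rho^\gamma$ Lipschitz from $\{\rho\ge 1/R\}\subset H^2\times H^2$ into $H^2$ on bounded sets --- in particular $|\chi_R(\rho,q)-\chi_R(\tilde\rho,\tilde q)|\le C_R(\|\bar\rho\|_{H^1}+\|\bar q\|_{H^2})$, since $\chi_R$ is smooth with bounded derivative and $\|\rho^{-1}-\tilde\rho^{-1}\|_{L^\infty}\le R^2\|\bar\rho\|_{L^\infty}$; and (iii) for the noise, the regularized coefficient is smooth in all variables with $|\nabla^m_{x,\rho,q}G_k^{R,\epsilon}|\le C(m,R,N)\alpha_k$ (see \eqref{noiseRepsbound}--\eqref{A0eps}), so that a chain-rule expansion of $\partial_x^i\big(G_k^{R,\epsilon}(\cdot,\rho,q)-G_k^{R,\epsilon}(\cdot,\tilde\rho,\tilde q)\big)$ for $i\le 2$, combined with $\sum_k\alpha_k^2=A_0<\infty$, yields $\big\|\chi_R(\rho,q)\bd{\Phi}^{R,\epsilon}(\rho,q)-\chi_R(\tilde\rho,\tilde q)\bd{\Phi}^{R,\epsilon}(\tilde\rho,\tilde q)\big\|_{L_2(\mathcal U;H^2)}\le C(R,\epsilon)\big(\|\bar\rho\|_{H^2}+\|\bar q\|_{H^2}\big)$. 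The one genuinely delicate point is that testing $\partial_x^2$ of the $\bar\rho$-equation against $\partial_x^2\bar\rho$ produces a term with \emph{three} $x$-derivatives on the difference; one integrates by parts once more, $-\int_{\mathbb{T}}\partial_x^3\big([q]_R-[\tilde q]_R\big)\partial_x^2\bar\rho = \int_{\mathbb{T}}\partial_x^2\big([q]_R-[\tilde q]_R\big)\partial_x^3\bar\rho \le \tfrac{\epsilon}{2}\|\partial_x^3\bar\rho\|_{L^2}^2 + \tfrac{1}{2\epsilon}\|[q]_R-[\tilde q]_R\|_{H^2}^2$, and absorbs the first term into $-\epsilon\|\partial_x^3\bar\rho\|_{L^2}^2$; the flux $\tfrac{[q]_R q}{\rho}$ and the pressure $\chi_R(\rho,q)\kappa\rho^\gamma$ are treated the same way against $\partial_x^3\bar q$, whereas the noise terms lose no derivative precisely because $G_k^{R,\epsilon}$ has been regularized.

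Collecting these estimates would yield, for all $t\in[0,T]$,
\begin{equation*}
\|(\bar\rho,\bar q)(t)\|_{\mathcal X}^2 + \epsilon\int_0^t\sum_{i=0}^{2}\Big(\|\partial_x^{\,i+1}\bar\rho\|_{L^2}^2 + \|\partial_x^{\,i+1}\bar q\|_{L^2}^2\Big)ds \le \|(\bar\rho_0,\bar q_0)\|_{\mathcal X}^2 + C(T,R,\epsilon)\int_0^t\|(\bar\rho,\bar q)(s)\|_{\mathcal X}^2\,ds + M_t ,
\end{equation*}
where $M_t$ is the martingale arising from the stochastic integral, with $\langle M\rangle_t\le C(T,R,\epsilon)\int_0^t\|(\bar\rho,\bar q)(s)\|_{\mathcal X}^4\,ds$ by (iii). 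Taking the supremum over $s\le t$, then expectations, and bounding $\mathbb{E}\sup_{s\le t}|M_s|$ by the Burkholder--Davis--Gundy and Young inequalities so as to absorb a term $\tfrac14\mathbb{E}\sup_{s\le t}\|(\bar\rho,\bar q)(s)\|_{\mathcal X}^2$ into the left-hand side, one arrives at $\mathbb{E}\sup_{s\le t}\|(\bar\rho,\bar q)(s)\|_{\mathcal X}^2\le\|(\bar\rho_0,\bar q_0)\|_{\mathcal X}^2+C(T,R,\epsilon)\int_0^t\mathbb{E}\sup_{r\le s}\|(\bar\rho,\bar q)(r)\|_{\mathcal X}^2\,ds$, and Gronwall's inequality gives $\mathbb{E}\sup_{t\le T}\|(\bar\rho,\bar q)(t)\|_{\mathcal X}^2\le C(T,R,\epsilon)\|(\bar\rho_0,\bar q_0)\|_{\mathcal X}^2$; the stated bound, with first powers, then follows by the Cauchy--Schwarz inequality. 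The main obstacle will be precisely this derivative loss: the estimate closes only because the artificial viscosity is present to absorb, after integration by parts, the top-order contributions of the nonlinear flux and pressure, and because $\rho\ge 1/R$ keeps $\rho^{-1}$ and $\rho^\gamma$ well-behaved in $H^2$. The constant produced therefore depends on $T$, $R$, $\epsilon$ and on the a~priori $H^2$-bounds of the two solutions over $[0,T]$; that is, it is uniform on bounded subsets of $\mathcal X$, which is all the subsequent construction of the Feller semigroup requires.
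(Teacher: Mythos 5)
Your overall route is the one the paper takes: subtract the two copies of \eqref{approxsystem}, perform $H^{2}$ energy/It\^o estimates on the differences for $j=0,1,2$ spatial derivatives, move the extra derivative created by the flux and pressure onto the difference and absorb it with the $\epsilon$-dissipation via Cauchy's inequality, bound all truncated nonlinear and noise differences Lipschitz-in-$\mathcal{X}$, and close with Gronwall. Your additional Burkholder--Davis--Gundy step to control $\mathbb{E}\sup_{t\le T}$ is a harmless strengthening of the paper's argument, which simply takes expectations pointwise in $t$ so that the stochastic integral vanishes before Gronwall is applied.

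The one step that does not survive scrutiny as written is your ingredient (i): you invoke a priori $H^{2}$-bounds on $q,\tilde q$ from Proposition \ref{existunique} and ``uniform $L^\infty$-control of $\rho$ and $q$'', and you concede at the end that your constant depends on these bounds. No such deterministic pathwise bound on $\|q\|_{H^{2}}$ exists: Proposition \ref{existunique} only gives $q\in L^{2}(\Omega;C(0,\infty;H^{2}(\mathbb{T})))$, so a ``constant'' depending on $\sup_{[0,T]}\|q\|_{H^{2}}$ is a random variable, your Lipschitz estimates ``on bounded sets'' are invoked in a regime where boundedness of $q$ is not available, and the resulting Gronwall constant would not have the claimed form $C(T,R,\epsilon)$. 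The repair is exactly what the paper's difference lemmas implement: every occurrence of $q$ in the flux, pressure, and noise terms carries the truncation $\chi_{R}(\rho,q)$, and writing $\chi_{R}=(\sqrt{\chi_{R}})^{2}$ (the paper assumes $\sqrt{\chi_{R}}$ smooth precisely for this purpose) one pairs each factor $\partial_{x}^{i}q$ with $\sqrt{\chi_{R}}$, so that $\bigl\|\sqrt{\chi_{R}(\rho,q)}\,q\bigr\|_{H^{2}(\mathbb{T})}\le R+1$ and all constants in Lemma \ref{diffest}, Lemma \ref{nonlineardiff}, and Lemma \ref{noisedifflemma} depend only on $R$, $\epsilon$, the deterministic density bound from Lemma \ref{rhoH2}, and the lower bound $\rho,\tilde\rho\ge 1/R$ from Proposition \ref{rholowerbound}; no bound on $\|q\|_{H^{2}}$ is ever needed. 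With the $q$-dependence routed through the truncation in this way, the rest of your estimate closes and yields the stated bound.
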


This will be accomplished via a priori estimates, on all spatial derivatives up to the second spatial derivative. Before doing the a priori estimates, we make the following observations, which will be useful for estimating the difference between two solutions to \eqref{approxsystem}.

\begin{lemma}\label{diffest}
For $(\rho, q)$ and $(\tilde{\rho}, \tilde{q})$ in $H^{2}(\mathbb{T}) \times H^{2}(\mathbb{T})$ with $\rho, \tilde{\rho} \ge 1/R$ and $\max\Big(\|\rho\|_{H^{2}(\mathbb{T})}, \|\tilde{\rho}\|_{H^{2}(\mathbb{T})}\Big) \le M$ for some constant $M > 0$, the following estimates hold:
\begin{align}
\Big|\chi_{R}\Big(\|\rho^{-1}\|_{L^{\infty}(\mathbb{T})}\Big) - \chi_{R}\Big(\|\tilde{\rho}^{-1}\|_{L^{\infty}(\mathbb{T})}\Big)\Big| &\le C_{R}\|\rho - \tilde{\rho}\|_{H^{2}(\mathbb{T})}, \label{diff1} \\
\Big|\chi_{R}\Big(\|q\|_{H^{2}(\mathbb{T})}\Big) - \chi_{R}\Big(\|\tilde{q}\|_{H^{2}(\mathbb{T})}\Big)\Big| &\le C_{R} \|q - \tilde{q}\|_{H^{2}(\mathbb{T})}, \label{diff2} \\
\Big|\chi_{R}(\rho, q) - \chi_{R}(\tilde{\rho}, \tilde{q})\Big| &\le C_{R}\Big(\|\rho - \tilde{\rho}\|_{H^{2}(\mathbb{T})} + \|q - \tilde{q}\|_{H^{2}(\mathbb{T})}\Big), \label{diff3} \\
\Big\|\chi_{R}\Big(\|q\|_{H^{2}(\mathbb{T})}\Big) q - \chi_{R}\Big(\|\tilde{q}\|_{H^{2}(\mathbb{T})}\Big)\tilde{q}\Big\|_{H^{2}(\mathbb{T})} &\le C_{R}\|q - \tilde{q}\|_{H^{2}(\mathbb{T})}, \label{diff4} \\
\Big\|\chi_{R}(\rho, q) q - \chi_{R}(\tilde{\rho}, \tilde{q}) \tilde{q}\Big\|_{H^{2}(\mathbb{T})} &\le C_{R}\Big(\|\rho - \tilde{\rho}\|_{H^{2}(\mathbb{T})} + \|q - \tilde{q}\|_{H^{2}(\mathbb{T})}\Big), \label{diff5} \\
\Big\|\chi_{R}(\rho, q) \rho - \chi_{R}(\tilde{\rho}, \tilde{q}) \tilde{\rho}\Big\|_{H^{2}(\mathbb{T})} &\le C_{R, M}\Big(\|\rho - \tilde{\rho}\|_{H^{2}(\mathbb{T})} + \|q - \tilde{q}\|_{H^{2}(\mathbb{T})}\Big), \label{diff5b} \\
\label{diff6} \|\rho^{-\alpha} - \tilde{\rho}^{-\alpha}\|_{L^{\infty}(\mathbb{T})} &\le C_{R, M, \alpha}\|\rho - \tilde{\rho}\|_{H^{2}(\mathbb{T})} \qquad \text{ for } \alpha \ge 1.
\end{align}
Furthermore, for $i, j = 0, 1, 2$:
\begin{equation}\label{diffquad}
\begin{cases}
\Big\|\chi_{R}(\rho, q) \partial_{x}^{i}q \partial_{x}^{j}q - \chi_{R}(\tilde{\rho}, \tilde{q}) \partial_{x}^{i} \tilde{q} \partial_{x}^{j} \tilde{q}\Big\|_{L^{2}(\mathbb{T})} &\le C_{R}\Big(\|\rho - \tilde{\rho}\|_{H^{2}(\mathbb{T})} + \|q - \tilde{q}\|_{H^{2}(\mathbb{T})}\Big), \\
\Big\|\chi_{R}(\rho, q) \partial_{x}^{i}\rho\partial_{x}^{i}q - \chi_{R}(\tilde{\rho}, \tilde{q})\partial_{x}^{i}\tilde{\rho}\partial_{x}^{i}\tilde{q}\Big\|_{L^{2}(\mathbb{T})} &\le C_{R, M}\Big(\|\rho - \tilde{\rho}\|_{H^{2}(\mathbb{T})} + \|q - \tilde{q}\|_{H^{2}(\mathbb{T})}\Big), \\
\Big\|\chi_{R}(\rho, q) \partial_{x}^{i}\rho \partial_{x}^{j} \rho - \chi_{R}(\tilde{\rho}, \tilde{q}) \partial_{x}^{i}\tilde{\rho} \partial_{x}^{j}\tilde{\rho}\Big\|_{L^{2}(\mathbb{T})} &\le C_{R, M}\Big(\|\rho - \tilde{\rho}\|_{H^{2}(\mathbb{T})} + \|q - \tilde{q}\|_{H^{2}(\mathbb{T})}\Big).
\end{cases}
\end{equation}
\end{lemma}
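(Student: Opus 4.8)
The plan is to reduce every inequality in the statement to two elementary mechanisms. The first is the Lipschitz continuity of the smooth, compactly supported cutoff, $|\chi_R(a)-\chi_R(b)|\le\|\chi_R'\|_{L^\infty}|a-b|$, with $\|\chi_R'\|_{L^\infty}$ depending only on $R$. The second is the observation that each cutoff factor, once nonzero, confines its argument to a bounded set --- so $\chi_R(\|q\|_{H^2})\ne 0$ forces $\|q\|_{H^2}\le R$, and more generally $\chi_R(s)\,s\le R$ for every $s\ge 0$. One should also keep in mind that $\chi_R(\rho,q)=\chi_R(\|\rho^{-1}\|_{L^\infty})\,\chi_R(\|q\|_{H^2})$ is a \emph{scalar} (constant in $x$), so it can be pulled out of any spatial norm, and that in one dimension $H^2(\mathbb T)\hookrightarrow W^{1,\infty}(\mathbb T)$ and $H^2(\mathbb T)$ is a Banach algebra; these are used throughout as routine facts.

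The scalar estimates are quick. For \eqref{diff1}, the map $\rho\mapsto\|\rho^{-1}\|_{L^\infty}$ is Lipschitz on $\{\rho\ge 1/R\}$ because $|\rho^{-1}-\tilde\rho^{-1}|=|\rho-\tilde\rho|/(\rho\tilde\rho)\le R^2|\rho-\tilde\rho|$, hence $\big|\,\|\rho^{-1}\|_{L^\infty}-\|\tilde\rho^{-1}\|_{L^\infty}\,\big|\le R^2\|\rho-\tilde\rho\|_{L^\infty}\le CR^2\|\rho-\tilde\rho\|_{H^2}$; composing with $\chi_R$ gives the bound. Estimate \eqref{diff2} is the reverse triangle inequality composed with $\chi_R$, \eqref{diff3} follows from $|ab-\tilde a\tilde b|\le|a|\,|b-\tilde b|+|\tilde b|\,|a-\tilde a|$ with both factors bounded by $1$ together with \eqref{diff1}--\eqref{diff2}, and \eqref{diff6} is the mean value theorem, $|\rho^{-\alpha}-\tilde\rho^{-\alpha}|\le\alpha\,\xi^{-\alpha-1}|\rho-\tilde\rho|\le\alpha R^{\alpha+1}|\rho-\tilde\rho|$ for $\xi$ between $\rho$ and $\tilde\rho$, upgraded to $H^2$ on the left by Sobolev embedding.

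For the functional estimates \eqref{diff4}, \eqref{diff5}, \eqref{diff5b} and the bilinear ones \eqref{diffquad}, the idea is to peel the cutoff off by telescoping: writing $P$ for the product in question ($q$, $\rho$, or $\partial_x^i q\,\partial_x^j q$ and its analogues),
\begin{equation*}
\chi_R(\rho,q)\,P(\rho,q)-\chi_R(\tilde\rho,\tilde q)\,P(\tilde\rho,\tilde q)=\chi_R(\rho,q)\big(P(\rho,q)-P(\tilde\rho,\tilde q)\big)+\big(\chi_R(\rho,q)-\chi_R(\tilde\rho,\tilde q)\big)P(\tilde\rho,\tilde q),
\end{equation*}
and likewise when only the scalar $\chi_R(\|q\|_{H^2})$ appears. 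The first term is controlled with $0\le\chi_R\le 1$ and the difference estimate for $P$ itself, which is standard once the bounded-region constraint is invoked: for the $q$-products one uses $\|q\|_{H^2}\le R$ on the support and, crucially, $\|\chi_R(\|q\|_{H^2})\,q\|_{H^2}\le R$ from $\chi_R(s)\,s\le R$, so no bound on $\|q\|_{H^2}$ is needed and the estimate stays $M$-free (this is \eqref{diff4}, \eqref{diff5}, first line of \eqref{diffquad}); for the $\rho$-products one must instead use $\|\rho\|_{H^2},\|\tilde\rho\|_{H^2}\le M$, which is where the constant $C_{R,M}$ enters (\eqref{diff5b}, last two lines of \eqref{diffquad}). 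The second term is where the only real subtlety lies: the surviving factor $P(\tilde\rho,\tilde q)$ --- e.g. $\|\tilde q\|_{H^2}$ in \eqref{diff4} --- is a priori unbounded, so one argues by cases on whether the cutoffs vanish. If $\chi_R(\tilde\rho,\tilde q)=0$ the whole difference is just $\chi_R(\rho,q)\,P(\rho,q)$, and one rewrites the lone scalar $\chi_R(\rho,q)=|\chi_R(\rho,q)-\chi_R(\tilde\rho,\tilde q)|\le C_R(\|\rho-\tilde\rho\|_{H^2}+\|q-\tilde q\|_{H^2})$ by \eqref{diff3} while bounding $P(\rho,q)$ by the support constraints; if both cutoffs are nonzero, both Sobolev norms are $\le R$ and ordinary telescoping closes the bound. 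The compact support of $\chi_R$ (and the smoothness of $\sqrt{\chi_R}$ built into its choice, used to make the decay of $\chi_R$ near the edge of its support quantitative) is exactly what renders these borderline contributions linear in $\|\rho-\tilde\rho\|_{H^2}+\|q-\tilde q\|_{H^2}$.

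The main obstacle is precisely this last piece of bookkeeping: tracking which of the several norm-valued cutoff factors are active, and rescuing the terms in which an unbounded Sobolev norm of $\tilde q$ or $\tilde\rho$ sits next to a difference of cutoffs. Everything else --- the Leibniz expansion of $\partial_x^i(q-\tilde q)\,\partial_x^j q$-type products with the lower-order factor placed in $L^\infty$ via $H^2(\mathbb T)\hookrightarrow W^{1,\infty}(\mathbb T)$, and the elementary scalar Lipschitz bounds --- is routine one-dimensional calculus and produces only the stated constants depending on $R$, $M$, and $\alpha$.
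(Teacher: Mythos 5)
Your proof is correct and rests on the same two mechanisms as the paper's: the Lipschitz bound on $\chi_{R}$ combined with $\rho,\tilde{\rho}\ge 1/R$, the mean value theorem, and $H^{2}(\mathbb{T})\hookrightarrow W^{1,\infty}(\mathbb{T})$ for the scalar estimates \eqref{diff1}--\eqref{diff3} and \eqref{diff6}, and telescoping plus the support constraint on $\chi_{R}$ (equivalently $\chi_{R}(s)\,s\le R$) for the rest, with $M$ entering exactly where $\rho$-factors must be bounded in $H^{2}$. The arrangement differs in two minor ways. For \eqref{diff4}--\eqref{diff5b} the paper first disposes of the trivial case $\|q\|_{H^{2}},\|\tilde{q}\|_{H^{2}}\ge R+1$ and then telescopes so that the factor multiplying the cutoff difference is the one known to satisfy $\|q\|_{H^{2}}<R+1$, whereas you telescope the other way and rescue the $\tilde{q}$- (or $\tilde{\rho}$-) factor by splitting on whether $\chi_{R}(\tilde{\rho},\tilde{q})$ vanishes; these are the same trick in different clothing. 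Note that your two cases ``$\chi_{R}(\tilde{\rho},\tilde{q})=0$'' and ``both cutoffs nonzero'' are not exhaustive, but the omitted case $\chi_{R}(\tilde{\rho},\tilde{q})\ne 0$, $\chi_{R}(\rho,q)=0$ is covered verbatim by your ordinary telescoping (only boundedness of the $(\tilde{\rho},\tilde{q})$-factor is needed there), so this is cosmetic. For the quadratic estimates \eqref{diffquad} the paper instead factors $\chi_{R}=\sqrt{\chi_{R}}\cdot\sqrt{\chi_{R}}$ and reduces to the \eqref{diff5}-type estimate with $\sqrt{\chi_{R}}$ in place of $\chi_{R}$ --- this is precisely where the standing assumption that $\sqrt{\chi_{R}}$ is smooth is used --- whereas your direct telescoping with the same case split avoids invoking $\sqrt{\chi_{R}}$ at all, a small simplification; your aside that the smoothness of $\sqrt{\chi_{R}}$ serves to quantify the decay of $\chi_{R}$ near the edge of its support is not how the paper uses it and is not needed in your argument either.
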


\begin{proof}
We prove the series of inequalities as follows.
\medskip
\newline
\noindent \textbf{Proof of truncation estimates \eqref{diff1}--\eqref{diff3}.} Note that since $\chi_{R}$ is a smooth function satisfying $0 \le \chi_{R} \le 1$ and $\sup_{z \ge 0} |\chi_{R}'(z)| \le C_{R}$ for a constant $C_{R}$ depending only on $R$, we can estimate:
\begin{align*}
\Big|\chi_{R}\Big(\|\rho^{-1}\|_{L^{\infty}(\mathbb{T})}\Big) - \chi_{R}\Big(\|\tilde{\rho}^{-1}\|_{L^{\infty}(\mathbb{T})}\Big)\Big| &\le C_{R}\Big|\|\rho^{-1}\|_{L^{\infty}(\mathbb{T})} - \|\tilde{\rho}^{-1}\|_{L^{\infty}(\mathbb{T})}\Big| \\
&\le C_{R}\|\rho^{-1} - \tilde{\rho}^{-1}\|_{L^{\infty}(\mathbb{T})} \le C_{R} \left\|\frac{\rho - \tilde{\rho}}{\rho\tilde{\rho}}\right\|_{L^{\infty}(\mathbb{T})} \\
&\le C_{R} \|\rho - \tilde{\rho}\|_{L^{\infty}(\mathbb{T})} \le C_{R} \|\rho - \tilde{\rho}\|_{H^{2}(\mathbb{T})},
\end{align*}
since $\rho, \tilde{\rho} \ge 1/R$ by assumption. Similarly, we estimate that
\begin{equation*}
\Big|\chi_{R}\Big(\|q\|_{H^{2}(\mathbb{T})}\Big) - \chi_{R} \Big(\|\tilde{q}\|_{H^{2}(\mathbb{T})}\Big)\Big| \le C_{R} \Big|\|q\|_{H^{2}(\mathbb{T})} - \|\tilde{q}\|_{H^{2}(\mathbb{T})}\Big| \le C_{R}\|q - \tilde{q}\|_{H^{2}(\mathbb{T})}. 
\end{equation*}
By combining these two estimates \eqref{diff1} and \eqref{diff2}, and using the fact that $0 \le \chi_{R} \le 1$, we also obtain the estimate \eqref{diff3} via
\begin{multline*}
|\chi_{R}(\rho, q) - \chi_{R}(\tilde{\rho}, \tilde{q})| \le \Big|\chi_{R}\Big(\|\rho^{-1}\|_{L^{\infty}(\mathbb{T})}\Big) - \chi_{R}\Big(\|\tilde{\rho}^{-1}\|_{L^{\infty}(\mathbb{T})}\Big)\Big| \cdot \chi_{R}\Big(\|q\|_{H^{2}(\mathbb{T})}\Big) \\
+ \chi_{R}\Big(\|\tilde{\rho}^{-1}\|_{L^{\infty}(\mathbb{T})}\Big) \Big|\chi_{R}\Big(\|q\|_{H^{2}(\mathbb{T})}\Big) - \chi_{R}\Big(\|\tilde{q}\|_{H^{2}(\mathbb{T})}\Big)\Big|.
\end{multline*}

\medskip

\noindent \textbf{Proof of \eqref{diff4} and \eqref{diff5}.} Next, we prove the fourth inequality \eqref{diff4}. To show this, note that if both $\|q\|_{H^{2}(\mathbb{T})} \ge R + 1$ and $\|\tilde{q}\|_{H^{2}(\mathbb{T})} \ge R + 1$, then the left-hand side of \eqref{diff4} is zero and the inequality is trivial. So suppose that at least one of $q$ and $\tilde{q}$ has $H^{2}(\mathbb{T})$ norm less than $R + 1$, and without loss of generality, let $\|q\|_{H^{2}(\mathbb{T})} < R + 1$. Then, by \eqref{diff2}:
\begin{align*}
\Big\|\chi_{R}\Big(\|q\|_{H^{2}(\mathbb{T})}\Big) q &- \chi_{R}\Big(\|\tilde{q}\|_{H^{2}(\mathbb{T})}\Big)\tilde{q}\Big\|_{H^{2}(\mathbb{T})} \\
&\le \Big|\chi_{R}\Big(\|q\|_{H^{2}(\mathbb{T})}\Big) - \chi_{R}\Big(\|\tilde{q}\|_{H^{2}(\mathbb{T})}\Big)\Big| \cdot \|q\|_{H^{2}(\mathbb{T})} + \chi_{R}\Big(\|\tilde{q}\|_{H^{2}(\mathbb{T})}\Big) \|q - \tilde{q}\|_{H^{2}(\mathbb{T})} \\
&\le C_{R}(R + 1)\|q - \tilde{q}\|_{H^{2}(\mathbb{T})} + \|q - \tilde{q}\|_{H^{2}(\mathbb{T})} \le C_{R}\|q - \tilde{q}\|_{H^{2}(\mathbb{T})}.
\end{align*}
\if 1 = 0
The same argument works for establishing the last inequality, once we note that $1/R \le \rho, \tilde{\rho} \le C_{R}$ pointwise almost surely. So we can estimate:
\begin{equation*}
\|\rho^{-\alpha} - \tilde{\rho}^{-\alpha}\|_{L^{\infty}(\mathbb{T})} = \left\|\frac{\rho^{\alpha} - \tilde{\rho}^{\alpha}}{\rho^{\alpha}\tilde{\rho}^{\alpha}}\right\|_{L^{\infty}(\mathbb{T})} \le R^{2\alpha}\|\rho^{\alpha} - \tilde{\rho}^{\alpha}\|_{L^{\infty}(\mathbb{T})} \le C_{R}\|\rho - \tilde{\rho}\|_{L^{\infty}(\mathbb{T})},
\end{equation*}
by the mean value theorem and the pointwise upper and lower bounds on the density. 
\fi

To show the fifth inequality \eqref{diff5}, we estimate the left-hand side of the fourth inequality as
\begin{equation*}
\Big\|\chi_{R}\Big(\|\rho^{-1}\|_{L^{\infty}(\mathbb{T})}\Big) \chi_{R}\Big(\|q\|_{H^{2}(\mathbb{T})}\Big) q - \chi_{R}\Big(\|\tilde{\rho}^{-1}\|_{L^{\infty}(\mathbb{T})}\Big) \chi_{R}\Big(\|\tilde{q}\|_{H^{2}(\mathbb{T})}\Big) \tilde{q}\Big\|_{H^{2}(\mathbb{T})} \le I_{1} + I_{2},
\end{equation*}
for $I_{1}$ and $I_{2}$ defined by:
\begin{equation*}
I_{1} := \Big|\chi_{R}\Big(\|\rho^{-1}\|_{L^{\infty}(\mathbb{T})}\Big) - \chi_{R} \Big(\|\tilde{\rho}^{-1}\|_{L^{\infty}(\mathbb{T})}\Big) \Big| \cdot \Big\|\chi_{R}\Big(\|q\|_{H^{2}(\mathbb{T})}\Big) q\Big\|_{H^{2}(\mathbb{T})},
\end{equation*}
\begin{equation*}
I_{2} := \chi_{R}\Big(\|\tilde{\rho}^{-1}\|_{L^{\infty}(\mathbb{T})}\Big) \Big\|\chi_{R}\Big(\|q\|_{H^{2}(\mathbb{T})}\Big) q - \chi_{R}\Big(\|\tilde{q}\|_{H^{2}(\mathbb{T})}\Big) \tilde{q}\Big\|_{H^{2}(\mathbb{T})}. 
\end{equation*}
We then immediately obtain the desired inequality \eqref{diff5} from the past results \eqref{diff1} and \eqref{diff4}, the bound $0 \le \chi_{R} \le 1$, and the fact that $\Big\|\chi_{R}\Big(\|q\|_{H^{2}(\mathbb{T})}\Big) q\Big\|_{H^{2}(\mathbb{T})} \le R + 1$. 

\medskip

\noindent \textbf{Proof of \eqref{diff5b}.} We estimate that
\begin{equation*}
\Big\|\chi_{R}\Big(\|\rho^{-1}\|_{L^{\infty}(\mathbb{T})}\Big) \chi_{R}\Big(\|q\|_{H^{2}(\mathbb{T})}\Big)\rho - \chi_{R}\Big(\|\tilde{\rho}^{-1}\|_{L^{\infty}(\mathbb{T})}\Big) \chi_{R}\Big(\|\tilde{q}\|_{H^{2}(\mathbb{T})}\Big)\tilde{\rho}\Big\|_{H^{2}(\mathbb{T})} \le I_{1} + I_{2},
\end{equation*}
where
\begin{equation*}
I_{1} := \Big|\chi_{R}\Big(\|\rho^{-1}\|_{L^{\infty}(\mathbb{T})}\Big) \chi_{R}\Big(\|q\|_{L^{\infty}(\mathbb{T})}\Big) - \chi_{R}\Big(\|\tilde{\rho}^{-1}\|_{L^{\infty}(\mathbb{T})}\Big) \chi_{R}\Big(\|\tilde{q}\|_{H^{2}(\mathbb{T})}\Big)\Big| \cdot \|\rho\|_{H^{2}(\mathbb{T})},
\end{equation*}
\begin{equation*}
I_{2} := \chi_{R}\Big(\|\tilde{\rho}^{-1}\|_{L^{\infty}(\mathbb{T})}\Big) \chi_{R}\Big(\|\tilde{q}\|_{H^{2}(\mathbb{T})}\Big) \cdot \|\rho - \tilde{\rho}\|_{H^{2}(\mathbb{T})}.
\end{equation*}
By the assumption $\|\rho\|_{H^{2}(\mathbb{T})} \le M$ and the estimate \eqref{diff3}:
\begin{equation*}
I_{1} \le C_{R, M} \Big(\|\rho - \tilde{\rho}\|_{H^{2}(\mathbb{T})} + \|q - \tilde{q}\|_{H^{2}(\mathbb{T})}\Big).
\end{equation*}
Since $0 \le \chi_{R} \le 1$, we immediately have $I_{2} \le \|\rho - \tilde{\rho}\|_{H^{2}(\mathbb{T})}$, which establishes the estimate.

\medskip

\noindent \textbf{Proof of density estimates \eqref{diff6}.} We calculate that
\begin{equation*}
\|\rho^{-\alpha} - \tilde{\rho}^{-\alpha}\|_{L^{\infty}(\mathbb{T})} = \left\|\frac{\rho^{\alpha} - \tilde{\rho}^{\alpha}}{\rho^{\alpha}\tilde{\rho}^{\alpha}}\right\|_{L^{\infty}(\mathbb{T})} \le R^{2\alpha} \|\rho^{\alpha} - \tilde{\rho}^{\alpha}\|_{L^{\infty}(\mathbb{T})} \le C_{R, M, \alpha} \|\rho - \tilde{\rho}\|_{L^{\infty}(\mathbb{T})} \le C_{R, M, \alpha} \|\rho - \tilde{\rho}\|_{H^{2}(\mathbb{T})},
\end{equation*}
using the almost sure lower bound on $\rho, \tilde{\rho} \ge 1/R$ and the almost sure upper bound $\rho, \tilde{\rho} \le C_{M}$ from the assumption $\max\Big(\|\rho\|_{H^{2}(\mathbb{T})}, \|\tilde{\rho}\|_{H^{2}(\mathbb{T})}\Big) \le M$.

\medskip

\noindent \textbf{Proof of quadratic estimates \eqref{diffquad}.} To show the last estimate, note that 
\begin{align*}
\Big\|\chi_{R}(\rho, q) &\partial_{x}^{i}q \partial_{x}^{j}q - \chi_{R}(\tilde{\rho}, \tilde{q}) \partial_{x}^{i} \tilde{q} \partial_{x}^{j} \tilde{q}\Big\|_{L^{2}(\mathbb{T})} \le \Big\|\Big(\sqrt{\chi_{R}(\rho, q)} \partial_{x}^{i}q - \sqrt{\chi_{R}(\tilde{\rho}, \tilde{q})} \partial_{x}^{i}\tilde{q}\Big)\Big\|_{L^{\infty}(\mathbb{T})} \Big\|\sqrt{\chi_{R}(\rho, q)}\partial_{x}^{j}q\Big\|_{L^{2}(\mathbb{T})} \\
&+ \Big\|\sqrt{\chi_{R}(\tilde{\rho}, \tilde{q})} \partial_{x}^{i}\tilde{q}\Big\|_{L^{\infty}(\mathbb{T})} \Big\|\sqrt{\chi_{R}(\rho, q)}\partial_{x}^{j}q - \sqrt{\chi_{R}(\tilde{\rho}, \tilde{q})} \partial_{x}^{j}\tilde{q}\Big\|_{L^{2}(\mathbb{T})} \\
&\le \Big\|\Big(\sqrt{\chi_{R}(\rho, q)} q - \sqrt{\chi_{R}(\tilde{\rho}, \tilde{q})} \tilde{q}\Big)\Big\|_{H^{2}(\mathbb{T})} \cdot \Big(\Big\|\sqrt{\chi_{R}(\rho, q)}q\Big\|_{H^{2}(\mathbb{T})} + \Big\|\sqrt{\chi_{R}(\tilde{\rho}, \tilde{q})} \tilde{q}\Big\|_{H^{2}(\mathbb{T})}\Big) \\
&\le C_{R}\Big(\|\rho - \tilde{\rho}\|_{H^{2}(\mathbb{T})} + \|q - \tilde{q}\|_{H^{2}(\mathbb{T})}\Big).
\end{align*}
Here, we use the fifth estimate \eqref{diff5} (which still holds in this case since $\sqrt{\chi_{R}}$ is also smooth and hence the proof of the estimate \eqref{diff5} remains unchanged in this case), and the support properties of $\chi_{R}$ and hence $\sqrt{\chi_{R}}$ to obtain this estimate. We can similarly obtain the other estimates in \eqref{diffquad} analogously, using \eqref{diff5} and \eqref{diff6}.
\end{proof}

We then use these inequalities to estimate differences of nonlinear terms in the equations.

\begin{lemma}\label{nonlineardiff}
For nonnegative $\rho, \tilde{\rho} \in H^{2}(\mathbb{T})$ satisfying the pointwise lower bounds $\rho \ge 1/R$, $\tilde{\rho} \ge 1/R$, and $\max\Big(\|\rho\|_{H^{2}(\mathbb{T})}, \|\tilde{\rho}\|_{H^{2}(\mathbb{T})}\Big) \le M$ for some positive constant $M$, we have the following estimates:
\begin{align*}
\|[q]_{R} - [\tilde{q}]_{R}\|_{H^{2}(\mathbb{T})} &\le C_{R}\Big(\|\rho - \tilde{\rho}\|_{H^{2}(\mathbb{T})} + \|q - \tilde{q}\|_{H^{2}(\mathbb{T})}\Big), \\
\left\|\frac{[q]_{R}q}{\rho} - \frac{[\tilde{q}]_{R}\tilde{q}}{\tilde{\rho}}\right\|_{H^{2}(\mathbb{T})} &\le C_{R, M}\Big(\|\rho - \tilde{\rho}\|_{H^{2}(\mathbb{T})} + \|q - \tilde{q}\|_{H^{2}(\mathbb{T})}\Big), \\
\Big\|\chi_{R}(\rho, q) \rho^{\gamma} - \chi_{R}(\tilde{\rho}, \tilde{q})\tilde{\rho}^{\gamma}\Big\|_{H^{2}(\mathbb{T})} &\le C_{R, M}\Big(\|\rho - \tilde{\rho}\|_{H^{2}(\mathbb{T})} + \|q - \tilde{q}\|_{H^{2}(\mathbb{T})}\Big).
\end{align*}
\end{lemma}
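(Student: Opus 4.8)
The plan is to deduce all three bounds from Lemma~\ref{diffest} together with the fact that $H^{2}(\mathbb{T})$ is a Banach algebra in one space dimension (via the embedding $H^{2}(\mathbb{T})\hookrightarrow C^{1}(\mathbb{T})$), which controls products and, since the densities are bounded below by $1/R$, also reciprocals. The first bound is immediate: since $[q]_{R}=\chi_{R}(\rho,q)q$ and $[\tilde q]_{R}=\chi_{R}(\tilde\rho,\tilde q)\tilde q$, the quantity $\|[q]_{R}-[\tilde q]_{R}\|_{H^{2}(\mathbb{T})}$ is literally the left-hand side of \eqref{diff5}, so the estimate holds verbatim.

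For the second bound I would rewrite $\frac{[q]_{R}q}{\rho}=\chi_{R}(\rho,q)\frac{q^{2}}{\rho}$ and run the same case analysis as in the proof of \eqref{diff4}--\eqref{diff5}. If both $\chi_{R}(\rho,q)$ and $\chi_{R}(\tilde\rho,\tilde q)$ vanish the left-hand side is $0$; otherwise, by symmetry, assume $\chi_{R}(\rho,q)\neq0$, which forces $\|q\|_{H^{2}(\mathbb{T})}\le R$. Using this together with $\rho\ge1/R$, $\|\rho\|_{H^{2}(\mathbb{T})}\le M$, the algebra property, and the identities $\partial_{x}(\rho^{-1})=-\rho^{-2}\partial_{x}\rho$ and $\partial_{x}^{2}(\rho^{-1})=2\rho^{-3}(\partial_{x}\rho)^{2}-\rho^{-2}\partial_{x}^{2}\rho$, one obtains $\|q^{2}/\rho\|_{H^{2}(\mathbb{T})}\le C_{R,M}$. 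Then I would telescope
\[
\chi_{R}(\rho,q)\frac{q^{2}}{\rho}-\chi_{R}(\tilde\rho,\tilde q)\frac{\tilde q^{2}}{\tilde\rho}
=\big(\chi_{R}(\rho,q)-\chi_{R}(\tilde\rho,\tilde q)\big)\frac{q^{2}}{\rho}
+\chi_{R}(\tilde\rho,\tilde q)\Big(\frac{q^{2}}{\rho}-\frac{\tilde q^{2}}{\tilde\rho}\Big).
\]
The first term is controlled by \eqref{diff3} for the scalar difference times $\|q^{2}/\rho\|_{H^{2}}\le C_{R,M}$. For the second term, if $\chi_{R}(\tilde\rho,\tilde q)=0$ it vanishes; if $\chi_{R}(\tilde\rho,\tilde q)\neq0$ then also $\|\tilde q\|_{H^{2}(\mathbb{T})}\le R$, and with both $q,\tilde q$ now $H^{2}$-bounded one estimates $\|q^{2}/\rho-\tilde q^{2}/\tilde\rho\|_{H^{2}(\mathbb{T})}\le C_{R,M}\big(\|\rho-\tilde\rho\|_{H^{2}}+\|q-\tilde q\|_{H^{2}}\big)$ via the further splitting $q^{2}/\rho-\tilde q^{2}/\tilde\rho=(q^{2}-\tilde q^{2})/\rho+\tilde q^{2}(\tilde\rho-\rho)/(\rho\tilde\rho)$, using $\|q^{2}-\tilde q^{2}\|_{H^{2}}\le C(\|q\|_{H^{2}}+\|\tilde q\|_{H^{2}})\|q-\tilde q\|_{H^{2}}$ and the $H^{2}$-analogue of \eqref{diff6}, namely $\|\rho^{-1}-\tilde\rho^{-1}\|_{H^{2}}\le C_{R,M}\|\rho-\tilde\rho\|_{H^{2}}$, proved exactly as \eqref{diff6} but in the algebra $H^{2}$.

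The third bound is the same telescoping with $\rho^{\gamma}$ in place of $q^{2}/\rho$. Since $\rho,\tilde\rho\in H^{2}(\mathbb{T})$ with $1/R\le\rho,\tilde\rho$ and $\|\rho\|_{H^{2}},\|\tilde\rho\|_{H^{2}}\le M$, the chain rule together with the lower bound $\rho\ge1/R$ (which keeps $\rho^{\gamma-2}$ bounded even for $1<\gamma<2$) gives $\|\rho^{\gamma}\|_{H^{2}(\mathbb{T})}\le C_{R,M,\gamma}$, and writing $\rho^{\gamma}-\tilde\rho^{\gamma}=(\rho-\tilde\rho)\int_{0}^{1}\gamma\big(\tau\rho+(1-\tau)\tilde\rho\big)^{\gamma-1}\,d\tau$ and differentiating yields $\|\rho^{\gamma}-\tilde\rho^{\gamma}\|_{H^{2}(\mathbb{T})}\le C_{R,M,\gamma}\|\rho-\tilde\rho\|_{H^{2}(\mathbb{T})}$. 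Splitting $\chi_{R}(\rho,q)\rho^{\gamma}-\chi_{R}(\tilde\rho,\tilde q)\tilde\rho^{\gamma}$ as before and applying \eqref{diff3}, $0\le\chi_{R}\le1$, and these two bounds finishes the proof. The one genuine subtlety, and the step I expect to need the most care, is the bookkeeping of the truncation factor $\chi_{R}(\rho,q)$: unlike $\rho$, the momentum $q$ carries \emph{no} a priori $H^{2}$ bound under the hypotheses, so a quantity such as $\|q^{2}/\rho\|_{H^{2}}$ need not even be finite in general. The resolution, exactly as in Lemma~\ref{diffest}, is that $\chi_{R}(\rho,q)$ is either $0$ or automatically supplies the bound $\|q\|_{H^{2}(\mathbb{T})}\le R$; hence the telescopings above must be organized so that whenever a factor $q$ or $\tilde q$ appears multiplied by a nonvanishing truncation, its $H^{2}$-norm is under control. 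Beyond this, everything reduces to routine multiplication and composition estimates in $H^{2}(\mathbb{T})$.
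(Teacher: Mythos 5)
Your proposal is correct, but it is organized differently from the paper's proof, and the comparison is worth recording. The paper never detaches the truncation from the momentum: it differentiates $\chi_{R}(\rho,q)q^{2}/\rho$ and $\chi_{R}(\rho,q)\rho^{\gamma}$ term by term, and estimates each resulting difference in $L^{2}(\mathbb{T})$ by invoking the preparatory quadratic estimates \eqref{diffquad}, together with \eqref{diff5}, \eqref{diff6}, Sobolev embedding, and bounds such as $\|\chi_{R}(\rho,q)q\,\partial_{x}^{2}q\|_{L^{2}(\mathbb{T})}\le C_{R}$ that come directly from the truncation (this is also why $\sqrt{\chi_{R}}$ is kept smooth). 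You instead run the case analysis from the proof of \eqref{diff4} (either the truncation vanishes or it automatically supplies $\|q\|_{H^{2}(\mathbb{T})}\lesssim R$), and then treat $q^{2}/\rho$ and $\rho^{\gamma}$ as whole objects in the Banach algebra $H^{2}(\mathbb{T})$, telescoping with \eqref{diff3} and using an $H^{2}$ version of \eqref{diff6}, $\|\rho^{-1}-\tilde\rho^{-1}\|_{H^{2}(\mathbb{T})}\le C_{R,M}\|\rho-\tilde\rho\|_{H^{2}(\mathbb{T})}$, which indeed follows from $\rho^{-1}-\tilde\rho^{-1}=(\tilde\rho-\rho)\rho^{-1}\tilde\rho^{-1}$, the lower bound $1/R$, and $\|\rho\|_{H^{2}},\|\tilde\rho\|_{H^{2}}\le M$ (note it is not literally \eqref{diff6}, which is only an $L^{\infty}$ statement, so this auxiliary bound does need its own one-line proof, as does $\|\rho^{\gamma}-\tilde\rho^{\gamma}\|_{H^{2}}\le C_{R,M}\|\rho-\tilde\rho\|_{H^{2}}$ via your integral representation, where the lower bound keeps the exponents $\gamma-2$, $\gamma-3$ harmless). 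Your symmetry reduction is legitimate since hypotheses and conclusion are symmetric in $(\rho,q)\leftrightarrow(\tilde\rho,\tilde q)$, and your constants correctly pick up the $M$-dependence through $\|q^{2}/\rho\|_{H^{2}}\le C_{R,M}$. What your route buys is a shorter, more modular argument that avoids the derivative-by-derivative bookkeeping; what the paper's route buys is that the intermediate quadratic estimates \eqref{diffquad} it relies on are reused verbatim for the noise differences in Lemma \ref{noisedifflemma}, so no work is wasted there.
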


\begin{proof}
The first inequality is the fifth inequality \eqref{diff5} proved in Lemma \ref{diffest}. For the second inequality, we have the following estimates, using \eqref{diff6} and \eqref{diffquad} in Lemma \ref{diffest} and $\Big\|\chi_{R}(\rho, q)\partial_{x}^{i}q \partial_{x}^{j}q\Big\|_{L^{\infty}(\mathbb{T})} \le C_{R}$ for $i, j = 0, 1$ by the Sobolev embedding $H^{2}(\mathbb{T}) \subset W^{1, \infty}(\mathbb{T})$:
\begin{equation*}
\left\|\frac{\chi_{R}(\rho, q)q^{2}}{\rho} - \frac{\chi_{R}(\tilde{\rho}, \tilde{q})\tilde{q}^{2}}{\tilde{\rho}}\right\|_{L^{2}(\mathbb{T})} \le C_{R}\Big(\|\rho - \tilde{\rho}\|_{H^{2}(\mathbb{T})} + \|q - \tilde{q}\|_{H^{2}(\mathbb{T})}\Big).
\end{equation*}
We can similarly obtain the same estimate for the following quantities:
\begin{equation*}
\left\|\frac{\chi_{R}(\rho, q) (\partial_{x}q)^{2}}{\rho} - \frac{\chi_{R}(\tilde{\rho}, \tilde{q})(\partial_{x}\tilde{q})^{2}}{\tilde{\rho}}\right\|_{L^{2}(\mathbb{T})}, \qquad \left\|\frac{\chi_{R}(\rho, q) q(\partial_{x}q)}{\rho} - \frac{\chi_{R}(\tilde{\rho}, \tilde{q})\tilde{q} (\partial_{x}\tilde{q})}{\tilde{\rho}}\right\|_{L^{2}(\mathbb{T})},
\end{equation*}
\begin{equation*}
\left\|\frac{\chi_{R}(\rho, q)q\partial_{x}^{2}q}{\rho} - \frac{\chi_{R}(\tilde{\rho}, \tilde{q})\tilde{q}\partial_{x}^{2}\tilde{q}}{\tilde{\rho}}\right\|_{L^{2}(\mathbb{T})},
\end{equation*}
where we use $\Big\|\chi_{R}(\rho, q) q\partial_{x}^{2}q\Big\|_{L^{2}(\mathbb{T})} \le C_{R}$ by the truncation to estimate the last quantity. It remains to estimate two more quantities. For $i = 1, 2$:
\begin{align*}
\Bigg\|\frac{\chi_{R}(\rho, q)q \partial_{x}^{i}\rho}{\rho^{2}} &- \frac{\chi_{R}(\tilde{\rho}, \tilde{q}) \tilde{q} \partial_{x}^{i}\tilde{\rho}}{\tilde{\rho}^{2}}\Bigg\|_{L^{2}(\mathbb{T})} \le \|\rho^{-2}\|_{L^{\infty}(\mathbb{T})} \|\partial_{x}^{i}(\rho - \tilde{\rho})\|_{L^{2}(\mathbb{T})} \|\chi_{R}(\rho, q)q\|_{L^{\infty}(\mathbb{T})} \\
&+ \|\rho^{-2} - \tilde{\rho}^{-2}\|_{L^{\infty}(\mathbb{T})} \|\partial_{x}^{i}\tilde{\rho}\|_{L^{2}(\mathbb{T})} \|\chi_{R}(\rho, q) q\|_{L^{\infty}(\mathbb{T})} + \left\|\frac{\partial_{x}^{i} \tilde{\rho}}{\tilde{\rho}^{2}}\right\|_{L^{2}(\mathbb{T})} \|\chi_{R}(\rho, q) q - \chi_{R}(\tilde{\rho}, \tilde{q}) \tilde{q}\|_{L^{\infty}(\mathbb{T})} \\
&\le C_{R, M}\Big(\|\rho - \tilde{\rho}\|_{H^{2}(\mathbb{T})} + \|q - \tilde{q}\|_{H^{2}(\mathbb{T})}\Big),
\end{align*}
where we use the assumptions $\|\rho\|_{H^{2}(\mathbb{T})} \le M$ and $\rho, \tilde{\rho} \ge 1/R$ with \eqref{diff5} and \eqref{diff6}. Finally, we estimate:
\begin{align*}
&\Bigg\|\frac{\chi_{R}(\rho, q) q (\partial_{x}\rho)^{2}}{\rho^{3}} - \frac{\chi_{R}(\tilde{\rho}, \tilde{q}) \tilde{q}(\partial_{x}\tilde{\rho})^{2}}{\tilde{\rho}^{3}}\Bigg\|_{L^{2}(\mathbb{T})} \le \|\rho^{-3}\|_{L^{\infty}(\mathbb{T})} \|(\partial_{x}\rho)^{2} - (\partial_{x}\tilde{\rho})^{2}\|_{L^{2}(\mathbb{T})} \|\chi_{R}(\rho, q)q\|_{L^{\infty}(\mathbb{T})} \\
&+ \|\rho^{-3} - \tilde{\rho}^{-3}\|_{L^{\infty}(\mathbb{T})} \|(\partial_{x}\tilde{\rho})^{2}\|_{L^{2}(\mathbb{T})} \|\chi_{R}(\rho, q) q\|_{L^{\infty}(\mathbb{T})} + \left\|\frac{(\partial_{x}^{i} \tilde{\rho})^{2}}{\tilde{\rho}^{3}}\right\|_{L^{2}(\mathbb{T})} \|\chi_{R}(\rho, q) q - \chi_{R}(\tilde{\rho}, \tilde{q}) \tilde{q}\|_{L^{\infty}(\mathbb{T})}.
\end{align*}
We estimate, using the assumption that $\|\rho\|_{H^{2}(\mathbb{T})} \le M$ (and similarly with $\tilde{\rho}$) along with Sobolev embedding, that 
\begin{equation*}
\|(\partial_{x}\tilde{\rho})^{2}\|_{L^{2}(\mathbb{T})} \le \|\partial_{x}\tilde{\rho}\|_{L^{4}(\Omega)}^{2} \le C_{R, M}, 
\end{equation*}
\begin{equation*}
\|(\partial_{x}\rho)^{2} - (\partial_{x}\tilde{\rho})^{2}\|_{L^{2}(\mathbb{T})} \le \|\partial_{x}(\rho + \tilde{\rho})\|_{L^{\infty}(\mathbb{T})} \|\partial_{x}(\rho - \tilde{\rho})\|_{L^{2}(\mathbb{T})} \le C_{R, M}\|\rho - \tilde{\rho}\|_{H^{2}(\mathbb{T})}. 
\end{equation*}
Hence, by \eqref{diff5} and \eqref{diff6}:
\begin{equation*}
\Bigg\|\frac{\chi_{R}(\rho, q) q (\partial_{x}\rho)^{2}}{\rho^{3}} - \frac{\chi_{R}(\tilde{\rho}, \tilde{q}) \tilde{q}(\partial_{x}\tilde{\rho})^{2}}{\tilde{\rho}^{3}}\Bigg\|_{L^{2}(\mathbb{T})} \le C_{R, M}\Big(\|\rho - \tilde{\rho}\|_{H^{2}(\mathbb{T})} + \|q - \tilde{q}\|_{H^{2}(\mathbb{T})}\Big).
\end{equation*}

To prove the last inequality, we estimate it as
\begin{align*}
\Big\|\chi_{R}(\rho, q) \rho^{\gamma} - \chi_{R}(\tilde{\rho}, \tilde{q})\tilde{\rho}^{\gamma}\Big\|_{L^{2}(\mathbb{T})} &\le \|(\chi_{R}(\rho, q) - \chi_{R}(\tilde{\rho}, \tilde{q}))\rho^{\gamma}\|_{L^{2}(\mathbb{T})} + \|\chi_{R}(\tilde{\rho}, \tilde{q}) \cdot (\rho^{\gamma} - \tilde{\rho}^{\gamma})\|_{L^{2}(\mathbb{T})} \\
&\le C_{R}\|\chi_{R}(\rho, q) - \chi_{R}(\tilde{\rho}, \tilde{q})\|_{L^{2}(\mathbb{T})} + \|\rho^{\gamma} - \tilde{\rho}^{\gamma}\|_{L^{2}(\mathbb{T})} \\
&\le C_{R}\Big(\|\rho - \tilde{\rho}\|_{H^{2}(\mathbb{T})} + \|q - \tilde{q}\|_{H^{2}(\mathbb{T})}\Big),
\end{align*}
where we use \eqref{diff3} and the assumption that $1/R \le \rho \le C_{M}$ combined with the mean value theorem. For the first derivative, we estimate:
\begin{align*}
\Big\|\chi_{R}(\rho, q) &\rho^{\gamma - 1} \partial_{x}\rho - \chi_{R}(\tilde{\rho}, \tilde{q}) \tilde{\rho}^{\gamma - 1} \partial_{x}\tilde{\rho}\Big\|_{L^{2}(\mathbb{T})} \le |\chi_{R}(\rho, q) - \chi_{R}(\tilde{\rho}, \tilde{q})| \cdot \|\rho^{\gamma - 1}\|_{L^{\infty}(\mathbb{T})} \|\partial_{x}\rho\|_{L^{2}(\mathbb{T})} \\
&+ \chi_{R}(\tilde{\rho}, \tilde{q}) \|\rho^{\gamma - 1} - \tilde{\rho}^{\gamma - 1}\|_{L^{\infty}(\mathbb{T})} \|\partial_{x}\rho\|_{L^{2}(\mathbb{T})} + \|\chi_{R}(\tilde{\rho}, \tilde{q}) \tilde{\rho}^{\gamma - 1}\|_{L^{\infty}(\mathbb{T})} \|\partial_{x}(\rho - \tilde{\rho})\|_{L^{2}(\mathbb{T})}.
\end{align*}
We use \eqref{diff3}, the assumption that $1/R \le \rho \le C_{M}$, the mean value theorem, and the assumption that $\|\rho\|_{H^{2}(\mathbb{T})} \le M$ to conclude that
\begin{equation*}
\Big\|\chi_{R}(\rho, q) \rho^{\gamma - 1} \partial_{x}\rho - \chi_{R}(\tilde{\rho}, \tilde{q}) \tilde{\rho}^{\gamma - 1} \partial_{x}\tilde{\rho}\Big\|_{L^{2}(\mathbb{T})} \le C_{R, M}\Big(\|\rho - \tilde{\rho}\|_{H^{2}(\mathbb{T})} + \|q - \tilde{q}\|_{H^{2}(\mathbb{T})}\Big).
\end{equation*}
Finally, for the second derivative, we estimate two quantities. We first estimate:
\begin{align*}
\Big\|\chi_{R}(\rho, q) &\rho^{\gamma - 1} \partial_{x}^{2}\rho - \chi_{R}(\tilde{\rho}, \tilde{q}) \tilde{\rho}^{\gamma - 1} \partial_{x}^{2}\tilde{\rho}\Big\|_{L^{2}(\mathbb{T})} \le |\chi_{R}(\rho, q) - \chi_{R}(\tilde{\rho}, \tilde{q})| \cdot \|\rho^{\gamma - 1}\|_{L^{\infty}(\mathbb{T})} \|\partial_{x}^{2}\rho\|_{L^{2}(\mathbb{T})} \\
&+ \chi_{R}(\tilde{\rho}, \tilde{q}) \|\rho^{\gamma - 1} - \tilde{\rho}^{\gamma - 1}\|_{L^{\infty}(\mathbb{T})} \|\partial_{x}^{2}\rho\|_{L^{2}(\mathbb{T})} + \|\chi_{R}(\tilde{\rho}, \tilde{q}) \tilde{\rho}^{\gamma - 1}\|_{L^{\infty}(\mathbb{T})} \|\partial_{x}^{2}(\rho - \tilde{\rho})\|_{L^{2}(\mathbb{T})} \\
&\le C_{R, M} \Big(\|\rho - \tilde{\rho}\|_{H^{2}(\mathbb{T})} + \|q - \tilde{q}\|_{H^{2}(\mathbb{T})}\Big),
\end{align*}
similarly to the first derivative computation. We also estimate:
\begin{align*}
\Big\|\chi_{R}(\rho, q) &\rho^{\gamma - 2} (\partial_{x}\rho)^{2} - \chi_{R}(\tilde{\rho}, \tilde{q}) \tilde{\rho}^{\gamma - 2} (\partial_{x}\tilde{\rho})^{2}\Big\|_{L^{2}(\mathbb{T})} \le |\chi_{R}(\rho, q) - \chi_{R}(\tilde{\rho}, \tilde{q})| \cdot \|\rho^{\gamma - 2}\|_{L^{\infty}(\mathbb{T})} \|(\partial_{x}\rho)^{2}\|_{L^{2}(\mathbb{T})} \\
&+ \chi_{R}(\tilde{\rho}, \tilde{q}) \|\rho^{\gamma - 2} - \tilde{\rho}^{\gamma - 2}\|_{L^{\infty}(\mathbb{T})} \|(\partial_{x}\rho)^{2}\|_{L^{2}(\mathbb{T})} + \|\chi_{R}(\tilde{\rho}, \tilde{q}) \tilde{\rho}^{\gamma - 2}\|_{L^{\infty}(\mathbb{T})} \|(\partial_{x}\rho)^{2} - (\partial_{x}\tilde{\rho})^{2}\|_{L^{2}(\mathbb{T})} \\
&\le C_{R, M} \Big(\|\rho - \tilde{\rho}\|_{H^{2}(\mathbb{T})} + \|q - \tilde{q}\|_{H^{2}(\mathbb{T})}\Big).
\end{align*}
Here, we estimate using $\|\rho\|_{H^{2}(\mathbb{T})} \le M$ and $\|\tilde{\rho}\|_{H^{2}(\mathbb{T})} \le M$, along with Sobolev embedding, that 
\begin{equation*}
\|(\partial_{x}\rho)^{2}\|_{L^{2}(\mathbb{T})} \le \|\partial_{x}\rho\|_{L^{4}(\mathbb{T})}^{2} \le \|\rho\|_{H^{2}(\mathbb{T})}^{2} \le C_{M},
\end{equation*}
\begin{equation*}
\|(\partial_{x}\rho)^{2} - (\partial_{x}\tilde{\rho})^{2}\|_{L^{2}(\mathbb{T})} \le \|\partial_{x}\rho + \partial_{x}\tilde{\rho}\|_{L^{\infty}(\mathbb{T})} \cdot \|\partial_{x}(\rho - \tilde{\rho})\|_{L^{2}(\mathbb{T})} \le C_{R, M} \|\rho - \tilde{\rho}\|_{H^{2}(\mathbb{T})}.
\end{equation*}
This completes the proof of the last inequality.
\end{proof}

We also use the estimates in Lemma \ref{diffest} to prove estimates on the difference of the terms arising from the stochastic noise.

\begin{lemma}\label{noisedifflemma}
    Suppose that $\rho, \tilde{\rho} \in H^{2}(\mathbb{T})$ with $\rho \ge 1/R$, $\tilde{\rho} \ge 1/R$, and $\max\Big(\|\rho\|_{H^{2}(\mathbb{T})}, \|\tilde{\rho}\|_{H^{2}(\mathbb{T})}\Big) \le M$ for some positive constants $R, M > 0$. We then have the following estimates for $i = 0, 1, 2$, for a constant $C_{R, M, \epsilon}$ that depends only on $R$, $M$, and $\epsilon$ (and independent of $k$):
    \begin{multline}\label{noisediffk}
    \Bigg\|\chi_{R}(\rho,q) \partial_{x}^{i}\Big[G_{k}^{R, \epsilon}(\rho, q)\Big] - \chi_{R}(\tilde\rho,\tilde q)\partial_{x}^{i}\Big[G_{k}^{R, \epsilon}(\tilde{\rho}, \tilde{q})\Big]\Bigg\|_{L^{2}(\mathbb{T})} 
    \le C_{R, M, \epsilon} \alpha_{k} \Big(\|\rho - \tilde{\rho}\|_{H^{2}(\mathbb{T})} + \|q - \tilde{q}\|_{H^{2}(\mathbb{T})}\Big),
    \end{multline}
    for any positive integer $k$, where $\alpha_{k}$ is defined in \eqref{noiseassumption}. Hence, for $A_{0}$ defined in \eqref{A0}, we have that
    \begin{equation}\label{noisediffA0}
    \sum_{k = 1}^{\infty} \Bigg\|\chi_{R}(\rho, q) \partial_{x}^{i}\Big[G^{R, \epsilon}_{k}(\rho, q)\Big] - \chi_{R}(\tilde{\rho}, \tilde{q})\partial_{x}^{i}\Big[G^{R, \epsilon}_{k}(\tilde{\rho}, \tilde{q}\Big]\Bigg\|_{L^{2}(\mathbb{T})}^{2} \le C_{R, M, \epsilon} A_{0}\Big(\|\rho - \tilde{\rho}\|_{H^{2}(\mathbb{T})} + \|q - \tilde{q}\|_{H^{2}(\mathbb{T})}\Big).
    \end{equation}
\end{lemma}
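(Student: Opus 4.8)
The plan is to reduce to the case where both $q$ and $\tilde q$ have $H^{2}(\mathbb{T})$-norm at most $R$, prove the $k$-by-$k$ estimate \eqref{noisediffk} by a chain-rule computation combined with the uniform bounds \eqref{noiseRepsbound}--\eqref{alphakeps} on the regularized noise coefficients, and then deduce \eqref{noisediffA0} by squaring and summing in $k$.

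First I would exploit the truncation. Since $\chi_{R}(s)=0$ for $s>R$, the factor $\chi_{R}(\rho,q)$ vanishes as soon as $\|q\|_{H^{2}(\mathbb{T})}>R$, and likewise for $\chi_{R}(\tilde\rho,\tilde q)$. Hence if both these norms exceed $R$ the left-hand side of \eqref{noisediffk} is zero; otherwise, without loss of generality $\|q\|_{H^{2}(\mathbb{T})}\le R$, and if $\|\tilde q\|_{H^{2}(\mathbb{T})}>R$ then the left-hand side equals $\chi_{R}(\rho,q)\,\partial_{x}^{i}[G_{k}^{R,\epsilon}(\rho,q)]$, which I would bound using $|\chi_{R}(\rho,q)|=|\chi_{R}(\rho,q)-\chi_{R}(\tilde\rho,\tilde q)|\le C_{R}(\|\rho-\tilde\rho\|_{H^{2}(\mathbb{T})}+\|q-\tilde q\|_{H^{2}(\mathbb{T})})$ from \eqref{diff3}, times $\|\partial_{x}^{i}[G_{k}^{R,\epsilon}(\rho,q)]\|_{L^{2}(\mathbb{T})}$. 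This leaves the main case $\|q\|_{H^{2}(\mathbb{T})},\|\tilde q\|_{H^{2}(\mathbb{T})}\le R$. A preliminary ingredient, used both in the reduction and in the main case, is the bound $\|\partial_{x}^{i}[G_{k}^{R,\epsilon}(\rho,q)]\|_{L^{2}(\mathbb{T})}\le C_{R,M,\epsilon}\alpha_{k}$ for $i=0,1,2$, valid whenever $\rho\ge 1/R$, $\|\rho\|_{H^{2}(\mathbb{T})}\le M$, and $\|q\|_{H^{2}(\mathbb{T})}\le R$: it follows by expanding $\partial_{x}^{i}$ of the composition $x\mapsto G_{k}^{R,\epsilon}(x,\rho(x),q(x))$ by the chain rule, using that all derivatives of $G_{k}^{R,\epsilon}$ of bounded order — including those in the explicit $x$-variable, since $G_{k}^{R,\epsilon}$ is a full mollification of the compactly supported $G_{k}^{\epsilon}$, see \eqref{GRepsconv} — are $\le C_{R,\epsilon}\alpha_{k}$ as in \eqref{noiseRepsbound}, while $\partial_{x}\rho,\partial_{x}q$ are bounded in $L^{\infty}(\mathbb{T})$ by $CM$ and $CR$ respectively via $H^{2}(\mathbb{T})\hookrightarrow W^{1,\infty}(\mathbb{T})$, and $\partial_{x}^{2}\rho,\partial_{x}^{2}q$ are bounded in $L^{2}(\mathbb{T})$ by $M$ and $R$.

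For \eqref{noisediffk} in the main case I would write
\begin{align*}
\chi_{R}(\rho,q)\partial_{x}^{i}[G_{k}^{R,\epsilon}(\rho,q)]-\chi_{R}(\tilde\rho,\tilde q)\partial_{x}^{i}[G_{k}^{R,\epsilon}(\tilde\rho,\tilde q)]
&=\big(\chi_{R}(\rho,q)-\chi_{R}(\tilde\rho,\tilde q)\big)\partial_{x}^{i}[G_{k}^{R,\epsilon}(\rho,q)]\\
&\quad+\chi_{R}(\tilde\rho,\tilde q)\big(\partial_{x}^{i}[G_{k}^{R,\epsilon}(\rho,q)]-\partial_{x}^{i}[G_{k}^{R,\epsilon}(\tilde\rho,\tilde q)]\big),
\end{align*}
estimating the first term by \eqref{diff3} and the preliminary bound, and for the second using $\chi_{R}(\tilde\rho,\tilde q)\le 1$ together with a Lipschitz estimate for $(\rho,q)\mapsto\partial_{x}^{i}[G_{k}^{R,\epsilon}(\rho,q)]$ into $L^{2}(\mathbb{T})$ with constant $C_{R,M,\epsilon}\alpha_{k}$. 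Expanding $\partial_{x}^{i}$ of both compositions by the chain rule and subtracting, each resulting difference is handled by adding and subtracting: either it is a difference of $\rho$/$q$-derivative factors against a pointwise-bounded $G_{k}^{R,\epsilon}$-coefficient (e.g.\ $\partial_{x}(\rho-\tilde\rho)$ in $L^{2}(\mathbb{T})$, or $(\partial_{x}\rho)^{2}-(\partial_{x}\tilde\rho)^{2}=(\partial_{x}\rho+\partial_{x}\tilde\rho)\,\partial_{x}(\rho-\tilde\rho)$), or a difference of $G_{k}^{R,\epsilon}$-coefficients $(\nabla_{\rho,q}^{m}\partial_{x}^{l}G_{k}^{R,\epsilon})(x,\rho,q)-(\nabla_{\rho,q}^{m}\partial_{x}^{l}G_{k}^{R,\epsilon})(x,\tilde\rho,\tilde q)$, which by the mean value theorem and the next-order bound in \eqref{noiseRepsbound} is $\le C_{R,\epsilon}\alpha_{k}(|\rho-\tilde\rho|+|q-\tilde q|)$ pointwise, against an $L^{2}(\mathbb{T})$-bounded $\rho$/$q$-derivative factor; in every case the pointwise quantity $|\rho-\tilde\rho|+|q-\tilde q|$ is controlled in $L^{\infty}(\mathbb{T})$ by $C(\|\rho-\tilde\rho\|_{H^{2}(\mathbb{T})}+\|q-\tilde q\|_{H^{2}(\mathbb{T})})$. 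Collecting terms gives \eqref{noisediffk} with a constant linear in $\alpha_{k}$, hence $k$-independent, and then squaring and summing over $k\ge 1$ with $\sum_{k}\alpha_{k}^{2}=A_{0}$ from \eqref{A0} yields \eqref{noisediffA0}.

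I expect the main obstacle to be the $i=2$ bookkeeping: one must expand the second-order chain rule fully and, for each of the many resulting terms, check after the add-and-subtract manipulation that it carries a factor $\alpha_{k}$ and a norm bounded in terms of $R,M,\epsilon$ alone. The conceptual point — and the reason no a priori $H^{2}(\mathbb{T})$-bound on $q$ is assumed in the hypotheses — is that $\|q\|_{H^{2}(\mathbb{T})}$ is under control precisely on the support of $\chi_{R}(\rho,q)$, which is exactly where the terms being estimated are nonzero.
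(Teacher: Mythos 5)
Your proposal is correct, and it reaches \eqref{noisediffk}--\eqref{noisediffA0} by the same basic mechanism as the paper (chain rule, add-and-subtract, the derivative bounds \eqref{noiseRepsbound}--\eqref{alphakeps}, then square and sum against $\sum_k\alpha_k^2=A_0$), but with a different bookkeeping of the truncation. The paper never performs your case split: it keeps the factor $\chi_{R}$ glued to the derivative factors $\partial_x^{i}(\rho,q)$ and invokes the ready-made difference estimates for truncated products from Lemma \ref{diffest} --- \eqref{diff5}, \eqref{diff5b} for the terms $\nabla_{\rho,q}G_k^{R,\epsilon}\cdot\partial_x^2(\rho,q)$ and \eqref{diffquad} for the quadratic terms $\langle\nabla^2_{\rho,q}G_k^{R,\epsilon}\,\partial_x(\rho,q),\partial_x(\rho,q)\rangle$ --- together with the Lipschitz bounds on $\nabla_{\rho,q}G_k^{R,\epsilon}$, $\nabla^2_{\rho,q}G_k^{R,\epsilon}$ and the bound $\|\chi_R(\tilde\rho,\tilde q)\partial_x^2(\tilde\rho,\tilde q)\|_{L^2}\le C_{R,M}$ for the coefficient-difference pieces. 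Your route instead reduces, via the support of $\chi_R$ and \eqref{diff3}, to the regime $\|q\|_{H^2},\|\tilde q\|_{H^2}\le R$, after which only $\chi_R\le 1$ is needed and the whole composition $(\rho,q)\mapsto\partial_x^{i}[G_k^{R,\epsilon}(\rho,q)]$ is estimated directly as a Lipschitz map into $L^2(\mathbb{T})$. What the paper's organization buys is reuse of Lemma \ref{diffest} with no case analysis; what yours buys is independence from \eqref{diff5}, \eqref{diff5b}, \eqref{diffquad}, at the cost of the preliminary bound $\|\partial_x^{i}G_k^{R,\epsilon}(\rho,q)\|_{L^2}\le C_{R,M,\epsilon}\alpha_k$ and the $i=2$ bookkeeping you flag. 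One small point in your favor: you treat the explicit $x$-dependence of $G_k^{R,\epsilon}(x,\rho,q)$ in the chain rule, which the paper's displayed expansion suppresses; since $G_k^{R,\epsilon}$ is mollified in $x$ as well (see \eqref{GRepsconv}), those extra terms obey the same $C(R,N)\alpha_k$ bounds and your treatment is legitimate, with constants still of the allowed form $C_{R,M,\epsilon}$.
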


\begin{proof}
    The second statement \eqref{noisediffA0} follows immediately from the first statement \eqref{noisediffk} using the definition of $A_0$ in \eqref{A0}. We hence prove the first statement \eqref{noisediffk}, and we focus on the case of $i = 2$, since the other cases of $i = 0, 1$ are easier variations of the same argument. By the Chain Rule:
    \begin{equation*}
    \partial_{x}^{2}\Big[G_{k}^{R, \epsilon}(\rho, q)\Big] = \nabla_{\rho, q}G_{k}^{R, \epsilon}(\rho, q) \cdot \partial_{x}^{2}(\rho, q) + \langle \nabla_{\rho, q}^{2}G_{k}^{R, \epsilon}(\rho, q) \partial_{x}(\rho, q), \partial_{x}(\rho, q)\rangle. 
    \end{equation*}
    So it suffices to estimate the following two terms:
    \begin{equation}\label{noiseterm1}
    \Big\|\chi_{R}(\rho, q) \nabla_{\rho, q} G_{k}^{R, \epsilon}(\rho, q) \cdot \partial_{x}^{2}(\rho, q) - \chi_{R}(\tilde{\rho}, \tilde{q}) \nabla_{\rho, q} G_{k}^{R, \epsilon}(\tilde{\rho}, \tilde{q}) \cdot \partial_{x}^{2}(\tilde{\rho}, \tilde{q})\Big\|_{L^{2}(\mathbb{T})},
    \end{equation}
    \begin{equation}\label{noiseterm2}
    \Big\|\chi_{R}(\rho, q) \langle \nabla_{\rho, q}^{2}G_{k}^{R, \epsilon}(\rho, q) \partial_{x}(\rho, q), \partial_{x}(\rho, q) \rangle - \chi_{R}(\tilde{\rho}, \tilde{q}) \langle \nabla_{\rho, q}^{2}G_{k}^{R, \epsilon}(\tilde{\rho}, \tilde{q}) \partial_{x}(\tilde{\rho}, \tilde{q}), \partial_{x}(\tilde{\rho}, \tilde{q}) \rangle\Big\|_{L^{2}(\mathbb{T})}.
    \end{equation}

    For the term \eqref{noiseterm1}, we estimate using \eqref{noiseRepsbound}, \eqref{diff5}, and \eqref{diff5b}:
    \begin{equation*}
    \Big\|\Big(\chi_{R}(\rho, q) \partial_{x}^{2}(\rho, q) - \chi_{R}(\tilde{\rho}, \tilde{q}) \partial^{2}_{x}(\tilde{\rho}, \tilde{q})\Big) \nabla_{\rho, q} G_{k}^{R, \epsilon}(\rho, q)\Big\|_{L^{2}(\mathbb{T})} \le C_{R, M, \epsilon} \alpha_{k} \Big(\|\rho - \tilde{\rho}\|_{H^{2}(\mathbb{T})} + \|q - \tilde{q}\|_{H^{2}(\mathbb{T})}\Big),
    \end{equation*}
    and we use \eqref{noiseRepsbound}, along with the fact that $\|\chi_{R}(\tilde{\rho}, \tilde{q}) (\rho, q)\|_{H^{2}(\mathbb{T})} \le C_{R, M}$ by the definition of the truncation and $\|\tilde{\rho}\|_{H^{2}(\mathbb{T})} \le M$, to conclude that
    \begin{align*}
    \Big\|\chi_{R}(\tilde{\rho}, \tilde{q})\partial_{x}^{2}(\tilde{\rho}, \tilde{q}) \Big(\nabla_{\rho, q}G_{k}^{R, \epsilon}(\rho, q) &- \nabla_{\rho, q} G_{k}^{R, \epsilon}(\tilde{\rho}, \tilde{q})\Big)\Big\|_{L^{2}(\mathbb{T})} \\
    &\le C_{R}\alpha_{k}\Big\|\chi_{R}(\tilde{\rho}, \tilde{q})\partial_{x}^{2}(\tilde{\rho}, \tilde{q})\Big\|_{L^{2}(\mathbb{T})} \Big(\|\rho - \tilde{\rho}\|_{L^{\infty}(\mathbb{T})} + \|q - \tilde{q}\|_{L^{\infty}(\mathbb{T})}\Big) \\
    &\le C_{R, M}\alpha_{k} \Big(\|\rho - \tilde{\rho}\|_{H^{2}(\mathbb{T})} + \|q - \tilde{q}\|_{H^{2}(\mathbb{T})}\Big).
    \end{align*}

    For the term \eqref{noiseterm2}, we use the quadratic estimates \eqref{diffquad} and the boundedness of the derivatives of the truncated noise coefficient \eqref{noiseRepsbound} to conclude that:
    \begin{multline*}
    \Big\|\chi_{R}(\rho, q) \langle \nabla^{2}_{\rho, q} G_{k}^{R, \epsilon}(\rho, q) \partial_{x}(\rho, q), \partial_{x}(\rho, q) - \chi_{R}(\tilde{\rho}, \tilde{q}) \langle \nabla^{2}_{\rho, q} G_{k}^{R, \epsilon}(\rho, q) \partial_{x}(\tilde{\rho}, \tilde{q}), \partial_{x}(\tilde{\rho}, \tilde{q}) \rangle \Big\|_{L^{2}(\mathbb{T})} \\
    \le C_{R, M, \epsilon} \alpha_{k} \Big(\|\rho - \tilde{\rho}\|_{H^{2}(\mathbb{T})} + \|q - \tilde{q}\|_{H^{2}(\mathbb{T})}\Big).
    \end{multline*}
    Furthermore, by a Lipschitz estimate on the truncated noise which follows from \eqref{noiseRepsbound}, and by the fact that $\Big\|\chi_{R}(\rho, q) |\partial_{x}(\rho, q)|^{2}\Big\|_{L^{2}(\mathbb{T})} \le C_{R, M}$ by Sobolev embedding, the definition of the truncation, and $\|\rho\|_{H^{2}(\mathbb{T})} \le M$, we conclude that
    \begin{equation*}
    \Big\|\chi(\tilde{\rho}, \tilde{q}) \langle \nabla^{2}_{\rho, q} G_{k}^{R, \epsilon}(\rho, q) - \nabla^{2}_{\rho, q} G_{k}^{R, \epsilon}(\tilde{\rho}, \tilde{q}) \partial_{x}(\tilde{\rho}, \tilde{q}), \partial_{x}(\tilde{\rho}, \tilde{q}) \rangle\Big\|_{L^{2}(\mathbb{T})} \le C_{R, M, \epsilon} \alpha_{k} \Big(\|\rho - \tilde{\rho}\|_{H^{2}(\mathbb{T})} + \|q - \tilde{q}\|_{H^{2}(\mathbb{T})}\Big).
    \end{equation*}
\end{proof}

Now, we proceed with the a priori estimates for continuous dependence, and we will often use the inequalities in Lemma \ref{nonlineardiff} and Lemma \ref{noisedifflemma} in these estimates. For these a priori estimates, we consider initial data $(\rho_0, q_0), (\tilde{\rho}_{0}, \tilde{q}_{0}) \in \mathcal{X}$ (see \eqref{path}), and we solve the approximate system \eqref{approxsystem} with this initial data to obtain corresponding unique solutions $(\rho, q)$ and $(\tilde{\rho}, \tilde{q})$ that are both in $C(0, T; \mathcal{X})$ almost surely. We want to estimate difference between these two solutions $(\rho - \tilde{\rho}, q - \tilde{q})$ in terms of the difference of the initial data. These a priori estimates will be the content of the proof of Proposition \ref{continuous}.

\begin{proof}[Proof of Proposition \ref{continuous}]
The goal of the a priori estimates is to obtain a Gronwall-type inequality for the quantity 
\begin{equation*}
\mathbb{E}\Big(\|\rho(t) - \tilde{\rho}(t)\|^2_{H^{2}(\mathbb{T})} + \|q(t) - \tilde{q}(t)\|^2_{H^{2}(\mathbb{T})}\Big).
\end{equation*}
We recall from Lemma \ref{rhoH2} and Proposition \ref{rholowerbound} that the solutions $(\rho, q)$ and $(\tilde{\rho}, \tilde{q})$ satisfy the following almost sure bounds:
\begin{equation*}
\|\rho\|_{C(0, T; H^{2}(\mathbb{T}))} \le C_{R, T}, \quad \|\tilde{\rho}\|_{C(0, T; H^{2}(\mathbb{T}))} \le C_{R, T}, \quad \rho(t, \cdot) \ge 1/R, \quad \tilde{\rho}(t, \cdot) \ge 1/R.
\end{equation*}
This will allow us to apply Lemma \ref{nonlineardiff} and Lemma \ref{noisedifflemma}, where these types of bounds are required as assumptions in order to derive the estimates. 

We obtain the following equations for the difference $(\rho - \tilde{\rho}, q - \tilde{q})$:
\begin{equation}\label{firstdiff0}
    \partial_{t}(\rho - \tilde{\rho}) + \partial_{x}([q]_{R} - [\tilde{q}]_{R}) = \epsilon \Delta (\rho - \tilde{\rho}),
\end{equation}
\begin{multline}\label{seconddiff0}
    \partial_{t}(q - \tilde{q}) + \partial_{x}\left(\frac{[q]_{R}q}{\rho} - \frac{[\tilde{q}]_{R}\tilde{q}}{\tilde{\rho}}\right) + \chi_{R}(\rho, q)\partial_{x}(\kappa \rho^{\gamma}) - \chi_{R}(\tilde{\rho}, \tilde{q})\partial_{x}(\kappa \tilde{\rho}^{\gamma}) \\
    = \Big(\chi_{R}(\rho, q) \bd{\Phi}^{R, \epsilon}(\rho, q) - \chi_{R}(\tilde{\rho}, \tilde{q}) \bd{\Phi}^{R, \epsilon}(\tilde{\rho}, \tilde{q})\Big) dW - \alpha(q - \tilde{q}) + \epsilon\Delta(q - \tilde{q}). 
\end{multline}

We differentiate each equation \eqref{firstdiff0} and \eqref{seconddiff0} by $\partial_{x}^{j}$ for $j = 0, 1, 2$, and then use the estimates in Lemma \ref{nonlineardiff} to prove the continuous dependence result in Proposition \ref{continuous} via Gronwall's inequality. From the first equation \eqref{firstdiff0}, we obtain:
\begin{equation}\label{testdiff1}
\int_{\mathbb{T}} |\partial_{x}^{j}(\rho - \tilde{\rho})(t)|^{2} dx + \epsilon \int_{0}^{t} \int_{\mathbb{T}} |\partial^{j + 1}_{x} (\rho - \tilde{\rho})|^{2} = \int_{\mathbb{T}} |\partial_{x}^{j}(\rho_0 - \tilde{\rho}_{0})|^{2} + \int_{0}^{t} \int_{\mathbb{T}} \partial_{x}^{j} \Big([q]_{R} - [\tilde{q}]_{R}\Big) \partial_{x}^{j + 1} (\rho - \tilde{\rho}).
\end{equation}
Using It\"{o}'s formula in \eqref{seconddiff0}, we obtain:
\begin{align}\label{testdiff2}
\int_{\mathbb{T}} |\partial_{x}^{j}(q - \tilde{q})(t)|^{2} &+ \alpha \int_{0}^{t} \int_{\mathbb{T}} |\partial^{j}_{x}(q - \tilde{q})|^{2} + \epsilon \int_{0}^{t} \int_{\mathbb{T}} |\partial^{j + 1}_{x}(q - \tilde{q})|^{2} = \int_{0}^{t} \int_{\mathbb{T}} \partial^{j}_{x}\left(\frac{[q]_{R}q}{\rho} - \frac{[\tilde{q}]_{R}\tilde{q}}{\tilde{\rho}}\right) \partial^{j + 1}_{x} (q - \tilde{q}) \nonumber \\
&+ \int_{0}^{t} \int_{\mathbb{T}} \Big(\chi_{R}(\rho, q) \partial_{x}^{j}(\kappa \rho^{\gamma}) - \chi_{R}(\tilde{\rho}, \tilde{q}) \partial_{x}^{j} (\kappa \tilde{\rho}^{\gamma})\Big) \partial_{x}^{j + 1}(q - \tilde{q}) \nonumber \\
&+ \int_{0}^{t} \int_{\mathbb{T}} \partial_{x}^{j}\Big(\chi_{R}(\rho, q) \bd{\Phi}^{R, \epsilon}(\rho, q) - \chi_{R}(\tilde{\rho}, \tilde{q}) \bd{\Phi}^{R, \epsilon}(\tilde{\rho}, \tilde{q})\Big) \partial_{x}^{j}(q - \tilde{q}) dW \nonumber \\
&+ \frac{1}{2} \int_{0}^{t} \sum_{k = 1}^{\infty} \left(\int_{\mathbb{T}} \partial_{x}^{j}\Big(\chi_{R}(\rho, q) G_{k}^{R, \epsilon}(\rho, q) - \chi_{R}(\tilde{\rho}, \tilde{q}) G_{k}^{R, \epsilon}(\tilde{\rho}, \tilde{q})\Big)\right)^{2}.
\end{align}
By using Lemma \ref{nonlineardiff}, we estimate for \eqref{testdiff1} that
\begin{align}\label{continuouscalc1}
\left|\int_{0}^{t} \int_{\mathbb{T}} \partial_{x}^{j}\Big([q]_{R} - [\tilde{q}]_{R}\Big) \partial^{j + 1}_{x}(\rho - \tilde{\rho})\right| &\le \frac{\epsilon}{2} \int_{0}^{t} \int_{\mathbb{T}} |\partial^{j + 1}_{x}(\rho - \tilde{\rho})|^{2} + C_{\epsilon, R, T} \int_{0}^{t} \Big\|[q]_{R} - [\tilde{q}]_{R}\Big\|^{2}_{H^{2}(\mathbb{T})} \\
&\le \frac{\epsilon}{2} \int_{0}^{t} \int_{\mathbb{T}} |\partial^{j + 1}_{x}(\rho - \tilde{\rho})|^{2} + C_{\epsilon, R, T} \int_{0}^{t} \Big(\|\rho - \tilde{\rho}\|^{2}_{H^{2}(\mathbb{T})} + \|q - \tilde{q}\|^{2}_{H^{2}(\mathbb{T})}\Big) \nonumber,
\end{align}
where by using Cauchy's inequality (with $\epsilon$), we can absorb the term $\displaystyle \frac{\epsilon}{2} \int_{0}^{t} \int_{\mathbb{T}} |\partial^{j + 1}_{x}(\rho - \tilde{\rho})|^{2}$ into the dissipation term on the left-hand side of \eqref{testdiff1}. For \eqref{testdiff2}, we note that by taking \textit{expectation} of both sides, the stochastic integral has expectation zero and hence vanishes from the computation. For the first two nonlinear difference terms on the right-hand side of \eqref{testdiff2}, we use Cauchy with epsilon along with the estimates in Lemma \ref{nonlineardiff} to estimate:
\begin{multline}\label{continuouscalc2}
\left|\mathbb{E} \int_{0}^{t} \int_{\mathbb{T}} \partial_{x}^{j}\left(\frac{[q]_{R}q}{\rho} - \frac{[\tilde{q}]_{R}\tilde{q}}{\tilde{\rho}}\right) \partial^{j + 1}_{x}(q - \tilde{q}) + \mathbb{E} \int_{0}^{t} \int_{\mathbb{T}} \Big(\chi_{R}(\rho, q) \partial_{x}^{j}(\kappa \rho^{\gamma}) - \chi_{R}(\tilde{\rho}, \tilde{q})\partial_{x}^{j}(\kappa \tilde{\rho}^{\gamma})\Big) \partial^{j + 1}_{x}(q - \tilde{q})\right| \\
\le \frac{\epsilon}{2} \mathbb{E} \int_{0}^{t} \int_{\mathbb{T}} |\partial^{j + 1}_{x}(q - \tilde{q})|^{2} + C_{\epsilon, R, T} \mathbb{E} \int_{0}^{t} \Big(\|\rho - \tilde{\rho}\|^{2}_{H^{2}(\mathbb{T})} + \|q - \tilde{q}\|^{2}_{H^{2}(\mathbb{T})}\Big).
\end{multline}
We can also use the estimate in Lemma \ref{noisedifflemma} to estimate the quadratic variation term:
\begin{equation}\label{continuouscalc3}
\mathbb{E} \int_{0}^{t} \sum_{k = 1}^{\infty} \int_{\mathbb{T}} \Big|\partial^{j}_{x}\Big(\chi_{R}(\rho, q) G_{k}^{R, \epsilon}(\rho, q) - \chi_{R}(\tilde{\rho}, \tilde{q}) G_{k}^{R, \epsilon}(\tilde{\rho}, \tilde{q})\Big)\Big|^{2} \le C_{\epsilon, R, T} A_{0} \mathbb{E} \int_{0}^{t} \Big(\|\rho - \tilde{\rho}\|_{H^{2}(\mathbb{T})}^{2} + \|q - \tilde{q}\|_{H^{2}(\mathbb{T})}^{2}\Big) ds.
\end{equation}
By taking expectations in \eqref{firstdiff0} and \eqref{seconddiff0} and adding over $j = 0, 1, 2$, and then applying the estimates \eqref{continuouscalc1}, \eqref{continuouscalc2}, and \eqref{continuouscalc3}, we obtain the desired estimate by absorbing terms into the dissipation term on the left-hand side:
\begin{multline}\label{gronwallcontinuous}
\mathbb{E} \Big(\|(\rho - \tilde{\rho})(t)\|^{2}_{H^{2}(\mathbb{T})} + \|(q - \tilde{q})(t)\|^{2}_{H^{2}(\mathbb{T})}\Big) + \frac{\epsilon}{2} \mathbb{E} \int_{0}^{t} \Big(\|\partial_{x}(\rho - \tilde{\rho})(s)\|_{H^{2}(\mathbb{T})}^{2} + \|\partial_{x}(q - \tilde{q})(s)\|_{H^{2}(\mathbb{T})}^{2}\Big) \\
\le \Big(\|\rho_{0} - \tilde{\rho}_{0}\|_{H^{2}(\mathbb{T})}^{2} + \|q_{0} - \tilde{q}_{0}\|_{H^{2}(\mathbb{T})}^{2}\Big) + C_{\epsilon, R, T} \mathbb{E} \int_{0}^{t} \Big(\|(\rho - \tilde{\rho})(s)\|_{H^{2}(\mathbb{T})}^{2} + \|(q - \tilde{q})(s)\|_{H^{2}(\mathbb{T})}\Big) ds.
\end{multline}
The result then follows from an application of Gronwall's inequality.
\end{proof}

\subsection{Feller semigroup for the approximate system.}\label{fellersemigroup} Since the approximate system \eqref{approxsystem} has a notion of Hadamard well-posedness, we claim that we can define an associated \textit{Feller semigroup} $\{\mathcal{P}_{t}\}_{t \ge 0}$ to the evolution of the approximate system. Recall the definition of the phase space $\mathcal{X}$ from \eqref{path}:
\begin{equation*}
\mathcal{X} := \left\{(\rho, q) \in H^{2}(\mathbb{T}) \times H^{2}(\mathbb{T}) : \int_{\mathbb{T}} \rho(x) dx = 1 \text{ and } \rho \ge \frac{1}{R}\right\}.
\end{equation*}
Consider any deterministic state $(\rho_0, q_0) \in \mathcal{X}$. Note that by the global existence and uniqueness result in Proposition \ref{continuous} for the approximate system, we can define the evolution
\begin{equation*}
\Big(\rho^{R,\epsilon}(t; (\rho_0, q_0)), q^{R,\epsilon}(t; (\rho_0, q_0))\Big) \in \mathcal{X} \qquad \text{ for } t \ge 0,
\end{equation*}
which we define to be the (random) solution at time $t$ to the approximate system \eqref{approxsystem} with initial data $(\rho_0, q_0)$. 

Let $C_{b}(\mathcal{X})$ denote the space of bounded continuous functions $\varphi: \mathcal{X} \to \mathbb{R}$. Then, for each time $t \ge 0$, we can define an operator $\mathcal{P}_{t}$ acting on functions in $C_{b}(\mathcal{X})$, defined by
\begin{equation}\label{ptdef}
(\mathcal{P}_{t}\varphi)(\rho_0, q_0) = \mathbb{E}\Big[\varphi\Big(\rho^{R,\epsilon}(t; (\rho_0, q_0)), q^{R,\epsilon}(t; (\rho_0, q_0))\Big)\Big].
\end{equation}

We claim that $\{\mathcal{P}_{t}\}_{t \ge 0}$ is a Feller semigroup, namely that it has the properties listed in the following proposition.

\begin{proposition}\label{fellerprop}
The collection $\{\mathcal{P}_{t}\}_{t \ge 0}$ is a Feller semigroup in the sense that $\mathcal{P}_{t}: C_{b}(\mathcal{X}) \to C_{b}(\mathcal{X})$ for each $t \ge 0$, and for all $s, t \ge 0$:
\begin{equation*}
\mathcal{P}_{0}\varphi = \varphi \ \ \text{ and } \ \ \mathcal{P}_{s}(\mathcal{P}_{t}\varphi) = \mathcal{P}_{s + t}\varphi, \qquad \text{ for all } \varphi \in C_{b}(\mathcal{X}),
\end{equation*}
\end{proposition}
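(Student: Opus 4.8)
The plan is to derive all three properties directly from the Hadamard well-posedness already established: global existence and uniqueness of $\mathcal{X}$-valued solutions (Proposition \ref{existunique} together with the minimum principle, Proposition \ref{rholowerbound}), and the Lipschitz continuous dependence on the initial datum (Proposition \ref{continuous}). For brevity write $\bd{U}^{R,\epsilon}(t;(\rho_0,q_0)) := \big(\rho^{R,\epsilon}(t;(\rho_0,q_0)),\,q^{R,\epsilon}(t;(\rho_0,q_0))\big)$; by Proposition \ref{existunique} and Proposition \ref{rholowerbound} this process has, almost surely, continuous paths valued in $\mathcal{X}$. First, $\mathcal{P}_0\varphi = \varphi$ is immediate, since $\bd{U}^{R,\epsilon}(0;(\rho_0,q_0)) = (\rho_0,q_0)$ deterministically; and boundedness of $\mathcal{P}_t\varphi$ is the trivial estimate $|(\mathcal{P}_t\varphi)(\rho_0,q_0)| \le \mathbb{E}\,|\varphi(\bd{U}^{R,\epsilon}(t;(\rho_0,q_0)))| \le \|\varphi\|_{C_b(\mathcal{X})}$.

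For the Feller property I would first note that $\mathcal{X}$ is a closed, hence complete, subset of $H^2(\mathbb{T})\times H^2(\mathbb{T})$: the constraints $\int_{\mathbb{T}}\rho\,dx = 1$ and $\rho \ge 1/R$ are closed conditions, the latter using the embedding $H^2(\mathbb{T})\hookrightarrow C(\mathbb{T})$. Then, given $(\rho_0^n,q_0^n)\to(\rho_0,q_0)$ in $\mathcal{X}$, Proposition \ref{continuous} applied with final time $T=t$ gives $\mathbb{E}\,\|\bd{U}^{R,\epsilon}(\cdot\,;(\rho_0^n,q_0^n)) - \bd{U}^{R,\epsilon}(\cdot\,;(\rho_0,q_0))\|_{C(0,t;\mathcal{X})}\to 0$, so in particular $\bd{U}^{R,\epsilon}(t;(\rho_0^n,q_0^n))\to\bd{U}^{R,\epsilon}(t;(\rho_0,q_0))$ in probability in $\mathcal{X}$. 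Since $\varphi$ is continuous, $\varphi\bigl(\bd{U}^{R,\epsilon}(t;(\rho_0^n,q_0^n))\bigr)\to\varphi\bigl(\bd{U}^{R,\epsilon}(t;(\rho_0,q_0))\bigr)$ in probability, and as these real random variables are uniformly bounded by $\|\varphi\|_{C_b(\mathcal{X})}$, the bounded (Vitali) convergence theorem yields $(\mathcal{P}_t\varphi)(\rho_0^n,q_0^n)\to(\mathcal{P}_t\varphi)(\rho_0,q_0)$. Hence $\mathcal{P}_t\varphi\in C_b(\mathcal{X})$.

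For the semigroup identity I would invoke the Markov property of $\bd{U}^{R,\epsilon}$, which follows from pathwise uniqueness. Fix $s,t\ge 0$ and set $W^{(t)}(\cdot) := W(t+\cdot) - W(t)$, a cylindrical Wiener process for the shifted filtration that is independent of $\mathcal{F}_t$. Since the coefficients of \eqref{approxsystem} do not depend on time, $r\mapsto\bd{U}^{R,\epsilon}(t+r;(\rho_0,q_0))$ solves the approximate system on $[0,\infty)$ driven by $W^{(t)}$ with the $\mathcal{F}_t$-measurable initial datum $\bd{U}^{R,\epsilon}(t;(\rho_0,q_0))\in\mathcal{X}$. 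By the uniqueness in Proposition \ref{existunique}, this process coincides almost surely with $\Phi^{(t)}_s\bigl(\bd{U}^{R,\epsilon}(t;(\rho_0,q_0))\bigr)$, where $\Phi^{(t)}_s:\mathcal{X}\to\mathcal{X}$ is the measurable solution map built solely from the increments $\{W^{(t)}(r):r\in[0,s]\}$ — hence independent of $\mathcal{F}_t$ — and equal in law to the map $\bd{U}_0\mapsto\bd{U}^{R,\epsilon}(s;\bd{U}_0)$. Applying the freezing lemma for conditional expectations (legitimate because $\mathcal{P}_s\varphi$ is Borel, being continuous by the previous step) gives $\mathbb{E}\bigl[\varphi(\bd{U}^{R,\epsilon}(t+s;(\rho_0,q_0)))\mid\mathcal{F}_t\bigr] = (\mathcal{P}_s\varphi)\bigl(\bd{U}^{R,\epsilon}(t;(\rho_0,q_0))\bigr)$; taking expectations and using the definition of $\mathcal{P}_t$ yields $(\mathcal{P}_{t+s}\varphi)(\rho_0,q_0) = \bigl(\mathcal{P}_t(\mathcal{P}_s\varphi)\bigr)(\rho_0,q_0)$, and interchanging the roles of $s$ and $t$ gives the stated identity $\mathcal{P}_s(\mathcal{P}_t\varphi) = \mathcal{P}_{s+t}\varphi$.

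The main obstacle is making this flow/freezing argument in the last step fully rigorous at the level of pathwise solutions of the SPDE: constructing the measurable solution map $\Phi^{(t)}_s$ on $\mathcal{X}$ and verifying both its independence of $\mathcal{F}_t$ and its equality in law with the original solution map. This is standard for systems whose coefficients are, after the truncation, globally Lipschitz on the phase space $\mathcal{X}$ — the Lipschitz bound being precisely Proposition \ref{continuous} — and can be carried out along the lines of the Markov-property arguments for semilinear stochastic evolution equations in \cite{DPZ96}; the remaining verifications (continuity and boundedness of $\mathcal{P}_t\varphi$, and $\mathcal{P}_0\varphi=\varphi$) are routine as indicated above.
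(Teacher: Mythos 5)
Your proposal is correct and follows essentially the same route as the paper: the identity $\mathcal{P}_0=\mathrm{Id}$ is immediate, the semigroup law is derived from the pathwise uniqueness of Proposition \ref{existunique} (via the standard time-homogeneous Markov/flow argument you sketch), and the Feller property $\mathcal{P}_t:C_b(\mathcal{X})\to C_b(\mathcal{X})$ comes from the continuous dependence estimate of Proposition \ref{continuous} combined with bounded convergence. You simply spell out in more detail (shifted Wiener process, freezing lemma, bounded convergence) what the paper states in one line, so there is no gap beyond the paper's own level of rigor.
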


\begin{proof}
    It is immediate that $\mathcal{P}_{0} = \text{Id}$, and the semigroup property $\mathcal{P}_{s} \circ \mathcal{P}_{t} = \mathcal{P}_{s + t}$ follows from the uniqueness result in Proposition \ref{existunique}. The fact that $\mathcal{P}_{t}$ is a bounded continous operator on $C_{b}(\mathcal{X})$, namely that $\mathcal{P}_{t}\varphi \in C_{b}(\mathcal{X})$ for all $\varphi \in C_{b}(\mathcal{X})$, is a direct consequence of the continuous dependence property in Proposition \ref{continuous}.
\end{proof}

\section{Results on uniform invariant regions}\label{uniforminvariant}
Now that we have a Feller semigroup $\{\mathcal{P}_{t}\}_{t \ge 0}$ for the approximate system \eqref{approxsystem}, we will use a time-averaging procedure to obtain an invariant measure (and hence statistically stationary solution) on $\mathcal{X}$ for the approximate system. To carry out this time averaging procedure, we need uniform estimates on the approximate system in time for a general (stochastic) solution $(\rho^{R,\epsilon}(t), q^{R,\epsilon}(t))$ to the initial value problem in \eqref{approxsystem}. This will be the content of the current section and also Section \ref{uniformsection}.

In this section, we will deduce uniform in time $L^{\infty}$ bounds on the solution $(\rho^{R,\epsilon}(t), q^{R,\epsilon}(t))$ to the initial value problem \eqref{approxsystem}, using the structure of \textbf{invariant regions} for the isentropic Euler equations. In particular, we note that the structure of the truncations we use in the approximate system \eqref{approxsystem} preserve the structure of invariant regions to the damped compressible Euler equations, which we can use to obtain uniform $L^{\infty}$ bounds independently of $T$ and $R$. 

It is well-known that the (undamped) deterministic compressible isentropic Euler equations in one spatial dimension with artificial viscosity
\begin{equation}\label{undamped}
\begin{cases}
    \partial_{t}\rho + \partial_{x}q = \epsilon \Delta \rho, \\
    \partial_{t}q + \partial_{x}\left(\frac{q^{2}}{\rho}\right) + \partial_{x}(\kappa \rho^{\gamma}) = \epsilon \Delta q.
\end{cases}
\end{equation}
possess the Riemann invariants
\begin{equation*}
z = u - \rho^{\theta}, \qquad w = u + \rho^{\theta},
\end{equation*}
and an associated invariant region
\begin{equation*}
\Lambda_{\kappa} = \{(\rho, u) \in (0, \infty) \times \R : -\kappa \le z \le w \le \kappa\}.
\end{equation*}
This region is invariant in the sense that any classical solution to \eqref{undamped} with initial data $(\rho_0, q_0) \in \Lambda_{\kappa}$ for some $\kappa > 0$ will remain within $\Lambda_{\kappa}$ for all times $t \ge 0$. This is the result of \cite{CCS77} (see also \cite{DP832}), which essentially involves a change of variables via the Riemann invariants with a maximum/minimum principle type argument. 

In this section, we consider the following deterministic \textit{truncated} system with damping, which differs from the classical scenario in \eqref{undamped} and gives the approximate problem we are considering in \eqref{approxsystem} when $\delta = 0$:
\begin{equation}\label{dampedtruncate}
\begin{cases}
    \partial_{t}\rho +  \chi_{R}(\rho,q) 
    \partial_{x}q = \epsilon \Delta \rho, \\
    \partial_{t}q + \chi_{R}(\rho,q)\partial_{x}\left(\frac{q^{2}}{\rho} + \kappa \rho^{\gamma} \right) = -\alpha q + \epsilon \Delta q.
\end{cases}
\end{equation}
We claim that this system has the same invariant region $\Lambda_{\kappa}$ (for arbitrary $\kappa > 0$) \textit{independently of the truncation parameter $R$}. Moreover, the fact that the truncated equations have the same invariant region will then give us the following uniform bound on the classical solutions $(\rho^{R,\epsilon}(t), q^{R,\epsilon}(t))$ to the initial value problem for initial data $(\rho_0, q_0) \in \mathcal{X}$, that are constructed in Proposition \ref{existunique}.

\begin{proposition}\label{Linfuniform1}
    For initial data $(\rho_0, q_0) \in \mathcal{X}$, the classical solution $(\rho^{R,\epsilon}, q^{R,\epsilon})$ to \eqref{approxsystem} in Proposition \ref{existunique} satisfies the following uniform bounds:
    \begin{equation*}
    \left\|\left(\rho^{R,\epsilon}, q^{R,\epsilon}, \frac{q^{R,\epsilon}}{\rho^{R,\epsilon}}\right)\right\|_{ C(0,T;C(\mathbb{T}))} \le C_\epsilon \quad \text{ almost surely. }
    \end{equation*}
    for a deterministic constant $C_\epsilon$ that is independent of $T$ and $R$ but dependent on $\epsilon$.
\end{proposition}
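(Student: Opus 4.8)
The plan is to bypass energy-type estimates entirely and instead exploit the \emph{invariant region} structure that the truncations in \eqref{approxsystem} were designed to preserve. The key observation is that \eqref{approxsystem} coincides with the deterministic truncated system \eqref{dampedtruncate} at every space-time point where the regularized noise coefficient vanishes, and that, by construction, $\bd{\Phi}^{R,\epsilon_N}$ is supported in $\mathbb{T}\times\tilde B_N$ with $\tilde B_N$ compactly contained in the interior of $\tilde\Lambda_{\kappa_N}$ (see \eqref{tildeBN}, \eqref{tildeBNdef}, \eqref{kappaN}). Consequently I would prove the following: for every $\kappa\ge\kappa_N$ and every truncation parameter $R$, the region $\Lambda_\kappa$ from \eqref{invariant} is invariant for the classical solution $(\rho^{R,\epsilon},q^{R,\epsilon})$ of \eqref{approxsystem} from Proposition \ref{existunique}. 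Granting this, I would then choose $\kappa$ so large that the (deterministic) initial state $(\rho_0,u_0)\in\mathcal{X}$ lies in $\Lambda_\kappa$ — possible since $\rho_0\ge 1/R$ and $\rho_0,q_0\in H^2(\mathbb{T})\hookrightarrow C(\mathbb{T})$ — and enlarge $\kappa$ further, if needed, so that $\kappa\ge\kappa_N$; the desired bounds are then read off from the explicit description $\Lambda_\kappa=\{0\le\rho\le\kappa^{1/\theta},\ -\kappa+\rho^\theta\le u\le\kappa-\rho^\theta\}$.

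The first step is the invariant region property for the deterministic system \eqref{dampedtruncate}. I would pass to the Riemann invariants $z=u-\rho^\theta$, $w=u+\rho^\theta$ and invoke the classical Chueh--Conley--Smoller theorem \cite{CCS77} (see also \cite{DP832}), which gives that $\Lambda_\kappa$ is invariant for the pure artificial-viscosity system \eqref{undamped}. Two modifications must be checked against the standard statement. First, the truncation multiplies the flux $\partial_x\bd{F}(\bd{U})$ by the scalar $\chi_R(\rho,q)\in[0,1]$ which is \emph{constant in $x$}; at a point where $w$ attains its spatial maximum (resp.\ $z$ its spatial minimum) one has $\partial_x w=0$ (resp.\ $\partial_x z=0$), so the first-order flux contribution vanishes there regardless of $\chi_R$, and the maximum-principle argument for the identity diffusion matrix is unaffected. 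Second, the damping adds $-\alpha q=-\alpha\rho u$ to the momentum equation, hence $-\alpha u=-\tfrac\alpha2(w+z)$ to the evolution of both $w$ and $z$ along the corresponding characteristics; on $\{w=\kappa\}\cap\Lambda_\kappa$ one has $z\ge-\kappa$, so $-\tfrac\alpha2(w+z)=-\tfrac\alpha2(\kappa+z)\le 0$, and on $\{z=-\kappa\}\cap\Lambda_\kappa$ one has $w\le\kappa$, so $-\tfrac\alpha2(w+z)=-\tfrac\alpha2(w-\kappa)\ge 0$. In both cases the damping points into $\Lambda_\kappa$, so $\Lambda_\kappa$ remains invariant for classical solutions of \eqref{dampedtruncate}, for every $\kappa>0$ and every $R$.

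The second step upgrades this to the stochastic system. Fix $\kappa\ge\kappa_N$; for $\kappa>\kappa_N$ one has $\Lambda_{\kappa_N}\subset\operatorname{int}\Lambda_\kappa$, so a full neighborhood of $\partial\Lambda_\kappa$ is disjoint from $\tilde\Lambda_{\kappa_N}\supset\operatorname{supp}\bd{\Phi}^{R,\epsilon}$. Thus, running the standard integrated maximum principle — e.g.\ applying It\^o's formula to $\int_{\mathbb{T}}\Psi(\rho^{R,\epsilon},q^{R,\epsilon})\,dx$ for a smooth $\Psi\ge 0$ vanishing exactly on $\Lambda_\kappa$ with $\operatorname{supp}\nabla\Psi\subset\R^2\setminus\operatorname{int}\Lambda_\kappa$ — the stochastic integrand $\partial_q\Psi\,\bd{\Phi}^{R,\epsilon}$ and the It\^o correction $\partial_q^2\Psi\,(\bd{G}^{R,\epsilon})^2$ vanish identically, because $\operatorname{supp}\nabla\Psi\cap\operatorname{supp}\bd{\Phi}^{R,\epsilon}=\emptyset$; the surviving terms are exactly those of \eqref{dampedtruncate}, which are non-positive by Step 1. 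Hence $(\rho^{R,\epsilon}(t,\cdot),u^{R,\epsilon}(t,\cdot))\in\Lambda_\kappa$ for all $t\ge 0$, $\bP$-a.s. (using also Proposition \ref{rholowerbound} to keep $\rho^{R,\epsilon}\ge 1/R>0$, so that $u^{R,\epsilon}=q^{R,\epsilon}/\rho^{R,\epsilon}$ is well-defined). Reading off the bounds: $0\le\rho^{R,\epsilon}\le\kappa^{1/\theta}$, $|u^{R,\epsilon}|\le\kappa$, and $|q^{R,\epsilon}|=\rho^{R,\epsilon}|u^{R,\epsilon}|\le\kappa^{1+1/\theta}$, uniformly in $t\in[0,T]$ — so the constant is manifestly independent of $T$ and of $R$ (the invariant region property held for every $R$), and depends on $\epsilon$ through $\kappa_N$.

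I expect the delicate step to be the stochastic invariant-region argument of Step 2: one must make rigorous that the regularized noise plays \emph{no} role on a neighborhood of $\partial\Lambda_\kappa$ and that the essentially one-dimensional maximum-principle reasoning behind the Chueh--Conley--Smoller theorem survives both the multiplicative It\^o noise and the spatially constant but time- and $\omega$-dependent truncation factor $\chi_R(\rho,q)$. The crux inside that step is the usual one for invariant regions: the Riemann-invariant change of variables does \emph{not} diagonalize the viscous term $\epsilon\Delta\bd{U}$, so the sign of its contribution to $\partial_t w$ at a spatial maximum of $w$ is not obvious and must be extracted from the CCS machinery; by contrast, the vanishing of the convective term when $\chi_R=0$ is harmless, since the full-rank parabolic operator $\epsilon\Delta\bd{U}$ is always present.
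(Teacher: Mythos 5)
Your overall strategy — an invariant region for the truncated deterministic dynamics combined with the compact support of the regularized noise — is the same as the paper's (Propositions \ref{invariantregion} and \ref{stochinv}), but your Step 2 has a genuine gap. The paper combines the two invariance statements through a splitting scheme (as in \cite{B14, BGR}); you instead apply It\^o's formula to $\int_{\mathbb{T}}\Psi(\rho^{R,\epsilon},q^{R,\epsilon})\,dx$ for a penalization $\Psi$ vanishing on $\Lambda_\kappa$ and claim that ``the surviving terms are exactly those of \eqref{dampedtruncate}, which are non-positive by Step 1.'' That is a non sequitur: Step 1 is a \emph{pointwise} maximum principle in the Riemann invariants, and it gives no sign information on the \emph{integrated} drift terms your It\^o computation produces. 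Concretely, after the stochastic terms drop out you are left with $\int_{\mathbb{T}}\nabla\Psi(\bd{U})\cdot\bigl(\epsilon\Delta\bd{U}-\chi_R\,\partial_x\bd{F}(\bd{U})+\bd{A}(\bd{U})\bigr)\,dx$. The viscous contribution is $\le 0$ only if $\Psi$ is convex (this is available, since the region is convex in the $(\rho,q)$ variables, but it must be said and used); the convective contribution $\int_{\mathbb{T}}\nabla\Psi(\bd{U})\cdot\nabla\bd{F}(\bd{U})\,\partial_x\bd{U}\,dx$ neither vanishes nor has a sign unless $\nabla\Psi\,\nabla\bd{F}$ is an exact gradient in state space, i.e.\ unless $\Psi$ is an entropy — which a generic penalization of $\Lambda_\kappa$ is not — and it cannot be Gronwalled against $\int_{\mathbb{T}}\Psi$ either, because it involves $\partial_x\bd{U}$. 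This is exactly why invariant-region arguments for such systems are run pointwise (Chueh--Conley--Smoller style) or by operator splitting, as the paper does; your integrated argument, as written, does not close.

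Two further points. Your Step 1 concludes invariance from the damping being only \emph{weakly} inward on $\partial\Lambda_\kappa$; at boundary points with $q=0$, $\rho=\kappa^{1/\theta}$ the inward derivative is zero, which is precisely why the paper first proves invariance for the $\delta$-perturbed system \eqref{perturbed} (Lemma \ref{perturbedinvariant}), where the extra $-\delta\rho$ makes the boundary derivative strictly signed (also using the no-vacuum bound derived from Proposition \ref{rholowerbound} so that the Riemann-invariant/eigenvector computation is legitimate), and then passes $\delta\to 0$ via a continuous-dependence estimate as in Proposition \ref{continuous}. If you want to lean on the classical invariant-region theorem instead, you must either quote a version covering weakly inward fields or reproduce this approximation step. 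Finally, enlarging $\kappa$ to contain a general $(\rho_0,q_0)\in\mathcal{X}$ makes your constant depend on the initial data, not only on $\epsilon$; the $\epsilon$-only constant corresponds to data already in $\Lambda_{\kappa_\epsilon}$ (e.g.\ $(1,0)$, the case actually used later), which is worth stating explicitly.
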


This proposition will be a consequence of the following result on invariant regions for the deterministic damped truncated system in \eqref{dampedtruncate}. (Note that even though the approximate system is stochastic, due to the regularization of the noise in Section \ref{noiseregularize} at the $\epsilon$ level, the $\epsilon$-regularized noise is compactly supported and hence also respects the invariant region structure of the damped Euler equations, see Proposition \ref{stochinv}.) 

\begin{proposition}\label{invariantregion}
The region
\begin{equation}\label{Lambdakappa}
\Lambda_{\kappa} = \{(\rho, q) \in (0, \infty) \times \R : -\kappa \le z \le w \le \kappa\}, \qquad \text{ for } z = \frac{q}{\rho} - \rho^{\theta} \text{ and } w = \frac{q}{\rho} + \rho^{\theta}
\end{equation}
is an invariant region for \eqref{dampedtruncate} in the sense that for any spatially smooth initial data $(\rho_0, q_0)$ such that $(\rho_0(x), q_0(x)) \in \Lambda_{\kappa}$ for all $x \in \mathbb{T}$, the unique global smooth solution with initial data $(\rho_0, q_0)$ has $(\rho(t, x), q(t, x)) \in \Lambda_{\kappa}$ for all $t \ge 0$ and $x \in \mathbb{T}$. 
\end{proposition}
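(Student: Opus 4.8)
The plan is to pass from the truncated damped system \eqref{dampedtruncate} to the Riemann invariants $z = \frac{q}{\rho} - \rho^{\theta}$ and $w = \frac{q}{\rho} + \rho^{\theta}$ and to run the classical coupled maximum/minimum principle of \cite{CCS77} (see also \cite{DP832}) that establishes invariance of $\Lambda_{\kappa}$ for the untruncated undamped system \eqref{undamped}, checking that the two new ingredients here — the truncation factor $\chi_{R}(\rho,q)$ and the damping $-\alpha q$ — do not obstruct it. The key structural point is that $\chi_{R}(\rho,q) = \chi_{R}\big(\|\rho^{-1}\|_{L^{\infty}(\mathbb{T})}\big)\chi_{R}\big(\|q\|_{H^{2}(\mathbb{T})}\big)$ is, at each fixed time, a scalar in $[0,1]$ depending on $t$ alone and not on $x$; write $\chi(t)$ for it. Eliminating $\rho_{t}$ from the momentum equation via the continuity equation and dividing by $\rho$, a direct computation using $\kappa = \theta^{2}/\gamma$ gives, with $u := q/\rho$,
\begin{align*}
\partial_{t}w + \chi(t)\big(u + \theta\rho^{\theta}\big)\partial_{x}w &= \epsilon\,\partial_{x}^{2}w - \alpha u + \frac{\epsilon}{4\theta\rho^{\theta}}\Big[(3-\theta)(\partial_{x}w)^{2} + 2(\theta-1)\partial_{x}w\,\partial_{x}z - (\theta+1)(\partial_{x}z)^{2}\Big], \\
\partial_{t}z + \chi(t)\big(u - \theta\rho^{\theta}\big)\partial_{x}z &= \epsilon\,\partial_{x}^{2}z - \alpha u + \frac{\epsilon}{4\theta\rho^{\theta}}\Big[(\theta+1)(\partial_{x}w)^{2} - 2(\theta-1)\partial_{x}w\,\partial_{x}z + (\theta-3)(\partial_{x}z)^{2}\Big].
\end{align*}
Crucially, $\chi(t)$ multiplies only the first–order transport terms, and the quadratic ``gradient'' terms on the right are exactly those produced by the artificial viscosity in the Riemann–invariant formulation of \eqref{undamped}; the truncation leaves them untouched. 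Positivity of $\rho$, which is needed to define $z,w$, is propagated by the maximum principle applied to the continuity equation in the form $\partial_{t}\rho + \chi(t)u\,\partial_{x}\rho = \epsilon\,\partial_{x}^{2}\rho - \chi(t)\rho\,\partial_{x}u$, and in the application to Proposition \ref{Linfuniform1} one in fact has $\rho \ge 1/R$ by Proposition \ref{rholowerbound}.

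Next I would record the sign facts that drive the maximum principle. At a spatial maximum of $w$ one has $\partial_{x}w = 0$, so the gradient bracket in the $w$–equation reduces to $-(\theta+1)(\partial_{x}z)^{2} \le 0$; at a spatial minimum of $z$ one has $\partial_{x}z = 0$, so the gradient bracket in the $z$–equation reduces to $(\theta+1)(\partial_{x}w)^{2} \ge 0$; both hold for every $\gamma>1$ since $\theta>0$ and $\rho^{\theta}>0$. As for the damping, on the boundary face $\{w=\kappa\}\cap\Lambda_{\kappa}$ one has $u = \frac{w+z}{2} = \frac{\kappa+z}{2}\ge 0$ (because $z\ge-\kappa$ there), hence $-\alpha u\le 0$; symmetrically, on $\{z=-\kappa\}\cap\Lambda_{\kappa}$ one has $u=\frac{w-\kappa}{2}\le 0$ (because $w\le\kappa$ there), hence $-\alpha u\ge 0$. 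Consequently, as long as $(\rho,q)$ stays in $\Lambda_{\kappa}$: at a point where $w$ attains a spatial maximum equal to $\kappa$, the $w$–equation gives $\partial_{t}w\le\epsilon\,\partial_{x}^{2}w\le 0$; at a point where $z$ attains a spatial minimum equal to $-\kappa$, the $z$–equation gives $\partial_{t}z\ge\epsilon\,\partial_{x}^{2}z\ge 0$. The inequality $z\le w$, i.e. $\rho\ge 0$, is automatic.

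Finally I would run the standard coupled argument. Set $W(t) := \max_{x\in\mathbb{T}}w(t,x)$ and $Z(t):=\min_{x\in\mathbb{T}}z(t,x)$, both locally Lipschitz in $t$ since $w,z$ are smooth and $\mathbb{T}$ is compact, and let $[0,t_{*})$ be the maximal interval on which $W\le\kappa$ and $Z\ge-\kappa$ hold simultaneously. If $t_{*}<\infty$, then at $t_{*}$ one of the two equalities holds, say $W(t_{*})=\kappa$ (the case $Z(t_{*})=-\kappa$ is symmetric); since $Z(s)\ge-\kappa$ for $s\le t_{*}$, every maximizing point $x_{0}$ of $w(t_{*},\cdot)$ has $u(t_{*},x_{0})\ge 0$ by the boundary sign fact, so by the standard formula for the derivative of a maximum, $D^{+}W(t_{*})\le\max_{x_{0}}\partial_{t}w(t_{*},x_{0})\le 0$; coupling this with the symmetric estimate for $Z$ through a Gronwall argument exactly as in \cite{CCS77} — together with the harmless perturbation $w\mapsto w-\eta(1+t)$, $z\mapsto z+\eta(1+t)$ followed by $\eta\downarrow 0$ to absorb the degenerate case where the initial data already touches $\partial\Lambda_{\kappa}$ — shows that $W\le\kappa$ and $Z\ge-\kappa$ persist, contradicting maximality. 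I expect the only genuinely delicate point to be this coupling (each of the two boundary sign facts relies on the other invariant bound still being valid), but it is precisely the configuration handled in \cite{CCS77}; the factor $\chi(t)$ is inert at spatial extrema and the damping only sharpens the inward orientation of the dynamics on $\partial\Lambda_{\kappa}$, so neither changes the argument.
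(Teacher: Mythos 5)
Your proposal is correct, and I verified your Riemann--invariant computation (with $v=\rho^{\theta}$, the brackets $(3-\theta)(\partial_x w)^2+2(\theta-1)\partial_x w\,\partial_x z-(\theta+1)(\partial_x z)^2$ and its counterpart for $z$ are exactly what the change of variables produces, and indeed $\chi_R$ is a scalar function of $t$ alone multiplying only the transport terms). However, your route differs from the paper's in two respects. First, the paper never rewrites the system in $(z,w)$ form: it stays in the conservative variables, uses that $\nabla_{\rho,q}z,\nabla_{\rho,q}w$ are left eigenvectors of $\bd{F}'(\bd{U})$ so the transport term dies at a spatial extremum, and invokes quasiconvexity of $w$ and $-z$ (citing \cite{DP832}) to control the viscous term; your sign analysis of the quadratic gradient brackets at extrema is precisely the content of that quasiconvexity, just made explicit. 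Second, and more substantively, the mechanisms for obtaining a \emph{strict} inequality at the first touching time differ: the paper perturbs the PDE itself, adding $-\delta\rho$ to the continuity equation (Lemma \ref{perturbedinvariant}, where $\nabla w\cdot\bd{G}_\delta=-(\alpha-\delta)\tfrac{q}{\rho}-\delta\theta\rho^{\theta}<0$ on the face $w=\kappa$, using a non-vacuum bound obtained by an exponential rescaling of Proposition \ref{rholowerbound}), and then removes $\delta$ by a continuous-dependence/Gronwall limit passage in $C(0,T;H^2)$ modeled on Proposition \ref{continuous}; you instead keep the equation fixed and tilt the barriers, $w\mapsto w-\eta(1+t)$, $z\mapsto z+\eta(1+t)$, getting $\partial_t w_\eta\le-\eta<0$ at the first touching point and letting $\eta\downarrow0$. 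Your variant is more elementary and self-contained (no PDE stability estimate is needed for the limit), while the paper's buys reuse of machinery already established for well-posedness and keeps all sign computations in the $(\rho,q)$ variables. The only loose spot in your write-up is the simultaneous mention of a Dini-derivative/Gronwall argument and the $\eta$-tilt — the tilt alone closes the coupled first-touching argument, and you correctly note that each boundary sign fact ($u\ge0$ on $\{w=\kappa\}$, $u\le0$ on $\{z=-\kappa\}$) requires the other constraint to still hold, which the first-touching-time definition guarantees; your handling of strict positivity of $\rho$ via the parabolic minimum principle (with bounded zeroth-order coefficient $-\chi\,\partial_x u$ on compact time intervals) is also adequate, since the initial data here is only assumed to lie in $\Lambda_\kappa$ and need not satisfy $\rho_0\ge1/R$.
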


The proof of this proposition is essentially a minimum/maximum principle type argument, but with a nonlinear transformation given by the Riemann invariants. As is customary in some minimum and maximum principle arguments, to have the strict inequalities required for such arguments, we use a perturbation by a parameter $\delta > 0$ to help with the proof of the result, and then pass to the limit as $\delta \to 0$ to obtain a result for the original set of equations. Namely,  we consider
\begin{equation}\label{perturbed}
\begin{cases}
\partial_{t}\rho + \chi_{R}(\rho,q) \partial_{x}q = - \delta \rho + \epsilon \Delta \rho, \\
\partial_{t}q + \chi_{R}(\rho,q) \partial_{x}\Big(\frac{q^{2}}{\rho} + \kappa \rho^{\gamma}\Big) = -\alpha q + \epsilon \Delta q.
\end{cases}
\end{equation}

We hence first show the following invariant region result for the perturbed $\delta$-system \eqref{perturbed} and then pass to the limit as $\delta \to 0$ to prove Proposition \ref{invariantregion}.
This is done in the spirit of the invariant region results in \cite{CCS77}.

\begin{lemma}\label{perturbedinvariant}
The region $\Lambda_{\kappa}$ is an invariant region for \eqref{perturbed}, whenever $0 < \delta < \alpha$.
\end{lemma}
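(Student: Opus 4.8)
The plan is to argue by a maximum/minimum principle in the Riemann invariants, in the spirit of \cite{CCS77}, using the perturbation $0<\delta<\alpha$ precisely to make the zeroth-order (reaction) terms point \emph{strictly} inward at the boundary of $\Lambda_\kappa$. Throughout I would work with smooth solutions (whose global existence and uniqueness I take as given, as for \eqref{approxsystem}), writing $u=q/\rho$, $w=u+\rho^\theta$, $z=u-\rho^\theta$ with $\theta=\frac{\gamma-1}{2}$. First I would check that $\rho$ stays strictly positive: since $\rho_0>0$ on the compact torus, the continuity equation in \eqref{perturbed} can be written in non-conservative form
\begin{equation*}
\partial_t\rho+\chi_R(\rho,q)\,u\,\partial_x\rho-\epsilon\,\partial_x^2\rho+\big(\delta+\chi_R(\rho,q)\,\partial_x u\big)\rho=0,
\end{equation*}
which is linear parabolic in $\rho$, so the minimum principle forbids $\rho$ from reaching $0$. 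Since $w-z=2\rho^\theta\ge 0$ automatically, the region $\Lambda_\kappa$ of \eqref{Lambdakappa} is cut out by the two conditions $w\le\kappa$ and $z\ge-\kappa$, and it suffices to propagate these.

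The heart of the proof is a computation: starting from \eqref{perturbed} and using $\kappa\gamma=\theta^2$, $\gamma-2=2\theta-1$, and $\rho^{\theta-1}\partial_x^2\rho=\partial_x^2(\rho^\theta)-\theta(\theta-1)\rho^{\theta-2}(\partial_x\rho)^2$, I would derive the scalar equations
\begin{align*}
\partial_t w+\chi_R(\rho,q)\,\lambda_+\,\partial_x w&=\epsilon\,\partial_x^2 w+\epsilon E_w+(\delta-\alpha)u-\delta\theta\rho^\theta,\\
\partial_t z+\chi_R(\rho,q)\,\lambda_-\,\partial_x z&=\epsilon\,\partial_x^2 z+\epsilon E_z+(\delta-\alpha)u+\delta\theta\rho^\theta,
\end{align*}
where $\lambda_\pm=u\pm\theta\rho^\theta$ are the characteristic speeds and $E_w=\tfrac{2\partial_x\rho\,\partial_x u}{\rho}-\theta(\theta-1)\rho^{\theta-2}(\partial_x\rho)^2$, $E_z=\tfrac{2\partial_x\rho\,\partial_x u}{\rho}+\theta(\theta-1)\rho^{\theta-2}(\partial_x\rho)^2$. (Here $\chi_R(\rho,q)$ is constant in $x$, so it multiplies only the transport terms; the viscous and reaction terms are untouched by the truncation.) The two facts I would then extract are: (i) at a spatial maximum of $w$ one has $\partial_x w=0$, hence $\partial_x u=-\theta\rho^{\theta-1}\partial_x\rho$ there, which forces $E_w=-\theta(\theta+1)\rho^{\theta-2}(\partial_x\rho)^2\le 0$ (and symmetrically $E_z=\theta(\theta+1)\rho^{\theta-2}(\partial_x\rho)^2\ge 0$ at a spatial minimum of $z$); and (ii) on $\{w=\kappa\}$, where $u=\kappa-\rho^\theta$, the reaction $(\delta-\alpha)u-\delta\theta\rho^\theta=(\delta-\alpha)(\kappa-\rho^\theta)-\delta\theta\rho^\theta$ is $\le-\delta\theta\rho^\theta<0$ as soon as $0<\delta<\alpha$ and $0<\rho^\theta\le\kappa$ (and symmetrically, on $\{z=-\kappa\}$, using $u=\rho^\theta-\kappa$, the reaction is $\ge\delta\theta\rho^\theta>0$).

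With these in hand I would run the standard contradiction argument, but for the two bounds \emph{simultaneously}, since the favorable sign of the reaction at the boundary requires $\rho^\theta\le\kappa$, which is itself a consequence of $w\le\kappa$ and $z\ge-\kappa$. Concretely: set $M(t)=\max_x w(t,x)$, $N(t)=\min_x z(t,x)$, let $\tau=\sup\{t\ge 0: M(s)\le\kappa \text{ and } N(s)\ge-\kappa\ \text{ for all } s\le t\}$, and suppose $\tau<\infty$. Continuity gives $\rho(\tau,x)^\theta=\tfrac12\big(w-z\big)(\tau,x)\le\tfrac12\big(M(\tau)-N(\tau)\big)\le\kappa$ for all $x$, and $\rho(\tau,\cdot)>0$ by the first step. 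In the case $M(\tau)=\kappa$ (the case $N(\tau)=-\kappa$ is symmetric), at a maximizer $x^\ast$ of $w(\tau,\cdot)$ the $w$-equation together with facts (i)--(ii) gives $\partial_t w(\tau,x^\ast)\le-\delta\theta\rho(\tau,x^\ast)^\theta<0$, contradicting that $M$ touches $\kappa$ from below and then exceeds it (packaged via the Dini derivative estimate $D^+M(\tau)\ge 0$ and $D^+M(\tau)\le\partial_t w(\tau,x^\ast)$). Hence $\tau=\infty$, so $w\le\kappa$ and $z\ge-\kappa$ for all time, which, with $w\ge z$, says exactly that $(\rho(t,x),q(t,x))\in\Lambda_\kappa$ for all $t\ge 0$, $x\in\mathbb{T}$. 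The main obstacle I anticipate is bookkeeping rather than conceptual: carefully tracking the viscous correction terms $E_w,E_z$ through the nonlinear change of variables and verifying their sign at spatial extrema, and correctly handling the coupling between the $w\le\kappa$ and $z\ge-\kappa$ bounds (plus positivity of $\rho$) so that $\rho^\theta\le\kappa$ is available exactly when needed — this coupling, together with the strict inequality $\delta<\alpha$, is what makes the reaction term strictly inward-pointing and drives the whole argument.
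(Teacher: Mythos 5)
Your proposal is correct and proves the lemma by the same Chueh--Conley--Smoller-type maximum principle in the Riemann invariants that the paper uses: the perturbation $0<\delta<\alpha$ supplies the strictly inward sign of the zeroth-order term at $\{w=\kappa\}$ and $\{z=-\kappa\}$ (your reaction $(\delta-\alpha)u-\delta\theta\rho^\theta$ is exactly the paper's $\nabla_{\rho,q}w\cdot\bd{G}_\delta$), and coupling the two bounds yields $\rho^\theta\le\kappa$ and the correct sign of $u$ at the touching point, just as in the paper. The implementation differs in two places, both acceptable. Where the paper stays in the quasilinear $(\rho,q)$ form, kills the transport term because $\nabla_{\rho,q}w,\nabla_{\rho,q}z$ are left eigenvectors of $\bd{F}'(\bd{U})$, and cites the quasiconvexity of $w$ and $-z$ from \cite{DP832} to get $\epsilon\,\nabla_{\rho,q}w\cdot\Delta\bd{U}\le 0$ at a spatial maximum, you derive the scalar parabolic equations for $w,z$ with explicit viscous corrections $E_w,E_z$ and verify their sign at extrema by hand; your computation $E_w=-\theta(\theta+1)\rho^{\theta-2}(\partial_x\rho)^2\le 0$ at a maximizer of $w$ is precisely the quasiconvexity input, made self-contained. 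And where the paper gets positivity of $\rho_\delta$ quantitatively ($\rho_\delta\ge e^{-\delta t}/R$) by rescaling with $e^{\delta t}$ and reusing Proposition \ref{rholowerbound}, you use a direct minimum-principle/Gronwall bound on $\min_x\rho$; the one point you should make explicit there is that the zeroth-order coefficient $\delta+\chi_R(\rho,q)\partial_x u$ is bounded only because $\chi_R\big(\|\rho^{-1}\|_{L^\infty(\mathbb{T})}\big)$ vanishes once $\min_x\rho<1/R$, so $\chi_R\partial_x u$ cannot degenerate as $\rho$ gets small. With that remark your argument is complete, and your Dini-derivative packaging of the first-touching-time contradiction is, if anything, slightly more careful than the paper's.
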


\begin{proof}[Proof of Lemma \ref{perturbedinvariant}]

    We rewrite the system \eqref{perturbed} as
    \begin{equation}\label{quasitruncate}
    \partial_{t}\bd{U} + \chi_R(\rho,q) \bd{F}'(\bd{U}) \partial_{x}\bd{U} = \epsilon \Delta \bd{U} + \bd{G}_{\delta}(\bd{U}) \text{ on } \R^{+} \times \mathbb{T}, \qquad \bd{U}(0) = \bd{U}_{0} := \begin{pmatrix} \rho_0 \\ q_0 \\ \end{pmatrix},
    \end{equation}
    where
    \begin{equation*}
    \bd{U} := \begin{pmatrix} \rho \\ q \\ \end{pmatrix}, \quad \bd{F}(\bd{U}) = \begin{pmatrix} q \\ \frac{q^{2}}{\rho} + \kappa \rho^{\gamma} \end{pmatrix}, \quad \bd{F}'(\bd{U}) = \begin{pmatrix} 0 & 1 \\ -\frac{q^{2}}{\rho^{2}} + \kappa \gamma \rho^{\gamma - 1} & \frac{2q}{\rho} \\ \end{pmatrix}, \quad \bd{G}_{\delta}(\bd{U}) := \begin{pmatrix} -\delta \rho \\ -\alpha q \\ \end{pmatrix}.
    \end{equation*}
    In terms of the state variables $(\rho, q)$, we can rewrite the Riemann invariants as
    \begin{equation}\label{rinv}
    z = \frac{q}{\rho} - \rho^{\theta}, \qquad w = \frac{q}{\rho} + \rho^{\theta}.
    \end{equation}
    Hence, we compute that
    \begin{equation*}
    \nabla_{\rho, q}z = \left(-\frac{q}{\rho^{2}} - \theta \rho^{\theta - 1}, \frac{1}{\rho}\right), \qquad \nabla_{\rho, q}w = \left(-\frac{q}{\rho^{2}} + \theta \rho^{\theta - 1}, \frac{1}{\rho}\right)
    \end{equation*}
    and we observe that $\nabla_{\rho, q} z$ and $\nabla_{\rho, q} w$ are left eigenvectors of the matrix $\bd{F}'(\bd{U})$ with eigenvalues $\displaystyle \lambda_{z} = \frac{q}{\rho} - \theta \rho^{\theta}$ and $\displaystyle \lambda_{w} = \frac{q}{\rho} + \theta \rho^{\theta}$ respectively.

    \medskip

    We now prove the desired claim about invariant regions by using the truncated system written in quasilinear form \eqref{quasitruncate}. Consider initial data $(\rho_0, q_0) \in \Lambda_{\kappa}$, defined in \eqref{Lambdakappa}, and let $\bd{U}_\delta=(\rho_\delta(t, x), q_\delta(t, x))$ be the solution to \eqref{quasitruncate}. Suppose that there exists $t_{0} > 0$ such that 
    \begin{equation*}
    z(\rho_\delta(t_0, x_0), q_\delta(t_0, x_0)) = -\kappa \ \ \text{ or } \ \  w(\rho_\delta(t_0, x_0), q_\delta(t_0, x_0)) = \kappa, \quad \text{ for some $x_0 \in \mathbb{T}$},
    \end{equation*}
    and
    \begin{equation}\label{zw}
    -\kappa \le z(\rho_\delta(t, x), q_\delta(t, x)) \le w(\rho_\delta(t, x), q_\delta(t, x)) \le \kappa, \qquad \text{ for all } 0 \le t < t_0 \text{ and } x \in \mathbb{T}.
    \end{equation}
    We claim that
    \begin{equation*}
    \begin{cases}
    \frac{\partial}{\partial t}\Big(z(\rho_\delta(t, x), q_\delta(t, x))\Big)\Big|_{(t, x) = (t_0, x_0)} > 0 \text{ in the case where } z(\rho_\delta(t_0, x_0), q_\delta(t_0, x_0)) = -\kappa, \\
    \frac{\partial}{\partial t} \Big(w(\rho_\delta(t, x), q_\delta(t, x))\Big)\Big|_{(t, x) = (t_0, x_0)} < 0 \text{ in the case where $w(\rho_\delta(t_0, x_0), q_\delta(t_0, x_0)) = \kappa$}.
    \end{cases}
    \end{equation*}

    To prove this, we consider the case where 
    \begin{equation}\label{wkappa}
    w(\rho_\delta(t_0, x_0), q_\delta(t_0, x_0)) = \kappa
    \end{equation}
    with the objective of showing that $\displaystyle \frac{\partial}{\partial t} w\Big(\rho_\delta(t_0, x_0), q_\delta(t_0, x_0)\Big) < 0$, as the case of showing the claim for $z$ follows analogously. We use \eqref{quasitruncate} and the Chain Rule to compute that
    \begin{equation*}
    \frac{\partial}{\partial t}w(\bd{U}_\delta) = \nabla_{\rho, q} w(\bd{U}_\delta) \Big(\epsilon \Delta \bd{U}_\delta + \bd{G}_{\delta}(\bd{U}_\delta) - \chi_R(\rho_\delta,q_\delta ) \bd{F}'(\bd{U}_\delta) \partial_{x}\bd{U}_\delta\Big).
    \end{equation*}
    We will perform a sign analysis on the various terms on the right-hand side, evaluated at the point $(t_0, x_0)$, in order to show the desired result.

    \medskip

    \noindent \textbf{Term 1.} It can be shown (see Section 4 in \cite{DP832}) that $w$ is \textit{quasiconvex}, meaning that
    \begin{equation*}
    \nabla_{\rho, q} w \cdot \partial_{x}\bd{U}_\delta|_{(t, x)} = 0 \text{ at some point $(t, x)$} \quad \Longrightarrow \quad \langle (\nabla^{2}_{\rho, q}w)\partial_{x}\bd{U}_\delta, \partial_{x}\bd{U}_\delta \rangle|_{(t, x)} \ge 0.
    \end{equation*}
    Recall that there is a local maximum in space of $w(\rho_\delta(t, x), q_\delta(t, x))$ at $(t_0, x_0)$ by assumption in \eqref{zw} and \eqref{wkappa}. So by considering the quadratic term in the Taylor expansion in the $x$ variable around $x_0$ (for fixed $t_0$):
    \begin{equation*}
    \nabla_{\rho, q}w(\bd{U}_\delta) \cdot \partial_{x}^{2}\bd{U}_\delta + \langle (\nabla^{2}_{\rho, q} w)\partial_{x}\bd{U}_\delta, \partial_{x}\bd{U}_\delta \rangle|_{(t_0, x_0)} \le 0.
    \end{equation*}
    So the quasiconvexity of $w$ implies that
    \begin{equation*}
    \epsilon \nabla_{\rho, q} w(\bd{U}_\delta) \cdot \partial_{x}^{2}\bd{U}_\delta = \epsilon \nabla_{\rho, q}w(\bd{U}_\delta) \cdot \Delta \bd{U}_\delta |_{(t_0, x_0)} \le 0.
    \end{equation*}

    \medskip

    \noindent \textbf{Term 2.} We compute that
    \begin{equation*}
    \nabla_{\rho, q} w(\bd{U}_\delta) \bd{G}_{\delta}(\bd{U}_\delta) = -(\alpha - \delta) \frac{q_\delta}{\rho_\delta} - \delta \theta \rho^{\theta}_\delta. 
    \end{equation*}
    Since $w(\rho_\delta(t_0, x_0), q_\delta(t_0, x_0)) = \kappa$ for $w = \frac{q_\delta}{\rho_\delta} + \rho_\delta^{\theta}$ and $-\kappa \le z(\rho_\delta(t, x), q_\delta(t, x)) \le w(\rho_\delta(t, x), q_\delta(t, x)) \le \kappa$ for all $0 \le t < t_0$ by assumption in \eqref{zw} and \eqref{wkappa}, we conclude by the geometry of the invariant region $\Lambda_{\kappa}$ that $0 < \rho_\delta(t_0, x_0) \le \kappa^{1/\theta}$ and $q_\delta(t_0, x_0) \ge 0$ by solving the inequalities for $z$ and $w$, using \eqref{rinv}. 

Next,  note that if $(\rho_\delta,q_\delta)$ satisfies \eqref{perturbed}, then $(\bar\rho_\delta,\bar q_\delta)=e^{\delta t}(\rho_\delta,q_\delta)$ satisfies 
$$\partial_{t}\bar\rho_\delta + \chi_{R}(\bar\rho_\delta e^{-\delta t},\bar q_\delta e^{-\delta t}) \partial_{x}\bar q_\delta =\epsilon \Delta \bar \rho_\delta. $$

 An easy generalization of the result in Proposition \ref{rholowerbound} via a minimum principle argument gives us $\frac{1}{R} \leq \bar\rho_\delta(t,x)$ a.s. {almost surely for all} $t\in [0,T], x\in\mathbb{T}$. Hence we obtain:
 \begin{align}\label{uniformrho_inv}
   0<  \frac{e^{-\delta t}}{R} \leq \rho_\delta(t,x),\qquad \text{almost surely for all }t\in [0,T], x\in\mathbb{T},
\end{align}
    
  This gives us that $\rho_\delta(t_0, x_0)$ cannot be zero, i.e. the approximate system \eqref{perturbed} does not have vacuum.

    Hence, for $\delta > 0$ sufficiently small (namely $0 < \delta < \alpha$), we have that
    \begin{equation*}
    \nabla_{\rho, q} w(\bd{U}_\delta) \bd{G}_{\delta}(\bd{U}_\delta)\Big\vert_{(t_0, x_0)} < 0.
    \end{equation*}
    Note that this is where we needed the extra $\delta$ approximation, to get this derivative to be strictly negative (otherwise, if $\delta = 0$, it could be zero at $\rho_\delta(t_0, x_0) = \kappa^{1/\theta}$ and $q_\delta = 0$).

    \medskip

    \noindent \textbf{Term 3.} Since $\nabla_{\rho, q}w$ is a left eigenvector of $\bd{F}'(\bd{U}_\delta)$ with eigenvector $\displaystyle \lambda_{w} = \frac{q_\delta}{\rho_\delta} + \theta \rho_\delta^{\theta}$, we compute that
    \begin{equation*}
    -\chi_R(\rho,q)\nabla_{\rho, q}w(\bd{U}_\delta) \bd{F}'(\bd{U}_\delta) \partial_{x}\bd{U}_\delta = -\lambda_{w}(\bd{U}_\delta) \chi_R(\rho_\delta,q_\delta) \nabla_{\rho, q} w(\bd{U}_\delta) \partial_{x}\bd{U}_\delta.
    \end{equation*}
    At $(t_0, x_0)$, $w(\rho_\delta(t_0, x), q_\delta(t_0, x))$ has a local maximum in space by the assumptions \eqref{zw} and \eqref{wkappa}. So by the Chain Rule, $\nabla_{\rho, q}w(\bd{U}) \partial_{x}\bd{U}_\delta\Big\vert_{(t_0, x_0)} = 0$. Hence,
    \begin{equation*}
    -\chi_R(\rho_\delta,q_\delta)\nabla_{\rho, q} w(\bd{U}_\delta) \bd{F}'(\bd{U}_\delta) \partial_{x}\bd{U}_\delta\Big\vert_{(t_0, x_0)} = 0.
    \end{equation*}

    This establishes the desired result that $\displaystyle \frac{\partial}{\partial t} \Big(w(\rho_\delta(t, x), q_\delta(t, x))\Big)\Big\vert_{(t, x) = (t_0, x_0)} < 0$.

    \medskip
    
    Finally, we make a few comments about the other case in which $z(\rho_\delta(t_0, x_0), q_\delta(t_0, x_0)) = -\kappa$, and the assumption \eqref{zw} holds. In this case, we would want to show that
    \begin{equation*}
    \frac{\partial}{\partial t}\Big(z(\rho_\delta(t, x), q_\delta(t, x))\Big)\Big|_{(t_0, x_0)} > 0.
    \end{equation*}
    By \eqref{quasitruncate} and the Chain Rule:
    \begin{equation}\label{zchain}
    \frac{\partial}{\partial t}z(\bd{U}_\delta) = \nabla_{\rho, q}z(\bd{U}_\delta) \Big(\epsilon \Delta \bd{U}_\delta + G_{\delta}(\bd{U}_\delta) - \chi_R(\rho_\delta,q_\delta)\bd{F}'(\bd{U}_\delta) \partial_{x}\bd{U}_\delta\Big)
    \end{equation}
    In this case, we can estimate the terms on the right-hand side similarly to the case of $w$ above. For the first term on the right-hand side, $-z$ is quasiconvex (see Section 4 of \cite{DP832}), so therefore,
    \begin{equation*}
    \epsilon \nabla_{\rho, q} z(\bd{U}_\delta) \Delta \bd{U}_\delta|_{(t_0, x_0)} \ge 0.
    \end{equation*}
    For the second term, we compute that for $0 < \delta < \alpha$:
    \begin{equation*}
    \nabla_{\rho, q}z(\bd{U}_\delta) \bd{G}_{\delta}(\bd{U}_\delta) = -(\alpha - \delta) \frac{q_\delta}{\rho_\delta} + \delta \theta \rho_\delta^{\theta} > 0,
    \end{equation*}
    since $z(\rho_\delta(t_0, x_0), q_\delta(t_0, x_0)) = -\kappa$ and the assumption \eqref{zw} together imply that $0 < \rho_\delta(t_0, x_0) \le \kappa^{1/\theta}$ and $q_\delta(t_0, x_0) \le 0$, from the definition of the Riemann invariants. Finally, in the same way as for Term 3 of the computation for $w$ (namely, using the fact that $\nabla_{\rho, q}z$ is a left eigenvector of $\bd{F}'(\bd{U}_\delta)$), we have that $\nabla_{\rho, q} z(\bd{U}_\delta) \partial_{x}\bd{U}_\delta\Big\vert_{(t_0, x_0)} = 0$, and hence:
    \begin{equation*}
    -\chi_R(\rho_\delta,q_\delta) \nabla_{\rho, q}z(\bd{U}_\delta) \bd{F}'(\bd{U}_\delta) \partial_{x}\bd{U}_\delta\Big\vert_{(t_0, x_0)} = 0.
    \end{equation*}
    Thus, using \eqref{zchain}, this completes the proof of the fact that $\displaystyle \frac{\partial}{\partial t} z(\rho_\delta(t, x), q_\delta(t, x))\Big\vert_{(t_0, x_0)} > 0$.
\end{proof}

Our next goal is to extend this result in Proposition \ref{perturbedinvariant} on invariant regions to the equations \eqref{perturbed} with $\delta > 0$ to the case of $\delta = 0$, by taking a limit as $\delta \to 0$. This will hence prove the invariant region result in Proposition \ref{invariantregion} for $\delta = 0$. 

\begin{proof}[Proof of Proposition \ref{invariantregion}]
    Let $(\rho_{\delta}, q_{\delta})$ be the solution to \eqref{perturbed} for a perturbation parameter $\delta > 0$ and some initial data $(\rho_0, q_0) \in \Lambda_{\kappa}$, and let $(\rho, u)$ be the solution to \eqref{dampedtruncate} for the same initial data. The proposition will be established if we show that 
    \begin{equation}\label{deltastability}
        \|\rho_{\delta} - \rho\|_{ C([0, T] \times \mathbb{T})} + \|u_{\delta} - u\|_{C([0, T] \times \mathbb{T})} \to 0, \qquad \text{ almost surely.}
    \end{equation}
 The result thus follows when we combine this with the fact that $(\rho_{\delta}, u_{\delta}) \in \Lambda_{\kappa}$ for all $t \ge 0$ and for all $0 < \delta < \alpha$ almost surely.

    Subtracting the equations for $(\rho, q)$ and $(\rho_{\delta}, q_{\delta})$ in \eqref{dampedtruncate} and \eqref{perturbed}, we obtain:
    \begin{align*}
    &\partial_{t}(\rho - \rho_{\delta}) + \partial_{x}([q]_{R} - [q_{\delta}]_{R}) = \epsilon \Delta (\rho - \rho_{\delta}) - \delta \rho_{\delta},\\
   & \partial_{t}(q - q_{\delta}) + \partial_{x}\left(\frac{[q]_{R}q}{\rho} - \frac{[q_{\delta}]_{R}q_{\delta}}{\rho_{\delta}}\right) + \chi_{R}(\rho, q) \partial_{x}(\kappa \rho^{\gamma}) - \chi_{R}(\rho_{\delta}, q_{\delta})\partial_{x}(\kappa \rho_{\delta}^{\gamma}) \\
    &= \Big(\chi_{R}(\rho, q) \bd{\Phi}^{R, \epsilon}(\rho, q) - \chi_{R}(\rho_{\delta}, q_{\delta}) \bd{\Phi}^{R, \epsilon}(\rho_{\delta}, q_{\delta})\Big) dW - \alpha(q - q_{\delta}) + \epsilon \Delta (q - q_{\delta}).
    \end{align*}
    Note that these difference equations are the same as those for the continuous dependence proof in Proposition \ref{continuous}, see \eqref{firstdiff0} and \eqref{seconddiff0}, with an extra $\delta \rho_{\delta}$ term in the first equation.
    Moreover, observe due to the lower bounds on the density $\rho_\delta$ given in \eqref{uniformrho_inv}
we get analogues of the estimates in Lemma \ref{diffest}. 

So as in the proof of Proposition \ref{continuous}, we can obtain the following analogue of inequality \eqref{gronwallcontinuous}, where we account for the extra $\delta \rho_{\delta}$ term and note that $\rho$ and $\rho_{\delta}$ have the same initial data:
    \begin{multline}\label{deltaineq2}
    \Big(\|\rho - \rho_{\delta}(t)\|^{2}_{H^{2}(\mathbb{T})} + \|(q - q_{\delta})(t)\|_{H^{2}(\mathbb{T})}^{2}\Big) + \frac{\epsilon}{2} \mathbb{E} \int_{0}^{t} \Big(\|\partial_{x}(\rho - \rho_{\delta})(s)\|^{2}_{H^{2}(\mathbb{T})} + \|\partial_{x}(q - q_{\delta})(s)\|_{H^{2}(\mathbb{T})}^{2}\Big) ds \\
    \le \delta C(\epsilon) \mathbb{E}\Big(\|\rho_{\delta}\|_{L^{2}(0, T; H^{2}(\mathbb{T}))}^{2}\Big) + C_{R, \epsilon, T} \mathbb{E} \int_{0}^{t} \Big(\|(\rho - \rho_{\delta})(s)\|_{H^{2}(\mathbb{T})}^{2} + \|(q - q_{\delta})(s)\|^{2}_{H^{2}(\mathbb{T})}\Big) ds.
    \end{multline}
    Here, we used the estimate for $j = 0, 1, 2$ that
    \begin{equation*}
    \left|\int_{0}^{t} \int_{\mathbb{T}} \delta \partial_{x}^{j}\rho_{\delta} \partial_{x}^{j}(\rho - \rho_{\delta})\right| \le \frac{\epsilon}{4} \int_{0}^{t} \int_{\mathbb{T}} |\partial_{x}^{j}(\rho - \rho_{\delta})|^{2} + \delta C(\epsilon) \int_{0}^{t} \int_{\mathbb{T}} |\partial^{j}_{x}\rho_{\delta}|^{2}, 
    \end{equation*}
    where the term $\displaystyle \frac{\epsilon}{4} \int_{0}^{t} \int_{\mathbb{T}} |\partial^{j}_{x}(\rho - \rho_{\delta})|^{2}$ can be absorbed into the dissipation term on the left-hand side of the estimate, as in the proof of Lemma \ref{continuous}.

   Identically to the proof of Lemma \ref{rhoH2} we can show that $\|\rho_{\delta}\|_{C(0, T; H^{2}(\mathbb{T}))} \le C_{R, T}$ almost surely, for a constant $C_{R, T}$ that is independent of $\delta$. Therefore, using this in \eqref{deltaineq2}, we obtain by Gronwall's inequality that 
    \begin{equation*}
    \|\rho - \rho_{\delta}\|_{C(0, T; H^{2}(\mathbb{T}))} + \|q - q_{\delta}\|_{C(0, T; H^{2}(\mathbb{T}))} \to 0,
    \end{equation*}
    so since $(\rho_{\delta}, q_{\delta}) \in \Lambda_{\kappa}$, we conclude by the continuous embedding $C(0, T; H^{2}(\mathbb{T})) \subset C([0, T] \times \mathbb{T})$ that $(\rho, q) \in \Lambda_{\kappa}$ also by passing to the limit as $\delta \to 0$. 
\end{proof}

Now that we have shown that the deterministic dynamics of the approximate system \eqref{approxsystem}, represented by \eqref{dampedtruncate}, have an invariant region of $\Lambda_{\kappa}$ whenever the initial data $(\rho_0, q_0) \in \Lambda_{\kappa}$, we show a corresponding invariant region result for the stochastic problem.

\begin{proposition}\label{stochinv}
    The region $\Lambda_{\kappa_{\epsilon}}$ is an invariant region for the stochastic equation:
    \begin{equation*}
    \begin{cases}
        \partial_{t}\rho = 0, \\
        \partial_{t}q = \chi_{R}(\rho, q) \bd{\Phi}^{R, \epsilon}(\rho, q) dW.
    \end{cases}
    \end{equation*}
   Namely, given (potentially random) initial data $(\rho_0, q_0)$ which is in $\Lambda_{\kappa_{\epsilon}}$, the solution $(\rho(t), q(t))$ exists and is in $\Lambda_{\kappa_{\epsilon}}$ for all $t\geq 0$ almost surely.   (See Section \ref{noiseregularize} and \eqref{suppReps} for the construction of $\kappa_{\epsilon}$).
\end{proposition}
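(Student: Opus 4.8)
Since the first equation reads $\partial_t\rho = 0$, the density is frozen: $\rho(t,\cdot) \equiv \rho_0$ for all $t \ge 0$. I would first dispose of the degenerate case $\|\rho_0^{-1}\|_{L^\infty(\mathbb{T})} > R$, in which $\chi_R(\rho_0,\cdot) \equiv 0$ so that the noise vanishes identically, $q(t,\cdot) \equiv q_0$, and the assertion is trivial; hence assume $\rho_0 \ge 1/R$. With $\rho_0$ held fixed, the map $q \mapsto \chi_R(\rho_0, q)\,\bd{\Phi}^{R,\epsilon}(\rho_0, q)$ is bounded and globally Lipschitz from $H^2(\mathbb{T})$ into $L_2(\mathcal{U}; H^2(\mathbb{T}))$ --- this is the $\tilde\rho = \rho = \rho_0$ instance of Lemma \ref{noisedifflemma} together with \eqref{diff2} --- so by the standard well-posedness theory for stochastic evolution equations with Lipschitz coefficients (as in the proof of Proposition \ref{existunique}) there is a unique global solution, with $\rho \equiv \rho_0$ and $q \in C([0,\infty); H^2(\mathbb{T}))$ almost surely.

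Because the density does not move, membership of $(\rho(t),q(t))$ in $\Lambda_{\kappa_\epsilon}$ reduces, pointwise in $x$, to a statement about $q(t,x)$ alone. Indeed $(\rho_0(x),q_0(x)) \in \tilde{\Lambda}_{\kappa_\epsilon}$ gives $0 \le \rho_0(x) \le \kappa_\epsilon^{1/\theta}$ for every $x$, so it suffices to show that for each fixed $x \in \mathbb{T}$ the scalar $q(t,x)$ remains in the slice $[q^-(x), q^+(x)]$, where $q^{\pm}(x) := \pm\,\rho_0(x)\big(\kappa_\epsilon - \rho_0(x)^\theta\big)$ and $q_0(x) \in [q^-(x),q^+(x)]$. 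Fix such an $x$ and set $Y_t := q(t,x)$, a well-defined continuous process since $H^2(\mathbb{T})$ embeds continuously into $C(\mathbb{T})$. Evaluating the $q$-equation at $x$ (point evaluation commutes with the $H^2$-valued stochastic integral, whose $k$-th integrand is bounded by $C_N\alpha_k$ via \eqref{alphakeps}) shows that $Y$ is a square-integrable continuous martingale with
\begin{equation*}
dY_t = c(t)\sum_{k=1}^{\infty} G_k^{R,\epsilon}\big(x,\rho_0(x),Y_t\big)\,dW_k(t), \qquad c(t) := \chi_R(\rho_0, q(t)) \in [0,1],
\end{equation*}
and hence $d\langle Y\rangle_t = c(t)^2\sum_{k=1}^{\infty} G_k^{R,\epsilon}(x,\rho_0(x),Y_t)^2\,dt$. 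The decisive structural input is the support property \eqref{suppReps}, $\text{supp}\,\bd{\Phi}^{R,\epsilon} \subset \mathbb{T} \times \tilde{\Lambda}_{\kappa_\epsilon}$: since the $\rho = \rho_0(x)$ slice of $\tilde{\Lambda}_{\kappa_\epsilon}$ is precisely $[q^-(x),q^+(x)]$, this forces $\sum_{k} G_k^{R,\epsilon}(x,\rho_0(x),q)^2 = 0$ for every $q \notin [q^-(x),q^+(x)]$, and consequently $d\langle Y\rangle_t = 0$ at every time $t$ with $Y_t \notin [q^-(x),q^+(x)]$.

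The core of the argument is then a one-dimensional trapping argument. By continuity of $t \mapsto Y_t$, the set $\{t \ge 0 : Y_t > q^+(x)\}$ is open, hence a countable disjoint union of relatively open intervals. On any component $(a,b)$ one has $Y_s > q^+(x)$ for all $s \in (a,b)$, so $d\langle Y\rangle_s = 0$ there; thus $\langle Y\rangle$ is constant on $(a,b)$ and therefore so is $Y$ (for instance via the Dambis--Dubins--Schwarz time change $Y = B_{\langle Y\rangle}$), giving $Y \equiv \lim_{t\downarrow a} Y_t = Y_a$ on $(a,b)$. If $a > 0$, then $Y_a \le q^+(x)$ because $a$ is not in the open set $\{t : Y_t > q^+(x)\}$, while right-continuity forces $Y_a \ge q^+(x)$; so $Y \equiv q^+(x)$ on $(a,b)$, contradicting $Y_s > q^+(x)$. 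If $a = 0$, then $Y_0 = q_0(x) > q^+(x)$, contradicting $q_0(x) \in [q^-(x),q^+(x)]$. Hence $\{t : Y_t > q^+(x)\} = \emptyset$, i.e. $Y_t \le q^+(x)$ for all $t \ge 0$; the symmetric argument applied to $\{t : Y_t < q^-(x)\}$ yields $Y_t \ge q^-(x)$. As $x \in \mathbb{T}$ was arbitrary and $\rho(t,\cdot) \equiv \rho_0 \le \kappa_\epsilon^{1/\theta}$, we conclude $(\rho(t), q(t)) \in \Lambda_{\kappa_\epsilon}$ for all $t \ge 0$, almost surely.

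The only step requiring genuine care is the trapping argument: its crux is that $\langle Y\rangle$ is \emph{absolutely continuous} with density that vanishes whenever $Y_t \notin [q^-(x),q^+(x)]$, so that $Y$ literally freezes the instant it attempts to leave that interval and can therefore never cross it. Everything else --- the persistence of the density bound, the reduction to a scalar SDE at each $x$, and existence and uniqueness --- follows routinely from the construction of the regularized noise in Section \ref{noiseregularize} (in particular the support property \eqref{suppReps} and the bounds \eqref{alphakeps}) together with the Lipschitz estimates of Lemma \ref{noisedifflemma}.
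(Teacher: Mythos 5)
Your proposal is correct and rests on exactly the same ingredient as the paper's proof, namely the compact support property \eqref{suppReps} of the regularized noise $\bd{\Phi}^{R,\epsilon}$ inside $\Lambda_{\kappa_{\epsilon}}$; the paper simply states the proposition is a ``direct consequence'' of this, while you rigorously flesh out why — freezing $\rho$, reducing to a scalar continuous martingale $Y_t = q(t,x)$ at each $x$, and using that $\langle Y\rangle$ has density vanishing outside the slice $[q^{-}(x),q^{+}(x)]$ so that the trapping argument (constancy of a continuous local martingale on intervals of constancy of its quadratic variation) prevents any exit. This is a sound and welcome elaboration of the argument the paper leaves implicit, not a different approach.
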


\begin{proof}
    This is a direct consequence of the compact support assumption on the noise coefficient $\bd{\Phi}^{R, \epsilon}(\rho, q)$ in \eqref{suppReps}, where we construct the regularized and truncated noise coefficient $\bd{\Phi}^{R, \epsilon}(\rho, q)$ so that it has support in $\Lambda_{\kappa_{\epsilon}}$.
\end{proof}

Since both the deterministic dynamics (Proposition \ref{invariantregion}) and the stochastic dynamics (Proposition \ref{stochinv}) of the approximate system \eqref{approxsystem} have an invariant region, the coupled dynamics of the full stochastic damped compressible Euler equations give rise to an invariant region too. This allows us to prove Proposition \ref{Linfuniform1} on uniform $L^{\infty}([0, T] \times \mathbb{T})$ bounds on $(\rho, q)$ as follows.

\begin{proof}[Proof of Proposition \ref{Linfuniform1}]
We split the problem \eqref{approxsystem} into its deterministic and stochastic components (see e.g. the splitting scheme used in \cite{B14} or \cite{BGR}). By combining Proposition \ref{invariantregion} and Proposition \ref{stochinv}, we conclude that the splitting scheme and thus the system \eqref{approxsystem} has an invariant region of $\Lambda_{\kappa_{\epsilon}}$. Namely, given initial data $(\rho_0, q_0) \in \Lambda_{\kappa_{\epsilon}}$, the stochastic solution to the approximate system \eqref{approxsystem} is in $\Lambda_{\kappa_{\epsilon}}$ for all $t \ge 0$, almost surely. Therefore, since $\rho$ and $u$ are bounded for all $(\rho, u) \in \Lambda_{\kappa_{\epsilon}}$, we conclude that $\rho$ and $q := \rho u$ are bounded almost surely, namely:
\begin{equation*}
\|(\rho, u, q)\|_{C([0, T] \times \mathbb{T})} \le C_{\epsilon}.
\end{equation*}
\end{proof}

\section{Uniform-in-time bounds for the approximate system}\label{uniformsection}
In this section, we will obtain additional higher order uniform bounds on the fluid density and the fluid velocity in time, in preparation for the time averaging procedure, see \eqref{muT}, that will give the existence of an invariant measure to \eqref{approxsystem}. We will consider the approximate solution \eqref{approxsystem} with initial data
\begin{equation*}
(\rho_0, u_0) = (1, 0),
\end{equation*}
and derive bounds on the resulting solution $(\rho, u)$ that are sublinear in time at the expense of their dependence on the approximating and regularizing parameters $R$ and $\epsilon$. 

{In Propositions \ref{densityest} and \ref{fluidapprox}, we will obtain $H^3(\mathbb{T})$ bounds for the fluid density and the momentum, as a result of the truncation and additional regularization, which will let us establish tightness of the time-averaged laws in Proposition \ref{timeaverage} via standard compactness arguments. We note that the choice of $H^{3}(\mathbb{T})$ is because of the fact that $H^{3}(\mathbb{T})$ compactly embeds into $H^{2}(\mathbb{T})$, which is the space for the path space $\mathcal{X}$ defined in \eqref{path}.}

\subsection{Bounds on the density.} 
We first start by showing uniform bounds for the density.
\begin{proposition}\label{densityest}
Let $(\rho^{R,\epsilon}, q^{R,\epsilon})$ be the unique solution in $C(0, T; \mathcal{X})$ to the approximate system \eqref{approxsystem} with initial data $(\rho_0, u_0) = (1, 0)$. Then, for all $T \ge 0$:
\begin{equation*}
\|\rho^{R,\epsilon}\|_{C(0, T; H^{1}(\mathbb{T}))} \le C_{R,\epsilon} \quad \text{ and } \quad \frac{1}{T}\int_{0}^{T}\|\rho^{R,\epsilon}(t, \cdot)\|^{2}_{H^{3}(\mathbb{T})} dt \le C_{R,\epsilon} \quad \text{ almost surely},
\end{equation*}
where the constant $C_{R,\epsilon}$, depending on $R, \epsilon$, is \textit{deterministic} and is \textit{independent of the time $T > 0$.}
\end{proposition}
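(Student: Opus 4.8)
The plan is to work entirely with the parabolic continuity equation $\partial_t \rho^{R,\epsilon} + \partial_x([q^{R,\epsilon}]_R) = \epsilon \Delta \rho^{R,\epsilon}$, exploiting two structural facts. First, by the very definition of the truncation \eqref{truncatedq}--\eqref{chi}, the scalar factors $\chi_R(\|(\rho^{R,\epsilon})^{-1}\|_{L^\infty})$ and $\chi_R(\|q^{R,\epsilon}\|_{H^2})$ force the deterministic bound $\|[q^{R,\epsilon}]_R\|_{H^2(\mathbb{T})} \le R$ pointwise in $(t,\omega)$ (this is already noted in the proof of Lemma \ref{rhoH2}). Second, the initial datum $\rho_0\equiv 1$ has vanishing derivatives of all orders, so no initial-data contribution will survive upon integration in time. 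Since the density solves a \emph{linear} heat equation with forcing $-\partial_x([q^{R,\epsilon}]_R)\in C(0,T;H^1(\mathbb{T}))$, parabolic smoothing gives $\rho^{R,\epsilon}\in L^2(0,T;H^3(\mathbb{T}))$ with $\partial_t\rho^{R,\epsilon}\in L^2(0,T;H^1(\mathbb{T}))$, so the energy identities below are legitimate (alternatively one runs them on a mollified equation and passes to the limit).

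For the uniform $C(0,T;H^1)$ bound, differentiate the continuity equation once in $x$, test with $\partial_x\rho^{R,\epsilon}$, and integrate by parts:
\begin{equation*}
\tfrac12\tfrac{d}{dt}\|\partial_x\rho^{R,\epsilon}\|_{L^2}^2 + \epsilon\|\partial_x^2\rho^{R,\epsilon}\|_{L^2}^2 = \int_{\mathbb{T}}\partial_x([q^{R,\epsilon}]_R)\,\partial_x^2\rho^{R,\epsilon}\,dx \le \tfrac{\epsilon}{2}\|\partial_x^2\rho^{R,\epsilon}\|_{L^2}^2 + C(\epsilon)R^2,
\end{equation*}
using $\|[q^{R,\epsilon}]_R\|_{H^2}\le R$ and Young's inequality. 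Since $\partial_x\rho^{R,\epsilon}$ has zero mean on $\mathbb{T}$, the Poincar\'e inequality gives $\|\partial_x^2\rho^{R,\epsilon}\|_{L^2}^2\ge c_0\|\partial_x\rho^{R,\epsilon}\|_{L^2}^2$; absorbing and keeping a fraction of the dissipation produces a differential inequality $\tfrac{d}{dt}y + \epsilon c_0 y + \tfrac{\epsilon}{2}\|\partial_x^2\rho^{R,\epsilon}\|_{L^2}^2\le C(\epsilon)R^2$ with $y(t)=\|\partial_x\rho^{R,\epsilon}(t)\|_{L^2}^2$. As $y(0)=0$, Gronwall's inequality yields $y(t)\le C(\epsilon)R^2/(\epsilon c_0)=:C_{R,\epsilon}$ uniformly in $t$ and $T$. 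Combining with the $L^2$ part of the norm, controlled via the zero-mean Poincar\'e estimate together with $\int_{\mathbb{T}}\rho^{R,\epsilon}=1$ (or directly from Proposition \ref{Linfuniform1}), gives $\|\rho^{R,\epsilon}\|_{C(0,T;H^1(\mathbb{T}))}\le C_{R,\epsilon}$.

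For the time-averaged $H^3$ bound, first integrate the displayed energy inequality on $[0,T]$: since the initial term vanishes, $\tfrac{\epsilon}{2}\int_0^T\|\partial_x^2\rho^{R,\epsilon}(t)\|_{L^2}^2\,dt\le C(\epsilon)R^2T$, i.e. $\tfrac1T\int_0^T\|\partial_x^2\rho^{R,\epsilon}\|_{L^2}^2\,dt\le C_{R,\epsilon}$. Next, differentiate the continuity equation twice in $x$, test with $\partial_x^2\rho^{R,\epsilon}$, integrate by parts once more to move a derivative off $[q^{R,\epsilon}]_R$, and use $\|[q^{R,\epsilon}]_R\|_{H^2}\le R$ with Young's inequality to get
\begin{equation*}
\tfrac12\tfrac{d}{dt}\|\partial_x^2\rho^{R,\epsilon}\|_{L^2}^2 + \tfrac{\epsilon}{2}\|\partial_x^3\rho^{R,\epsilon}\|_{L^2}^2 \le \tfrac{R^2}{2\epsilon}.
\end{equation*}
Integrating on $[0,T]$ and using $\partial_x^2\rho_0=0$ gives $\tfrac1T\int_0^T\|\partial_x^3\rho^{R,\epsilon}\|_{L^2}^2\,dt\le C_{R,\epsilon}$. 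Adding the time averages of $\|\rho^{R,\epsilon}\|_{L^2}^2$ and $\|\partial_x\rho^{R,\epsilon}\|_{L^2}^2$ (both bounded by the $C(0,T;H^1)$ estimate just proved) and of $\|\partial_x^2\rho^{R,\epsilon}\|_{L^2}^2$ completes the proof. The calculations are routine; the only real point of care is that no constant secretly depends on $T$ — which is guaranteed precisely by the interplay of the dissipation $\epsilon\Delta$ with the Poincar\'e inequality (true exponential relaxation of the $H^1$ seminorm) and the $T$-free bound $\|[q^{R,\epsilon}]_R\|_{H^2}\le R$ on the forcing — and by the vanishing of all derivatives of $\rho_0\equiv 1$. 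I do not anticipate a substantive obstacle beyond this bookkeeping and the standard justification of the $H^3$ energy identity via parabolic regularity.
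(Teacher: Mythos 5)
Your proof is correct, but it follows a genuinely different route from the paper. The paper starts from conservation of mass as its only $T$-uniform datum, and then climbs the regularity ladder ($L^1 \to L^2 \to H^1 \to H^2 \to H^3$) by maximal parabolic regularity applied on unit time intervals, combined with a covering argument: on each half-integer interval it selects a random time where the lower-order norm is small, restarts the maximal-regularity estimate there, and covers an arbitrary $[T,T+1]$ by finitely many such intervals; the final $\sup_{t}\|\rho^{R,\epsilon}(t)\|_{H^1}$ bound comes from the Fundamental Theorem of Calculus using the $L^2_tH^1_x$ bound on $\partial_t\rho^{R,\epsilon}$. You instead exploit the dissipative structure directly: since $\partial_x\rho^{R,\epsilon}$ (and $\rho^{R,\epsilon}-1$, by mass conservation) have zero mean on $\mathbb{T}$, the Poincar\'e inequality converts the viscous dissipation into a damping term, and the deterministic, time-independent bound $\|[q^{R,\epsilon}]_R\|_{H^2}\le R$ on the forcing then yields a differential inequality $\tfrac{d}{dt}y+ay\le b$ with $y(0)=0$, hence a $T$-free bound on the $H^1$ seminorm, while integrating the same identities gives the linear-in-$T$ growth of $\int_0^T\|\partial_x^2\rho^{R,\epsilon}\|_{L^2}^2$ and, after one more differentiation, of $\int_0^T\|\partial_x^3\rho^{R,\epsilon}\|_{L^2}^2$. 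Your approach is shorter and more quantitative (it even gives exponential relaxation of the $H^1$ seminorm), at the price of relying on the torus Poincar\'e inequality and on the justification of the higher-order energy identities, which you correctly dispatch by noting that the density equation is a pathwise linear heat equation with $C(0,T;H^1)$ forcing (or by mollification); the paper's maximal-regularity bootstrap is softer and supplies that qualitative regularity as a byproduct. Two small bookkeeping points, neither a gap: in your Gronwall step the damping and the retained dissipation must come from splitting the single term $\tfrac{\epsilon}{2}\|\partial_x^2\rho^{R,\epsilon}\|_{L^2}^2$ into two halves (this only changes constants), and the cleaner source for the $L^2$ part of the $H^1$ norm is the mean-zero Poincar\'e bound on $\rho^{R,\epsilon}-1$ rather than Proposition \ref{Linfuniform1}, though both are available at this point of the paper.
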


\begin{proof} 
    By integrating the approximate continuity equation in \eqref{approxsystem} over the spatial domain $\mathbb{T}$, we obtain conservation of mass:
    \begin{equation}\label{massconserve}
    \int_{\mathbb{T}} \rho^{R,\epsilon}(t, \cdot) dx = 1, \quad \text{ almost surely, for all } t \ge 0.
    \end{equation}
    We use this (almost sure) uniform $L^{1}(\mathbb{T})$ bound on $\rho^{R,\epsilon}$ to bootstrap the uniform-in-time bounds to higher regularity via maximal regularity. Namely, we have, for some constant $C>0$, 
    \begin{align*}
    \|\partial_{t}\rho^{R,\epsilon}\|_{L^{2}(T, T + 1; H^{-2}(\mathbb{T}))} &+ \|\Delta \rho^{R,\epsilon}\|_{L^{2}(T, T + 1; H^{-2}(\mathbb{T}))} \le C\Big(\|\rho^{R,\epsilon}(T, \cdot)\|_{H^{-1}(\mathbb{T})} + \|\partial_{x}[q^{R,\epsilon}]_{R}\|_{L^{2}(T, T + 1; H^{-2}(\mathbb{T})}\Big) \\
    &\le C\Big(\|\rho^{R,\epsilon}(T, \cdot)\|_{L^{1}(\mathbb{T})} + \|[q^{R,\epsilon}]_R\|_{L^2(T,T+1;H^{-1}(\mathbb{T}))}\Big) 
    \le C_R, 
    \end{align*}  
   by the Sobolev embedding $L^{1}(\mathbb{T}) \subset H^{-1}(\mathbb{T})$ and the definition of the truncation \eqref{truncatedq}. 
    Therefore, for a deterministic constant $c_0$ that is independent of $T$:
    \begin{equation}\label{c0bound}
    \|\rho^{R,\epsilon}\|_{L^{2}(T, T + 1; L^{2}(\mathbb{T}))} \le c_0 \quad \text{ almost surely.}
    \end{equation}

    Using this uniform bound, we deduce that in every interval $[N/2, (N + 1)/2]$ for nonnegative integers $N$, there exists a corresponding random $\tau_{0, N}(\omega) \in [N/2, (N + 1)/2]$ depending on the random outcome in the probability space $\omega \in \Omega$, such that
    \begin{equation*}
    \|\rho^{R,\epsilon}(\omega, \tau_{0, N}, \cdot)\|_{L^{2}(\mathbb{T})} \le 2c_0.
    \end{equation*}
    and we note that by maximal regularity (and for a fixed $\omega$ in a measurable set of probability one):
    \begin{align*}
    \|\partial_{t}\rho^{R,\epsilon}\|_{L^{2}(\tau_{0, N}, \tau_{0, N} + 1; H^{-1}(\mathbb{T}))} &+ \|\Delta \rho^{R,\epsilon}\|_{L^{2}(\tau_{0, N}, \tau_{0, N} + 1; H^{-1}(\mathbb{T}))} \\
    &\le C\Big(\|\rho^{R,\epsilon}(\tau_{0, N}, \cdot)\|_{L^{2}(\mathbb{T})} + \|\partial_{x}[q^{R,\epsilon}]_{R}\|_{L^{2}(\tau_{0, N}, \tau_{0, N} + 1; H^{-1}(\mathbb{T}))}\Big) \\
    &\le C_{R, \epsilon}\Big(c_{0} + \|[q^{R,\epsilon}]_{R}\|_{L^{2}(\tau_{0, N}, \tau_{0, N} + 1; L^{2}(\mathbb{T}))}\Big) \le C(1 + 2c_0).
    \end{align*}
    Note that for (almost every) fixed $\omega \in \Omega$, the corresponding $\{\tau_{0, N}\}_{N = 1}^{\infty}$ is a monotonically increasing sequence of times for which $|\tau_{0, N + 1} - \tau_{0, N}| \le 1$. Thus, every interval $[T, T + 1]$ for arbitrary $T$ can be fully covered by five such intervals $[\tau_{0, N}, \tau_{0, N} + 1]$ and hence:
    \begin{equation}\label{c1bound}
    \|\rho^{R,\epsilon}\|_{L^{2}(T, T + 1; H^{1}(\mathbb{T}))} \le 5C(1 + 2c_0 +R) := c_1 \quad \text{ almost surely},
    \end{equation}
    for a deterministic constant $c_1$ that is independent of $T$. 

    We can then iterate this procedure to bootstrap uniform bounds for higher regularity. We can construct $\tau_{1, N}(\omega) \in [N/2, (N + 1)/2]$ for each nonnegative integer $N$ and outcome $\omega$ as before, such that
    \begin{equation}\label{tau1N}
    \|\rho^{R,\epsilon}(\omega, \tau_{1, N}, \cdot)\|_{H^{1}(\mathbb{T})} \le 2c_1,
    \end{equation}
    and by maximal regularity, as before:
    \begin{align*}
    \|\partial_{t}\rho^{R,\epsilon}\|_{L^{2}(\tau_{1, N}, \tau_{1, N} + 1, L^{2}(\mathbb{T}))} &+ \|\Delta \rho^{R,\epsilon}\|_{L^{2}(\tau_{1, N}, \tau_{1, N} + 1; L^{2}(\mathbb{T}))} \\
    &\le C\Big(\|\rho^{R,\epsilon}(\tau_{1, N}, \cdot)\|_{H^{1}(\mathbb{T})} + \|\partial_{x}([q^{R,\epsilon}]_{R})\|_{L^{2}(\tau_{1, N}, \tau_{1, N} + 1; L^{2}(\mathbb{T}))}\Big) \\
    &\le C_{R,\epsilon}\Big(1  + c_1\Big),
    \end{align*}
    by \eqref{c0bound} and \eqref{c1bound}.
    A similar covering argument shows that for a deterministic constant $c_2$ independent of $T > 0$, 
    \begin{equation}\label{c2bound}
    \|\rho^{R,\epsilon}\|_{L^{2}(T, T + 1; H^{2}(\mathbb{T}))} \le c_2 \quad \text{ almost surely.}
    \end{equation}
    Given the definition of the truncation we have, for any $T > 0$, that
    \begin{align*}
    \|\partial_{x}( [q^{R,\epsilon}]_{R})\|^2_{L^{2}(T, T + 1; H^{1}(\mathbb{T}))} &\le C_R.
\end{align*}
By the Sobolev embedding $H^{2}(\mathbb{T}) \subset W^{1, \infty}(\mathbb{T})$, we can iterate the above procedure once more to obtain a uniform bound for a deterministic constant $c_3$ that is independent of $T$:
    \begin{equation}\label{c3bound}
    \|\partial_{t}\rho^{R,\epsilon}\|_{L^{2}(T, T + 1; H^{1}(\mathbb{T}))} + \|\rho^{R,\epsilon}\|_{L^{2}(T, T + 1; H^{3}(\mathbb{T}))} \le c_{3} \quad \text{ almost surely}.
    \end{equation}
    Then, \eqref{c3bound} immediately implies that
    \begin{equation*}
    \frac{1}{T} \int_{0}^{T} \|\rho^{R,\epsilon}\|_{H^{3}(\mathbb{T})}^{2} dt \le C, \qquad \text{ almost surely}
    \end{equation*}
    for a deterministic constant $C$ that is independent of $T$. 

    Furthermore, by construction, for every $t \ge 0$, there exists some $\tau_{N} \le t$ such that $t - \tau_{N} \le 1$. Hence, by \eqref{tau1N} and \eqref{c3bound}, there exists a uniform in time constant $C$, dependent on $R$ and $\epsilon$, such that the following inequality follows from Fundamental Theorem of Calculus:
    \begin{equation*}
    \sup_{t \ge 0} \|\rho^{R,\epsilon}(t, \cdot)\|_{H^{1}(\mathbb{T})} \le C, \quad \text{ almost surely.}
    \end{equation*}
\end{proof}
\if 1 = 0
Next, in order to obtain tightness, we need a uniform bound on $\rho$ from below, away from vacuum. This is provided by the following proposition.
 
\begin{proposition}\label{vacuumbound}
Let $(\rho, u)$ be the unique solution to \eqref{approxsystem} with initial data $(\rho_0, u_0) = (1, 0)$. Then, there exists a deterministic constant $C$ that is independent of $T > 0$, but depending on $R$ and $\epsilon$, such that
\begin{equation*}
\frac{1}{T} \int_{0}^{T} \int_{\mathbb{T}} \frac{(\partial_{x}\rho)^2}{\rho^{2}} \le C.
\end{equation*}
\end{proposition}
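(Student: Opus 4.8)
The plan is to deduce the bound from a convex-entropy estimate on the continuity equation in \eqref{approxsystem}, using the function $s \mapsto -\log s$. I would first collect the ingredients that are already available. With initial data $(\rho_{0},u_{0})=(1,0) \in \mathcal{X}$, Proposition \ref{existunique} gives a classical solution $(\rho^{R,\epsilon},q^{R,\epsilon})$; Proposition \ref{rholowerbound} (see also Proposition \ref{invariantregion}) gives the pointwise lower bound $\rho^{R,\epsilon}(t,\cdot) \ge 1/R > 0$ for all $t \ge 0$, so that $\log\rho^{R,\epsilon}$ and $(\rho^{R,\epsilon})^{-1}$ are bounded and smooth and all integrations by parts below are legitimate; \eqref{massconserve} gives conservation of total mass $\int_{\mathbb{T}}\rho^{R,\epsilon}(t,\cdot)\,dx = 1$; and Proposition \ref{Linfuniform1} gives $\|u^{R,\epsilon}\|_{C(0,T;C(\mathbb{T}))} \le C_{\epsilon}$ with $C_{\epsilon}$ independent of $T$ and $R$.

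For the main step, I would use that $[q^{R,\epsilon}]_{R} = \chi_{R}(\rho^{R,\epsilon},q^{R,\epsilon})\, q^{R,\epsilon}$ and that the prefactor $\chi_{R}(\rho^{R,\epsilon},q^{R,\epsilon})$ is, for each fixed $t$, constant in $x$ (it is a functional of the norms of $\rho^{R,\epsilon}(t,\cdot)$ and $q^{R,\epsilon}(t,\cdot)$), so that the continuity equation reads $\partial_{t}\rho^{R,\epsilon} + \chi_{R}(\rho^{R,\epsilon},q^{R,\epsilon})\,\partial_{x}q^{R,\epsilon} = \epsilon\Delta\rho^{R,\epsilon}$. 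Testing this against $-1/\rho^{R,\epsilon}$ and integrating by parts in $x$ yields
\[
\frac{d}{dt}\int_{\mathbb{T}}\bigl(-\log\rho^{R,\epsilon}\bigr)\,dx + \epsilon\int_{\mathbb{T}}\frac{(\partial_{x}\rho^{R,\epsilon})^{2}}{(\rho^{R,\epsilon})^{2}}\,dx = \chi_{R}(\rho^{R,\epsilon},q^{R,\epsilon})\int_{\mathbb{T}}\frac{q^{R,\epsilon}\,\partial_{x}\rho^{R,\epsilon}}{(\rho^{R,\epsilon})^{2}}\,dx .
\]
The right-hand side equals $\chi_{R}(\rho^{R,\epsilon},q^{R,\epsilon})\int_{\mathbb{T}} u^{R,\epsilon}\,\partial_{x}(\log\rho^{R,\epsilon})\,dx$; by Young's inequality, $0 \le \chi_{R} \le 1$, $|\mathbb{T}|=1$ and Proposition \ref{Linfuniform1} it is bounded by $\tfrac{\epsilon}{2}\int_{\mathbb{T}}(\partial_{x}\rho^{R,\epsilon})^{2}(\rho^{R,\epsilon})^{-2}\,dx + \tfrac{1}{2\epsilon}\|u^{R,\epsilon}\|_{L^{\infty}(\mathbb{T})}^{2} \le \tfrac{\epsilon}{2}\int_{\mathbb{T}}(\partial_{x}\rho^{R,\epsilon})^{2}(\rho^{R,\epsilon})^{-2}\,dx + \tfrac{C_{\epsilon}^{2}}{2\epsilon}$. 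Absorbing the first term on the left gives
\[
\frac{d}{dt}\int_{\mathbb{T}}\bigl(-\log\rho^{R,\epsilon}\bigr)\,dx + \frac{\epsilon}{2}\int_{\mathbb{T}}\frac{(\partial_{x}\rho^{R,\epsilon})^{2}}{(\rho^{R,\epsilon})^{2}}\,dx \le \frac{C_{\epsilon}^{2}}{2\epsilon}.
\]

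Finally I would integrate this differential inequality in time over $[0,T]$. Since $\rho^{R,\epsilon}(0,\cdot)=1$, the initial contribution $\int_{\mathbb{T}}(-\log\rho^{R,\epsilon}(0))\,dx$ vanishes; and since $\log s \le s-1$, conservation of mass gives $\int_{\mathbb{T}}\log\rho^{R,\epsilon}(T,\cdot)\,dx \le \int_{\mathbb{T}}(\rho^{R,\epsilon}(T,\cdot)-1)\,dx = 0$, so the terminal contribution $-\int_{\mathbb{T}}\log\rho^{R,\epsilon}(T,\cdot)\,dx$ is nonnegative and can be dropped. Hence $\tfrac{\epsilon}{2}\int_{0}^{T}\!\int_{\mathbb{T}}(\partial_{x}\rho^{R,\epsilon})^{2}(\rho^{R,\epsilon})^{-2}\,dx\,dt \le \tfrac{C_{\epsilon}^{2}}{2\epsilon}T$, which is exactly the claimed bound with $C := C_{\epsilon}^{2}/\epsilon^{2}$, independent of $T$ (and in fact also of $R$).

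I do not expect a genuine obstacle here: this is a clean parabolic energy estimate for the (deterministic-in-$\rho$) continuity equation. The only place requiring a little care is the terminal boundary term $\int_{\mathbb{T}}\log\rho^{R,\epsilon}(T,\cdot)\,dx$ produced by the time integration, which is a priori not obviously bounded but is controlled from above by $\log s \le s-1$ together with conservation of total mass. If one were content with an $R$-dependent constant, the proposition would also follow immediately from the uniform $H^{1}$ bound $\sup_{t}\|\rho^{R,\epsilon}(t)\|_{H^{1}(\mathbb{T})} \le C_{R,\epsilon}$ of Proposition \ref{densityest} combined with $\rho^{R,\epsilon} \ge 1/R$; the entropy argument has the advantage of producing an $R$-independent constant.
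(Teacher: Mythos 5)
Your proof is correct and follows essentially the same route as the paper's own argument: test the regularized continuity equation by $1/\rho^{R,\epsilon}$ (legitimate by Proposition \ref{rholowerbound}), integrate by parts, and absorb the transport term with Young's inequality. The only differences are in bookkeeping — you control the endpoint term $\int_{\mathbb{T}}\log\rho^{R,\epsilon}(T)$ by $\log s\le s-1$ together with conservation of mass, and the transport term by the invariant-region bound $\|u^{R,\epsilon}\|_{L^{\infty}}\le C_{\epsilon}$ of Proposition \ref{Linfuniform1} rather than the truncation and Sobolev embedding — which is if anything cleaner and yields a constant independent of $R$ as well, matching the treatment the paper later uses in the stationary setting (Proposition \ref{logbounds}).
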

\begin{proof}
We consider the regularized continuity equation in \eqref{approxsystem}:
\begin{equation*}
\partial_{t}\rho + \chi_R(\rho,q)\partial_{x}q = \epsilon \Delta \rho.
\end{equation*}
Thanks to Proposition \ref{rholowerbound} we know that there is no vacuum for all time, and we can test the approximate continuity equation by $1/\rho$ to obtain:
\begin{equation*}
\int_{\mathbb{T}} \text{log}(\rho(T)) dx + \int_{0}^{T} \int_{\mathbb{T}}\chi_R(\rho,q) \frac{\partial_{x}\rho}{\rho}\frac{q}{\rho} dx dt + \int_{0}^{T} \int_{\mathbb{T}} \chi_R(\rho,q) \partial_{x}(\frac{q}{\rho}) dx dt = \epsilon \int_{0}^{T} \int_{\mathbb{T}} \frac{(\partial_{x}\rho)^{2}}{\rho^{2}} dx dt.
\end{equation*}
Using Cauchy's inequality, we obtain:
\begin{equation*}
\frac{\epsilon}{2} \int_{0}^{T} \int_{\mathbb{T}} \frac{(\partial_{x}\rho)^{2}}{\rho^{2}} dx dt \le \int_{\mathbb{T}} |\log(\rho(T))| dx + C_{\epsilon}\int_{0}^{T} \int_{\mathbb{T}} \chi_R({\rho,q})\Big(1 + (\frac{q}{\rho}) ^{2} + (\partial_{x}(\frac{q}{\rho}) )^{2}\Big) dx dt.
\end{equation*}
By using \eqref{expbound} to estimate the term involving $|\text{log}(\rho(T))| \le CRt$ and the definition of $\chi_R(\rho,q)$ in \eqref{chi} along with the Sobolev embedding $H^{2}(\mathbb{T}) \subset W^{1, \infty}(\mathbb{T})$, we obtain:
\begin{equation*}
\frac{\epsilon}{2} \int_{0}^{T} \int_{\mathbb{T}} \frac{(\partial_{x}\rho)^{2}}{\rho^{2}} dx dt \le C_{R, \epsilon} T
\end{equation*}
almost surely, which establishes the desired bound.
\end{proof}

We note that the previous bound in Proposition \ref{vacuumbound} is useful for giving a pointwise lower bound on the fluid density $\rho$ away from vacuum for the following reason. {\cred I'm assuming that the point of the following lemma is to get bounds on density independent of time? Otherwise maximal regularity will give us bounds..}

\begin{lemma}\label{vacuumlemma}
Given $\rho \in \mathcal{X}$ such that $\displaystyle \int_{\mathbb{T}} \frac{(\partial_{x}\rho)^{2}}{\rho^{2}} dx \le M$, we have that
\begin{equation*}
e^{-(1 + M)} \le \rho \le e^{1 + M}.
\end{equation*}
\end{lemma}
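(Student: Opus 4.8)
The plan is to pass to the logarithm of the density and run an elementary one-dimensional argument. First I would note that since $\rho \in \mathcal{X} \subset H^{2}(\mathbb{T})$, the Sobolev embedding $H^{2}(\mathbb{T}) \subset C^{1}(\mathbb{T})$ makes $\rho$ continuous, while the defining lower bound $\rho \ge 1/R > 0$ of the phase space $\mathcal{X}$ in \eqref{path} guarantees that $\log \rho$ is well-defined, continuous, and belongs to $H^{1}(\mathbb{T})$ with $\partial_{x}\log\rho = \partial_{x}\rho/\rho$. Hence the hypothesis of the lemma is exactly $\|\partial_{x}\log\rho\|_{L^{2}(\mathbb{T})}^{2} = \int_{\mathbb{T}} (\partial_x\rho)^2/\rho^2\,dx \le M$.

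Next I would fix a base point. Since $\int_{\mathbb{T}}\rho\,dx = 1$ and $|\mathbb{T}| = 1$, the continuous function $\rho$ has mean value $1$, so it attains values $\le 1$ and $\ge 1$ on the connected set $\mathbb{T}$; by the intermediate value theorem there exists $x_{0}\in\mathbb{T}$ with $\rho(x_{0}) = 1$, i.e. $\log\rho(x_{0}) = 0$. For an arbitrary $x\in\mathbb{T}$, since $\log\rho$ is absolutely continuous one may write $\log\rho(x) = \log\rho(x_{0}) + \int_{x_{0}}^{x}\partial_{y}\log\rho(y)\,dy$ along a path in $\mathbb{T}$ of length at most $1$; applying the Cauchy--Schwarz inequality gives
\begin{equation*}
|\log\rho(x)| \;\le\; \left(\int_{\mathbb{T}} \frac{(\partial_{y}\rho)^{2}}{\rho^{2}}\,dy\right)^{1/2}\left(\int_{\mathbb{T}} 1\,dy\right)^{1/2} \;\le\; M^{1/2}.
\end{equation*}

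Finally, I would invoke the elementary inequality $M^{1/2}\le 1+M$ for all $M\ge 0$ (it follows from $(1-M^{1/2})^{2}\ge 0$, which yields $M^{1/2}\le \tfrac{1+M}{2}\le 1+M$), obtaining $|\log\rho(x)|\le 1+M$ for every $x\in\mathbb{T}$, and therefore $e^{-(1+M)}\le\rho(x)\le e^{1+M}$ pointwise, as claimed. There is no substantive obstacle here: the only points requiring a line of care are the legitimacy of the fundamental-theorem-of-calculus step for $\log\rho$ on the torus (handled by its $H^{1}$ regularity and continuity) and the crude but sufficient bound $\sqrt{M}\le 1+M$ that converts the natural $M^{1/2}$ estimate into the stated form.
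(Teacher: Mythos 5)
Your proof is correct and follows essentially the same route as the paper's own argument: locate a point $x_{0}$ with $\rho(x_{0})=1$ from $\int_{\mathbb{T}}\rho\,dx=1$ and continuity, apply the fundamental theorem of calculus with Cauchy--Schwarz to $\log\rho$ to get $|\log\rho|\le M^{1/2}$, and then use $M^{1/2}\le 1+M$. No gaps to flag.
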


\begin{proof}
    Since $\displaystyle \int_{\mathbb{T}} \rho(x) dx = 1$ and $\rho \in H^{2}(\mathbb{T}) \subset C(\mathbb{T})$, there exists $x_0 \in \mathbb{T}$ such that $\rho(x_0) = 1$, and hence $\log(\rho(x_0)) = 0$. By assumption,
    \begin{equation*}
    \int_{\mathbb{T}} |\partial_{x}(\log(\rho))|^{2} dx \le M.
    \end{equation*}
    Hence, for any $x_1 \in \mathbb{T}$:
    \begin{equation*}
    |\log(\rho(x_1))| = \left|\int_{x_0}^{x_1} \partial_{x}\log(\rho(x)) dx\right| \le M^{1/2}  {\color{red}\text{why not stop here? }} \le 1 + M. 
    \end{equation*}
    This establishes the desired bound.
\end{proof}
\fi
\subsection{Uniform bounds on the fluid velocity.} Finally, we show a uniform bound on the fluid velocity.
\begin{proposition}\label{fluidapprox}
    Let, for $T>0$, $(\rho^{R,\epsilon}, q^{R,\epsilon})$ be the unique solution to \eqref{approxsystem} for initial data $(\rho_0, u_0) = (1, 0)$. Then, there exists a constant $C$ that is independent of $T > 0$ such that
    \begin{equation*}
   \frac{1}{T} \mathbb{E} \int_0^{{T}} \|q^{R,\epsilon} (t)\|_{H^{3}(\mathbb{T})}^{2} dt \le C.
    \end{equation*}
\end{proposition}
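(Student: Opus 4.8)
The plan is to obtain the $H^{3}(\mathbb{T})$ bound on $q^{R,\epsilon}$ by a short bootstrap in spatial regularity: apply It\^o's formula to $\|\partial_x^{j}q^{R,\epsilon}(t)\|_{L^{2}(\mathbb{T})}^{2}$ for $j=0,1,2$, and each time extract control of the \emph{next} derivative $\partial_x^{j+1}q^{R,\epsilon}$ from the artificial-viscosity dissipation $2\epsilon\|\partial_x^{j+1}q^{R,\epsilon}\|_{L^{2}}^{2}$ appearing on the left. The inputs are: the time-uniform bound $\|(\rho^{R,\epsilon},q^{R,\epsilon},q^{R,\epsilon}/\rho^{R,\epsilon})\|_{C(0,T;C(\mathbb{T}))}\le C_{\epsilon}$ from Proposition \ref{Linfuniform1}; the density bounds $\|\rho^{R,\epsilon}\|_{C(0,T;H^{1})}\le C_{R,\epsilon}$ and $\tfrac1T\int_{0}^{T}\|\rho^{R,\epsilon}\|_{H^{3}}^{2}\,dt\le C_{R,\epsilon}$ from Proposition \ref{densityest}, which by the interpolation $H^{2}=[H^{1},H^{3}]_{1/2}$ and Cauchy--Schwarz also give $\tfrac1T\int_{0}^{T}\|\rho^{R,\epsilon}\|_{H^{2}}^{2}\,dt\le C_{R,\epsilon}$ almost surely; the noise bounds \eqref{A0eps}, \eqref{alphakeps}, \eqref{noiseRepsbound}; and the fact that $q_{0}=\rho_{0}u_{0}=0$, so every initial-data term vanishes. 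The key structural facts used throughout are that $\chi_{R}(\rho,q)$ is \emph{spatially constant} (so it commutes with $\partial_x$), and that on the set $\{\chi_{R}(\rho,q)\neq 0\}$ one has $\|q\|_{H^{2}(\mathbb{T})}\le R$, hence $\|q\|_{W^{1,\infty}(\mathbb{T})}\le C_{R}$ and $\|\partial_x^{2}q\|_{L^{2}}\le R$. Writing $\Psi^{R,\epsilon}:=\chi_{R}(\rho,q)\big(\tfrac{q^{2}}{\rho}+\kappa\rho^{\gamma}\big)$, the momentum equation reads $dq^{R,\epsilon}=\big(-\partial_x\Psi^{R,\epsilon}-\alpha q^{R,\epsilon}+\epsilon\Delta q^{R,\epsilon}\big)\,dt+\chi_{R}(\rho,q)\bd\Phi^{R,\epsilon}(\rho,q)\,dW$.

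First I would do the base step. It\^o's formula for $\|q^{R,\epsilon}(t)\|_{L^{2}}^{2}$, after taking expectation so the stochastic integral drops, gives
\begin{equation*}
\mathbb{E}\|q^{R,\epsilon}(t)\|_{L^{2}}^{2}+2\alpha\,\mathbb{E}\!\int_{0}^{t}\!\|q^{R,\epsilon}\|_{L^{2}}^{2}\,ds+2\epsilon\,\mathbb{E}\!\int_{0}^{t}\!\|\partial_x q^{R,\epsilon}\|_{L^{2}}^{2}\,ds=\mathbb{E}\!\int_{0}^{t}\!\Big(2\langle\partial_x q^{R,\epsilon},\Psi^{R,\epsilon}\rangle+\textstyle\sum_{k}\|\chi_{R}(\rho,q)G_{k}^{R,\epsilon}(\rho,q)\|_{L^{2}}^{2}\Big)ds.
\end{equation*}
The quadratic-variation term is $\le A_{0}\|\rho^{R,\epsilon}\|_{L^{2}}^{2}\le A_{0}C_{\epsilon}$ by \eqref{A0eps} and mass conservation; the drift term, using $\|\Psi^{R,\epsilon}\|_{L^{2}}\le C_{\epsilon}$ (from the $L^{\infty}$ bounds), is $\le\epsilon\|\partial_x q^{R,\epsilon}\|_{L^{2}}^{2}+C_{\epsilon}$. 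Absorbing yields $\tfrac1T\,\mathbb{E}\int_{0}^{T}\|\partial_x q^{R,\epsilon}\|_{L^{2}}^{2}\,dt\le C_{\epsilon}$.

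Next I would iterate: It\^o for $\|\partial_x^{j}q^{R,\epsilon}\|_{L^{2}}^{2}$, $j=1,2$, produces the dissipation $2\epsilon\|\partial_x^{j+1}q^{R,\epsilon}\|_{L^{2}}^{2}$, a drift term equal after one integration by parts to $2\langle\partial_x^{j+1}q^{R,\epsilon},\partial_x^{j}\Psi^{R,\epsilon}\rangle$, and a quadratic-variation term $\sum_{k}\chi_{R}(\rho,q)^{2}\|\partial_x^{j}(G_{k}^{R,\epsilon}(\rho,q))\|_{L^{2}}^{2}$. Expanding by the chain and Leibniz rules: the $q$-derivative factors arising have order $\le j+1$; those of order $0,1$ are bounded in $L^{\infty}$ by $C_{R}$ on $\{\chi_{R}\neq0\}$, the order-$2$ factor is bounded in $L^{2}$ by $R$ there, and the single top-order factor $\chi_{R}\tfrac{2q}{\rho}\partial_x^{j}q^{R,\epsilon}$ inside $\partial_x^{j}\Psi^{R,\epsilon}$ is only bounded in $L^{2}$ by $C_{R,\epsilon}\|\partial_x^{j}q^{R,\epsilon}\|_{L^{2}}$ but pairs against $\partial_x^{j+1}q^{R,\epsilon}$, so Cauchy--Schwarz against the dissipation handles it; the density factors are controlled by $\|\rho^{R,\epsilon}\|_{H^{2}}$, with products of lower-order spatial derivatives estimated by one-dimensional Gagliardo--Nirenberg inequalities. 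This gives
\begin{equation*}
\epsilon\,\mathbb{E}\!\int_{0}^{T}\!\|\partial_x^{j+1}q^{R,\epsilon}\|_{L^{2}}^{2}\,dt\le C_{R,\epsilon}\,\mathbb{E}\!\int_{0}^{T}\!\big(1+\|\partial_x^{j}q^{R,\epsilon}\|_{L^{2}}^{2}+\|\rho^{R,\epsilon}\|_{H^{2}}^{2}\big)\,dt
\end{equation*}
(for $j=1$ the middle term is in fact absent, since $\chi_{R}$ truncates it). Using the time-averaged bound from the previous step together with $\tfrac1T\int_{0}^{T}\|\rho^{R,\epsilon}\|_{H^{2}}^{2}\,dt\le C_{R,\epsilon}$, the right-hand side is $\le C_{R,\epsilon}T$, so $\tfrac1T\,\mathbb{E}\int_{0}^{T}\|\partial_x^{j+1}q^{R,\epsilon}\|_{L^{2}}^{2}\,dt\le C_{R,\epsilon}$. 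Taking $j=2$ and combining with the trivial bound $\|q^{R,\epsilon}\|_{L^{2}}\le C_{\epsilon}$ from Proposition \ref{Linfuniform1} yields $\tfrac1T\,\mathbb{E}\int_{0}^{T}\|q^{R,\epsilon}\|_{H^{3}(\mathbb{T})}^{2}\,dt\le C$, a constant independent of $T$.

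I expect the main obstacle to be precisely the fact that the truncation $\chi_{R}(\rho,q)$ controls only $\|q\|_{H^{2}}$ and not $\|q\|_{H^{3}}$: when the convective flux $\chi_{R}q^{2}/\rho$ is differentiated to order $j$, the top-order term cannot be bounded pointwise and must be absorbed into the viscous dissipation, which forces the proof to proceed one derivative at a time (each level using the time-averaged bound from the level below), rather than by a single Gronwall inequality — the latter would only produce a bound growing exponentially in $T$. A secondary, routine point is that applying It\^o's formula to $\|\partial_x^{2}q^{R,\epsilon}\|_{L^{2}}^{2}$ formally requires more spatial regularity than the $C(0,T;H^{2})$ of Proposition \ref{existunique}; this is legitimate because the parabolic regularization $\epsilon\Delta$ together with the smoothness in $(x,\rho,q)$ of the regularized noise coefficients $G_{k}^{R,\epsilon}$ (cf.\ \eqref{noiseRepsbound}) gives instantaneous spatial smoothing, and can be made rigorous through a standard Galerkin approximation in which all the estimates above hold uniformly.
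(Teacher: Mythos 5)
Your proposal follows essentially the same route as the paper's proof: Itô estimates on $\|\partial_x^j q^{R,\epsilon}\|_{L^2}^2$ for $j=0,1,2$, carried out one derivative at a time so that each level's time-averaged dissipation bound (rather than a Gronwall argument) feeds the next, with the flux and pressure terms absorbed into the $\epsilon$-dissipation via Cauchy–Schwarz, the inverse-density and low-order $q$-factors controlled by the truncation and the invariant-region $L^\infty$ bounds of Proposition \ref{Linfuniform1}, the density factors by Proposition \ref{densityest} (including the $L^4$ interpolation for $(\partial_x\rho)^2$ terms), and the noise quadratic variation by \eqref{A0eps}, \eqref{noiseRepsbound}. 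The argument is correct and matches the paper's, with only cosmetic differences in how the Leibniz expansions are organized.
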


\begin{proof} 
    \textbf{Zeroth order derivative estimate on the momentum.} We consider the momentum equation for $q^{R,\epsilon}=\rho^{R,\epsilon} u^{R,\epsilon}$ in \eqref{approxsystem}: 
    \begin{multline*}
        \partial_{t}q^{R,\epsilon} +\chi_R(\rho^{R,\epsilon},q^{R,\epsilon}) \partial_{x}\left(\frac{(q^{R,\epsilon})^2}{\rho^{R,\epsilon}}\right) + \kappa \chi_R(\rho^{R,\epsilon},q^{R,\epsilon}) \partial_{x}((\rho^{R,\epsilon})^{\gamma}) \\
        = \chi_R(\rho^{R,\epsilon},q^{R,\epsilon})\Phi^{R, \epsilon}(\rho^{R,\epsilon}, q^{R,\epsilon}) dW - \alpha q^{R,\epsilon} + \epsilon \Delta q^{R,\epsilon}.
    \end{multline*}
  By applying It\^{o}'s formula with the functional $q^{R,\epsilon} \to \frac{1}{2}\|q^{R,\epsilon}\|^{2}_{L^2(\mathbb{T})}$, we obtain for any $t\in [0,T]$ that
    \begin{align*}
    &\frac{1}{2} \int_{\mathbb{T}} |q^{R,\epsilon}(t)|^{2} dx + \alpha \int_{0}^{t} \int_{\mathbb{T}} (q^{R,\epsilon})^{2} dx dt + \epsilon  \int_{0}^{t} \int_{\mathbb{T}} |\partial_{x} q^{R,\epsilon}|^{2} dx dt \\
    &= \int_{0}^{t} \int_{\mathbb{T}} \chi_R\left(\rho^{R,\epsilon},q^{R,\epsilon}\right)\frac{(q^{R,\epsilon})^2}{\rho^{R,\epsilon}}(\partial_{x}q^{R,\epsilon}) dx dt 
    +{ \kappa \gamma  \int_{0}^{t} \int_{\mathbb{T}} \chi_R\left(\rho^{R,\epsilon},q^{R,\epsilon}\right)(\rho^{R,\epsilon})^{\gamma} (\partial_x q^{R,\epsilon}) dx dt }\\
    &+ \frac{1}{2}  \int_{0}^{t} \int_{\mathbb{T}} |\chi(\rho^{R,\epsilon}, q^{R,\epsilon})|^{2} |\bd{G}^{R, \epsilon}(x, \rho^{R,\epsilon}, q^{R,\epsilon})|^{2} dx dt + \int_0^t\sum_{k=1}^\infty\int_{\mathbb{T}} q^{R,\epsilon}\chi_{R}(\rho^{R,\epsilon}, q^{R,\epsilon}) G_k^{R,\epsilon}(\rho^{R,\epsilon},q^{R,\epsilon})dW_k ,
    \end{align*}
    where $W_k= W\bd{e}_k$. Recall that $(\rho_0, q_0) = (1, 0)$, so that $\displaystyle \int_{\mathbb{T}} |q(0)|^{2} dx = 0$.
    Now we take $t=T$ and apply expectation on both sides and estimate the terms on the right-hand side.
First, observe that due to the uniform bounds on $q^{R,\epsilon},\rho^{R,\epsilon}$ and $\frac{q^{R,\epsilon}}{\rho^{R,\epsilon}}$ obtained in Proposition \ref{Linfuniform1}, we can immediately deduce by Young's inequality that
 \begin{align*}
    \left|\mathbb{E}   \int_{0}^{T} \int_{\mathbb{T}} \chi_R\left(\rho^{R,\epsilon},q^{R,\epsilon}\right)\left(\kappa \gamma(\rho^{R,\epsilon})^{\gamma} +\frac{(q^{R,\epsilon})^2}{\rho^{R,\epsilon}}\right)(\partial_x q^{R,\epsilon}) dx dt  \right| &\le    \mathbb{E} \int_{0}^{T} \int_{\mathbb{T}} \left|(\rho^{R,\epsilon})^{\gamma} +\frac{(q^{R,\epsilon})^2}{\rho^{R,\epsilon}}\right| |\partial_x q^{R,\epsilon}| dx dt  \\
    &\le \frac{\epsilon}2\bE \int_{0}^{T} \int_{\mathbb{T}} |\partial_xq^{R,\epsilon}|^2dxdt+ C_\epsilon T,
    \end{align*}

    Similarly, using $|\bd{G}^{R, \epsilon}(\rho^{R,\epsilon}, q^{R,\epsilon})|^{2} \le A_{0}(\rho^{R,\epsilon})^{2}$ from \eqref{A0eps} on the final term, we thus conclude that
    \begin{equation}\label{uhigher1}
  \epsilon \mathbb{E} \int_{0}^{T} \int_{\mathbb{T}} |\partial_{x}q^{R,\epsilon}|^{2} dx dt \le C_{ \epsilon}T,
    \end{equation}
    where $C_\epsilon$ is \textit{independent of both $R$ and $T$.}

  \medskip
    \noindent \textbf{First order derivative estimate on the momentum.} Next, we estimate the higher derivatives. We differentiate by $x$ and apply Ito formula with $\frac{1}{2}\|\cdot\|_{L^2(\mathbb{T})}$. By integrating by parts appropriately, we obtain:
    \begin{equation}\label{ito_qh2}
    \begin{split}
     \frac{1}{2}\int_{\mathbb{T}} &|\partial_{x}q^{R,\epsilon} (T)|^{2} dx + \alpha \int_{0}^{T} \int_{\mathbb{T}} |\partial_{x}q^{R,\epsilon}|^{2} dx dt +  \epsilon  \int_{0}^{T} \int_{\mathbb{T}} |\partial_{x}^{2}q^{R,\epsilon}|^{2} dx dt \\
    &=  \int_{0}^{T} \int_{\mathbb{T}} \chi_R({\rho^{R,\epsilon},q^{R,\epsilon}})\frac{q^{R,\epsilon}}{\rho^{R,\epsilon}} \partial_{x}q^{R,\epsilon} \partial_{x}^{2}q^{R,\epsilon} dx dt+  \int_{0}^{T} \int_{\mathbb{T}} \chi_R({\rho^{R,\epsilon},q^{R,\epsilon}}) \partial_{x}\left(\frac{q^{R,\epsilon}}{\rho^{R,\epsilon}}\right)q^{R,\epsilon} \partial_{x}^{2}q^{R,\epsilon} dx dt \\
    &+ \kappa \gamma  \int_{0}^{T} \int_{\mathbb{T}} \chi_R\left( \rho^{R,\epsilon},q^{R,\epsilon}\right) (\rho^{R,\epsilon})^{\gamma - 1} \partial_{x}\rho^{R,\epsilon} \partial_{x}^{2}q^{R,\epsilon} dx dt\\
    &+ \frac{1}{2} \sum_{k = 1}^{\infty} \int_{0}^{T} \int_{\mathbb{T}} \chi_{R}(\rho^{R,\epsilon}, q^{R,\epsilon})^{2} |\nabla_{\rho^{R,\epsilon}, q^{R,\epsilon}} G_{k}^{R, \epsilon}(\rho^{R,\epsilon}, q^{R,\epsilon})\cdot\partial_x(\rho^{R,\epsilon},q^{R,\epsilon}) |^{2} dx dt \\
    &+ \sum_{k = 1}^{\infty} \int_0^T\int_{\mathbb{T}}\chi_{R}(\rho^{R,\epsilon}, q^{R,\epsilon}) \nabla_{\rho^{R,\epsilon},q^{R,\epsilon}}G_k^{R,\epsilon}(\rho^{R,\epsilon},q^{R,\epsilon})\cdot\partial_{x}(\rho^{R,\epsilon},q^{R,\epsilon})\partial_xq^{R,\epsilon}dW_k,
    \end{split}
    \end{equation}
    where $W_k= W\bd{e}_k$. We again take expectation on both sides and find bounds for the terms on the right hand side.
Thanks to the definition of $\chi_R(\rho,q^{R,\epsilon})$ and Proposition \ref{Linfuniform1}, we have
    \begin{align*}
    &\left|\mathbb{E} \int_{0}^{T} \int_{\mathbb{T}} \chi_R({\rho^{R,\epsilon},q^{R,\epsilon}})\frac{q^{R,\epsilon}}{\rho^{R,\epsilon}}\partial_{x}q^{R,\epsilon} \partial_{x}^{2}q^{R,\epsilon} dx dt\right|\\
    &\le \frac{\epsilon}{4} \mathbb{E} \int_{0}^{T} \int_{\mathbb{T}} |\partial_{x}^{2}q^{R,\epsilon}|^{2} dx dt + C_{\epsilon} \mathbb{E} \int_{0}^{T} \int_{\mathbb{T}} \Big[\chi_R({\rho^{R,\epsilon},q^{R,\epsilon}})\Big]^{2}|\partial_{x}q^{R,\epsilon}|^{2} \left(\frac{q^{R,\epsilon}}{\rho^{R,\epsilon}}\right)^{2} \\
    &\le C_{R, \epsilon}T + \frac{\epsilon}{4} \mathbb{E} \int_{0}^{T} \int_{\mathbb{T}} |\partial_{x}^{2}q^{R,\epsilon}|^{2} dx.
    \end{align*}
    Similarly, for the second term, we additionally use Proposition \ref{densityest} and the definition of the truncation $\chi_{R}(\rho^{R, \epsilon}, q^{R, \epsilon}) = \chi_{R}\Big(\|(\rho^{R, \epsilon})^{-1}\|_{L^{\infty}(\mathbb{T})}\Big) \chi_{R}\Big(\|q^{R, \epsilon}\|_{H^{2}(\mathbb{T})}\Big)$ in \eqref{truncatedq}, to obtain
    \begin{equation*}
    \left|\mathbb{E} \int_{0}^{T} \int_{\mathbb{T}} q^{R,\epsilon} \chi_R({\rho^{R,\epsilon},q^{R,\epsilon}})\partial_{x}\left(\frac{q^{R,\epsilon}}{\rho^{R,\epsilon}} \right)\partial_{x}^{2}q^{R,\epsilon}\right| \le C_{R, \epsilon}T + \frac{\epsilon}{6} \mathbb{E} \int_{0}^{T} \int_{\mathbb{T}} |\partial_{x}^{2}q^{R,\epsilon}|^{2} dx.
    \end{equation*}
    Again from the uniform in time bound of $\rho^{R,\epsilon}$ in $H^{1}(\mathbb{T})$ for all time in Proposition \ref{densityest}, and the embedding $H^1(\mathbb{T})\hookrightarrow L^\infty(\mathbb{T})$, we obtain:
    \begin{align*}
    \kappa\gamma \mathbb{E} \int_{0}^{T} \int_{\mathbb{T}} |(\rho^{R,\epsilon})^{\gamma - 1} \partial_{x}\rho^{R,\epsilon} \partial_{x}^{2} q^{R,\epsilon}| dx dt &\le \frac{\epsilon}{6} \mathbb{E} \int_{0}^{T} \int_{\mathbb{T}} |\partial_{x}^{2}q^{R,\epsilon}|^{2}dx dt + C_{\epsilon} \mathbb{E} \int_{0}^{T} \int_{\mathbb{T}} (\rho^{R,\epsilon})^{2(\gamma - 1)} |\partial_{x}\rho^{R,\epsilon}|^{2} dx dt \\
    &\le C_{\epsilon} T + \frac{\epsilon}{6} \mathbb{E} \int_{0}^{T} \int_{\mathbb{T}} |\partial_{x}^{2}q^{R,\epsilon}|^{2} dx dt.
    \end{align*}
    Finally, by \eqref{A0eps}, \eqref{uhigher1}, Proposition \ref{densityest}, and the computation that $\nabla_{\rho, q}G_{k}^{R, \epsilon} = \rho \nabla_{\rho, q} g_{k}^{R, \epsilon} + (1, 0) g_{k}^{R, \epsilon}$:
    \begin{align*}
    \mathbb{E} \sum_{k = 1}^{\infty} \int_{0}^{T} \int_{\mathbb{T}} |\nabla_{\rho, q} G_{k}^{R, \epsilon}(\rho^{R,\epsilon}, q^{R,\epsilon}) &\cdot \partial_{x}(\rho^{R,\epsilon}, q^{R,\epsilon})|^{2} dx dt\\
    & \le 2A_{0}^{2} \mathbb{E} \int_{0}^{T} \int_{\mathbb{T}} (1 + (\rho^{R,\epsilon})^{2})\Big((\partial_{x}\rho^{R,\epsilon})^{2} + (\partial_{x}(q^{R,\epsilon})^{2})\Big) dx dt \\
    &\le C_{R, \epsilon}T.
    \end{align*}
    So we conclude that
    \begin{equation}\label{uhigher2}
    \mathbb{E} \int_{0}^{T} \int_{\mathbb{T}} |\partial_{x}^{2}q^{R,\epsilon}|^{2} dx dt \le C_{R, \epsilon} T. 
    \end{equation}

    \medskip

    \noindent \textbf{Second order derivative estimate on the momentum.} Finally, we differentiate the momentum equation twice and test with $\partial_{x}^{2}q^{R,\epsilon}$ to obtain:
    \begin{align*}
   & \mathbb{E} \int_{\mathbb{T}} |\partial_{x}^{2}q^{R,\epsilon}(T)|^{2} dx + \alpha \mathbb{E} \int_{0}^{T} \int_{\mathbb{T}} |\partial_{x}^{2}q^{R,\epsilon}|^{2} dx dt + \epsilon \mathbb{E} \int_{0}^{T} \int_{\mathbb{T}} |\partial_{x}^{3}q^{R,\epsilon}|^{2} dx dt \\
    =&\mathbb{E} \int_{0}^{T} \int_{\mathbb{T}} \left[(\partial_{x}^{2}q^{R,\epsilon} )\chi_{R}(\rho^{R,\epsilon},q^{R,\epsilon})\frac{q^{R,\epsilon}}{\rho^{R,\epsilon}} + 2 (\partial_{x}q^{R,\epsilon})\chi_R(\rho^{R,\epsilon},q^{R,\epsilon}) \partial_{x}\left(\frac{q^{R,\epsilon}}{\rho^{R,\epsilon}}\right) \right]\partial_{x}^{3}q^{R,\epsilon} dx dt\\
    &+ \mathbb{E} \int_{0}^{T} \int_{\mathbb{T}} \left[q^{R,\epsilon}\chi_R(\rho^{R,\epsilon},q^{R,\epsilon})\partial_{x}^{2}\left(\frac{q^{R,\epsilon}}{\rho^{R,\epsilon}}\right) \right]\partial_{x}^{3}q^{R,\epsilon} dx dt \\
    &+ \kappa \gamma \mathbb{E} \int_{0}^{T} \int_{\mathbb{T}} \chi_R\left(\rho^{R,\epsilon},q^{R,\epsilon} \right) \Big((\rho^{R,\epsilon})^{\gamma - 1} \partial_{x}^{2}\rho^{R,\epsilon} + (\gamma - 1) (\rho^{R,\epsilon})^{\gamma - 2}(\partial_{x}\rho^{R,\epsilon})^{2}\Big) \partial_{x}^{3}q^{R,\epsilon} dx dt \\
    &+ \frac{1}{2} \sum_{k = 1}^{\infty} \mathbb{E} \int_{0}^{T} \int_{\mathbb{T}} \chi_{R}(\rho^{R,\epsilon}, q^{R,\epsilon})^{2} |\langle \nabla^{2}_{\rho, q} G^{R, \epsilon}_{k}(\rho^{R,\epsilon}, q^{R,\epsilon}) \partial_{x}(\rho^{R,\epsilon}, q^{R,\epsilon}), \partial_{x}(\rho^{R,\epsilon}, q^{R,\epsilon}) \rangle|^{2} dx dt.
    \end{align*}
    In the same way as for the first order derivative estimate, we can estimate the first two terms in absolute values on the right hand side using Proposition \ref{densityest} and \eqref{uhigher1} as
    \begin{equation*}
    \le C_{R, \epsilon}T +  \frac{\epsilon}{6} \mathbb{E} \int_{0}^{T} \int_{\mathbb{T}} |\partial_{x}^{3}q^{R,\epsilon}|^{2} dx dt.
    \end{equation*}
    For the third term, the definition of $\chi_R(\rho^{R,\epsilon},q^{R,\epsilon})$ in \eqref{truncatedq} combined with Sobolev embedding, gives us:
    \begin{align*}
    &\mathbb{E}\int_0^T\int_{\mathbb{T}} \left|q^{R,\epsilon}\chi_R(\rho^{R,\epsilon},q^{R,\epsilon})\partial_{x}^{2}\left(\frac{q^{R,\epsilon}}{\rho^{R,\epsilon}}\right) \partial_{x}^{3}q^{R,\epsilon} \right|dx dt \\
    &=  \mathbb{E}\int_0^T\int_{\mathbb{T}} \left|q^{R,\epsilon}\chi_R(\rho^{R,\epsilon},q^{R,\epsilon})\left((\rho^{R,\epsilon})^{-1}\partial_{x}^{2}q^{R,\epsilon} -2\partial_xq^{R,\epsilon}\frac{\partial_x\rho^{R,\epsilon}}{(\rho^{R,\epsilon})^2}-\left(\frac{\partial^2_x\rho^{R,\epsilon}}{(\rho^{R,\epsilon})^2}-2\frac{(\partial_x\rho^{R,\epsilon})^2}{(\rho^{R,\epsilon})^3}\right)q^{R,\epsilon}\right)\partial_{x}^{3}q^{R,\epsilon} \right|dx dt \\
    & \leq C_{R,\epsilon}\mathbb{E}\int_0^T\int_{\mathbb{T}} \left(|\partial_x^2q^{R,\epsilon}|^2 dxdt + |\partial_x\rho^{R,\epsilon}|^2+ |\partial^2_x\rho^{R,\epsilon}|^2 + |\partial_x\rho^{R,\epsilon}|^4 \right)+ \frac{\epsilon}{6}\mathbb{E}\int_0^T\int_{\mathbb{T}} |\partial_{x}^{3}q^{R,\epsilon}|^{2} dx dt.
    \end{align*}
    Recall from Proposition \ref{densityest} that
 \begin{equation*}
    \sup_{t \ge 0} \|\rho^{R,\epsilon}(t, \cdot) \|_{C(\mathbb{T})} \le C_{R, \epsilon}, \qquad \|\rho^{R,\epsilon}(t, \cdot)\|_{L^{2}(0, T; H^{3}(\mathbb{T}))}^{2} \le C_{R, \epsilon}T.
    \end{equation*}
   Hence due to the following interpolation inequality
 \begin{align}\label{rhol4}
     \|\partial_x\rho^{R,\epsilon}\|_{L^4(\mathbb{T})}^4 \leq \|\rho^{R,\epsilon}\|^2_{L^\infty(\mathbb{T})}\|\partial_x^2\rho^{R,\epsilon}\|_{L^2(\mathbb{T})}^2,
 \end{align}
and the uniform bounds $\|\rho^{R,\epsilon}\|_{C(0, T; C(\mathbb{T}))} \le C_{R, \epsilon}$ and $\|\rho^{R,\epsilon}\|_{L^{2}(0, T; H^{2}(\mathbb{T}))}^{2} \le C_{R, \epsilon} T$ in Proposition \ref{densityest} and the bounds on $q^{R,\epsilon}$ found in \eqref{uhigher2}, we obtain
\begin{align*}
     &\mathbb{E}\int_0^T\int_{\mathbb{T}} \left|q^{R,\epsilon}\chi_R(\rho^{R,\epsilon},q^{R,\epsilon})\partial_{x}^{2}\left(\frac{q^{R,\epsilon}}{\rho^{R,\epsilon}}\right) \partial_{x}^{3}q^{R,\epsilon} \right|dx dt \\
     &\leq  C_{R,\epsilon}\left( T+ \mathbb{E}   \int_{0}^{T} \|\rho^{R,\epsilon}\|^2_{L^\infty(\mathbb{T})}\|\partial_x^2\rho^{R,\epsilon}\|_{L^2(\mathbb{T})}^2dxdt\right)+ \frac{\epsilon}{6}  \mathbb{E} \int_{0}^{T} \int_{\mathbb{T}} |\partial_{x}^{3}q^{R,\epsilon}|^2 dxdt \\
        &\leq C_{R,\epsilon}T+ \frac{\epsilon}{6}  \mathbb{E} \int_{0}^{T} \int_{\mathbb{T}} |\partial_{x}^{3}q^{R,\epsilon}|^2 dx dt.
\end{align*}
 Using again the bounds in Proposition \ref{densityest} we estimate that
    \begin{align*}
    \mathbb{E} \int_{0}^{T} \int_{\mathbb{T}} (\rho^{R,\epsilon})^{\gamma - 1} |\partial_{x}^{2}\rho^{R,\epsilon}| \cdot |\partial_{x}^{3}q^{R,\epsilon}| dx dt &\le \frac{\epsilon}{6} \mathbb{E} \int_{0}^{T} \int_{\mathbb{T}} |\partial_{x}^{3}q^{R,\epsilon}|^{2} dx dt + C_{\epsilon} \mathbb{E} \int_{0}^{T} \int_{\mathbb{T}} |\partial_{x}^{2}\rho^{R,\epsilon}|^{2} dx dt \\
    &\le \frac{\epsilon}{6} \mathbb{E} \int_{0}^{T} \int_{\mathbb{T}} |\partial_{x}^{3}q^{R,\epsilon}|^{2} dx dt + C_{R, \epsilon}T. 
    \end{align*}
    Next, we
    observe, due to the definition of $\chi_R$ and \eqref{rhol4}, that for any $\gamma \ge 1$, we can write:
     \begin{align*}
         \mathbb{E} \int_{0}^{T} \int_{\mathbb{T}} \chi_R(\rho^{R,\epsilon},q^{R,\epsilon}) (\rho^{R,\epsilon})^{\gamma - 2} (\partial_{x}\rho^{R,\epsilon})^{2} &|\partial_{x}^{3}q^{R,\epsilon}| dx dt 
        \leq  C_R\mathbb{E} \int_{0}^{T} \int_{\mathbb{T}}   |\partial_{x}\rho^{R,\epsilon}|^2|\partial_{x}^{3}q^{R,\epsilon}| dx dt \\
        &\leq  C_{R,\epsilon} \mathbb{E}   \int_{0}^{T} \int_{\mathbb{T}}  |\partial_x\rho^{R,\epsilon}|^4+ \frac{\epsilon}{6}  \mathbb{E} \int_{0}^{T} \int_{\mathbb{T}} |\partial_{x}^{3}q^{R,\epsilon}|^2\\
        &\leq  C_{R,\epsilon} \mathbb{E}   \int_{0}^{T} \|\rho^{R,\epsilon}\|^2_{L^\infty(\mathbb{T})}\|\partial_x^2\rho^{R,\epsilon}\|_{L^2(\mathbb{T})}^2+ \frac{\epsilon}{6}  \mathbb{E} \int_{0}^{T} \int_{\mathbb{T}} |\partial_{x}^{3}q^{R,\epsilon}|^2  \\
        &\leq C_{R,\epsilon}T+ \frac{\epsilon}{6}  \mathbb{E} \int_{0}^{T} \int_{\mathbb{T}} |\partial_{x}^{3}q^{R,\epsilon}|^2,
    \end{align*} 

    Finally, we can estimate the quadratic noise term using the noise assumption \eqref{A0eps}, the uniform bounds in Proposition \ref{densityest}, the interpolation inequality \eqref{rhol4}, the properties of the truncation \eqref{truncatedq}, and the computation 
    \begin{equation*}
    \nabla^{2}_{\rho, q} G_{k}(\rho^{R,\epsilon}, q^{R,\epsilon}) = \rho^{R,\epsilon} \nabla^{2}_{\rho, q} g_{k}(\rho^{R,\epsilon}, q^{R,\epsilon}) + 2(1, 0) \otimes \nabla_{\rho, q} g_{k}(\rho^{R,\epsilon}, q^{R,\epsilon}),
    \end{equation*}
    as follows, 
    \begin{align*}
    \sum_{k = 1}^{\infty} \mathbb{E} \int_{0}^{T} \int_{\mathbb{T}} \chi_{R}(\rho^{R,\epsilon}, q^{R,\epsilon})^{2} &|\langle \nabla^{2}_{\rho, q} G^{R, \epsilon}_{k}(\rho^{R,\epsilon}, q^{R,\epsilon}) \partial_{x}(\rho^{R,\epsilon}, q^{R,\epsilon}), \partial_{x}(\rho^{R,\epsilon}, q^{R,\epsilon}) \rangle|^{2} dx dt \\
    &\le C A_{0} \mathbb{E} \int_{0}^{T} \int_{\mathbb{T}} \chi_{R}(\rho^{R,\epsilon}, q^{R,\epsilon})^{2}(1 + (\rho^{R,\epsilon})^{2}) \Big((\partial_{x}\rho^{R,\epsilon})^{4} + (\partial_{x}q^{R,\epsilon})^{4}\Big) dx dt \\
    &\le C_{R, \epsilon} \mathbb{E} \int_{0}^{T} \int_{\mathbb{T}} \chi_{R}(\rho^{R,\epsilon}, q^{R,\epsilon}) \Big((\partial_{x}\rho^{R,\epsilon})^{4} + (\partial_{x}q^{R,\epsilon})^{4}\Big) dx dt \le C_{R, \epsilon} T,
    \end{align*}
    by the uniform bound on $\|\rho^{R,\epsilon}\|_{C(0, T; C(\mathbb{T}))} \le C_{R, \epsilon}$ and $\|\rho^{R,\epsilon}\|_{L^{2}(0, T; H^{2}(\mathbb{T}))}^{2} \le C_{R, \epsilon} T$ in Proposition \ref{densityest}{\if 1 = 0 and the bound in \eqref{uhigher2}\fi}, for a constant $C_{R, \epsilon}$ that is independent of $T$. Thus, we conclude that
    \begin{equation*}
    \frac{\epsilon}{2} \mathbb{E} \int_{0}^{T} \int_{\mathbb{T}} |\partial_{x}^{3}q^{R,\epsilon}|^{2} dx dt \le C_{R, \epsilon} T.
    \end{equation*}
\end{proof}

\section{Invariant measure for the approximate system}

In this section, we use the uniform-in-time estimates of Section \ref{uniformsection} to show that the approximate system \eqref{approxsystem} has an invariant measure, associated to the Feller semigroup $\{\mathcal{P}_{t}\}_{t \ge 0}$, defined in \eqref{ptdef} and Proposition \ref{fellerprop}. We use a standard time-averaging argument to establish existence of an invariant measure to \eqref{approxsystem} by averaging the laws of the stochastic solution $(\rho^{R,\epsilon}, q^{R,\epsilon})$ to \eqref{approxsystem} with initial condition $(1,0)$ over larger and larger time intervals $[0, T]$. This invariant measure corresponds to a stationary solution to the approximate system \eqref{approxsystem}, which we can denote by $(\rho_R, q_R)$, where we leave the $\epsilon_{N}$ dependence implicit. We then use the results on uniform invariant regions (independent of $R$) in Section \ref{uniforminvariant}, to deduce uniform $L^{\infty}(\mathbb{T}) \times L^{\infty}(\mathbb{T})$ bounds on the stationary solutions $(\rho_R, q_R)$ independently of the approximation parameters $R$. These uniform estimates (independent of $R$) will help us subsequently pass to the limit in the stationary solutions $(\rho_R, q_R)$ to the approximate system \eqref{approxsystem}, as $R \to \infty$. 

\subsection{Existence of an invariant measure.} We will show the existence of an invariant measure to the approximate problem \eqref{approxsystem}. To do this, we will define an appropriate path space, which we recall from \eqref{path}:
\begin{equation*}
\mathcal{X} := \left\{(\rho, q) \in H^{2}(\mathbb{T}) \times H^{2}(\mathbb{T}) : \int_{\mathbb{T}} \rho(x) dx = 1 \text{ and } \rho \geq \frac1R \right\}.
\end{equation*}
We will use a standard Krylov-Bogoliubov (time-averaging) procedure to show the existence of an invariant measure to the approximate system \eqref{approxsystem} on $\mathcal{X}$, associated to the Feller semigroup $\mathcal{P}_{t}$ in \eqref{ptdef}, generated by the (Hadamard well-posed) dynamics of the approximate system. See Proposition \ref{fellerprop}.

Hence, we consider the approximate system \eqref{approxsystem} with initial data $(\rho_0, q_0) = (1, 0)$, and we note that there is a unique solution $(\rho^{R,\epsilon},q^{R,\epsilon})$ that exists globally in time starting from this initial data $(\rho_0, (\rho u)_0) = (1, 0)$. We then define the \textit{time-averaged measures}:
\begin{equation}\label{muT}
\mu^{R,\epsilon}_T(B) = \frac{1}{T} \int_{0}^{T} \mathbb{P}\Big((\rho^{R,\epsilon}(t), q^{R,\epsilon}(t)) \in B \ \Big\vert \ (\rho_0, q_0) = (1, 0)\Big) dt.
\end{equation}
We will obtain an invariant measure for the approximate system \eqref{approxsystem} as a weak limit of the measures $\mu^{R,\epsilon}_T$ as $T \to \infty$, which will correspond to a statistically stationary solution with paths in $\mathcal{X}$ to the approximate problem \eqref{approxsystem}. To show that such a weak limit exists, we must show that the time-averaged measures $\{\mu^{R,\epsilon}_T\}_{T \in \mathbb{N}}$ are tight as measures on the phase space $\mathcal{X}$, which is the content of the following proposition. The tightness will be a direct consequence of the uniform bounds we have established for the fluid density and the fluid velocity, independently of time.

\begin{proposition}\label{timeaverage}
The time-averaged measures $\{\mu^{R,\epsilon}_T\}_{T \in \mathbb{N}}$ are tight as measures on $\mathcal{X}$. 
\end{proposition}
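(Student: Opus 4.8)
The plan is to verify tightness via the standard criterion that the family $\{\mu^{R,\epsilon}_T\}_{T\in\mathbb N}$ is tight on $\mathcal X$ (with its $H^2(\mathbb T)\times H^2(\mathbb T)$ topology) provided that for each $\delta>0$ there is a compact set $K_\delta\subset\mathcal X$ with $\mu^{R,\epsilon}_T(K_\delta)\ge 1-\delta$ for all $T$. Since the embedding $H^3(\mathbb T)\hookrightarrow H^2(\mathbb T)$ is compact, a natural candidate is $K_\delta:=\{(\rho,q)\in\mathcal X:\|\rho\|_{H^3(\mathbb T)}^2+\|q\|_{H^3(\mathbb T)}^2\le M_\delta\}$ for a suitable constant $M_\delta$ to be chosen; one must also keep track of the constraint $\rho\ge 1/R$ and the fixed mass, but these are already built into $\mathcal X$ and are preserved under the dynamics, so they do not affect compactness. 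Thus the key point is to bound the average (over $[0,T]$) of the probability that the $H^3$-norm of the solution is large, uniformly in $T$.

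First I would note that by the definition \eqref{muT} of $\mu^{R,\epsilon}_T$ and Chebyshev's inequality,
\begin{equation*}
\mu^{R,\epsilon}_T\big(\mathcal X\setminus K_\delta\big)
=\frac1T\int_0^T\mathbb P\Big(\|\rho^{R,\epsilon}(t)\|_{H^3(\mathbb T)}^2+\|q^{R,\epsilon}(t)\|_{H^3(\mathbb T)}^2>M_\delta\Big)\,dt
\le\frac1{M_\delta}\cdot\frac1T\int_0^T\mathbb E\Big(\|\rho^{R,\epsilon}(t)\|_{H^3(\mathbb T)}^2+\|q^{R,\epsilon}(t)\|_{H^3(\mathbb T)}^2\Big)\,dt.
\end{equation*}
Now Proposition \ref{densityest} gives the almost-sure bound $\frac1T\int_0^T\|\rho^{R,\epsilon}(t)\|_{H^3(\mathbb T)}^2\,dt\le C_{R,\epsilon}$, hence the same bound in expectation after integrating; and Proposition \ref{fluidapprox} gives directly $\frac1T\mathbb E\int_0^T\|q^{R,\epsilon}(t)\|_{H^3(\mathbb T)}^2\,dt\le C_{R,\epsilon}$. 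Both constants are independent of $T$. Therefore $\mu^{R,\epsilon}_T(\mathcal X\setminus K_\delta)\le C_{R,\epsilon}/M_\delta$, and choosing $M_\delta:=C_{R,\epsilon}/\delta$ yields $\mu^{R,\epsilon}_T(K_\delta)\ge 1-\delta$ for all $T\in\mathbb N$, which is the desired tightness.

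The only subtlety, and the step I would treat most carefully, is to confirm that $K_\delta$ is genuinely a compact subset of $\mathcal X$ and not merely of $H^2(\mathbb T)\times H^2(\mathbb T)$: one must check that $K_\delta$ is closed in $\mathcal X$, i.e. that the constraints $\int_{\mathbb T}\rho=1$ and $\rho\ge 1/R$ pass to $H^2$-limits (they do, since both are closed conditions under $H^2$, hence under $C(\mathbb T)$, convergence) and that the closed ball in $H^3$ is weakly compact and maps to a norm-compact set in $H^2$ by Rellich. I would also remark that this same argument, with $H^s$ replaced by $H^2$ or any space compactly embedded in the chosen state space, gives tightness in the generality of Remark \ref{rem:anyp}. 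With tightness established, Prokhorov's theorem then furnishes a weakly convergent subsequence $\mu^{R,\epsilon}_{T_n}\rightharpoonup\mu^{R,\epsilon}$, and the Krylov--Bogoliubov argument combined with the Feller property of $\{\mathcal P_t\}_{t\ge0}$ from Proposition \ref{fellerprop} identifies $\mu^{R,\epsilon}$ as an invariant measure for \eqref{approxsystem}; this last part, however, is carried out after the present proposition.
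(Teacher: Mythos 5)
Your proposal is correct and follows essentially the same route as the paper: define the candidate compact set via uniform $H^{3}(\mathbb{T})$ bounds, apply Chebyshev's inequality to the time-averaged measures, and invoke the $T$-independent estimates of Propositions \ref{densityest} and \ref{fluidapprox}. Your extra check that the constraints $\int_{\mathbb{T}}\rho=1$ and $\rho\ge 1/R$ are closed under $H^{2}$-convergence is a fine (and correct) elaboration of what the paper dismisses as "standard compact embeddings."
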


\begin{proof}
We recall from Proposition \ref{densityest} and Proposition \ref{fluidapprox} the following uniform bounds on the density and momentum, where the constants $C$ are independent of $T$ (but may depend on the parameters $R$ and $\epsilon$): 
\begin{align*}
&\frac{1}{T} \int_{0}^{T} \|\rho^{R,\epsilon}(t, \cdot)\|_{H^{3}(\mathbb{T})}^{2} dt \le C \quad \text{ almost surely},\\
&\frac{1}{T} \mathbb{E} \int_{0}^{T} \|q^{R,\epsilon}(t)\|_{H^{3}(\mathbb{T})}^{2} dt \le C.
\end{align*}
Therefore, we define the set
\begin{equation*}
K_{M} := \left\{(\rho, q) \in \mathcal{X} : \|\rho\|_{H^{3}(\mathbb{T})} \le M, \|q\|_{H^{3}(\mathbb{T})} \le M \right\}.
\end{equation*}
and we note that $K_{M}$ is a compact subset of the phase space $\mathcal{X}$ by standard compact embeddings. We can hence show tightness by considering an arbitrary $\varepsilon > 0$ and showing that for a uniform constant $M$ (depending potentially on $\epsilon$), we have that $\mu^{R,\epsilon}_T(K_{M}) \ge 1 - \varepsilon$ for all $T \in \mathbb{N}$. We hence calculate that
\begin{align*}
\mu^{R,\epsilon}_T(K_{M}) &= \frac{1}{T} \int_{0}^{T} \mathbb{P}\left(\|\rho^{R,\epsilon}(t)\|_{H^{3}(\mathbb{T})} \le M, \|q^{R,\epsilon}(t)\|_{H^{3}(\mathbb{T})} \le M\right) dt \\
&\ge 1 - \frac{1}{T} \int_{0}^{T} \left[\mathbb{P}\Big(\|\rho^{R,\epsilon}(t)\|_{H^{3}(\mathbb{T})} > M\Big) + \mathbb{P}\Big(\|q^{R,\epsilon}(t)\|_{H^{3}(\mathbb{T})} > M\Big)\right] dt \\
&\ge 1 - \frac{1}{TM^{2}} \int_{0}^{T} \mathbb{E} \Big(\|\rho^{R,\epsilon}(t)\|_{H^{3}(\mathbb{T})}^{2} + \|q^{R,\epsilon}(t)\|^{2}_{H^{3}(\mathbb{T})}\Big) dt \ge 1 - \frac{2C}{M^{2}}.
\end{align*}
Thus, choosing $M$ sufficiently large, since the constant $C$ is independent of $T$ in the previous estimate, we obtain the desired tightness result that $\mu^{R,\epsilon}_T(K_{M}) \ge 1 - \varepsilon$ for all $T \in \mathbb{N}$.
\end{proof}

Then, we can pass to the weak limit, via a standard Prokhorov theorem argument, in which tightness of probability measures implies weak convergence along a subsequence. 

\begin{corollary}\label{invmeasure}
    For fixed $R,\epsilon>0$, there exists an invariant measure $\mathcal{L}^{R,\epsilon}_{\rho,q}$ for $\mathcal{P}_t$ defined in \eqref{ptdef}, describing the dynamics of \eqref{approxsystem}.
\end{corollary}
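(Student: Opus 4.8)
The plan is to combine the tightness of the time-averaged measures $\{\mu^{R,\epsilon}_T\}_{T\in\mathbb{N}}$ from Proposition \ref{timeaverage} with the Feller property of the semigroup $\{\mathcal{P}_t\}_{t\ge0}$ from Proposition \ref{fellerprop}, and run the standard Krylov--Bogoliubov argument. First I would invoke Prokhorov's theorem: since $\mathcal{X}$ is a Polish space (a closed subset of the separable Hilbert space $H^2(\mathbb{T})\times H^2(\mathbb{T})$) and $\{\mu^{R,\epsilon}_T\}_{T\in\mathbb{N}}$ is tight, there is a subsequence $T_n\to\infty$ and a probability measure $\mathcal{L}^{R,\epsilon}_{\rho,q}$ on $\mathcal{X}$ such that $\mu^{R,\epsilon}_{T_n}\rightharpoonup \mathcal{L}^{R,\epsilon}_{\rho,q}$ weakly, i.e. $\int_{\mathcal{X}}\varphi\, d\mu^{R,\epsilon}_{T_n}\to\int_{\mathcal{X}}\varphi\, d\mathcal{L}^{R,\epsilon}_{\rho,q}$ for every $\varphi\in C_b(\mathcal{X})$.

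Next I would verify invariance, $\mathcal{P}_s^*\mathcal{L}^{R,\epsilon}_{\rho,q}=\mathcal{L}^{R,\epsilon}_{\rho,q}$ for all $s\ge0$, equivalently $\int_{\mathcal{X}}\mathcal{P}_s\varphi\, d\mathcal{L}^{R,\epsilon}_{\rho,q}=\int_{\mathcal{X}}\varphi\, d\mathcal{L}^{R,\epsilon}_{\rho,q}$ for all $\varphi\in C_b(\mathcal{X})$. The key identity is that for the time-averaged measure based at $(1,0)$, using the semigroup property from Proposition \ref{fellerprop} and the Markov property of the solution,
\begin{equation*}
\int_{\mathcal{X}}\mathcal{P}_s\varphi\, d\mu^{R,\epsilon}_{T}-\int_{\mathcal{X}}\varphi\, d\mu^{R,\epsilon}_{T}=\frac{1}{T}\int_{T}^{T+s}(\mathcal{P}_t\varphi)(1,0)\,dt-\frac{1}{T}\int_{0}^{s}(\mathcal{P}_t\varphi)(1,0)\,dt,
\end{equation*}
whose right-hand side is bounded in absolute value by $\frac{2s\|\varphi\|_{\infty}}{T}\to0$ as $T\to\infty$. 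Since $\mathcal{P}_s\varphi\in C_b(\mathcal{X})$ by the Feller property, both sides pass to the limit along $T_n$, yielding $\int_{\mathcal{X}}\mathcal{P}_s\varphi\, d\mathcal{L}^{R,\epsilon}_{\rho,q}=\int_{\mathcal{X}}\varphi\, d\mathcal{L}^{R,\epsilon}_{\rho,q}$, which is exactly invariance.

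The only genuine subtlety — and I would spell it out carefully — is that the semigroup identity above requires that the solution started from the deterministic datum $(1,0)$ is a time-homogeneous Markov process on $\mathcal{X}$; this is guaranteed by the pathwise uniqueness and continuous dependence established in Propositions \ref{existunique} and \ref{continuous}, which let us write $(\rho^{R,\epsilon}(t+s),q^{R,\epsilon}(t+s))$ as the evolution from $(\rho^{R,\epsilon}(t),q^{R,\epsilon}(t))$ driven by the shifted (independent) increments of $W$, so that $\mathbb{E}[\varphi(\bd{U}^{R,\epsilon}(t+s))]=\mathbb{E}[(\mathcal{P}_s\varphi)(\bd{U}^{R,\epsilon}(t))]$. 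Given the machinery already in place this is routine rather than hard; there is no real obstacle, only the bookkeeping of checking that $\mathcal{X}$ is Polish and that $K_M$-type sets indeed exhaust $\mathcal{X}$ up to $\varepsilon$, both of which follow from the compact embedding $H^3(\mathbb{T})\hookrightarrow\hookrightarrow H^2(\mathbb{T})$. I would conclude that $\mathcal{L}^{R,\epsilon}_{\rho,q}$ is an invariant measure for $\mathcal{P}_t$, and note in passing that it corresponds to a statistically stationary solution of \eqref{approxsystem} with paths in $\mathcal{X}$, obtained by running the dynamics from a random initial datum distributed according to $\mathcal{L}^{R,\epsilon}_{\rho,q}$.
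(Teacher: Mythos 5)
Your proposal is correct and is essentially the paper's argument: the paper simply cites the Krylov--Bogoliubov theorem (Theorem 7.1 of Da Prato--Zabczyk) applied to Propositions \ref{timeaverage} and \ref{fellerprop}, and what you have written out — Prokhorov on the tight time averages, the $\tfrac{2s\|\varphi\|_\infty}{T}$ cancellation via the semigroup identity, and the Markov/Feller bookkeeping — is precisely the standard proof of that theorem. No gap; you have just unpacked the black box the paper invokes.
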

\begin{proof}
    This is an immediate consequence of the Krylov–Bogoliubov Theorem (see Theorem 7.1 in \cite{DP06}) applied to Proposition \ref{timeaverage} and Proposition \ref{fellerprop}.
\end{proof}

Using the fact that the time-averaged laws $\mu^{R,\epsilon}_T$ defined in \eqref{muT} converge weakly to the invariant measure $\mathcal{L}^{R, \epsilon}_{\rho, q}$, we can deduce properties of the law of the invariant measure. The most important properties for the upcoming analysis are in the following proposition: (1) a uniform $L^{\infty}$ bound, which follows from the uniform-in-time bounds for the initial value problem in Proposition \ref{Linfuniform1}, and (2) the non-negativity of the density.
\begin{proposition}\label{Linfuniform}
    For the constant $C_{\epsilon}$ defined in Proposition \ref{Linfuniform1}, depending only on $\epsilon$, the invariant measure $\mathcal{L}^{R,\epsilon}_{\rho, q}$ satisfies:
    \begin{equation}\label{port1}
    \mathcal{L}^{R,\epsilon}_{\rho, q}\Big(\left\{(\rho, q) \in \mathcal{X}:\|(\rho, q)\|_{L^{\infty}(\mathbb{T}) \times L^{\infty}(\mathbb{T})} \le C_{\epsilon}\right\}\Big) = 1.
    \end{equation}
    In addition,
    \begin{equation}\label{port2}
    \mathcal{L}^{R, \epsilon}_{\rho, q}\left(\left\{(\rho, q) \in \mathcal{X}:\left\|\frac{q}{\rho}\right\|_{L^{\infty}(\mathbb{T})} \le C_{\epsilon} \text{ and } \rho(x) \ge { \frac1R} \text{ for all } x \in \mathbb{T}\right\}\right) = 1
    \end{equation}
    and
    \begin{equation}\label{port3}
    \mathcal{L}^{R, \epsilon}_{\rho, q}\left(\left\{(\rho, q) \in \mathcal{X} : \rho(x) \ge { \frac1R} \text{ and } \int_{\mathbb{T}} \rho(x) dx = 1\right\}\right) = 1.
    \end{equation}
\end{proposition}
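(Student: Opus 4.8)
The plan is to derive all three properties \eqref{port1}, \eqref{port2}, \eqref{port3} of the invariant measure $\mathcal{L}^{R,\epsilon}_{\rho,q}$ from the fact that it is a weak limit of the time-averaged laws $\mu^{R,\epsilon}_{T}$ defined in \eqref{muT}, together with the uniform-in-time bounds from Proposition \ref{Linfuniform1}. The key idea is the following: if $A \subset \mathcal{X}$ is a \emph{closed} set such that the initial value problem solution $(\rho^{R,\epsilon}(t), q^{R,\epsilon}(t))$ with initial data $(1,0)$ lies in $A$ almost surely for every $t \ge 0$, then $\mu^{R,\epsilon}_{T}(A) = 1$ for every $T$, and hence by the Portmanteau theorem (using that $\mu^{R,\epsilon}_{T} \rightharpoonup \mathcal{L}^{R,\epsilon}_{\rho,q}$ and that $A$ is closed, so $\limsup_{T} \mu^{R,\epsilon}_{T}(A) \le \mathcal{L}^{R,\epsilon}_{\rho,q}(A)$) we get $\mathcal{L}^{R,\epsilon}_{\rho,q}(A) = 1$.

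The steps are as follows. First I would record that by Proposition \ref{Linfuniform1}, the unique global solution $(\rho^{R,\epsilon}, q^{R,\epsilon})$ to \eqref{approxsystem} with initial data $(1,0)$ satisfies $\|(\rho^{R,\epsilon}(t), q^{R,\epsilon}(t), q^{R,\epsilon}(t)/\rho^{R,\epsilon}(t))\|_{C(\mathbb{T})} \le C_{\epsilon}$ for all $t \ge 0$ almost surely, with $C_{\epsilon}$ independent of $T$ and $R$; by Proposition \ref{rholowerbound} it also satisfies $\rho^{R,\epsilon}(t, \cdot) \ge 1/R$ for all $t$; and by \eqref{massconserve} (conservation of mass) it satisfies $\int_{\mathbb{T}} \rho^{R,\epsilon}(t, x)\, dx = 1$ for all $t$. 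Next, for each of the three target events, I would define the corresponding subset of $\mathcal{X}$ — e.g. $A_{1} := \{(\rho,q) \in \mathcal{X} : \|(\rho,q)\|_{L^{\infty}\times L^{\infty}} \le C_{\epsilon}\}$ for \eqref{port1}, and analogously $A_{2}, A_{3}$ for \eqref{port2}, \eqref{port3} — and verify these sets are closed in the $H^{2}(\mathbb{T}) \times H^{2}(\mathbb{T})$ topology of $\mathcal{X}$. This is where a little care is needed: the conditions $\|\rho\|_{L^{\infty}} \le C_{\epsilon}$, $\|q\|_{L^{\infty}} \le C_{\epsilon}$, $\rho \ge 1/R$ pointwise, and $\int_{\mathbb{T}}\rho = 1$ all pass to limits under $H^{2}$-convergence via the Sobolev embedding $H^{2}(\mathbb{T}) \hookrightarrow C(\mathbb{T})$, so each $A_{i}$ is closed. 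For the condition $\|q/\rho\|_{L^{\infty}} \le C_{\epsilon}$ appearing in \eqref{port2} one uses that on the set where $\rho \ge 1/R$, the map $(\rho, q) \mapsto q/\rho$ is continuous from $\{\rho \ge 1/R\} \subset \mathcal{X}$ into $C(\mathbb{T})$, so this is also a closed condition; intersecting with the (closed) set $\{\rho \ge 1/R\}$ keeps $A_{2}$ closed. Then, since $(\rho^{R,\epsilon}(t), q^{R,\epsilon}(t)) \in A_{i}$ almost surely for all $t \ge 0$, the definition \eqref{muT} gives $\mu^{R,\epsilon}_{T}(A_{i}) = 1$ for all $T$, and the Portmanteau argument gives $\mathcal{L}^{R,\epsilon}_{\rho,q}(A_{i}) = 1$. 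Finally I would conclude the proof by noting that \eqref{port1} is $\mathcal{L}^{R,\epsilon}_{\rho,q}(A_{1}) = 1$, and similarly for \eqref{port2}, \eqref{port3}.

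I would also remark that, because $\mathcal{X}$ itself is defined in \eqref{path} to require $\rho \ge 1/R$ and $\int_{\mathbb{T}}\rho = 1$, the lower bound and mass conditions in \eqref{port2}, \eqref{port3} are in some sense automatic from $\mathcal{L}^{R,\epsilon}_{\rho,q}$ being supported on $\mathcal{X}$; nonetheless it is worth stating them explicitly since later sections will invoke these properties. The main (mild) obstacle is verifying the closedness of the sets $A_{i}$ — in particular handling the nonlinear quantity $q/\rho$ — and correctly invoking the Portmanteau direction for closed sets (upper semicontinuity of $\limsup_{T}\mu^{R,\epsilon}_{T}$); everything else is a direct transfer of the uniform-in-time bounds of Proposition \ref{Linfuniform1}, Proposition \ref{rholowerbound}, and conservation of mass \eqref{massconserve} through the weak limit $T \to \infty$.
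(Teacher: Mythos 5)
Your proposal is correct and follows essentially the same route as the paper: transfer the almost-sure uniform bounds from Proposition \ref{Linfuniform1}, Proposition \ref{rholowerbound}, and conservation of mass to the time-averaged laws $\mu^{R,\epsilon}_{T}$, then apply the Portmanteau theorem for closed sets (closedness via the embedding $H^{2}(\mathbb{T}) \subset L^{\infty}(\mathbb{T})$) to the weak limit $\mathcal{L}^{R,\epsilon}_{\rho,q}$. Your extra care with the closedness of the set involving $q/\rho$ is a reasonable elaboration of a point the paper treats briefly, but the argument is the same.
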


\begin{proof}
    Note that the set $\{(\rho, q) \in \mathcal{X} : \|(\rho, q)\|_{L^{\infty}(\mathbb{T}) \times L^{\infty}(\mathbb{T})} \le C_{\epsilon}\}$ is a closed set in $\mathcal{X}$ defined in \eqref{path}, since $H^{2}(\mathbb{T}) \subset L^{\infty}(\mathbb{T})$. Hence, we can use weak convergence of the time-averaged measure $\mu^{R,\epsilon}_T$ in \eqref{muT} as $T \to \infty$ to the invariant measure $\mathcal{L}^{R,\epsilon}_{\rho, q}$, combined with Portmanteau's theorem, to obtain the desired result. Namely, for the initial value problem started with initial data $(1, 0)$, we have by Proposition \ref{Linfuniform1} that
    \begin{equation*}
    \mathbb{P}\Big(\|(\rho^{R,\epsilon}(t), q^{R,\epsilon}(t)\|_{L^{\infty}(\mathbb{T}) \times L^{\infty}(\mathbb{T})} \le C_{\epsilon}\Big) = 1, \qquad \text{ for all } t \ge 0.
    \end{equation*}
    Hence, for all $T \in \mathbb{N}$:
    \begin{equation*}
    \mu^{R,\epsilon}_T\Big(\{(\rho, q) \in \mathcal{X} : \|(\rho, q)\|_{L^{\infty}(\mathbb{T}) \times L^{\infty}(\mathbb{T})} \le C_{\epsilon}\Big) = 1.
    \end{equation*}
    So by Portmanteau's theorem for closed sets:
    \begin{equation*}
    \mathcal{L}^{R,\epsilon}_{\rho, q}\Big(\{(\rho, q) \in \mathcal{X} : \|(\rho, q)\|_{L^{\infty}(\mathbb{T}) \times L^{\infty}(\mathbb{T})} \le C_{\epsilon}\Big) \ge \lim_{T \to \infty} \mu^{R,\epsilon}_T\Big(\{(\rho, q) \in \mathcal{X} : \|(\rho, q)\|_{L^{\infty}(\mathbb{T}) \times L^{\infty}(\mathbb{T})} \le C_{\epsilon}\Big) = 1,
    \end{equation*}
    which establishes the desired result in \eqref{port1}.

    To prove the second result in \eqref{port2}, note that the set
    \begin{equation*}
    \left\{(\rho, q) \in \mathcal{X} : 1/R \le \rho(x) \le C_{\epsilon} \text{ and } \left\|\frac{q}{\rho}\right\|_{L^{\infty}(\mathbb{T})} \le C_{\epsilon}\right\}
    \end{equation*}
    is a closed set in $\mathcal{X}$. So by Proposition \ref{rholowerbound}, Proposition \ref{Linfuniform1}, and a similar Portmanteau theorem argument:
    \begin{equation*}
    \mathcal{L}^{R, \epsilon}_{\rho, q}\left(\left\{(\rho, q) \in \mathcal{X} : 1/R \le \rho(x) \le C_{\epsilon} \text{ and } \left\|\frac{q}{\rho}\right\|_{L^{\infty}(\mathbb{T})} \le C_{\epsilon}\right\}\right) = 1.
    \end{equation*}
    This establishes the result in \eqref{port2}. Finally, for \eqref{port3}, note that 
    \begin{equation*}
    \left\{(\rho, q) \in \mathcal{X} : \rho(x) \ge 1/R \text{ for all } x \in \mathbb{T} \text{ and } \int_{\mathbb{T}} \rho(x) dx = 1\right\}
    \end{equation*}
    is a closed set in $\mathcal{X}$, so a similar Portmanteau theorem argument works, once we note that $\rho^{R, \epsilon}(x) \ge 1/R$ for all $x \in \mathbb{T}$ almost surely by Proposition \ref{rholowerbound}, and furthermore, $\displaystyle \int_{\mathbb{T}} \rho^{R, \epsilon}(t, x) dx = 1$ for all $t \ge 0$ almost surely.
\end{proof}

We can directly translate the existence of an invariant measure $\mathcal{L}^{R,\epsilon}_{\rho, q}$ for the approximate system \eqref{approxsystem} into the existence of a statistically stationary solution to the approximate system \eqref{approxsystem}.

\begin{corollary}\label{stationaryReps}
There exists a statistically stationary solution $(\rho^S_{R, \epsilon}, q^S_{R, \epsilon})$ for the approximate system \eqref{approxsystem}, which furthermore satisfies the uniform bounds almost surely:
\begin{equation}\label{RepsLinf}
\|(\rho_{R, \epsilon}^S, q_{R, \epsilon}^S)\|_{C(\R_{+}; L^{\infty}(\mathbb{T}) \times L^{\infty}(\mathbb{T}))} \le C_{\epsilon}, 
\qquad \left\|\frac{q^S_{R,\epsilon}}{\rho^S_{R,\epsilon}}\right\|_{C(\R_{+}; L^{\infty}(\mathbb{T}))} \le C_{\epsilon},
\end{equation}
for a constant $C_{\epsilon}$, depending only on $\epsilon$, with $\rho_{R, \epsilon}^S(t, x) \ge 1/R$ for all $x \in \mathbb{T}$ and $\displaystyle \int_{\mathbb{T}} \rho_{R, \epsilon}^S(t, x) dx = 1$ for all $t \ge 0$, almost surely.
\end{corollary}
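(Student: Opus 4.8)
The plan is to convert the invariant measure $\mathcal{L}^{R,\epsilon}_{\rho,q}$ from Corollary \ref{invmeasure} into an actual statistically stationary process via the standard construction. First I would take the state space to be $\mathcal{X}$ as in \eqref{path} and set the law of the initial datum $(\rho^S_{R,\epsilon}(0), q^S_{R,\epsilon}(0))$ to be $\mathcal{L}^{R,\epsilon}_{\rho,q}$. On a (possibly enlarged) probability space carrying a cylindrical Wiener process $W$ independent of the initial datum with law $\mathcal{L}^{R,\epsilon}_{\rho,q}$, I would solve the approximate system \eqref{approxsystem} using the pathwise existence and uniqueness result of Proposition \ref{existunique}, which applies since $\mathcal{X}$-valued initial data are admissible by Proposition \ref{rholowerbound}. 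This produces a global solution $(\rho^S_{R,\epsilon}, q^S_{R,\epsilon}) \in L^2(\Omega; C(0,\infty;\mathcal{X}))$. Stationarity then follows from the fact that $\mathcal{L}^{R,\epsilon}_{\rho,q}$ is invariant for the Feller semigroup $\{\mathcal{P}_t\}_{t\geq 0}$ of \eqref{ptdef}: for any bounded continuous $\varphi$ on $\mathcal{X}$, using the Markov property and $\mathcal{P}_t^* \mathcal{L}^{R,\epsilon}_{\rho,q} = \mathcal{L}^{R,\epsilon}_{\rho,q}$, we get $\mathbb{E}\,\varphi(\rho^S_{R,\epsilon}(t), q^S_{R,\epsilon}(t)) = \int_{\mathcal{X}} (\mathcal{P}_t\varphi)\, d\mathcal{L}^{R,\epsilon}_{\rho,q} = \int_{\mathcal{X}} \varphi \, d\mathcal{L}^{R,\epsilon}_{\rho,q}$, so the time-$t$ marginal is $\mathcal{L}^{R,\epsilon}_{\rho,q}$ for every $t$; the analogous computation with the multi-time Markov property upgrades this to stationarity of the finite-dimensional distributions, hence of the whole process.

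Next I would verify that this stationary process is a weak martingale entropy solution in the sense of Definition \ref{entropydef} (with the noise coefficient $\bd{\Phi}^{R,\epsilon}$ and the extra viscous terms, i.e. in the sense of the identity \eqref{entropyRepsilon}). This is immediate: Proposition \ref{existunique} already asserts that the pathwise solution satisfies \eqref{entropyRepsilon} for all entropy-flux pairs, and stationarity of the law is the remaining requirement; finite-energy and measurability conditions hold since $(\rho^S_{R,\epsilon}, q^S_{R,\epsilon}) \in C(0,\infty;H^2(\mathbb{T})\times H^2(\mathbb{T}))$ with $\rho^S_{R,\epsilon}\geq 1/R$.

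Finally I would establish the uniform bounds \eqref{RepsLinf} and the pointwise/mass constraints. These transfer directly from Proposition \ref{Linfuniform}: since the time-$t$ marginal law of $(\rho^S_{R,\epsilon}(t), q^S_{R,\epsilon}(t))$ equals $\mathcal{L}^{R,\epsilon}_{\rho,q}$ for every fixed $t$, the events $\{\|(\rho,q)\|_{L^\infty\times L^\infty}\leq C_\epsilon\}$, $\{\|q/\rho\|_{L^\infty}\leq C_\epsilon\}$, $\{\rho(x)\geq 1/R \text{ for all } x\}$, and $\{\int_{\mathbb{T}}\rho\,dx = 1\}$ each have probability one at every fixed time by \eqref{port1}, \eqref{port2}, \eqref{port3}. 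To promote "for every fixed $t$, almost surely" to "almost surely, for every $t$" I would use the almost sure continuity of $t\mapsto (\rho^S_{R,\epsilon}(t), q^S_{R,\epsilon}(t))$ in $\mathcal{X}\hookrightarrow C(\mathbb{T})\times C(\mathbb{T})$: intersect the probability-one events over rational $t$ and pass to the limit using continuity, which preserves all four (closed) conditions. The only mild subtlety — and the step requiring the most care — is this passage from fixed-time to all-time statements, together with making sure the solution really is built on a stochastic basis rich enough to carry an initial datum distributed as $\mathcal{L}^{R,\epsilon}_{\rho,q}$ independently of $W$; both are routine but should be spelled out. This completes the construction of $(\rho^S_{R,\epsilon}, q^S_{R,\epsilon})$ with all the stated properties.
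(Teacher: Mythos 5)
Your proposal is correct and takes essentially the same route as the paper: start the Hadamard well-posed approximate dynamics from initial data distributed according to the invariant measure $\mathcal{L}^{R,\epsilon}_{\rho,q}$, deduce that the time-$t$ marginals all equal $\mathcal{L}^{R,\epsilon}_{\rho,q}$ from invariance under the Feller semigroup $\{\mathcal{P}_t\}$, and transfer the almost sure bounds from Proposition \ref{Linfuniform}. The only cosmetic difference is in upgrading the fixed-time statements to all $t\ge 0$ simultaneously: the paper invokes path continuity in $\mathcal{X}\subset C(0,T;C(\mathbb{T}))$ together with Proposition \ref{rholowerbound}, while you intersect probability-one events over rational times and pass to the limit by continuity of the closed constraints; both arguments are fine.
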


\begin{proof}
    To construct the approximate solution, consider a stochastic basis $(\Omega,\sF,(\sF_t)_{t\geq 0},W)$ and (random) initial data $(\rho_{0}, q_{0})$ with law given by the invariant measure $\mathcal{L}^{R,\epsilon}_{\rho, q}$. Denote the resulting (unique) strong pathwise solution to \eqref{approxsystem} taking continuous paths in $\mathcal{X}$ by $(\rho_{R, \epsilon}^S(t), q_{R, \epsilon}^S(t))$. 

    By the definition of invariant measure for $\mathcal{P}_t$ given in \eqref{ptdef} and the fact that $(\rho_0,q_0)\sim \mathcal{L}^{R,\epsilon}_{\rho,q}$, we know that for any $\varphi\in C_b(\mathcal{X})$:
  \begin{align*}
      \int_\mathcal{X} \varphi(x) \mathcal{L}^{R,\epsilon}_{\rho, q}(dx)&=\int_\mathcal{X} \mathcal{P}_t \varphi(x)\, \mathcal{L}^{R,\epsilon}_{\rho, q}(dx) =	\mathbb{E}[\mathcal{P}_t \varphi(\rho_0,q_0)] =\mathbb{E}[\varphi(\rho^S_{R,\epsilon}(t),q^S_{R,\epsilon}(t))],
  \end{align*}
  which means that $(\rho^S_{R,\epsilon}(t),q^S_{R,\epsilon}(t))\sim \mathcal{L}^{R,\epsilon}_{\rho, q}$ for all $t\geq 0$.

    Hence, we have that the resulting solution $(\rho_{R, \epsilon}^S(t), q_{R, \epsilon}^S(t))$ is a statistically stationary solution with continuous paths in $\mathcal{X}$. The bound in $C(\R_{+}; L^{\infty}(\mathbb{T}) \times L^{\infty}(\mathbb{T}))$ follows directly from Proposition \ref{Linfuniform}. We remark that we can establish the uniform lower bound $\rho^S_{R,\epsilon}(t, x) \ge 1/R$ for $x \in \mathbb{T}$ almost surely, where on this almost sure set, this lower bound holds for all $t \ge 0$ simultaneously. This is because of the continuity in time $\rho_{R, \epsilon}^S \in C(0, T; \mathcal{X}) \subset C(0, T; C(\mathbb{T}))$ by Proposition \ref{rholowerbound}.
\end{proof}

Finally, we observe the following uniform bound (independent of $R$), which is a consequence of the uniform $L^{\infty}(\mathbb{T})$ bound in \eqref{RepsLinf}. {{This will be important for obtaining control of the vacuum set where the density is zero, in the subsequent limit passage as $R \to \infty$ (see Proposition \ref{rhoepspositive}), since the uniform constants in this proposition are independent of $R$.}}

\begin{proposition}\label{logbounds}
    The stationary solution $(\rho_{R, \epsilon}^S, q_{R, \epsilon}^S)$ satisfies the following uniform bounds for all $t \ge 0$:
    \begin{equation*}
    \mathbb{E}\|\partial_{x}(\log(\rho_{R, \epsilon}^S(t, x)))\|_{L^{2}(\mathbb{T})}^{2} \le C_{\epsilon}, \qquad \mathbb{E}\|\log(\rho_{R, \epsilon}^S(t))\|_{L^{\infty}(\mathbb{T})}^{2} \le C_{\epsilon},
    \end{equation*}
    for a constant $C_{\epsilon}$ depending only on $\epsilon$ (and not on $R$ or $t \ge 0$).
\end{proposition}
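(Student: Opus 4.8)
The plan is to use the stationarity of $(\rho_{R,\epsilon}^S, q_{R,\epsilon}^S)$ together with the entropy-type estimate obtained by testing the approximate continuity equation by $\rho^{-1}$, much as in the (deleted) Proposition ``vacuumbound''-style computation, but now exploiting that the time average of the noise-driven momentum energy is uniformly controlled. First I would note that since the solution is stationary, it suffices to bound $\mathbb{E}\|\partial_x \log \rho_{R,\epsilon}^S(t)\|_{L^2(\mathbb{T})}^2$ at a \emph{single} time, say $t = 1$, because the law of $(\rho_{R,\epsilon}^S(t), q_{R,\epsilon}^S(t))$ is independent of $t$. Using $\int_{\mathbb T}\rho\,dx = 1$ and $H^2(\mathbb T)\subset C(\mathbb T)$, for each $\omega$ there is $x_0$ with $\rho_{R,\epsilon}^S(1,x_0)=1$, hence $\log\rho_{R,\epsilon}^S(1,x_0)=0$, so by the fundamental theorem of calculus $\|\log\rho_{R,\epsilon}^S(1)\|_{L^\infty(\mathbb T)}\le \|\partial_x\log\rho_{R,\epsilon}^S(1)\|_{L^1(\mathbb T)}\le \|\partial_x\log\rho_{R,\epsilon}^S(1)\|_{L^2(\mathbb T)}$. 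Thus the second bound follows from the first, and only the $L^2$ bound on $\partial_x\log\rho_{R,\epsilon}^S$ needs work.

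To get the $L^2$ bound on $\partial_x\log\rho_{R,\epsilon}^S$, I would test the continuity equation $\partial_t\rho + \chi_R(\rho,q)\partial_x q = \epsilon\Delta\rho$ by $1/\rho$ and integrate over $\mathbb T\times[0,T]$ (valid since $\rho_{R,\epsilon}^S\ge 1/R>0$ by Corollary ``stationaryReps''). The $\partial_t$ term gives $\int_{\mathbb T}\log\rho(T) - \int_{\mathbb T}\log\rho(0)$, the dissipation term produces $\epsilon\int_0^T\int_{\mathbb T}\rho^{-2}(\partial_x\rho)^2 = \epsilon\int_0^T\|\partial_x\log\rho\|_{L^2(\mathbb T)}^2$, and the flux term is controlled after an integration by parts using the truncation $0\le\chi_R\le 1$ together with the uniform $L^\infty$ bounds $\|\rho^S_{R,\epsilon}\|_{L^\infty}\le C_\epsilon$, $\|q^S_{R,\epsilon}/\rho^S_{R,\epsilon}\|_{L^\infty}\le C_\epsilon$ from \eqref{RepsLinf}; in particular $\chi_R(\rho,q)\ne 0$ forces $\|q\|_{H^2(\mathbb T)}\le R+1$, so $\|\partial_x(q/\rho)\|_{L^2(\mathbb T)}$ and $\|q\|_{L^\infty(\mathbb T)}$ are controlled (the latter even uniformly in $R$ via the $L^\infty$ bound on $q$). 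Taking expectations, the stochastic content enters only through these a.s.\ bounds, which are $R$-independent, so Cauchy's inequality with $\epsilon$ yields
\begin{equation*}
\epsilon\,\mathbb{E}\int_0^T\|\partial_x\log\rho_{R,\epsilon}^S(t)\|_{L^2(\mathbb T)}^2\,dt \le \mathbb{E}\big(\|\log\rho_{R,\epsilon}^S(T)\|_{L^1(\mathbb T)} + \|\log\rho_{R,\epsilon}^S(0)\|_{L^1(\mathbb T)}\big) + C_\epsilon T,
\end{equation*}
and since $|\log s|\le \max(s, 1/s)$, the $\log$-terms are bounded by $C_\epsilon + R$ a.s.\ — but this still has an $R$. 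To remove the $R$, I would instead bound $\|\log\rho_{R,\epsilon}^S(T)\|_{L^1(\mathbb T)}\le \|\partial_x\log\rho_{R,\epsilon}^S(T)\|_{L^2(\mathbb T)}$ via the FTC trick above and absorb it, or — cleaner — run the estimate on a unit time window $[\tau,\tau+1]$ and combine with stationarity so that $\mathbb{E}\|\log\rho(\tau)\|_{L^1}$ appears on both sides with a good coefficient; dividing by $T$ and using stationarity of the integrand $\mathbb{E}\|\partial_x\log\rho_{R,\epsilon}^S(t)\|_{L^2(\mathbb T)}^2 = \mathbb{E}\|\partial_x\log\rho_{R,\epsilon}^S(1)\|_{L^2(\mathbb T)}^2$ gives $\epsilon\,\mathbb{E}\|\partial_x\log\rho_{R,\epsilon}^S(1)\|_{L^2(\mathbb T)}^2 \le C_\epsilon + \tfrac{1}{T}\mathbb{E}(\cdots)$, and letting $T\to\infty$ kills the boundary terms, leaving $\mathbb{E}\|\partial_x\log\rho_{R,\epsilon}^S(1)\|_{L^2(\mathbb T)}^2\le C_\epsilon$ with $C_\epsilon$ independent of $R$.

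The main obstacle is exactly this last point: ensuring that the constant coming out of the boundary terms $\mathbb{E}\|\log\rho(T)\|_{L^1}$ does not carry an $R$-dependence (the naive pointwise lower bound $\rho\ge 1/R$ only gives $|\log\rho|\le \log R$). The resolution is to never use the crude lower bound directly but to route everything through the stationarity + time-averaging argument: the time-averaged version of the entropy identity, divided by $T$ and sent to $T\to\infty$, eliminates all boundary contributions, so the only surviving terms are those bounded by $C_\epsilon$ via the $R$-independent $L^\infty$ bounds of Corollary ``stationaryReps''. A minor technical point to be careful about is the justification of applying Itô's formula / the testing procedure with the singular functional $\rho\mapsto\int\log\rho$, which is fine here because $\rho_{R,\epsilon}^S$ is a classical solution bounded below and above, so all manipulations are on smooth bounded quantities.
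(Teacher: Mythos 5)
Your proposal is correct and follows essentially the same route as the paper: test the regularized continuity equation by $1/\rho$, integrate by parts, apply Cauchy's inequality together with the $R$-independent $L^{\infty}$ bound on $[q^S_{R,\epsilon}]_R/\rho^S_{R,\epsilon}$, use stationarity, and deduce the $L^{\infty}$ bound on $\log\rho^S_{R,\epsilon}$ from $\int_{\mathbb{T}}\rho^S_{R,\epsilon}=1$ via the fundamental theorem of calculus. The only (minor) difference is the disposal of the boundary terms: the paper cancels $\mathbb{E}\int_{\mathbb{T}}\log\rho^S_{R,\epsilon}(T)$ against $\mathbb{E}\int_{\mathbb{T}}\log\rho^S_{R,\epsilon}(0)$ exactly by equality of laws, whereas you divide by $T$ and let $T\to\infty$, which is equally valid since those terms are finite and $T$-independent for fixed $R$.
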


\begin{proof}
    By Corollary \ref{stationaryReps}, we can test the continuity equation by $1/\rho_{\epsilon}$ since $\rho_{\epsilon}(t, x) \ge 1/R > 0$ for all $x \in \mathbb{T}$ and $t \ge 0$ almost surely. We thus obtain for arbitrary $T > 0$ that for all $t \in [0, T]$:
    \begin{equation*}
    \int_{0}^{T} \int_{\mathbb{T}} \frac{\partial_{t}\rho_{R, \epsilon}^S}{\rho_{R, \epsilon}^S} + \int_{0}^{T} \int_{\mathbb{T}} \frac{\partial_{x}([q_{R, \epsilon}^S]_{R})}{\rho_{R, \epsilon}^S} = \epsilon \int_{0}^{T} \int_{\mathbb{T}} \frac{\partial_{x}^{2}\rho_{R, \epsilon}^S}{\rho_{R, \epsilon}^S}.
    \end{equation*}
    By integrating by parts:
    \begin{equation*}
    \int_{\mathbb{T}} \log(\rho_{R, \epsilon}^S)(T) - \int_{\mathbb{T}} \log(\rho_{R, \epsilon}^S)(0) + \int_{0}^{T} \int_{\mathbb{T}} \frac{\partial_{x}\rho_{R, \epsilon}^S}{\rho_{R, \epsilon}^S} \cdot \frac{[q_{R, \epsilon}^S]_{R}}{\rho_{R, \epsilon}^S} = \epsilon \int_{0}^{T} \int_{\mathbb{T}} \frac{(\partial_{x}\rho_{R, \epsilon}^S)^{2}}{(\rho_{R, \epsilon}^S)^{2}}.
    \end{equation*}
    After taking expectation on both sides, we can use stationarity of $(\rho_{R, \epsilon}^S, q_{R, \epsilon}^S)$ to obtain for all $t \ge 0$:
    \begin{equation*}
    \epsilon \mathbb{E} \int_{\mathbb{T}} \frac{(\partial_{x}\rho_{R, \epsilon}^S)^{2}}{(\rho_{R, \epsilon}^S)^{2}} = \mathbb{E} \int_{\mathbb{T}} \frac{\partial_{x}\rho_{R, \epsilon}^S}{\rho_{R, \epsilon}^S} \cdot \frac{[q_{R, \epsilon}^S]_{R}}{\rho_{R, \epsilon}^S}.
    \end{equation*}
    So by Cauchy's inequality and the uniform $L^{\infty}(\mathbb{T})$ bounds in Proposition \ref{Linfuniform}:
    \begin{equation}\label{dxlog}
    \mathbb{E} \int_{\mathbb{T}} \Big(\partial_{x}\Big[\log(\rho_{R, \epsilon}^S(t))\Big]\Big)^{2} = \mathbb{E} \int_{\mathbb{T}} \frac{(\partial_{x}\rho_{R, \epsilon}^S)^{2}}{(\rho_{R, \epsilon}^S)^{2}} \le C_{\epsilon} \mathbb{E} \int_{\mathbb{T}} \left(\frac{[q_{R, \epsilon}^S]_{R}}{\rho_{R, \epsilon}^S}\right)^{2} \le C_{\epsilon}.
    \end{equation}
    This establishes the first estimate. For the second estimate, note that by Corollary \ref{stationaryReps}, $\displaystyle \int_{\mathbb{T}} \rho_{R, \epsilon}^S(t, x) dx = 1$, so there must exist some point $x_0 \in \mathbb{T}$ such that $\rho_{R, \epsilon}^S(x_0) = 1$. Then,
    \begin{equation*}
    |\log(\rho_{R, \epsilon}^S(t, x))|^{2} \le \left|\int_{x_{0}}^{x} \partial_{x}\Big(\log(\rho_{R, \epsilon}^S(t, x))\Big) dx\right|^{2} \le \int_{\mathbb{T}} \Big(\partial_{x}\Big[\log(\rho_{R, \epsilon}^S(t, \cdot))\Big]\Big)^{2},
    \end{equation*}
    so the result follows by taking expectations and using the bound \eqref{dxlog}.
\end{proof}

\section{Limit passage $R \to \infty$. }

In the previous section, we obtained an invariant measure for the approximate system \eqref{approxsystem}. This invariant measure corresponds to a (stochastic) stationary solution, see Proposition \ref{stationaryReps}. While these solutions depend also on the parameter $\epsilon$, we omit the explicit dependence on $\epsilon$ in this section for convenience of notation, as we focus on passing $R\to\infty$ for fixed but arbitrary $\epsilon > 0$. That is, 
\begin{notation} 
We denote by {$\boldsymbol{U}_R=(\rho_{R}, q_{R})$} the stationary solution constructed in Corollary \ref{stationaryReps} (denoted earlier by $(\rho^S_{R,\epsilon}, q^S_{R,\epsilon})$) which is the strong pathwise solution to \eqref{approxsystem} whose law at every time $t \ge 0$ is given by
the invariant measure $\mathcal{L}^{R,\epsilon}_{\rho, q}$ constructed in Corollary \ref{invmeasure}.
\end{notation}

 Most of the estimates obtained in the previous section depended on the parameter $R$. To prepare for the limit passage as $R \to \infty$, we thus need to obtain estimates on the approximate solutions $({\rho}_{R}, {q}_{R})$ independently of $R$. Note that upon obtaining the statistically stationary solution, the information about initial conditions is lost, as it no longer makes sense to talk about an initial value problem (since these statistically stationary solutions are obtained by time averaging). However, we will often use the fact that the law of the solution is equal for all times to recover uniform bounds on the approximate statistically stationary solutions.

In addition, we note that the approximation scheme \eqref{approxsystem} at the level of $R$ is still in conservation form. Namely, we note that an entropy flux pair $(\eta, H)$ for the original system \eqref{eulerU} corresponds to an entropy flux pair $\Big(\eta, \chi_R({ \rho,q}) H\Big)$ for the approximate system \eqref{approxsystem}. Hence, the approximate problem has the following entropy equality, which is satisfied for all entropy-flux pairs $(\eta, H)$ for the original problem, and for all test functions $\varphi \in C^{2}(\mathbb{T})$ and nonnegative $\psi \in C^{\infty}_{c}(0, \infty)$:
\begin{multline}\label{entropyR}
\int_{0}^{\infty} \left(\int_{\mathbb{T}} \eta(\bd{U}_{R}(t)) \varphi(x) dx\right) \partial_{t}\psi(t) dt + \int_{0}^{\infty} \chi_R(\rho,q) \left(\int_{\mathbb{T}} H(\bd{U}_{R})\partial_{x}\varphi(x) dx\right) \psi(t) dt \\
- \int_{0}^{\infty} \left(\int_{\mathbb{T}} \alpha q_{R} \partial_{q}\eta(\bd{U}_{R}) \varphi(x) dx\right) \psi(t) dt + \int_{0}^{\infty} \left(\int_{\mathbb{T}} \partial_{q}\eta(\bd{U}_{R}) \bd{\Phi}^{R, \epsilon}(\bd{U}_{R}) \varphi(x) dx\right) \psi(t) dW(t) \\
+ \int_{0}^{\infty} \left(\int_{\mathbb{T}} \frac{1}{2} \partial_{q}^{2} \eta(\bd{U}_{R}) G^{2}_{R, \epsilon}(\bd{U}_{R}) \varphi(x) dx\right) \psi(t) =  \epsilon \int_{0}^{\infty} \left(\int_{\mathbb{T}} \langle D^{2}\eta(\bd{U}_{R})\partial_{x}\bd{U}_{R}, \partial_{x}\bd{U}_{R} \rangle \varphi(x) dx\right) \psi(t) dt \\
- \epsilon \int_{0}^{\infty} \left(\int_{\mathbb{T}} \eta(\bd{U}_{R}) \partial^{2}_{x}\varphi dx\right) \psi(t) dt. 
\end{multline}
Our goal is to use uniform bounds on the solutions (independent of the parameter $R$) in order to pass to the limit as $R \to \infty$ in the approximate solutions $(\rho_{R}, q_{R})$ and also in the approximate entropy equality \eqref{entropyR}. 

We aim to obtain almost sure strong convergence as $R \to \infty$ for our approximate stationary solutions. For that purpose, we will apply Jakubowski's version of the classical Skorohod representation theorem \cite{J97}.

Consider the phase space:
\begin{equation}\label{phaseeps}
\begin{split}
    \mathcal{S} := [C_{\text{loc}}([0, \infty); L^{2}(\mathbb{T})) \cap L^2_{\text{loc}}([0,\infty);H^1(\mathbb{T})) &\cap (L^{2}_{loc}([0, \infty); H^{2}(\mathbb{T})), w)]^2 \\
&\times (L^{2}_{loc}([0, \infty); L^{2}(\mathbb{T})), w) \times C(0, \infty; \mathcal{{U}}),
\end{split}
\end{equation}
and let $\mu_{R}$ denote the law of the random variable 
\begin{equation*}
\mu_{R} = \text{Law}_{\mathcal{S}}\left(\rho_{R}, q_{R}, \frac{\partial_{x}\rho_{R}}{\rho_{R}}, W\right),
\end{equation*}
in $\mathcal{S}$, where $\bd{U}_{R} := (\rho_{R}, q_{R})$. Here, $(X, w)$ denotes the space $X$ equipped with the weak topology. The goal will be to show that the laws $\mu_{R}$ of the approximate statistically stationary solutions are tight as probability measures on the phase space $\mathcal{S}$.

\subsection{Uniform bounds on $(\rho_{R}, q_{R})$ in $R$.} To show tightness of the laws $\mu_{R}$, we will establish uniform bounds on the approximate statistically stationary solutions $(\rho_{R}, q_{R})$, independently of $R$. We first show that the following energy bounds are satisfied by the approximate statistically stationary solutions.
\begin{proposition}\label{energyR}
    For any $T>0$ the stationary solution $(\rho_R,q_R)$ to \eqref{approxsystem} satisfies the following bounds, where $C_{\epsilon, T}$ depends only on $\epsilon > 0$ and $T > 0$, and is independent of $R$:
    \begin{align*}
         \bE\|q_R\|^2_{L^2(0,T;H^1(\mathbb{T}))}\leq C_{\epsilon,T} \quad\text{ and }   \quad \bE\|\rho_R(t)\|^2_{H^1(\mathbb{T})} \leq C_\epsilon {\text{ for every $t\geq 0$}.} \end{align*}
\end{proposition}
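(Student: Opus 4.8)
\textbf{Proof plan for Proposition \ref{energyR}.}

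The plan is to derive the two bounds from the energy identity (the entropy equality \eqref{entropyR} with the energy entropy $\eta_E$) together with stationarity, which lets us avoid appealing to initial data. First I would test the momentum equation of \eqref{approxsystem} with $q_R$ (equivalently, apply It\^{o}'s formula with the functional $q \mapsto \tfrac12\|q\|_{L^2(\mathbb{T})}^2$, or use $\eta_E$ in \eqref{entropyR}) to obtain, after taking expectations,
\begin{multline*}
\frac12\bE\int_{\mathbb{T}} |q_R(t)|^2\,dx + \alpha\,\bE\int_0^t\int_{\mathbb{T}} q_R^2\,dx\,ds + \epsilon\,\bE\int_0^t\int_{\mathbb{T}} |\partial_x q_R|^2\,dx\,ds \\
= \frac12\bE\int_{\mathbb{T}} |q_R(0)|^2\,dx + \bE\int_0^t\int_{\mathbb{T}} \chi_R(\rho_R,q_R)\Big(\tfrac{q_R^2}{\rho_R}+\kappa\gamma\rho_R^\gamma\Big)\partial_x q_R\,dx\,ds \\
+ \frac12\,\bE\int_0^t\int_{\mathbb{T}} \chi_R(\rho_R,q_R)^2\,|\bd{G}^{R,\epsilon}(\rho_R,q_R)|^2\,dx\,ds,
\end{multline*}
the stochastic integral dropping out under expectation. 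The crucial point is that $\bd{U}_R$ is stationary, so $\bE\|q_R(t)\|_{L^2(\mathbb{T})}^2$ is independent of $t$; in particular the boundary terms at $t$ and at $0$ cancel. Using the noise bound $|\bd{G}^{R,\epsilon}(\rho_R,q_R)|\le A_0^{1/2}\rho_R$ from \eqref{A0eps}, the uniform $L^\infty$ bounds on $\rho_R$, $q_R$, $q_R/\rho_R$ from Corollary \ref{stationaryReps} (whose constants $C_\epsilon$ are independent of $R$), and Young's inequality to absorb the $\epsilon\|\partial_x q_R\|_{L^2}^2$ term from the nonlinear flux, we obtain
\begin{equation*}
\alpha\,\bE\int_0^t\int_{\mathbb{T}} q_R^2 + \frac{\epsilon}{2}\,\bE\int_0^t\int_{\mathbb{T}}|\partial_x q_R|^2 \le C_\epsilon\, t,
\end{equation*}
which upon taking $t=T$ gives $\bE\|q_R\|_{L^2(0,T;H^1(\mathbb{T}))}^2 \le C_{\epsilon,T}$ with a constant independent of $R$.

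For the density bound, I would combine conservation of mass $\int_{\mathbb{T}}\rho_R(t,x)\,dx = 1$ (Corollary \ref{stationaryReps}) with the uniform $L^2$-in-space control of $\partial_x\rho_R$ coming from Proposition \ref{logbounds}: since $\rho_R \le C_\epsilon$ pointwise by \eqref{RepsLinf}, we have $|\partial_x\rho_R| = \rho_R\,|\partial_x(\log\rho_R)| \le C_\epsilon\,|\partial_x(\log\rho_R)|$, so $\bE\|\partial_x\rho_R(t)\|_{L^2(\mathbb{T})}^2 \le C_\epsilon^2\,\bE\|\partial_x(\log\rho_R(t))\|_{L^2(\mathbb{T})}^2 \le C_\epsilon$ uniformly in $R$ and $t$. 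Together with the pointwise bound $\|\rho_R(t)\|_{L^2(\mathbb{T})}^2 \le \|\rho_R(t)\|_{L^\infty(\mathbb{T})}\int_{\mathbb{T}}\rho_R(t) \le C_\epsilon$, this yields $\bE\|\rho_R(t)\|_{H^1(\mathbb{T})}^2 \le C_\epsilon$ for every $t \ge 0$, with $C_\epsilon$ independent of $R$.

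The main obstacle — though it is more bookkeeping than a genuine difficulty — is ensuring that every constant appearing is genuinely $R$-independent. The nonlinear flux and quadratic-variation terms contain the truncation $\chi_R$ and the regularized noise $\bd{\Phi}^{R,\epsilon}$, and one must check that they are controlled purely via the $R$-uniform quantities: the $L^\infty$ bounds from Corollary \ref{stationaryReps}, the noise bound \eqref{A0eps} (whose constant $A_0$ is the \emph{original} one, not inflated by the regularization), and the logarithmic bound from Proposition \ref{logbounds}. Since $0 \le \chi_R \le 1$ and all the relevant $L^\infty$ norms of $\rho_R, q_R, q_R/\rho_R$ are bounded by $C_\epsilon$, this goes through; the one place to be slightly careful is that the flux term $\kappa\gamma\rho_R^\gamma\partial_x q_R$ must be handled by Young's inequality with the $\epsilon$ in front of $\|\partial_x q_R\|_{L^2}^2$, not by trying to bound $\partial_x q_R$ directly, and likewise that stationarity (not a decaying initial condition) is what makes the $L^2$-in-time bound on $q_R$ grow only linearly in $T$.
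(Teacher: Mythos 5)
Your argument is correct, and for the momentum bound it is the same as the paper's: the paper simply invokes the zeroth-order It\^{o} estimate leading to \eqref{uhigher1} (energy identity for $\tfrac12\|q_R\|_{L^2}^2$, expectation, Young's inequality with the $\epsilon$-dissipation, the $R$-independent $L^\infty$ bounds of Corollary \ref{stationaryReps}, and the noise bound \eqref{A0eps}); whether you cancel the boundary terms by stationarity, as you do, or bound $\bE\|q_R(0)\|_{L^2}^2$ by $C_\epsilon$ via the $L^\infty$ bound is immaterial.

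For the density bound you take a slightly different route from the paper. The paper tests the continuity equation directly with $\rho_R$, obtaining $\tfrac12\tfrac{d}{dt}\|\rho_R(t)\|_{L^2}^2+\epsilon\|\partial_x\rho_R(t)\|_{L^2}^2=\int_{\mathbb{T}}[q_R]_R\,\partial_x\rho_R$, and then uses stationarity to kill the time derivative in expectation, which gives $\epsilon\,\bE\|\partial_x\rho_R(t)\|_{L^2}^2\le C_\epsilon$ in one line. You instead deduce the gradient bound from the already-proved logarithmic estimate of Proposition \ref{logbounds} via $|\partial_x\rho_R|=\rho_R\,|\partial_x\log\rho_R|\le C_\epsilon|\partial_x\log\rho_R|$, combined with mass conservation and the $L^\infty$ bound for the $L^2$ part. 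Both arguments rest on the same two ingredients (stationarity and the $R$-independent $L^\infty$ bounds), and yours is legitimate since Proposition \ref{logbounds} is established independently of this proposition, so there is no circularity; the paper's version is marginally more self-contained in that it does not route the estimate through the vacuum-control lemma, while yours reuses an existing bound and avoids a fresh energy computation. Either way all constants are $R$-independent, as required.
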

\begin{proof}
    The proof for $ \bE\|q_R\|^2_{L^2(0,T;H^1(\mathbb{T}))} \leq C_{\epsilon,T} $ follows identically the proof of the bounds found in \eqref{uhigher1} which are independent of $R$.

    We test \eqref{approxsystem}$_{2}$ by $\rho_R$ and apply Corollary \ref{stationaryReps}:
    \begin{align*}
       \frac12 \frac{d}{dt}\|\rho_R(t)\|_{L^2(\mathbb{T})}^2 +\epsilon\int_{\mathbb{T}}|\partial_x\rho_R(t)|^2 &=  \int_{\mathbb{T}}\chi_{R}(\rho,q) q_R(t)\partial_x \rho_R(t) dx\\
       &
       \leq \int_{\mathbb{T}}q_R^2(t) + \frac{\epsilon}{2}\int_{\mathbb{T}}|\partial_x\rho_R(t)|^2 \leq   C_\epsilon + \frac{\epsilon}{2}\int_{\mathbb{T}}|\partial_x\rho_R(t)|^2  .
    \end{align*}
    We apply expectation on both sides and use stationarity of $(\rho_R,q_R)$ to obtain for any $t\geq0$ that
    \begin{align*}
\epsilon\bE \int_{\mathbb{T}}|\partial_x\rho_R(t)|^2 \leq  C_\epsilon.
    \end{align*}
\end{proof}
We will next upgrade these bounds and obtain the following tightness result for the approximate laws $\mu_{R}$ in $\mathcal{S}$, defined in \eqref{phaseeps}.

\begin{proposition}\label{tightnessR}
For any fixed $\epsilon>0$, the laws $\mu_{R}$ are tight in the path space $\mathcal{S}$, and hence, $\mu_{R}$ converges weakly along a subsequence as $R \to \infty$ to some limiting probability measure $\mu_\epsilon$ on $\mathcal{S}$.  
\end{proposition}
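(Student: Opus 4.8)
The strategy is standard: establish tightness of the laws $\mu_R$ on each of the four factors of $\mathcal{S}$ separately, then conclude by Tychonoff/Prokhorov. The law of $W$ is fixed (independent of $R$), so tightness there is trivial. For the remaining three components, we split the work into (i) uniform estimates and (ii) a compactness argument handling the time regularity.

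\textbf{Step 1: Uniform spatial bounds.} From Proposition \ref{Linfuniform} (translated to the stationary solution via Corollary \ref{stationaryReps}) we have $\|(\rho_R, q_R)\|_{L^\infty(\mathbb{T})} \le C_\epsilon$ and $\|q_R/\rho_R\|_{L^\infty(\mathbb{T})}\le C_\epsilon$ and $\rho_R \ge 1/R$ almost surely for all $t$, with constants independent of $R$. From Proposition \ref{energyR} we get $\mathbb{E}\|q_R\|^2_{L^2(0,T;H^1(\mathbb{T}))} \le C_{\epsilon,T}$ and $\sup_t \mathbb{E}\|\rho_R(t)\|^2_{H^1(\mathbb{T})} \le C_\epsilon$. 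To get the $L^2_{loc}([0,\infty);H^2(\mathbb{T}))$ bound (needed for the weak-topology factor), I would run the parabolic maximal-regularity/bootstrap argument exactly as in the proof of Proposition \ref{densityest}: integrate the continuity equation, test with $\partial_x^2 \rho_R$ (or use maximal regularity for $\partial_t - \epsilon\Delta$), and use the $L^\infty$ control of $[q_R]_R$ together with $\|[q_R]_R\|_{H^2(\mathbb{T})}\le R$; but here I must be careful — the $H^2$ bound on $[q_R]_R$ is $R$-dependent, so instead I should test the $q_R$ equation as in Proposition \ref{energyR}/\eqref{uhigher1} to get $\mathbb{E}\|q_R\|^2_{L^2(0,T;H^1)}\le C_{\epsilon,T}$ (uniform in $R$), and then feed this into the continuity equation via maximal regularity to obtain $\mathbb{E}\|\rho_R\|^2_{L^2(0,T;H^2)} \le C_{\epsilon,T}$, since $\partial_x([q_R]_R) = \chi_R(\rho_R,q_R)\partial_x q_R$ is bounded in $L^2(0,T;L^2)$ uniformly in $R$. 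Similarly, $\partial_x(\log \rho_R) = \partial_x\rho_R/\rho_R$ is bounded in $L^2_{loc}([0,\infty);L^2(\mathbb{T}))$ in expectation by Proposition \ref{logbounds}, giving the needed uniform estimate on the third component. This gives uniform-in-$R$ bounds in $L^2_{loc}([0,\infty);H^2(\mathbb{T}))$ for $\rho_R$, in $L^2_{loc}([0,\infty);H^1(\mathbb{T}))$ for $q_R$, and in $L^2_{loc}([0,\infty);L^2(\mathbb{T}))$ for $\partial_x\rho_R/\rho_R$.

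\textbf{Step 2: Time regularity and compactness.} For the $C_{loc}([0,\infty);L^2(\mathbb{T}))$ component I need equicontinuity in time. I would estimate fractional-in-time Sobolev norms: from the equations \eqref{approxsystem}, $\partial_t \rho_R = \epsilon\Delta\rho_R - \partial_x([q_R]_R)$ is bounded in $L^2_{loc}([0,\infty);L^2(\mathbb{T}))$ (using Step 1 and the $L^\infty$/$H^1$ control), and $q_R$ satisfies an SPDE whose drift is bounded in $L^2_{loc}([0,\infty);H^{-1}(\mathbb{T}))$ in expectation and whose diffusion coefficient $\chi_R \bd{\Phi}^{R,\epsilon}$ is bounded in $L^2(\Omega\times[0,T];L_2(\mathcal{U};L^2(\mathbb{T})))$ uniformly in $R$ by \eqref{A0eps}; hence the stochastic integral is bounded in $W^{\alpha,p}(0,T;L^2(\mathbb{T}))$ for $\alpha < 1/2$ by the Kolmogorov/BDG estimate (as in the usual compactness arguments, e.g. in \cite{BerthelinVovelle}). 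The Aubin–Lions–Simon lemma then gives that bounded sets in $L^2(0,T;H^2(\mathbb{T})) \cap W^{\alpha,p}(0,T;L^2(\mathbb{T}))$ (resp. $L^2(0,T;H^1(\mathbb{T})) \cap W^{\alpha,p}(0,T;H^{-1}(\mathbb{T}))$) are relatively compact in $C([0,T];L^2(\mathbb{T}))$ for suitable $\alpha,p$; combined with the weak-topology factors being automatically tight on bounded sets of the reflexive spaces, Chebyshev's inequality upgrades these moment bounds to tightness of $\mu_R$ on $\mathcal{S}$. Prokhorov's theorem (valid on $\mathcal{S}$, which is a quasi-Polish space in the sense of Jakubowski \cite{J97}) then yields weak convergence along a subsequence to a limit $\mu_\epsilon$.

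\textbf{Main obstacle.} The delicate point is obtaining the time-regularity estimate on $q_R$ \emph{uniformly in $R$}: the truncation $\chi_R(\rho_R,q_R)$ multiplies the flux and pressure terms, and while $\chi_R \le 1$ helps, the nonlinear flux $[q_R]_R q_R/\rho_R$ must be controlled in $H^{-1}$ using only the $R$-independent $L^\infty$ bounds from Proposition \ref{Linfuniform} rather than the $R$-dependent $H^2$ bounds. This is doable because $L^\infty(\mathbb{T}) \hookrightarrow H^{-1}(\mathbb{T})$ and the $L^\infty$ bounds on $\rho_R, q_R, q_R/\rho_R$ are uniform in $R$, so the flux is uniformly bounded in $L^\infty_{loc}([0,\infty);L^\infty(\mathbb{T})) \subset L^2_{loc}([0,\infty);H^{-1}(\mathbb{T}))$; likewise $\chi_R \partial_x(\kappa\rho_R^\gamma)$ is controlled by the uniform $H^1$ bound on $\rho_R$ from Proposition \ref{energyR}. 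Once this is in hand, the fractional-time estimate on the stochastic convolution is routine, and the rest of the argument is the standard tightness-plus-Prokhorov machinery.
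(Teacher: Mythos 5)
Your treatment of the $\rho_R$-component and of the general machinery (maximal regularity for the continuity equation, decomposition of $q_R$ into deterministic and stochastic integrals, fractional time regularity of the stochastic convolution, Chebyshev plus Jakubowski/Prokhorov) matches the paper, but there is a genuine gap in the $q_R$-component. Your Step 1 only produces the uniform-in-$R$ bound $\mathbb{E}\|q_R\|^2_{L^2(0,T;H^1(\mathbb{T}))}\le C_{\epsilon,T}$, and you then claim compactness into $C([0,T];L^2(\mathbb{T}))$ from boundedness in $L^2(0,T;H^1(\mathbb{T}))\cap W^{\alpha,p}(0,T;H^{-1}(\mathbb{T}))$. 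That embedding is not compact into $C([0,T];L^2(\mathbb{T}))$: an $L^2$-in-time bound with values in $H^1$ gives no control of $\sup_{t}\|q_R(t)\|_{L^2(\mathbb{T})}$, and interpolating it with Hölder continuity in $H^{-1}$ at fixed times fails precisely because the $H^1$-norm is not controlled pointwise in time. The Aubin--Lions--Simon lemma with those two spaces only yields compactness in $L^2(0,T;H^{s})$, $s<1$, or in $C([0,T];H^{-s'})$-type spaces. Moreover, the path space $\mathcal{S}$ in \eqref{phaseeps} also requires tightness of $q_R$ in the strong topology of $L^2_{loc}([0,\infty);H^1(\mathbb{T}))$ and in $(L^2_{loc}([0,\infty);H^2(\mathbb{T})),w)$; both of these need a uniform-in-$R$ bound one derivative above $H^1$, which you never establish (your own Step 1 correctly notes that the crude bound $\|[q_R]_R\|_{H^2}\le R$ is $R$-dependent, but you do not replace it with anything).

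What is missing is exactly the paper's estimate \eqref{qh2},
\begin{equation*}
\mathbb{E}\sup_{t\in[0,T]}\|q_R(t)\|^2_{H^1(\mathbb{T})}+\mathbb{E}\|q_R\|^2_{L^2(0,T;H^2(\mathbb{T}))}\le C_{\epsilon,T},
\end{equation*}
uniformly in $R$, which is the technical heart of the proof. It is obtained by applying It\^{o}'s formula to $\frac12\|\partial_x q_R\|^2_{L^2(\mathbb{T})}$ (the identity \eqref{ito_qh2}), estimating the convective and pressure terms using only the $R$-independent $L^\infty$ bounds of Corollary \ref{stationaryReps} and the energy bounds of Proposition \ref{energyR}, absorbing into the $\epsilon$-dissipation, and controlling the supremum of the martingale term via Burkholder--Davis--Gundy as in \eqref{bdgRest}. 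With \eqref{qh2} in hand, the correct compact embeddings are $C(0,T;H^1(\mathbb{T}))\cap W^{\beta,q}(0,T;L^2(\mathbb{T}))\subset\subset C(0,T;L^2(\mathbb{T}))$ for $\beta q>1$ (so the fractional regularity of the stochastic integral should be taken with values in $L^2(\mathbb{T})$, not $H^{-1}$, which your noise bound \eqref{A0eps} indeed permits) and $L^2(0,T;H^2(\mathbb{T}))\cap H^{\beta}(0,T;L^2(\mathbb{T}))\subset\subset L^2(0,T;H^1(\mathbb{T}))$, while the uniform $L^2(0,T;H^2)$ bound gives the weak-topology factor. Without this sup-in-time $H^1$ and $L^2_tH^2_x$ control, the tightness claim for the $q$-component of $\mathcal{S}$ does not follow from the bounds you list.
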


\begin{proof}
{\bf Part 1: Tightness of the laws of $\rho_R$:} We will begin by proving that for any $T>0$:
\begin{align}
&\mathbb{E} \|\rho_{R}\|^2_{H^1(0,T;L^2(\mathbb{T}))\cap L^{2}(0, T; H^2(\mathbb{T}))} \le C_{\epsilon, T}, \label{rhotightbound1}\\
&\mathbb{E} \|\rho_{R}\|^2_{C^{0,\frac14}(0, T; H^{\frac12}(\mathbb{T}))} \le C_{\epsilon, T} \label{rhotightbound},
\end{align}
for some constant $ C_{\epsilon, T}>0$ independent of $R$.

   Observe as before (see the proof of Proposition \ref{densityest}), that using maximal regularity for $\rho_R$ satisfying \eqref{approxsystem}$_1$ and applying the bounds found in Proposition \ref{energyR}, we have that for some $C_{\epsilon, T}>0$ depending only on $\epsilon$ and $T$:
\begin{equation}
 \begin{split}\label{rhoh2}
   \bE \|\partial_{t}\rho_R\|^2_{L^{2}(0,T; L^{2}(\mathbb{T}))} + \bE\|\partial_{xx} \rho_R\|^2_{L^{2}(0,T; L^{2}(\mathbb{T}))} &\le C\Big(\bE\|\rho_R(0, \cdot)\|^2_{H^{1}(\mathbb{T})} + \bE\|\partial_{x}q_R\|^2_{L^{2}(0,T; L^{2}(\mathbb{T}))}\Big) \le C_{\epsilon, T},
    \end{split} 
    \end{equation}
    where we additionally used Proposition \ref{energyR}. This finishes the proof of \eqref{rhotightbound1}.
Hence, thanks to Sobolev interpolating inequalities, we have,
\begin{align*}
    \bE \|\rho_R\|^2_{C^{0,\frac14}(0,T;H^\frac12(\mathbb{T}))} &\leq \bE \|\rho_R\|^2_{H^\frac34(0,T;H^\frac12(\mathbb{T}))} \leq \bE\left(\|\rho_R\|^\frac32_{L^2(0,T;H^2(\mathbb{T}))}\|\rho_R\|^\frac12_{H^1(0,T;L^2(\mathbb{T}))}\right) \\
  &\leq \bE\left(\|\rho_R\|^2_{L^2(0,T;H^2(\mathbb{T}))}\right)^{\frac34}\bE\left(\|\rho_R\|^2_{H^1(0,T;L^2(\mathbb{T}))}\right)^{\frac14}\leq C_{\epsilon,T}.
\end{align*}
This finishes the proof of \eqref{rhotightbound} which is key in proving the tightness result for the laws of $\rho_R$ because of the following compact embedding, which follows from the Arzela-Ascoli theorem:
    \begin{equation*}
     C^{0,\frac14}(0, T; H^{\frac12}(\mathbb{T}) )\subset \subset C(0, T; L^{2}(\mathbb{T}) ). 
    \end{equation*}
     
    Recall that 
    a set $K$ is compact in $C_{\text{loc}}([0,\infty);L^2(\mathbb{T}))$ if $K_{T} = \{f|_{[0,T]};f\in K\}$ is compact in $C(0, T ;L^2(\mathbb{T}))$ for every $T\in\mathbb{N}$. This fact follows from a diagonalization argument i.e. by obtaining a subsequence $\{f_{j_k}^1\}$ converging in $K|_{[0,1]}$ of a sequence $\{f_j\}$ bounded in $K$ and then thinning it for every $T\in\mathbb{N}$ to extract a subsequence of $\{f_j\}$ converging in $C_{\text{loc}}([0,\infty);L^2(\mathbb{T}))$.
    Hence, we obtain tightness of the laws of $\rho_R$ in $C_{\text{loc}}([0,\infty);L^2(\mathbb{T}))$ by an application of the Chebyshev inequality. The same is true for tightness of the laws of $\rho_R$ in $L^2_{loc}([0,\infty);H^1(\mathbb{T}))$ thanks to the Aubin-Lions theorem that states that
    $$L^2(0,T;H^2(\mathbb{T}))\cap H^1(0,T;L^2(\mathbb{T})) \subset\subset L^2(0,T;H^1(\mathbb{T})).$$
    Finally, tightness of laws in $(L^{2}_{loc}([0, \infty); H^{2}(\mathbb{T})), w)$ is immediate from the uniform bounds in \eqref{rhotightbound1}.

    \medskip

\noindent{\bf Part 2: Tightness of the laws of $q_R$:}
To prove the second half of the tightness result, we will prove that for any $T>0$, we have
\begin{align}\label{qh2}
   \bE\sup_{t\in [0,T]}\|q_R(t)\|^2_{H^1(\mathbb{T})} +  \bE\|q_R\|^2_{L^2(0,T;H^2(\mathbb{T}))} \leq C_{\epsilon,T},
\end{align}
where the constant $C_{\epsilon,T}$ is independent of $R$.

We recall \eqref{ito_qh2} and take $\sup_{t\in[0,T]}$ and then expectation on both sides. We will next estimate each term on the right hand side, independently of $R$, of the resulting equation.
For that purpose, we will appeal to the uniform bounds obtained in Corollary \ref{stationaryReps} and the energy bounds Proposition \ref{energyR}. For the first term we thus obtain
    \begin{align*}
    \mathbb{E} \int_{0}^{T} \int_{\mathbb{T}} \left|\chi_R({\rho_R,q_R})\partial_{x}q_R \left(\frac{q_R}{\rho_R} \right)\partial_{x}^{2}q_R\right| dx dt &\le \frac{\epsilon}{4} \mathbb{E} \int_{0}^{T} \int_{\mathbb{T}} |\partial_{x}^{2}q_R|^{2} dx dt + C_{\epsilon} \mathbb{E} \int_{0}^{T} \int_{\mathbb{T}} |\partial_{x}q_R|^{2} \left(\frac{q_R}{\rho_R}\right)^{2} \\
    &\le C_{\epsilon,T} + \frac{\epsilon}{4} \mathbb{E} \int_{0}^{T} \int_{\mathbb{T}} |\partial_{x}^{2}q_R|^{2} dx.
    \end{align*}
    Similarly, by Corollary \ref{stationaryReps}  and Proposition \ref{energyR}, for the second term we obtain
    \begin{align*}
    \mathbb{E} \int_{0}^{T} \int_{\mathbb{T}} \left| q_R \chi_R({\rho_R,q_R})\partial_{x}\left(\frac{q_R}{\rho_R} \right)\partial_{x}^{2}q_R\right| dxdt&\le C_{\epsilon,T}\mathbb{E} \int_{0}^{T} \int_{\mathbb{T}}  \left|\frac{q_R\partial_{x}q_R}{\rho_R} - \frac{q_R^2\partial_{x}\rho}{\rho^2_R}\right|^2+ \frac{\epsilon}{6}\mathbb{E} \int_{0}^{T} \int_{\mathbb{T}}|\partial_{x}^{2}q_R|^2\\
     &\leq  C_{\epsilon,T}\mathbb{E} \int_{0}^{T} \int_{\mathbb{T}} |\partial_{x}q_R|^{2} + |\partial_{x}\rho_R|^{2} dx + \frac{\epsilon}{6} \mathbb{E} \int_{0}^{T} \int_{\mathbb{T}} |\partial_{x}^{2}q_R|^{2} dx\\
    &\leq  C_{\epsilon,T} + \frac{\epsilon}{6} \mathbb{E} \int_{0}^{T} \int_{\mathbb{T}} |\partial_{x}^{2}q_R|^{2} dx.
    \end{align*}
 Next, we  obtain the following estimate for any $\gamma\geq 1$ using Corollary \ref{stationaryReps}  and Proposition \ref{energyR}:
    \begin{align*}
    \kappa\gamma \mathbb{E} \int_{0}^{T} \int_{\mathbb{T}} \left|\rho_R^{\gamma - 1} \partial_{x}\rho_R \partial_{x}^{2} q_R \right|dx dt  &\le \frac{\epsilon}{6} \mathbb{E} \int_{0}^{T} \int_{\mathbb{T}} |\partial_{x}^{2}q_R|^{2}dx dt + C_{\epsilon} \mathbb{E} \int_{0}^{T} \int_{\mathbb{T}} \rho_R^{2(\gamma - 1)} |\partial_{x}\rho_R|^{2} dx dt \\
    &\le  \frac{\epsilon}{6} \mathbb{E} \int_{0}^{T} \int_{\mathbb{T}} |\partial_{x}^{2}q_R|^{2} dx dt+
    C_{\epsilon, T}  \mathbb{E} \int_{0}^{T} \int_{\mathbb{T}} |\partial_{x}\rho_R|^{2} dx dt \\
    &\le C_{\epsilon, T} + \frac{\epsilon}{6} \mathbb{E} \int_{0}^{T} \int_{\mathbb{T}} |\partial_{x}^{2}q_R|^{2} dx dt.
    \end{align*}
    Finally, we recall that $\nabla_{\rho, q}G_{k}^{R, \epsilon} = \rho \nabla_{\rho, q} g_{k}^{R, \epsilon} + (1, 0) g_{k}^{R, \epsilon}$ and we use \eqref{A0eps}, in addition to Corollary \ref{stationaryReps}  and Proposition \ref{energyR}:
 {  \begin{align*}
    \mathbb{E}  \int_{0}^{T} \sum_{k = 1}^{\infty}\int_{\mathbb{T}} |\nabla_{\rho, q} G_{k}^{R, \epsilon}(\rho_R, q_R) \cdot \partial_{x}(\rho_R, q_R)|^{2} dx dt &\le 2A_{0}^{2} \mathbb{E} \int_{0}^{T} \int_{\mathbb{T}} (1 + \rho_R^{2})\Big(|\partial_{x}\rho_R|^{2} + |\partial_{x}q_R|^{2}\Big) dx dt \\
    &\le C_{ \epsilon} T.
    \end{align*}}
    The final martingale term is treated by using the Burkholder-Davis-Gundy inequality,
    \begin{align}\label{bdgRest}
        \bE \sup_{t\in [0,T]} &\left| \sum_{k = 1}^{\infty} \int_0^t\int_{\mathbb{T}} \chi_{R}(\rho, q) \nabla_{\rho,q}G_k^{R,\epsilon}(\rho,q)\cdot \partial_x(\rho,q)q_x dxdW_{k}(t)\right| \nonumber \\
        &\leq C\bE\left( \int_0^T \sum_{k = 1}^{\infty} \left(\int_{\mathbb{T}}\chi_{R}(\rho, q) \nabla_{\rho,q}G_k^{R,\epsilon}(\rho,q)\cdot \partial_x(\rho,q)q_x\right)^2dt\right)^\frac12 \nonumber \\
        & \leq C\bE\left( \int_0^T \sum_{k = 1}^{\infty} \left(\Big\|\nabla_{\rho,q}G_k^{R,\epsilon}(\rho,q)\cdot \partial_x(\rho,q)\Big\|_{L^2(\mathbb{T})} \cdot \|q_x\|_{L^2(\mathbb{T})}\right)^2dt\right)^\frac12 \nonumber \\
        & \leq C\bE\left( \sup_{t\in[0,T]}\|q_x(t)\|_{L^2(\mathbb{T})}\left(\int_0^T \sum_{k = 1}^{\infty} \Big\|\nabla_{\rho,q}G_k^{R,\epsilon}(\rho,q)\cdot \partial_x(\rho,q)\Big\|_{L^2(\mathbb{T})}^2dt\right)^{\frac12}\right) \nonumber \\
        &\leq \frac14\bE\left(\sup_{t\in [0,T]}\|q_x\|^2_{L^2(\mathbb{T})}\right) + 8A_{0}^{2} \mathbb{E} \int_{0}^{T} \int_{\mathbb{T}} (1 + \rho_R^{2})\Big(|\partial_{x}\rho_R|^{2} + |\partial_{x}q_R|^{2}\Big) dx dt \nonumber\\
        &\leq \frac14\bE\left(\sup_{t\in [0,T]}\|q_x\|^2_{L^2(\mathbb{T})}\right) + C_{\epsilon,T}.
    \end{align}
    Thus, we conclude the proof of \eqref{qh2} by using the equation \eqref{ito_qh2} with supremum in time $t \in [0, T]$ and expectation applied to both sides, where we move the term $\displaystyle \frac{1}{4}\bE\left(\sup_{t \in [0, T]} \|q_x\|^{2}_{L^{2}(\mathbb{T})}\right)$ to the other side. Now we consider the approximate momentum equation in its integral form,
\begin{align*}
    q_R(t) &=
      \int_{0}^{t}\chi_R(\rho_R,q_R)\left(2\partial_{x}q_R \left(\frac{q_R}{\rho_R}\right) -\partial_{x}\rho_R \left(\frac{q_R^2}{\rho^2_R}\right) \right)dt
    + \kappa \gamma  \int_{0}^{t}  \chi_R(\rho_R,q_R)  \rho_R^{\gamma - 1} \partial_{x}\rho_R   dt\\
     &- \epsilon  \int_{0}^{t} \partial_{xx}q_R dt+ \int_0^t\chi_R\left(\rho_R,q_R\right)\bd{\Phi}^{R, \epsilon}(\rho_R, q_R)  dW =: I_{1}(t) + I_{2}(t) + I_{3}(t) + I_{4}(t).
    \end{align*}
    We will first show that the integrals $I_{i};i=1,2,3$ are bounded in $L^2(\Omega,H^1(0,T;L^2(\mathbb{T})))$.
    Again, we apply Corollary \ref{stationaryReps}  and the energy bounds $\bE\|q_R\|^2_{L^2(0,T;H^1(\mathbb{T}))} +  \bE\|\rho_R\|^2_{L^2(0,T;H^1(\mathbb{T}))}  \leq C_\epsilon$ from Proposition \ref{energyR} to obtain
\begin{align*}
    \bE\|\partial_tI_{1}\|^2_{L^2(0, T; L^{2}(\mathbb{T}))} &\leq C\bE\|\partial_xq_R\|^2_{L^2(0,T;L^2(\mathbb{T}))}\left\|\frac{q_R}{\rho_R}\right\|^2_{L^\infty(0,T;L^\infty(\mathbb{T}))} \\
    &\qquad+ C\bE\|\partial_x\rho_R\|^2_{L^2(0,T;L^2(\mathbb{T}))}\left\|\frac{q_R}{\rho_R}\right\|^4_{L^\infty(0,T;L^\infty(\mathbb{T}))} \leq C_{\epsilon,T}.
\end{align*}
Similarly for any $\gamma\geq 1$, we observe
\begin{align*}
   \bE \|\partial_tI_{2}\|^2_{L^2(0,T; L^{2}(\mathbb{T}))} \leq C\bE\|\partial_x\rho_R\|^2_{L^2(0,T;L^2(\mathbb{T}))}\|\rho_R^{\gamma-1}\|^2_{L^\infty(0,T;L^\infty(\mathbb{T}))}  \leq C_T.
\end{align*}
   Now we estimate $I_{3}$ by using \eqref{qh2} as follows:
   \begin{align*}
    \bE\|\partial_tI_{3}\|^2_{L^2(0,T; L^{2}(\mathbb{T}))}  \leq \epsilon \bE\|\partial_{xx}q_R\|^2_{L^2(0,T; L^{2}(\mathbb{T}))}  \leq C_{\epsilon,T}.
   \end{align*}
For the stochastic integral $I_4$, we use the fact that for any $\beta<\frac12$, $q \geq 2$ and Hilbert space $H$ (see Lemma 2.1 in \cite{FlandoliGatarek}),
\begin{align*}
    \bE\left\|\int_0^\cdot\Phi dW \right\|^q_{W^{\beta,q}(0,T;H)} \leq C\bE\int_{0}^T\|\Phi\|^q_{L_2(\mathcal{U},H)} dt.
\end{align*}
Then, Corollary \ref{stationaryReps} again and \eqref{A0eps} give us that for any $\beta < \frac{1}{2}$ and $q \geq 2$:
\begin{align*}
    \bE\|I_4\|^q_{W^{\beta,q}(0,T;L^2(\mathbb{T}))} \leq C\bE\int_{0}^T \|\boldsymbol{G}^{R,\epsilon}\|^q_{L^2(\mathbb{T})} \leq CA_0^q\bE\int_{0}^T \|\rho_{R,\epsilon}\|^q_{L^2(\mathbb{T})} \leq C_{\epsilon,T}.
\end{align*}
Hence, we define the following set for the stochastic integral:
\[
\mathcal{B}^M_{S}
:= \left\{\, Y \in C([0,T]; L^{2}(\mathbb{T})) \;:\;
\|Y\|_{W^{\beta,4}(0,T;L^2(\mathbb{T}))} \leq M \right\},
\]
for any $\beta<\frac12$. We also define the following set for the deterministic terms:
\[
\mathcal{B}^M_{D}
:= \left\{\, X \in C([0,T]; L^{2}(\mathbb{T})) \;:\;
\|X\|_{H^1(0,T;L^2(\mathbb{T}))} \leq M \right\}.
\]
For $q_R$, we define the set $\mathcal{B}^M:=\mathcal{B}^M_{S}+\mathcal{B}^M_{D}$.
That is,
\begin{align*}
    \{q_R\in \mathcal{B}^M\} \supset \{I_1+I_2+I_3 \in \mathcal{B}^M_D\} \cap \{I_4 \in \mathcal{B}^M_S\}.
\end{align*}
Hence, for $\frac14 < \beta<\frac12$, so that $4\beta >1$:
\begin{align}\label{BMR}
    \bP(\{q_R \notin \mathcal{B}^M\}) &\leq \bP(\{\|I_1+I_2+I_3\|_{H^1(0,T;L^{2}(\mathbb{T}))}>M\} )+ \bP(\{\|I_4\|_{W^{\beta,4}(0,T;L^2(\mathbb{T}))}>M\}) \nonumber \\
    &\leq \frac{1}{M}\bE\|I_1+I_2+I_3\|_{H^1(0,T;L^{2}(\mathbb{T}))} +\frac1M \bE \|I_4\|_{W^{\beta,4}(0,T;L^2(\mathbb{T}))} \leq \frac{C_{\epsilon, T}}M.
\end{align}
    Thus, the tightness of the laws of $q_R$ in the function space defined in $\mathcal{S}$ follows from the preceding calculations in \eqref{BMR} and \eqref{qh2}, since $\mathcal{B}^M$ is compact in $C_{loc}([0,\infty) ;L^2(\mathbb{T}))$ due to (see e.g. \cite{CKMT25, S87}):
    \begin{equation*}
    C(0,T;H^1(\mathbb{T}) )\cap W^{\beta,q}(0, T; L^{2}(\mathbb{T}))  \subset \subset C(0, T; L^2(\mathbb{T}) ),\qquad
    \text{for $0<\beta<1, q> 1$ such that $\beta q >1$}.
\end{equation*}
Similarly, tightness of the laws of $q_R$ in $L^2_{loc}(0,\infty;H^1(\mathbb{T}))$, is due to the Aubin-Lions-Simon theorem which, for any $0<\beta<1$, states that
$$L^2(0,T;H^2(\mathbb{T})) \cap H^{\beta}(0, T; L^{2}(\mathbb{T}))\subset\subset L^2(0,T;H^1(\mathbb{T})).$$

The uniform bounds in \eqref{rhotightbound1} and \eqref{qh2} also give tightness of $(\rho_{R}, q_{R})$ in $((L^{2}(0, T; H^{2}(\mathbb{T}))^{2}, w)$ immediately, see \eqref{phaseeps}. Finally, to observe tightness of the laws of $\rho_{R}^{-1}\partial_{x}\rho_{R}$ in $(L^{2}_{loc}([0, \infty); L^{2}(\mathbb{T})), w)$, we note that from Proposition \ref{logbounds}, we have that for each $T > 0$:
\begin{equation*}
\mathbb{E} \int_{0}^{T} \int_{\mathbb{T}} \frac{(\partial_{x}\rho_{R})^{2}}{\rho_{R}^{2}} dx dt \le C_{\epsilon}T,
\end{equation*}
for a constant $C_{\epsilon}$ independent of $R$.

\end{proof}
\subsection{Skorohod representation theorem and limit passage as $R \to \infty$.} Given the tightness result in Proposition \ref{tightnessR}, we can obtain almost sure convergence of our approximate solutions, with the trade off of transferring to a potentially different probability space $(\tilde\Omega, \tilde{\mathcal{F}}, \tilde{\mathbb{P}})$ but with equivalence of laws. In particular, by the classical Skorohod representation theorem, we can construct new random variables $(\tilde{\rho}_{R}, \tilde{q}_{R}, \tilde{\ell}_{R}, \tilde{W}_{R})$ on a new probability space $(\tilde{\Omega}, \tilde{\mathcal{F}}, \tilde{\mathbb{P}})=([0,1), \text{Borel}([0,1)),\text{Leb}([0,1))$ with the same laws as $(\rho_{R}, q_{R}, \rho_{R}^{-1}\partial_{x}\rho_{R}, W)$ in $\mathcal{S}$ as defined in \eqref{phaseeps}, such that for any fixed $\epsilon>0$
\begin{equation}\label{skorohodR}
(\tilde{\rho}_{R}, \tilde{q}_{R}, \tilde{\ell}_{R}, \tilde{W}_{R}) \to ({\rho}_\epsilon, {q}_\epsilon, \ell_{\epsilon}, {W}_\epsilon), \qquad \text{ in } \mathcal{S}, \text{ $\tilde{\mathbb{P}}$-almost surely,}
\end{equation}
where the limiting random variable $({\rho}_{\epsilon}, {u}_{\epsilon}, \ell_{\epsilon}, {W}_{\epsilon})$ on the new probability space has law given by $\mu_\epsilon$, which is identified as the weak limit of the laws $\mu_{R}$ (along a subsequence), as in Proposition \ref{tightnessR}. See \eqref{phaseeps} for the definition of the phase space $\mathcal{S}$. 

We will now deduce several consequences of the convergence \eqref{skorohodR} that will help us pass to the limit as $R \to \infty$ in the approximate entropy balance equation \eqref{entropyR}.  
Let, 
$$\tilde {\bd{U}}_R:=(\tilde \rho_R,\tilde q_R)\quad\text{and}\quad\bd{U}_\epsilon:=(\rho_\epsilon,q_\epsilon).$$
\begin{proposition}\label{aeR}
For almost every $(\tilde{\omega}, t, x) \in \tilde{\Omega} \times [0, \infty) \times \mathbb{T}$, we have
that 
$$\tilde{\bd{U}}_{R}(\tilde{\omega}, t, x) \to \bd{U}_{\epsilon}(\tilde{\omega}, t, x)\quad\text{and}\quad\partial_{x}\tilde{\bd{U}}_{R}(\tilde{\omega}, t, x) \to \partial_{x}\bd{U}_{\epsilon}(\tilde{\omega}, t, x)\quad \text{ a.e. in }\tilde\Omega\times[0,\infty)\times\mathbb{T}.$$
\end{proposition}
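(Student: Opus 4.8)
The plan is to extract the pointwise (almost everywhere) convergence from the topological convergence in $\mathcal{S}$ established in \eqref{skorohodR}, combined with the uniform bounds obtained in Proposition \ref{tightnessR}. First I would note that by \eqref{skorohodR}, $\tilde\rho_R \to \rho_\epsilon$ and $\tilde q_R \to q_\epsilon$ strongly in $C_{\mathrm{loc}}([0,\infty); L^2(\mathbb{T}))$, hence in particular in $L^2_{\mathrm{loc}}([0,\infty)\times\mathbb{T})$, $\tilde{\mathbb{P}}$-almost surely. Strong $L^2$ convergence implies convergence almost everywhere along a subsequence; but since we also have strong convergence in $L^2_{\mathrm{loc}}([0,\infty); H^1(\mathbb{T}))$ from \eqref{skorohodR} (the second component of the product phase space), we likewise get $\partial_x\tilde\rho_R \to \partial_x\rho_\epsilon$ and $\partial_x\tilde q_R \to \partial_x q_\epsilon$ strongly in $L^2_{\mathrm{loc}}([0,\infty)\times\mathbb{T})$, and hence a.e.\ along a further subsequence. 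To upgrade ``along a subsequence'' to ``for the full sequence'', I would invoke the standard fact that the Skorohod construction gives convergence of the entire sequence $R \to \infty$ (along the subsequence already fixed in Proposition \ref{tightnessR}) in the topology of $\mathcal{S}$; then, since every subsequence of $\{(\tilde{\bd U}_R,\partial_x\tilde{\bd U}_R)\}$ has a further subsequence converging a.e.\ to the \emph{same} limit $(\bd U_\epsilon,\partial_x\bd U_\epsilon)$, the Urysohn subsequence principle for a.e.\ convergence (applied on the finite-measure sets $[\tilde\Omega]\times[0,T]\times\mathbb{T}$ for each $T\in\mathbb{N}$, then taking a diagonal/exhaustion over $T$) yields a.e.\ convergence of the full sequence.

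Carrying this out concretely: fix $T \in \mathbb{N}$. On the space $\tilde\Omega \times [0,T] \times \mathbb{T}$ with the product measure $\tilde{\mathbb{P}} \otimes \mathrm{Leb} \otimes \mathrm{Leb}$ (a finite measure space), the strong $L^2$ convergence of $\tilde\rho_R, \tilde q_R, \partial_x\tilde\rho_R, \partial_x\tilde q_R$ means convergence in $L^2$ of this finite measure space (using Fubini to view the $\tilde{\mathbb{P}}$-a.s.\ $L^2_{\mathrm{loc}}$ convergence together with, say, the uniform $L^2(\tilde\Omega)$ bounds of Proposition \ref{energyR} and \eqref{qh2} to pass the limit inside the expectation via Vitali/dominated convergence). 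Then any subsequence has a sub-subsequence converging a.e.\ on $\tilde\Omega\times[0,T]\times\mathbb{T}$; since the limit is always $(\bd U_\epsilon,\partial_x\bd U_\epsilon)$, the full sequence converges a.e.\ on $\tilde\Omega\times[0,T]\times\mathbb{T}$. Finally, taking the union over $T\in\mathbb{N}$ (a countable union of full-measure sets is full-measure) gives a.e.\ convergence on $\tilde\Omega\times[0,\infty)\times\mathbb{T}$.

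I do not anticipate a genuine obstacle here; the only mild subtlety is the bookkeeping needed to convert the $\tilde{\mathbb{P}}$-almost-sure convergence in the function-space topology (which is how Skorohod delivers it) into convergence in the joint product measure, which requires a uniform integrability input --- precisely the uniform-in-$R$ second-moment bounds $\tilde{\mathbb E}\|\tilde\rho_R\|^2_{L^2(0,T;H^1)}, \tilde{\mathbb E}\|\tilde q_R\|^2_{L^2(0,T;H^1)} \le C_{\epsilon,T}$ from Proposition \ref{energyR} and \eqref{qh2}, which transfer to the tilde variables by equality of laws. With these in hand, the argument is routine measure theory. The ``hard part'', to the extent there is one, is simply being careful that the subsequence extraction is done once and for all (matching the subsequence along which $\mu_R \to \mu_\epsilon$) so that all components converge simultaneously.
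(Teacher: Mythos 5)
Your core argument is the same as the paper's: use the $\tilde{\mathbb{P}}$-almost sure convergence in $\mathcal{S}$ from \eqref{skorohodR} (the $C_{\mathrm{loc}}([0,\infty);L^2(\mathbb{T}))$ and $L^2_{\mathrm{loc}}([0,\infty);H^1(\mathbb{T}))$ components) to get pathwise strong space-time convergence of $\tilde{\bd U}_R$ and $\partial_x\tilde{\bd U}_R$, transfer the uniform-in-$R$ second-moment bounds of Proposition \ref{energyR} (and \eqref{qh2}) to the tilde variables by equality of laws, and apply Vitali's theorem to upgrade to convergence in $L^1$ (you use $L^2$, the paper uses $L^1$ -- immaterial) on the finite measure space $\tilde\Omega\times[0,T]\times\mathbb{T}$, whence almost everywhere convergence; the exhaustion over $T\in\mathbb{N}$ is also as in the paper. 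Up to this point the proposal is correct and essentially identical to the paper's proof.

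The one step that is wrong is the attempt to get a.e.\ convergence of the \emph{full} sequence via a ``Urysohn subsequence principle for a.e.\ convergence.'' Almost everywhere convergence is not induced by a topology, so the subsequence principle fails for it: the standard typewriter sequence of indicators on $[0,1]$ has the property that every subsequence admits a further subsequence converging a.e.\ to $0$ (indeed the whole sequence converges to $0$ in measure), yet the sequence converges a.e.\ nowhere. What your Vitali argument legitimately gives for the full sequence is convergence in $L^2$ of the product space, hence convergence in measure, and then a.e.\ convergence only along a (single, $\omega$-independent) subsequence. This is exactly what the paper proves -- its proof explicitly says ``up to a subsequence'' -- and it is all that is used downstream, so the fix is simply to extract that one subsequence (consistently with the subsequence already fixed in Proposition \ref{tightnessR}) and work with it, rather than claiming full-sequence a.e.\ convergence.
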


\begin{proof}
    We first show that $\tilde{\bd{U}}_{R} \to \bd{U}_\epsilon$, up to a subsequence, for almost every $(\tilde\omega, t, x) \in \tilde\Omega \times [0, \infty) \times \mathbb{T}$. Since $\tilde{\bd{U}}_{R} \to \bd{U}_{\epsilon}$ in $C_{loc}([0, \infty); L^{2}(\mathbb{T}))$, $\tilde{\mathbb{P}}$-almost surely, we have that $\tilde{\bd{U}}_{R} \to \bd{U}_{\epsilon}$, $\tilde{\mathbb{P}}$-almost surely in $L^{1}([0, T] \times \mathbb{T})$ for all $T > 0$. Furthermore, by Proposition \ref{energyR} and equivalence of laws (by the Skorohod representation theorem), we have that for a uniform constant $C_{T}$ depending only on $T > 0$ and $\epsilon > 0$ (but independent of $R > 0$):
    \begin{equation*}
    \tilde{\mathbb{E}} \|(\tilde{\rho}_{R}, \tilde{q}_{R})\|_{L^{1}([0, T] \times \mathbb{T})}^{2} \le \tilde{\mathbb{E}} \|(\tilde{\rho}_{R}, \tilde{q}_{R})\|_{L^{2}([0, T] \times \mathbb{T})}^{2} \le C_{T, \epsilon}. 
    \end{equation*}
    So by Vitali's convergence theorem, $\tilde{\bd{U}}_{R} \to \bd{U}_{R}$ in $L^{1}(\tilde{\Omega} \times [0, T] \times \mathbb{T})$ for all $T > 0$, which establishes the desired result. 

    To show the result for the first spatial derivatives, namely that $\partial_{x}\tilde{\bd{U}}_{R}(\tilde\omega, t, x) \to \partial_{x}\bd{U}(\tilde\omega, t, x)$ for almost every $(\tilde\omega, t, x) \in \tilde\Omega \times [0, T] \times \mathbb{T}$, we can use the same Vitali convergence argument to show that $\partial_{x}\tilde{\bd{U}}_{R} \to \partial_{x}\bd{U}$ in $L^{1}(\tilde\Omega \times [0, T] \times \mathbb{T})$ for all $T > 0$. 
\end{proof}

As a consequence of this almost everywhere convergence result in Proposition \ref{aeR}, we prove that there is no vacuum in the limiting system.

\begin{proposition}\label{rhoepspositive}
    For every fixed but arbitrary $\epsilon > 0$, the stationary solution $(\rho_{\epsilon}, q_{\epsilon})$ satisfies $\rho_{\epsilon} > 0$ almost everywhere on $\tilde\Omega \times [0, \infty) \times \mathbb{T}$. Furthermore,
    \begin{equation}\label{logfatou}
    \tilde{\mathbb{E}} \int_{0}^{T} \int_{\mathbb{T}} |\log(\rho_{\epsilon})|^{2} dx dt \le C_{\epsilon}T,
    \end{equation}
\end{proposition}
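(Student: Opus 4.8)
The plan is to combine the almost-everywhere convergence $\tilde{\rho}_R \to \rho_\epsilon$ from Proposition \ref{aeR} with the uniform logarithmic bounds of Proposition \ref{logbounds} (transferred to the new probability space via equality of laws) and a Fatou-type argument. First I would note that since $\tilde{\rho}_R$ has the same law as $\rho_R$, Proposition \ref{logbounds} gives $\tilde{\mathbb{E}}\|\log(\tilde{\rho}_R(t))\|_{L^\infty(\mathbb{T})}^2 \le C_\epsilon$ for every $t \ge 0$, with $C_\epsilon$ independent of $R$; integrating in time yields $\tilde{\mathbb{E}}\int_0^T\int_{\mathbb{T}} |\log(\tilde{\rho}_R)|^2\,dx\,dt \le C_\epsilon T$. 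By Proposition \ref{aeR}, $\tilde{\rho}_R \to \rho_\epsilon$ a.e. on $\tilde\Omega \times [0,\infty)\times\mathbb{T}$. The function $\log$ is continuous on $(0,\infty)$, but I must be careful at the vacuum set $\{\rho_\epsilon = 0\}$, where $\log$ is not continuous.

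The key step is to rule out vacuum. On the set where $\rho_\epsilon(\tilde\omega,t,x) = 0$, the a.e. convergence $\tilde{\rho}_R \to 0$ forces $|\log(\tilde{\rho}_R)| \to +\infty$, while on $\{\rho_\epsilon > 0\}$ we have $|\log(\tilde{\rho}_R)| \to |\log(\rho_\epsilon)| < \infty$. Define $f_R := |\log(\tilde{\rho}_R)|^2 \ge 0$; then $f_R \to |\log(\rho_\epsilon)|^2 \cdot \mathbbm{1}_{\{\rho_\epsilon > 0\}} + (+\infty)\cdot\mathbbm{1}_{\{\rho_\epsilon = 0\}}$ a.e. By Fatou's lemma applied to $f_R$,
\begin{equation*}
\tilde{\mathbb{E}}\int_0^T\int_{\mathbb{T}} \liminf_{R\to\infty} f_R \,dx\,dt \le \liminf_{R\to\infty}\tilde{\mathbb{E}}\int_0^T\int_{\mathbb{T}} f_R\,dx\,dt \le C_\epsilon T < \infty.
\end{equation*}
If the set $\{(\tilde\omega,t,x) : \rho_\epsilon = 0\}$ had positive measure, the left-hand side would be $+\infty$, a contradiction. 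Hence $\rho_\epsilon > 0$ a.e. on $\tilde\Omega\times[0,\infty)\times\mathbb{T}$, and moreover $\liminf_R f_R = |\log(\rho_\epsilon)|^2$ a.e., so the same Fatou inequality immediately yields \eqref{logfatou}.

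The main obstacle — though it is more of a bookkeeping subtlety than a deep difficulty — is making the Fatou argument rigorous at the vacuum set: one must phrase the lower bound in the extended-reals sense (assigning the value $+\infty$ to $\liminf_R f_R$ on $\{\rho_\epsilon = 0\}$ as soon as $\tilde{\rho}_R \to 0$ there) and confirm that a.e. convergence genuinely propagates to the lower semicontinuous functional. A clean way to do this without invoking extended-valued integrands is to fix $\delta > 0$ and observe that on $\{\rho_\epsilon < \delta\} \cap \{\tilde{\rho}_R \to \rho_\epsilon\}$ one has $|\log(\tilde{\rho}_R)| \ge \frac{1}{2}|\log\delta|$ for $R$ large, so $\tilde{\mathbb{E}}\int_0^T\int_{\mathbb{T}} \mathbbm{1}_{\{\rho_\epsilon < \delta\}}\,dx\,dt \le 4 C_\epsilon T / |\log\delta|^2$; letting $\delta \to 0$ gives $|\{\rho_\epsilon = 0\}| = 0$. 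Then on $\{\rho_\epsilon > 0\}$, where $\log$ is continuous, $|\log(\tilde{\rho}_R)|^2 \to |\log(\rho_\epsilon)|^2$ a.e., and the ordinary Fatou lemma delivers \eqref{logfatou}. I would also remark that the first spatial-derivative bound from Proposition \ref{logbounds}, namely $\tilde{\mathbb{E}}\|\partial_x(\log\rho_\epsilon)\|_{L^2(\mathbb{T})}^2 \le C_\epsilon$, can be recovered similarly using $\tilde{\ell}_R = \tilde{\rho}_R^{-1}\partial_x\tilde{\rho}_R \to \ell_\epsilon$ weakly in $L^2_{\mathrm{loc}}$ together with the identification $\ell_\epsilon = \rho_\epsilon^{-1}\partial_x\rho_\epsilon$ (which follows from $\rho_\epsilon > 0$ a.e. and the a.e. convergence of $\partial_x\tilde{\rho}_R$), but this is not needed for the stated conclusion.
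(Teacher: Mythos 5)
Your proof is correct and follows essentially the same route as the paper: transfer the $R$-uniform bounds of Proposition \ref{logbounds} to the new probability space by equality of laws, integrate in time, and combine the almost-everywhere convergence of Proposition \ref{aeR} with Fatou's lemma to rule out vacuum and obtain \eqref{logfatou}. Your $\delta$-threshold argument merely makes explicit the handling of the vacuum set that the paper leaves implicit in its one-line appeal to Fatou, so no further changes are needed.
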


\begin{proof}
    Note that by Proposition \ref{logbounds}, we have that for all $R > 0$, almost surely:
    \begin{equation*}
    {\mathbb{E}} \|\log(\rho_{R}(t))\|_{L^{\infty}(\mathbb{T})}^{2} \le C_{\epsilon}, \qquad \text{ for all } t \ge 0.
    \end{equation*}
    So taking a time integral and transferring to the new probability space by equivalence of laws:
    \begin{equation*}
    \tilde{\mathbb{E}} \int_{0}^{T} \|\log(\tilde{\rho}_{R})\|^{2}_{L^{\infty}(\mathbb{T})} dx dt \le C_{\epsilon}T \qquad \text{ and hence} \qquad \tilde{\mathbb{E}} \int_{0}^{T} \int_{\mathbb{T}} |\log(\tilde{\rho}_{R})|^{2} dx dt \le C_{\epsilon}T,
    \end{equation*}
    for all $T > 0$. So \eqref{logfatou} follows from Proposition \ref{aeR} and Fatou's lemma.  
\end{proof}
\begin{remark}The observation that $\rho_{\epsilon} > 0$ almost everywhere on $\tilde{\Omega} \times [0, \infty) \times \mathbb{T}$, which follows from \eqref{logfatou}, is also essential for ensuring that the terms in the entropy inequality, such as the expectation of the dissipation integral involving $D^{2}\eta$ which is continuous away from the vacuum set $\rho = 0$, can be properly defined.
\end{remark}
Thanks to the previous two propositions, we can now identify the new random variables $\tilde \ell_R$ and $\ell_\epsilon$.
\begin{corollary}\label{lidentify}
    For the new random variable we have
  \begin{equation*}
    \tilde{\ell}_{R} = \frac{\partial_{x}\tilde{\rho}_{R}}{\tilde{\rho}_{R}},\qquad \ell_{\epsilon} = \frac{\partial_{x}\rho_{\epsilon}}{\rho_{\epsilon}}. 
    \end{equation*}
\end{corollary}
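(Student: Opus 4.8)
The plan is to identify $\tilde{\ell}_{R}$ by equivalence of laws, and then to identify $\ell_{\epsilon}$ by passing to the limit $R\to\infty$ via a weak--strong product argument.

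First I would record that, by the very construction of $\mu_{R}$, the random variable on the original probability space whose law is $\mu_{R}$ is $\big(\rho_{R}, q_{R}, \rho_{R}^{-1}\partial_{x}\rho_{R}, W\big)$, so the algebraic identity $\rho_{R}\cdot(\rho_{R}^{-1}\partial_{x}\rho_{R}) = \partial_{x}\rho_{R}$ holds $\mathbb{P}$-a.s. (this makes sense since $\rho_{R}\in L^{2}_{loc}([0,\infty);H^{1}(\mathbb{T}))\hookrightarrow L^{2}_{loc}([0,\infty);L^{\infty}(\mathbb{T}))$ by Sobolev embedding). Introduce
\begin{equation*}
A := \Big\{(\rho,q,\ell,W)\in\mathcal{S}\ :\ \int_{0}^{\infty}\!\!\int_{\mathbb{T}}(\rho\,\ell-\partial_{x}\rho)\,\phi\,dx\,dt = 0\ \text{ for all } \phi\in C_{c}^{\infty}((0,\infty)\times\mathbb{T})\Big\}.
\end{equation*}
Each functional $(\rho,q,\ell,W)\mapsto\int(\rho\ell-\partial_{x}\rho)\phi$ is separately continuous on $\mathcal{S}$ (continuous in $\rho$ for the strong $L^{2}_{loc}(H^{1})$ topology since $\ell\phi\in L^{2}$, and weakly continuous in $\ell$ since $\rho\phi\in L^{2}$), hence jointly Borel; intersecting over a countable dense family of $\phi$ shows $A$ is a Borel subset of $\mathcal{S}$. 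Since $\mu_{R}(A)=1$ and $(\tilde{\rho}_{R},\tilde{q}_{R},\tilde{\ell}_{R},\tilde{W}_{R})$ has law $\mu_{R}$, we get $\tilde{\rho}_{R}\tilde{\ell}_{R}=\partial_{x}\tilde{\rho}_{R}$ $\tilde{\mathbb{P}}$-a.s.; and by Proposition \ref{rholowerbound} together with equality of laws, $\tilde{\rho}_{R}\ge 1/R>0$ $\tilde{\mathbb{P}}$-a.s., so dividing gives $\tilde{\ell}_{R}=\partial_{x}\tilde{\rho}_{R}/\tilde{\rho}_{R}$.

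Next I would pass to the limit. By Proposition \ref{aeR}, $\tilde{\rho}_{R}\to\rho_{\epsilon}$ and $\partial_{x}\tilde{\rho}_{R}\to\partial_{x}\rho_{\epsilon}$ a.e.\ on $\tilde{\Omega}\times[0,\infty)\times\mathbb{T}$; by Corollary \ref{stationaryReps} and equality of laws, $\|\tilde{\rho}_{R}\|_{L^{\infty}}\le C_{\epsilon}$ uniformly in $R$, so dominated convergence yields $\tilde{\rho}_{R}\to\rho_{\epsilon}$ strongly in $L^{2}_{loc}$, while \eqref{rhotightbound1} (or Proposition \ref{energyR}) with Vitali's theorem gives $\partial_{x}\tilde{\rho}_{R}\to\partial_{x}\rho_{\epsilon}$ in $L^{1}_{loc}(\tilde{\Omega}\times[0,\infty)\times\mathbb{T})$. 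On the other hand $\tilde{\ell}_{R}\rightharpoonup\ell_{\epsilon}$ in $(L^{2}_{loc}([0,\infty);L^{2}(\mathbb{T})),w)$ by \eqref{skorohodR}, so for any $\phi\in C_{c}^{\infty}((0,\infty)\times\mathbb{T})$,
\begin{equation*}
\int\tilde{\rho}_{R}\tilde{\ell}_{R}\,\phi = \int(\tilde{\rho}_{R}-\rho_{\epsilon})\tilde{\ell}_{R}\,\phi + \int\rho_{\epsilon}\tilde{\ell}_{R}\,\phi \longrightarrow \int\rho_{\epsilon}\ell_{\epsilon}\,\phi,
\end{equation*}
because the first term is bounded by $\|\tilde{\rho}_{R}-\rho_{\epsilon}\|_{L^{2}(\mathrm{supp}\,\phi)}\,\|\tilde{\ell}_{R}\|_{L^{2}(\mathrm{supp}\,\phi)}\,\|\phi\|_{L^{\infty}}\to 0$ and the second converges by weak convergence tested against $\rho_{\epsilon}\phi\in L^{2}$. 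Since $\tilde{\rho}_{R}\tilde{\ell}_{R}=\partial_{x}\tilde{\rho}_{R}$ from the previous step and $\partial_{x}\tilde{\rho}_{R}\to\partial_{x}\rho_{\epsilon}$ in $L^{1}_{loc}$, uniqueness of distributional limits forces $\rho_{\epsilon}\ell_{\epsilon}=\partial_{x}\rho_{\epsilon}$ a.e.; and by Proposition \ref{rhoepspositive}, $\rho_{\epsilon}>0$ a.e., so $\ell_{\epsilon}=\partial_{x}\rho_{\epsilon}/\rho_{\epsilon}$ a.e.

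The main obstacle is the weak--strong product passage $\tilde{\rho}_{R}\tilde{\ell}_{R}\rightharpoonup\rho_{\epsilon}\ell_{\epsilon}$: it rests crucially on the $R$-uniform $L^{\infty}$ bound on the density from Corollary \ref{stationaryReps} (to upgrade the a.e.\ convergence of $\tilde{\rho}_{R}$ to strong $L^{2}_{loc}$ convergence) and on the uniform $L^{2}_{loc}$ bound on $\partial_{x}\tilde{\rho}_{R}$; the Borel-measurability of $A$ needed for the first step is routine given the weak-topology factors in $\mathcal{S}$.
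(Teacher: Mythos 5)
Your argument is correct and rests on exactly the same ingredients as the paper's proof: equality of laws to obtain $\tilde{\ell}_{R}=\partial_{x}\tilde{\rho}_{R}/\tilde{\rho}_{R}$, the Skorohod convergences \eqref{skorohodR}, the almost-everywhere convergence of Proposition \ref{aeR}, and the absence of vacuum from Proposition \ref{rhoepspositive}. The only difference is cosmetic: the paper identifies the $\tilde{\mathbb{P}}$-a.s.\ weak $L^{2}$ limit of the quotient directly with its a.e.\ limit, whereas you multiply by $\tilde{\rho}_{R}$, pass to the limit in the product by a weak--strong argument, and divide at the end using $\rho_{\epsilon}>0$ a.e.; both are standard, and your extra care with the Borel set $A$ is a more explicit rendering of what the paper dismisses as immediate from equivalence of laws.
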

\begin{proof}
  From equivalence of laws $\displaystyle \left(\rho_{R}, \frac{\partial_{x}\rho_{R}}{\rho_{R}}\right) =_{d} (\tilde{\rho}_{R}, \tilde{\ell}_{R})$, it is immediate that $\displaystyle \tilde{\ell}_{R} = \frac{\partial_{x}\tilde{\rho}_{R}}{\tilde{\rho}_{R}}$. Furthermore, we know from \eqref{skorohodR} that
    \begin{equation*}
    \frac{\partial_{x}\tilde{\rho}_{R}}{\tilde{\rho}_{R}} \rightharpoonup \ell_{\epsilon}, \qquad \text{$\tilde{\mathbb{P}}$-almost surely and weakly in $L^{2}(0, T; L^{2}(\mathbb{T}))$}.
    \end{equation*}
  Then Propositions \ref{aeR} and \ref{rhoepspositive} allow us to uniquely identify the weak $\tilde{\mathbb{P}}$-almost sure limit as $\displaystyle \ell_{\epsilon} = \frac{\partial_{x}\rho_{\epsilon}}{\rho_{\epsilon}}$.
\end{proof}

As another consequence that will be important in the limit passage as $R \to \infty$ (in particular, for passing to the limit in the entropy dissipation terms), we observe the following convergence.

\begin{proposition}\label{weakstrongR}
  Let $\displaystyle \tilde{u}_{R} = \frac{\tilde q_{R}}{\tilde \rho_{R}}$ and $\displaystyle u_{\epsilon} = \frac{q_{\epsilon}}{\rho_{\epsilon}}$.  We have the following $\tilde{\mathbb{P}}$-almost sure strong convergence:
    \begin{equation}\label{ustrong}
    \tilde{u}_{R} \to u_{\epsilon}, \qquad \text{$\tilde{\mathbb{P}}$-almost surely and strongly in $L^{2}([0, T] \times \mathbb{T})$}.
    \end{equation}
Consequently, we have that for arbitrary $T > 0$:
    \begin{equation}\label{q2rho}
    \int_{0}^{T} \int_{\mathbb{T}} \frac{(\partial_{x}\tilde{q}_{R})^{2}}{\tilde{\rho}_{R}} dx dt \to \int_{0}^{T} \int_{\mathbb{T}} \frac{(\partial_{x}q_{\epsilon})^{2}}{\rho_{\epsilon}} dx dt, \qquad \text{$\tilde{\mathbb{P}}$-almost surely.}
    \end{equation}
\end{proposition}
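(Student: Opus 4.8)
\textbf{Proof proposal for Proposition \ref{weakstrongR}.}

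The plan is to first establish the strong convergence \eqref{ustrong} of $\tilde{u}_R$ to $u_\epsilon$ in $L^2([0,T]\times\mathbb{T})$, and then deduce \eqref{q2rho} from \eqref{ustrong} combined with the weak convergence of $\partial_x\tilde{q}_R$ and the almost everywhere convergence of $\tilde{\rho}_R$. For the first part, I would argue $\tilde{\mathbb{P}}$-almost surely, working with a fixed $\tilde{\omega}$ in the almost sure event on which all the Skorohod convergences hold. The key observation is that $\tilde{u}_R = \tilde{q}_R/\tilde{\rho}_R$, and from Proposition \ref{aeR} we have $\tilde{\rho}_R \to \rho_\epsilon$ and $\tilde{q}_R \to q_\epsilon$ almost everywhere on $\tilde{\Omega}\times[0,\infty)\times\mathbb{T}$, while from Proposition \ref{rhoepspositive} we have $\rho_\epsilon > 0$ almost everywhere. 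Hence $\tilde{u}_R \to u_\epsilon$ almost everywhere on $\tilde{\Omega}\times[0,T]\times\mathbb{T}$. To upgrade this to strong $L^2$ convergence, I would invoke a uniform bound: by equivalence of laws and the uniform $L^\infty$ bound on $q_R/\rho_R$ from Corollary \ref{stationaryReps} (which gives $\|\tilde{q}_R/\tilde{\rho}_R\|_{C(\mathbb{R}_+;L^\infty(\mathbb{T}))}\le C_\epsilon$ almost surely), the sequence $\{\tilde{u}_R\}$ is $\tilde{\mathbb{P}}$-almost surely uniformly bounded in $L^\infty([0,T]\times\mathbb{T})$. Combined with the almost everywhere convergence, the dominated convergence theorem then gives $\tilde{u}_R \to u_\epsilon$ in $L^2([0,T]\times\mathbb{T})$, $\tilde{\mathbb{P}}$-almost surely. (Equivalence of laws also transfers the $L^\infty$ bound to the limit, so $u_\epsilon\in L^\infty$ and the limit makes sense.)

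For the second part \eqref{q2rho}, the plan is to rewrite the integrand by integration by parts so that the singular factor $1/\tilde{\rho}_R$ can be handled via the already-established convergences. Specifically, writing $\tilde{q}_R = \tilde{\rho}_R \tilde{u}_R$, one has $\partial_x\tilde{q}_R = \tilde{\rho}_R\partial_x\tilde{u}_R + \tilde{u}_R\partial_x\tilde{\rho}_R$, hence
\[
\int_0^T\int_{\mathbb{T}} \frac{(\partial_x\tilde{q}_R)^2}{\tilde{\rho}_R}\,dx\,dt = \int_0^T\int_{\mathbb{T}} \tilde{\rho}_R(\partial_x\tilde{u}_R)^2 + 2\tilde{u}_R\partial_x\tilde{u}_R\partial_x\tilde{\rho}_R + \tilde{u}_R^2\frac{(\partial_x\tilde{\rho}_R)^2}{\tilde{\rho}_R}\,dx\,dt.
\]
Alternatively — and this may be cleaner — one can integrate by parts directly: $\int\frac{(\partial_x\tilde{q}_R)^2}{\tilde{\rho}_R} = -\int \tilde{q}_R\partial_x\!\left(\frac{\partial_x\tilde{q}_R}{\tilde{\rho}_R}\right) = -\int\frac{\tilde{q}_R\partial_x^2\tilde{q}_R}{\tilde{\rho}_R} + \int\frac{\tilde{q}_R\partial_x\tilde{q}_R\partial_x\tilde{\rho}_R}{\tilde{\rho}_R^2} = -\int \tilde{u}_R\partial_x^2\tilde{q}_R + \int(\partial_x\tilde{q}_R)\tilde{u}_R\frac{\partial_x\tilde{\rho}_R}{\tilde{\rho}_R}$. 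In this last expression, $\tilde{u}_R\to u_\epsilon$ strongly in $L^2$ by \eqref{ustrong}, $\partial_x^2\tilde{q}_R\rightharpoonup\partial_x^2 q_\epsilon$ weakly in $L^2([0,T]\times\mathbb{T})$ from the $(L^2_{loc}([0,\infty);H^2(\mathbb{T})),w)$-convergence in the Skorohod setup \eqref{skorohodR}, $\partial_x\tilde{q}_R\to\partial_x q_\epsilon$ almost everywhere (Proposition \ref{aeR}) and is $L^2$-bounded, and $\partial_x\tilde{\rho}_R/\tilde{\rho}_R = \tilde{\ell}_R\rightharpoonup\ell_\epsilon = \partial_x\rho_\epsilon/\rho_\epsilon$ weakly in $L^2$ by Corollary \ref{lidentify}. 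A weak-strong pairing argument (strong $L^2$ times weak $L^2$ converges) in each term then gives convergence to the corresponding limiting expression, which by reversing the integration by parts equals $\int_0^T\int_{\mathbb{T}}(\partial_x q_\epsilon)^2/\rho_\epsilon\,dx\,dt$. One should take care that $u_\epsilon\partial_x q_\epsilon$ lies in $L^2$ (it does, since $u_\epsilon\in L^\infty$ and $\partial_x q_\epsilon\in L^2$) so the pairing with $\tilde{\ell}_R\rightharpoonup\ell_\epsilon$ is legitimate; for the term $-\int\tilde{u}_R\partial_x^2\tilde{q}_R$ one needs $u_\epsilon\in L^2$ to pair against the weak $L^2$ limit $\partial_x^2 q_\epsilon$.

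The main obstacle I anticipate is the second, third and mixed terms in the integration-by-parts expansion, where one must pair a weakly converging sequence ($\partial_x^2\tilde{q}_R$ or $\tilde{\ell}_R$) against a product of two strongly/a.e.-converging but only $L^\infty$-or-$L^2$-bounded sequences ($\tilde{u}_R$ and $\partial_x\tilde{q}_R$). The product $\tilde{u}_R\partial_x\tilde{q}_R$ must be shown to converge \emph{strongly} in $L^2([0,T]\times\mathbb{T})$ to $u_\epsilon\partial_x q_\epsilon$ — this follows from the almost everywhere convergence of both factors (Proposition \ref{aeR}, \eqref{ustrong}) together with a uniform integrability argument: $\tilde{u}_R$ is uniformly bounded in $L^\infty$ while $\partial_x\tilde{q}_R$ is $\tilde{\mathbb{P}}$-almost surely bounded in $L^\infty(0,T;L^2)\cap L^2(0,T;H^1)\hookrightarrow L^4([0,T]\times\mathbb{T})$ by \eqref{qh2}, so $\tilde{u}_R\partial_x\tilde{q}_R$ is bounded in $L^4([0,T]\times\mathbb{T})$ and hence uniformly integrable in $L^2$, whence Vitali gives strong $L^2$ convergence. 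With that strong convergence in hand, the weak-strong pairing closes the argument. A final technical point: all of these convergences hold $\tilde{\mathbb{P}}$-almost surely (pointwise in $\tilde{\omega}$), and since \eqref{q2rho} is itself an almost sure statement, no further passage through expectations is needed.
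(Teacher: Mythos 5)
Your overall route coincides with the paper's: for \eqref{ustrong} you use the almost everywhere convergence from Proposition \ref{aeR}, the absence of vacuum from Propositions \ref{logbounds} and \ref{rhoepspositive}, the uniform $L^{\infty}$ bound on $\tilde{q}_{R}/\tilde{\rho}_{R}$ from Corollary \ref{stationaryReps} via equivalence of laws, and dominated convergence; for \eqref{q2rho} you use exactly the same integration by parts
$\int \frac{(\partial_{x}\tilde{q}_{R})^{2}}{\tilde{\rho}_{R}} = -\int \tilde{u}_{R}\partial_{x}^{2}\tilde{q}_{R} + \int \tilde{u}_{R}\,\tilde{\ell}_{R}\,\partial_{x}\tilde{q}_{R}$
and a weak--strong pairing, with the first term handled identically (strong $L^{2}$ limit of $\tilde{u}_{R}$ against the weak $L^{2}$ limit of $\partial_{x}^{2}\tilde{q}_{R}$ from \eqref{skorohodR}).

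There is, however, one step whose justification does not hold as written: you claim that $\partial_{x}\tilde{q}_{R}$ is $\tilde{\mathbb{P}}$-almost surely bounded in $L^{\infty}(0,T;L^{2})\cap L^{2}(0,T;H^{1})$, hence in $L^{4}([0,T]\times\mathbb{T})$, uniformly in $R$, ``by \eqref{qh2}''. The bound \eqref{qh2} is only a bound in expectation (on the original space), and an expectation bound uniform in $R$ does not yield an $\tilde{\omega}$-wise bound uniform in $R$ on the Skorohod space; so the uniform integrability/Vitali argument for strong $L^{2}$ convergence of the product $\tilde{u}_{R}\partial_{x}\tilde{q}_{R}$ is not justified this way. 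The gap is easily repaired with tools you already invoke: the phase space $\mathcal{S}$ in \eqref{phaseeps} contains $L^{2}_{loc}([0,\infty);H^{1}(\mathbb{T}))$ with its strong topology, so \eqref{skorohodR} gives $\partial_{x}\tilde{q}_{R}\to\partial_{x}q_{\epsilon}$ strongly in $L^{2}([0,T]\times\mathbb{T})$, $\tilde{\mathbb{P}}$-almost surely; combining this with the uniform $L^{\infty}$ bound and a.e. convergence of $\tilde{u}_{R}$ (split $\tilde{u}_{R}\partial_{x}\tilde{q}_{R}-u_{\epsilon}\partial_{x}q_{\epsilon}$ into $\tilde{u}_{R}(\partial_{x}\tilde{q}_{R}-\partial_{x}q_{\epsilon})$ plus $(\tilde{u}_{R}-u_{\epsilon})\partial_{x}q_{\epsilon}$ and use dominated convergence on the second piece) gives the strong $L^{2}$ convergence of the product without any $L^{4}$ bound. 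Alternatively, the paper pairs the pieces the other way around: it first records the weak $L^{2}$ convergence $\tilde{u}_{R}\tilde{\ell}_{R}\rightharpoonup u_{\epsilon}\,\partial_{x}\rho_{\epsilon}/\rho_{\epsilon}$ (strong times weak, using Corollary \ref{lidentify}) and then tests it against the strongly convergent $\partial_{x}\tilde{q}_{R}$; either pairing closes the argument once the strong $L^{2}_{loc}(H^{1})$ convergence from $\mathcal{S}$ is used.
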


\begin{proof}
    By the vacuum estimates in Propositions \ref{logbounds} and \ref{rhoepspositive}, and the almost everywhere convergence in Proposition \ref{aeR}, we have that
    \begin{equation*}
    \tilde{u}_{R}(\tilde\omega, t, x) \to u_{\epsilon}(\tilde\omega, t, x), \qquad \text{ for almost every $(\tilde\omega, t, x) \in \tilde\Omega \times [0, \infty) \times \mathbb{T}$}
    \end{equation*}
    Furthermore, by Corollary \ref{stationaryReps} and equivalence of laws:
    \begin{equation*}
    \|\tilde{u}_{R}\|_{L^{\infty}(\tilde\Omega \times [0, \infty) \times \mathbb{T})} \le C_{\epsilon}, \qquad \|u_{\epsilon}\|_{L^{\infty}(\tilde\Omega \times [0, \infty) \times \mathbb{T})} \le C_{\epsilon}.
    \end{equation*}
    So by dominated convergence theorem,
    \begin{equation*}
    \tilde{u}_{R} \to u_{\epsilon}, \qquad \text{$\tilde{\mathbb{P}}$-almost surely and strongly in $L^{2}([0, T] \times \mathbb{T})$},
    \end{equation*}
  Hence,  we immediately conclude from the weak convergence \eqref{skorohodR} and Corollary \ref{lidentify}:
    \begin{equation}\label{ulweak}
    \tilde{u}_{R}\frac{\partial_{x}\tilde{\rho}_{R}}{\tilde{\rho}_{R}} \rightharpoonup u_{\epsilon}\frac{\partial_{x}\rho_{\epsilon}}{\rho_{\epsilon}}, \qquad \text{$\tilde{\mathbb{P}}$-almost surely and weakly in $ L^{2}([0, T] \times \mathbb{T})$.}
    \end{equation}
    Finally, we establish the convergence in \eqref{q2rho}. By integration by parts, we compute that
    \begin{equation}\label{q2rho1}
    \int_{0}^{T} \int_{\mathbb{T}} \frac{(\partial_{x}\tilde{q}_{R})^{2}}{\tilde{\rho}_{R}} dx dt = -\int_{0}^{T} \int_{\mathbb{T}} \tilde{u}_{R}\partial_{x}^{2}\tilde{q}_{R} dx dt + \int_{0}^{T} \int_{\mathbb{T}} \tilde{u}_{R} \frac{\partial_{x}\tilde{\rho}_{R}}{\tilde{\rho}_{R}} \partial_{x}\tilde{q}_{R} dx dt.
    \end{equation}
    We can then pass to the limit in each term. Using the strong convergence from \eqref{ustrong} and the weak convergence from \eqref{skorohodR} of $\partial_{x}^{2}\tilde{q}_{R} \to \partial_{x}^{2} q_{\epsilon}$ in $L^{2}([0, T] \times \mathbb{T})$, $\tilde{\mathbb{P}}$-almost surely, we obtain:
    \begin{equation}\label{q2rho2}
    -\int_{0}^{T} \int_{\mathbb{T}} \tilde{u}_{R} \partial_{x}^{2}\tilde{q}_{R} dx dt \to -\int_{0}^{T} \int_{\mathbb{T}} u_{\epsilon}\partial_{x}^{2}q_{\epsilon} dx dt, \qquad \text{$\tilde{\mathbb{P}}$-almost surely.}
    \end{equation}
    The convergence \eqref{ulweak} implies that, 
    \begin{equation}\label{q2rho3}
    \int_{0}^{T} \int_{\mathbb{T}} \tilde{u}_{R} \frac{\partial_{x}\tilde{\rho}_{R}}{\tilde{\rho}_{R}} dx dt \to \int_{0}^{T} \int_{\mathbb{T}} u_{\epsilon} \frac{\partial_{x}\rho_{\epsilon}}{\rho_{\epsilon}} dx dt, \qquad \text{$\tilde{\mathbb{P}}$-almost surely.}
    \end{equation}
    Using the convergences \eqref{q2rho2} and \eqref{q2rho3} in \eqref{q2rho1}, we obtain the desired convergence in \eqref{q2rho}.
\end{proof}

    We now have all of the components needed to pass to the limit as $R \to \infty$ in the entropy equality \eqref{entropyR} to obtain a limiting entropy equality for $(\rho_{\epsilon}, q_{\epsilon})$.

{ \begin{proposition}
    For every $\epsilon>0$, the limiting random variable $(\rho_{\epsilon},q_{\epsilon})$ with continuous paths in $\mathcal{X}$ almost surely, is a stationary martingale solution to \begin{equation}\label{approxsystem_ep}
\begin{cases}
    \partial_{t}\rho_{\epsilon} + \partial_{x}q_{\epsilon} = \epsilon \Delta \rho_{\epsilon}, \\
    dq_\epsilon + \partial_{x}\left(\frac{q_{\epsilon}^2}{\rho_{\epsilon}}\right) +  \partial_{x}(\kappa \rho_{\epsilon}^{\gamma}) = \bd{\Phi}^{\epsilon}(\rho_{\epsilon}, q_{\epsilon}) dW_{\epsilon} + \epsilon \Delta q_{\epsilon}.
\end{cases}
\end{equation}
Moreover, the solution $(\rho_{\epsilon},q_{\epsilon})$ satisfies the entropy equality for all entropy-flux pairs $(\eta, H)$ generated by subpolynomial $g \in \tilde{\mathcal{G}}$, and for all deterministic nonnegative test functions $\varphi(x) \in C^{2}(\mathbb{T})$ and $\psi(t) \in C_{c}^{\infty}(0, \infty)$ with $\psi \ge 0$:

\begin{multline}\label{entropyeq}
\int_{0}^{\infty} \left(\int_{\mathbb{T}} \eta(\bd{U}_{\epsilon}(t)) \varphi(x) dx\right) \partial_{t}\psi(t) dt + \int_{0}^{\infty} \left(\int_{\mathbb{T}} H(\bd{U}_{\epsilon}) \partial_{x}\varphi(x) dx\right) \psi(t) dt \\
- \int_{0}^{\infty} \left(\int_{\mathbb{T}} \alpha q_{\epsilon} \partial_{q}\eta(\bd{U}_{\epsilon}) \varphi(x) dx\right) \psi(t) dt + \int_{0}^{\infty}\left(\int_{\mathbb{T}} \partial_{q}\eta(\bd{U}_{\epsilon}) \Phi(\bd{U}_{\epsilon}) \varphi(x) dx\right) \psi(t) dW_\epsilon(t) \\
+ \int_{0}^{\infty}\left(\int_{\mathbb{T}} \frac{1}{2}\partial_{q}^{2}\eta(\bd{U}_{\epsilon}) G^{2}(\bd{U}_{\epsilon}) \varphi(x) dx\right) \psi(t) dt = \epsilon \int_{0}^{\infty} \left(\int_{\mathbb{T}} \langle D^{2}\eta(\bd{U}_{\epsilon}) \partial_{x}\bd{U}_{\epsilon}, \partial_{x}\bd{U}_{\epsilon}\rangle \varphi(x) dx\right) \psi(t) dt \\
- \epsilon \int_{0}^{\infty} \left(\int_{\mathbb{T}} \eta(\bd{U}_{\epsilon}) \partial_{x}^{2}\varphi dx\right) \psi(t) dt.
\end{multline}

\end{proposition}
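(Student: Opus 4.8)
The plan is to pass to the limit $R\to\infty$: first in the analytically weak formulation of the truncated system \eqref{approxsystem}, to identify $(\rho_\epsilon,q_\epsilon)$ as a stationary martingale solution of \eqref{approxsystem_ep}, and then --- exploiting the parabolic regularity and the absence of vacuum --- to derive the entropy equality \eqref{entropyeq} by applying It\^o's formula to $t\mapsto\int_\mathbb{T}\eta(\bd{U}_\epsilon(t))\varphi(x)\,dx$, in the spirit of Theorem 3.2 of \cite{BerthelinVovelle} (the computation underlying Proposition \ref{existunique}). First I would equip the new probability space with the $\tilde{\mathbb{P}}$-augmented filtration generated by $(\bd{U}_\epsilon,W_\epsilon)$ and verify, by the usual argument using equivalence of laws and the convergence \eqref{skorohodR}, that $W_\epsilon$ is an $(\tilde{\mathcal{F}}_t)$-cylindrical Wiener process, with the analogous statement for each $\tilde W_R$ relative to its own generated filtration; by equivalence of laws, $(\tilde{\bd{U}}_R,\tilde W_R)$ satisfies the same weak formulation and $R$-level entropy equality \eqref{entropyR} as $(\bd{U}_R,W)$.

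The second step is to show that the truncation disappears in the limit, i.e.\ that $\chi_R(\tilde\rho_R,\tilde q_R)\to1$ in $(\tilde{\mathbb{P}}\otimes\mathrm{Leb})$-measure on $\tilde\Omega\times[0,T]$ for every $T>0$. This follows from the $R$-uniform bounds $\tilde{\mathbb{E}}\|\tilde q_R\|_{L^2(0,T;H^2(\mathbb{T}))}^2\le C_{\epsilon,T}$ of \eqref{qh2} and $\tilde{\mathbb{E}}\int_0^T\|\log\tilde\rho_R(t)\|_{L^\infty(\mathbb{T})}^2\,dt\le C_\epsilon T$ from Proposition \ref{logbounds}: by Chebyshev's inequality, the set where $\|\tilde q_R(t)\|_{H^2(\mathbb{T})}>R/2$ or $\|\log\tilde\rho_R(t)\|_{L^\infty(\mathbb{T})}>\log(R/2)$ --- which contains $\{\chi_R\neq1\}$, since $\|\tilde\rho_R(t)^{-1}\|_{L^\infty(\mathbb{T})}\le e^{\|\log\tilde\rho_R(t)\|_{L^\infty(\mathbb{T})}}$ --- has measure tending to $0$. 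Simultaneously, the construction \eqref{GRepsconv} yields $G_k^{R,\epsilon}\to G_k^{\epsilon}$ uniformly as $R\to\infty$, with the $R$-uniform domination $|G_k^{R,\epsilon}|\le C_N\alpha_k$, $\sum_k\alpha_k^2\le A_0$ from \eqref{alphakeps}, so that $\bd{\Phi}^{R,\epsilon}\to\bd{\Phi}^{\epsilon}$ in the relevant Hilbert--Schmidt sense.

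The third step is the term-by-term limit passage in the weak form of \eqref{approxsystem}. For the deterministic terms one combines the pointwise convergence $\tilde{\bd{U}}_R\to\bd{U}_\epsilon$ and $\partial_x\tilde{\bd{U}}_R\to\partial_x\bd{U}_\epsilon$ from Proposition \ref{aeR} with the positivity $\rho_\epsilon>0$ a.e.\ of Proposition \ref{rhoepspositive}, the $R$-uniform $L^\infty$ bounds of Corollary \ref{stationaryReps} and the $R$-uniform $H^1$--$H^2$ bounds of Proposition \ref{energyR} and \eqref{qh2}, invoking Vitali's theorem; the step $\chi_R\to1$ removes the truncation and dominated convergence in the $k$-sum handles the noise coefficient. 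For the stochastic integral I would use the standard martingale-identification argument (as in \cite{BreitHofmanova,BFH18}): the quadratic and cross variations of the approximating martingales converge by the above, so $\int_0^\cdot\chi_R(\tilde{\bd{U}}_R)\bd{\Phi}^{R,\epsilon}(\tilde{\bd{U}}_R)\,d\tilde W_R\to\int_0^\cdot\bd{\Phi}^{\epsilon}(\bd{U}_\epsilon)\,dW_\epsilon$, identifying $(\rho_\epsilon,q_\epsilon)$ as a martingale solution of \eqref{approxsystem_ep}. Stationarity is inherited since $(\tilde\rho_R(t),\tilde q_R(t))$ has law $\mathcal{L}^{R,\epsilon}_{\rho,q}$ for every $t\ge0$, so the law of $(\rho_\epsilon(t),q_\epsilon(t))$ equals (the first two marginals of) the weak limit $\mu_\epsilon$, which is independent of $t$; continuity of paths in the stated space follows from the carried-over parabolic regularity. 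Finally, with \eqref{approxsystem_ep} in hand and $\rho_\epsilon>0$ a.e., I would apply It\^o's formula to $\eta(\bd{U}_\epsilon)$, using the entropy--flux relation \eqref{entropyflux} and two integrations by parts on the viscous term, $\epsilon\,\nabla\eta(\bd{U}_\epsilon)\!\cdot\!\partial_x^2\bd{U}_\epsilon=\epsilon\,\partial_x^2\bigl(\eta(\bd{U}_\epsilon)\bigr)-\epsilon\langle D^2\eta(\bd{U}_\epsilon)\partial_x\bd{U}_\epsilon,\partial_x\bd{U}_\epsilon\rangle$, to obtain \eqref{entropyeq}; the dissipation term is finite because the bounds of Propositions \ref{dissipationalg} and \ref{weakstrongR} pass to the limit by Fatou.

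I expect the main obstacle to be making the It\^o formula for a general subpolynomial entropy $\eta$ rigorous at the available regularity $\rho_\epsilon,q_\epsilon\in L^2_{\mathrm{loc}}([0,\infty);H^2(\mathbb{T}))$ with $\rho_\epsilon$ positive only almost everywhere, since $D^2\eta_m$ carries inverse powers of $\rho$ (cf.\ Proposition \ref{etamboundalg}); this forces one to mollify $\eta$ and pass to the limit using the weighted dissipation estimates of Proposition \ref{dissipationalg}. Equivalently, one may attempt a direct passage to the limit in the $R$-level entropy equality \eqref{entropyR}, in which case the delicate point is the convergence of $\epsilon\int\langle D^2\eta(\tilde{\bd{U}}_R)\partial_x\tilde{\bd{U}}_R,\partial_x\tilde{\bd{U}}_R\rangle\varphi\psi$: this is not implied by the a.e.\ convergence of $\partial_x\tilde{\bd{U}}_R$ alone --- only the weak $L^2_{\mathrm{loc}}$ convergence of $\partial_x^2\tilde{\bd{U}}_R$ is available --- and must be extracted by decomposing the quadratic form into the $\rho$- and $u$-weighted pieces of Proposition \ref{dissipationalg} and establishing strong convergence of each by integration by parts, exactly as in the proof of \eqref{q2rho} in Proposition \ref{weakstrongR}.
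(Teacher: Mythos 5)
Your proposal is essentially sound, but your \emph{primary} route is not the one the paper takes, while your ``equivalently, one may attempt a direct passage'' fallback is exactly the paper's proof. The paper never applies It\^o's formula at the limit level: it passes to the limit $R\to\infty$ directly in the $R$-level entropy equality \eqref{entropyR} (stationarity being inherited by stability of stationarity under convergence, citing Lemmas A.4--A.5 of \cite{BFHM19}), treats all terms except one by the uniform $L^\infty$ bounds of Corollary \ref{stationaryReps}, the a.e.\ convergence of Proposition \ref{aeR}, the bounds of Proposition \ref{generaletabound} together with the a.e.\ positivity of $\rho_\epsilon$, and dominated convergence, and then elaborates only on the dissipation term. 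There it uses the generalized dominated convergence theorem: the integrand converges a.e.\ by Proposition \ref{aeR}, is dominated by $C_{g,\epsilon}\,\tilde\rho_R^{-1}\,\partial_x\tilde{\bd U}_R\cdot\partial_x\tilde{\bd U}_R$ (via $\|D^2\eta(\tilde{\bd U}_R)\|\le C_{g,\epsilon}\tilde\rho_R^{-1}$ from Proposition \ref{generaletabound} and the $L^\infty$ bounds), and the integrals of the dominating sequence converge because $\int\frac{(\partial_x\tilde q_R)^2}{\tilde\rho_R}$ converges by \eqref{q2rho} and $\int\frac{\partial_x\tilde\rho_R}{\tilde\rho_R}\,\partial_x\tilde\rho_R$ converges by pairing the weak limit of $\tilde\rho_R^{-1}\partial_x\tilde\rho_R$ with the strong $L^2$ limit of $\partial_x\tilde\rho_R$ --- i.e.\ precisely the weak--strong mechanism you sketch at the end, packaged as a domination argument rather than a term-by-term strong-convergence claim. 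Your extra details (removal of $\chi_R$ via Chebyshev on the $H^2$ and $\log\rho$ bounds, martingale identification of the stochastic integral, uniform convergence $\bd\Phi^{R,\epsilon}\to\bd\Phi^{\epsilon}$) are correct and fill in steps the paper leaves implicit. By contrast, your preferred It\^o-formula route at the $\epsilon$ level is genuinely harder than you allow: $\rho_\epsilon$ is only positive a.e.\ with no lower bound, $D^2\eta$ carries $\rho^{-1}$ weights, and the limit object is only a martingale (not pathwise) solution with $L^2_{\mathrm{loc}}(H^2)$ regularity, so the mollification-of-$\eta$ program you mention would require a substantial additional argument that the paper deliberately avoids; you should take the direct limit passage in \eqref{entropyR} as the main proof rather than the alternative.
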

\begin{proof}
 This follows from passing to the limit $R\to\infty$ in \eqref{approxsystem} and using the fact that the notion of stationarity
is stable under strong (and weak) convergence as proven in Lemmas A.5 (and A.4) in \cite{BFHM19}. We only make comments about passing to the limit in the dissipation term, as $R \to \infty$, namely:
\begin{equation}\label{dissipationRconv}
\int_{0}^{\infty} \left(\int_{\mathbb{T}} \langle D^{2}\eta(\tilde{\bd{U}}_{R}) \partial_{x}\tilde{\bd{U}}_{R}, \partial_{x}\tilde{\bd{U}}_{R} \rangle \varphi(x) dx\right) \psi(t) dt \to \int_{0}^{\infty}\left(\int_{\mathbb{T}} \langle D^{2}\eta(\bd{U}_{\epsilon}) \partial_{x}\bd{U}_{\epsilon}, \partial_{x}\bd{U}_{\epsilon} \rangle \varphi(x) dx\right) \psi(t) dt,
\end{equation}
almost surely as $R \to \infty$. For all other terms in the entropy equality, we can pass to the limit as $R \to \infty$ using the uniform $L^{\infty}$ bounds in Corollary \ref{stationaryReps}, the almost everywhere convergence in Proposition \ref{aeR}, the entropy bounds in Proposition \ref{generaletabound} (combined with the $L^{\infty}$ bounds in Corollary \ref{stationaryReps}  and the fact that the vacuum is of measure zero as proved in in Propositions \ref{logbounds} and \ref{rhoepspositive}), and the dominated convergence theorem.

Hence, it suffices to show the convergence \eqref{dissipationRconv}, $\tilde{\mathbb{P}}$-almost surely. We can verify this convergence by using the generalized dominated convergence theorem (see Theorem 11 in Section 4.4 of \cite{RF23}). Fix some deterministic nonnegative functions $\varphi(x) \in C^{\infty}(\mathbb{T})$ and $\psi \in C_{c}^{\infty}([0, \infty))$, and fix some $g \in \tilde{\mathcal{G}}$ that defines some entropy $\eta$. Recall from Corollary \ref{stationaryReps} and by equivalence of laws and Proposition \ref{rhoepspositive}, that for some deterministic constant $C_{\epsilon} > 0$,
\begin{equation*}
\frac1R \le \tilde{\rho}_{R}(\tilde\omega, t, x) \le C_{\epsilon}, \qquad 0 < \rho_{\epsilon}(\tilde\omega, t, x) \le C_{\epsilon}, \qquad \text{ for almost every }(\tilde\omega, t, x) \in \tilde\Omega \times [0, T] \times \mathbb{T}.
\end{equation*}
Hence, from Proposition \ref{generaletabound}, 
\begin{equation*}
\|D^{2}\eta(\tilde{\bd{U}}_{R})\|_{L^{\infty}([0, T] \times \mathbb{T})} \le C_{g, \epsilon}\tilde{\rho}_{R}^{-1}, \qquad \|D^{2}\eta(\bd{U}_{\epsilon})\|_{L^{\infty}([0, T] \times \mathbb{T})} \le C_{g, \epsilon}\rho_{\epsilon}^{-1}, \qquad \text{$\tilde{\mathbb{P}}$-almost surely.}
\end{equation*}

Hence, choosing $T$ so that $\text{supp}(\psi(t)) \subset [0, T]$, we note that for almost every $(\tilde\omega, t, x) \in \tilde\Omega \times [0, T] \times \mathbb{T}$: 
\begin{equation}\label{generalLebesgue1}
|\langle D^{2}\eta(\tilde{\bd{U}}_{R})\partial_{x}\tilde{\bd{U}}_{R}, \partial_{x}\tilde{\bd{U}}_{R} \rangle \varphi(x) \psi(t) dt| \le 2C_{\epsilon, g}\|\phi\|_{L^{\infty}}\|\psi\|_{L^{\infty}} \frac{\partial_{x}\tilde{\bd{U}}_{R}}{\tilde{\rho}_{R}} \cdot \partial_{x}\tilde{\bd{U}}_{R},
\end{equation}
and similarly for the limiting $\bd{U}_{\epsilon}$. By the weak convergence
\begin{equation*}
\frac{\partial_{x}\tilde{\rho}_{R}}{\tilde{\rho}_{R}} \rightharpoonup \frac{\partial_{x}\rho_{\epsilon}}{\rho_{\epsilon}}, \qquad \text{$\tilde{\mathbb{P}}$-almost surely in $L^{2}([0, T] \times \mathbb{T})$}
\end{equation*}
and the strong convergence
\begin{equation*}
\partial_{x}\tilde{\rho}_{R} \to \partial_{x}\rho_{\epsilon}, \qquad \text{$\tilde{\mathbb{P}}$-almost surely in $L^{2}([0, T] \times \mathbb{T}$)}
\end{equation*}
from \eqref{skorohodR}, and also by \eqref{q2rho} from Proposition \ref{weakstrongR}, we conclude that
\begin{equation}\label{generalLebesgue2}
2C_{\epsilon, g} \|\phi\|_{L^{\infty}} \|\varphi\|_{L^{\infty}} \int_{0}^{T} \int_{\mathbb{T}} \frac{\partial_{x}\tilde{\bd{U}}_{R}}{\tilde{\rho}_{R}} \cdot \partial_{x} \tilde{\bd{U}}_{R} dx dt \to 2C_{\epsilon, g} \|\varphi\|_{L^{\infty}}\|\psi\|_{L^{\infty}(\mathbb{T})} \int_{0}^{T} \int_{\mathbb{T}} \frac{\partial_{x}\bd{U}_{\epsilon}}{\rho_{\epsilon}}\cdot \partial_{x}\bd{U}_{\epsilon} dx dt, 
\end{equation}
$\tilde{\mathbb{P}}$-almost surely. So by applying the generalized dominated convergence theorem (see Theorem 11 in Section 4.4 of \cite{RF23}) in the $(t, x)$ variables (pathwise in outcome $\tilde\omega$) using \eqref{generalLebesgue1} and \eqref{generalLebesgue2}, we obtain the desired $\tilde{\mathbb{P}}$-almost sure convergence in \eqref{dissipationRconv}.

\end{proof}

\section{Uniform bounds: $\epsilon_{N}$ level}
Our goal in this section will be to pass to the limit as $\epsilon \to 0$ in the statistically stationary solutions to the approximate problem \eqref{approxsystem_ep} with artificial viscosity. At this stage, we have a statistically stationary solution $\bd{U}_{\epsilon} := (\rho_{\epsilon}, q_{\epsilon})$ defined on $[0, \infty)$ that satisfies the \textbf{approximate entropy equality} \eqref{entropyeq}.
Using stationarity (the equivalence of laws of the process $\bd{U}_{\epsilon}(t)$ at all times $t \ge 0$), we can deduce uniform bounds, where usual dissipative terms in the energy inequality become $L^{\infty}$ in time bounds. One particular challenge here is that the $\rho\partial_{q}\eta(\bd{U}_{\epsilon})$ term for entropies of the form $\eta_{m}$, does not immediately give any bounds on higher powers of $\rho_\epsilon$. We will hence have to use a Bogovskii operator technique to obtain higher integrability, along with carefully managing numerology of powers of $\rho_\epsilon$ in order to close the resulting estimate. This is done in Proposition \ref{uniformeps2}. From there, we will deduce further uniform bounds independent of $\epsilon$ in Section \ref{additionalest}, that will be important for the $\epsilon_{N}$ limit passage in the next section. Note that as discussed in Section \ref{noiseregularize}, it is easiest to define $\epsilon$-level approximations of the noise using a specific sequence $\{\epsilon_{N}\}_{N = 1}^{\infty}$ with $\epsilon_{N} \searrow 0$, but for simplicity of notation, we will omit the subscript of $N$ on the $\epsilon$ parameters in this section.

\subsection{Entropy bounds uniform in $\epsilon$.}\label{uniformentropyeps}

In this subsection, we deduce uniform entropy bounds independently of $\epsilon$. This is the crucial component of the proof, as uniform bounds that we derived previously, such as the uniform $L^{\infty}(\mathbb{T})$ bounds on $(\rho_{\epsilon}, q_{\epsilon})$, are $\epsilon$ dependent.  This is done as follows. From stationarity, we have a bound on the dissipation rate uniformly in $\epsilon$. We can then use these uniform bounds to recover a moment estimate on the powers of the density. Note that the dissipative term in the approximate entropy inequality involves $q\partial_{q}\eta$, and since this term does not include any terms with just powers of $\rho$ by themselves (see Proposition \ref{etamboundalg}), we do not immediately have these bounds for all powers of the density. However, we can use a Bogovskii-type approach to recover these higher moment bounds. This is similar in spirit to what is done for compressible Navier-Stokes equations, except for this case, without the additional in $\epsilon$ bounds on $\|u_{\epsilon}\|_{H^{1}(\mathbb{T})}$.

\begin{proposition}\label{uniformeps2}
The approximate solutions $\bd{U}_{\epsilon}$ satisfy the following uniform bounds for all positive integers $m$:
\begin{align}\label{etaboundsm}
    \bE\int_{\mathbb{T}} \eta_{m}(\boldsymbol{U}_\epsilon) dx \leq C_m,
\end{align}
for a constant $C_{m}$ that is independent of $\epsilon$ and $t \ge 0$, which depends only on $m$ and the damping parameter $\alpha > 0$.
Consequently, the bounds above imply that,
\begin{equation}\label{etambounds}
\bE \int_{\mathbb{T}} \rho_\epsilon\Big(u_\epsilon^{2m} + \rho_\epsilon^{(m - 1)(\gamma-1)}u_\epsilon^{2}\Big) \le C_{m}, \qquad {\mathbb{E}} \int_{\mathbb{T}} \rho_\epsilon^{  1+ m(\gamma-1)} \le C_{m},
\end{equation}
\begin{equation}\label{gradmbounds}
\epsilon {\mathbb{E}} \int_{\mathbb{T}} \Big(u_\epsilon^{2(m-1)} + \rho_\epsilon^{(m-1)(\gamma-1)}\Big)\rho_\epsilon^{\gamma - 2}|\partial_{x}\rho_\epsilon|^{2} + \epsilon {\mathbb{E} }\int_{\mathbb{T}} \Big(u_\epsilon^{2(m-1)} + \rho_\epsilon^{(m-1)(\gamma-1)}\Big) \rho_\epsilon |\partial_{x}u_\epsilon|^{2} \le C_{m}.
\end{equation}

\end{proposition}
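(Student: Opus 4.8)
The plan is to prove \eqref{etaboundsm} by induction on $m$, using the approximate entropy equality \eqref{entropyeq} tested against $\varphi \equiv 1$ and $\psi = \mathbbm{1}_{[T, T+1]}$, together with stationarity. For the inductive step, testing \eqref{entropyeq} with $\eta = \eta_{m}$ and taking expectations, the boundary terms cancel by stationarity and the nonnegative dissipation term $\epsilon \mathbb{E}\int_{0}^{1}\int_{\mathbb{T}} \langle D^{2}\eta_{m}(\bd{U}_{\epsilon}) \partial_{x}\bd{U}_{\epsilon}, \partial_{x}\bd{U}_{\epsilon}\rangle$ can be dropped to obtain
\begin{equation*}
\alpha \mathbb{E} \int_{\mathbb{T}} q_{\epsilon} \partial_{q}\eta_{m}(\bd{U}_{\epsilon}) \, dx \le \frac{1}{2} \mathbb{E} \int_{\mathbb{T}} \partial_{q}^{2}\eta_{m}(\bd{U}_{\epsilon}) G^{2}(\bd{U}_{\epsilon}) \, dx,
\end{equation*}
and similarly (keeping the dissipation term) $\epsilon \mathbb{E} \int_{\mathbb{T}} \langle D^{2}\eta_{m}(\bd{U}_{\epsilon}) \partial_{x}\bd{U}_{\epsilon}, \partial_{x}\bd{U}_{\epsilon}\rangle \le C_{m} \mathbb{E}\int_{\mathbb{T}} \eta_{m-1}(\bd{U}_{\epsilon})$. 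By Proposition \ref{etamboundalg}, the left side of the first inequality is bounded below by $\alpha c_{m}\mathbb{E}\int_{\mathbb{T}} \rho_{\epsilon}(u_{\epsilon}^{2m} + \rho_{\epsilon}^{(m-1)(\gamma-1)} u_{\epsilon}^{2})$, while $\partial_{q}^{2}\eta_{m}(\bd{U}_{\epsilon}) G^{2}(\bd{U}_{\epsilon}) \le C_{m} \rho_{\epsilon}^{-2}\eta_{m-1}(\bd{U}_{\epsilon}) \cdot A_{0}\rho_{\epsilon}^{2} = C_{m} A_{0} \eta_{m-1}(\bd{U}_{\epsilon})$ using \eqref{Gbound}. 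So by the inductive hypothesis $\mathbb{E}\int_{\mathbb{T}} \eta_{m-1}(\bd{U}_{\epsilon}) \le C_{m-1}$ (with the base case $\eta_{0} = \rho_{\epsilon}$, $\mathbb{E}\int_{\mathbb{T}}\rho_{\epsilon} = 1$ by conservation of mass), we obtain
\begin{equation}\label{partialm}
\alpha \mathbb{E} \int_{\mathbb{T}} \rho_{\epsilon}\Big(u_{\epsilon}^{2m} + \rho_{\epsilon}^{(m-1)(\gamma-1)} u_{\epsilon}^{2}\Big) \le C_{m}, \qquad \epsilon \mathbb{E} \int_{\mathbb{T}} \langle D^{2}\eta_{m}(\bd{U}_{\epsilon}) \partial_{x}\bd{U}_{\epsilon}, \partial_{x}\bd{U}_{\epsilon}\rangle \le C_{m}.
\end{equation}
This already gives the first bound in \eqref{etambounds} and, after computing $\langle D^{2}\eta_{m}\partial_{x}\bd{U}, \partial_{x}\bd{U}\rangle$ and invoking Proposition \ref{dissipationalg} (with $m-1$ in place of $m$), the gradient bound \eqref{gradmbounds}.

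The missing piece is the pure density moment $\mathbb{E}\int_{\mathbb{T}} \rho_{\epsilon}^{1 + m(\gamma-1)}$, since \eqref{partialm} only controls $\rho_{\epsilon}^{(m-1)(\gamma-1)+1} u_{\epsilon}^{2}$ — a quantity that vanishes where $u_{\epsilon} = 0$. To close this, I would use a Bogovskii-type (pressure-estimate) argument: test the momentum equation's weak formulation — equivalently, use the entropy-flux pair $\eta = \rho u$, $H = \tfrac12 \rho u^{2} + \tfrac{\kappa}{\gamma-1}\rho^{\gamma}$ in \eqref{entropyeq} — against the test function $\varphi(x) = \int_{0}^{x}\big(\rho_{\epsilon}^{s} - \int_{\mathbb{T}}\rho_{\epsilon}^{s}\big)$ for a suitable exponent $s > 0$. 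By stationarity the time-boundary terms drop, and one is left with an identity of the form
\begin{equation*}
\frac{\kappa}{\gamma-1}\mathbb{E}\int_{\mathbb{T}} \rho_{\epsilon}^{\gamma + s} = \frac{\kappa}{\gamma-1}\mathbb{E}\int_{\mathbb{T}}\rho_{\epsilon}^{\gamma}\Big(\int_{\mathbb{T}}\rho_{\epsilon}^{s}\Big) - \tfrac12\mathbb{E}\int_{\mathbb{T}}\rho_{\epsilon}^{1+s} u_{\epsilon}^{2} + \tfrac12\mathbb{E}\int_{\mathbb{T}}\rho_{\epsilon}u_{\epsilon}^{2}\Big(\int_{\mathbb{T}}\rho_{\epsilon}^{s}\Big) + \alpha\mathbb{E}\int_{\mathbb{T}}\rho_{\epsilon}u_{\epsilon}\varphi - \epsilon\mathbb{E}\int_{\mathbb{T}}\rho_{\epsilon}u_{\epsilon}\partial_{x}(\rho_{\epsilon}^{s}).
\end{equation*}
Each term on the right is estimated by $\delta\mathbb{E}\int_{\mathbb{T}}\rho_{\epsilon}^{\gamma+s}$ plus a constant, using Young's inequality, conservation of mass ($\int_{\mathbb{T}}\rho_{\epsilon}^{s} \le 1$ when $s \le 1$, and an interpolation when $s > 1$), the already-established bounds $\mathbb{E}\int_{\mathbb{T}}\rho_{\epsilon}u_{\epsilon}^{4}$, $\mathbb{E}\int_{\mathbb{T}}\rho_{\epsilon}u_{\epsilon}^{2}$ from \eqref{partialm}, and the $\epsilon$-gradient bound from \eqref{gradmbounds} for the last term; absorbing the $\delta$-terms yields $\mathbb{E}\int_{\mathbb{T}}\rho_{\epsilon}^{q} \le C_{q}$ for all $1 \le q \le \gamma + s$. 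One must track carefully that $\gamma + s$ can be pushed up to roughly $2\gamma - 1$ in a single such step (taking $s$ up to $\gamma - 1$); to reach the exponent $1 + m(\gamma-1)$ for arbitrary $m$ one iterates this Bogovskii argument, at each stage using the density moments already obtained (and the companion bounds on $\rho_{\epsilon}u_{\epsilon}^{2m}$) to feed the next.

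The main obstacle will be the numerology of the exponents in the Bogovskii step: one must verify that at each iteration the power $s$ can be chosen so that (i) $\gamma + s$ strictly exceeds the previously controlled exponent, so the iteration genuinely advances, and (ii) every term on the right-hand side of the pressure identity — in particular the terms $\rho_{\epsilon}^{1+s}u_{\epsilon}^{2}$, $\rho_{\epsilon}^{1+2s}$, $\rho_{\epsilon}^{2s+2-\gamma}$ arising after Young's inequality — has an exponent bounded by $\gamma + s$ and controllable by the entropy moments already available from the inductive step \eqref{partialm} together with lower-order density moments. Keeping these inequalities consistent for all $\gamma > 1$ (as opposed to restricting to $\gamma \ge 3/2$, say) requires splitting into the cases $s \le 1$ and $s > 1$ and is the one genuinely delicate bookkeeping point; everything else is a routine application of stationarity, Young's inequality, and the algebraic entropy bounds of Propositions \ref{etamboundalg} and \ref{dissipationalg}. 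Once \eqref{etambounds} is in hand, \eqref{etaboundsm} follows immediately from the upper bound $\eta_{m}(\bd{U}_{\epsilon}) \le C_{m}\rho_{\epsilon}(u_{\epsilon}^{2m} + \rho_{\epsilon}^{m(\gamma-1)})$ of Proposition \ref{etamboundalg}, and the induction is complete.
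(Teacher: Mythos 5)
Your proposal is correct and follows essentially the same route as the paper: induction on $m$, with the stationary entropy balance for $\eta_m$ giving the velocity-weighted moments and the $\epsilon$-dissipation bound in terms of $\mathbb{E}\int_{\mathbb{T}}\eta_{m-1}$, and the missing pure density moment supplied by the Bogovskii-type test function $\varphi(x)=\int_0^x\bigl(\rho_\epsilon^{s}-\int_{\mathbb{T}}\rho_\epsilon^{s}\bigr)$ in the weak momentum equation, iterated so that the exponent advances by $\gamma-1$ per stage (the paper takes $s=m(\gamma-1)$ at stage $m$). Two details to fix when you carry out the bookkeeping: the flux paired with $\eta=\rho u$ is $H=\rho u^{2}+\kappa\rho^{\gamma}$ rather than $\tfrac12\rho u^{2}+\tfrac{\kappa}{\gamma-1}\rho^{\gamma}$ (harmless for the estimates, but your stated pair is not an entropy--flux pair), and the single-step exponents you list ($\rho_\epsilon^{1+2s}$ from $\rho_\epsilon^{1+s}u_\epsilon^{2}$, and $\rho_\epsilon^{2s+2-\gamma}$ from the $\epsilon$-term) only stay below $\gamma+s$ when $s\le\gamma-1$; for general $m$ one must instead split with Young exponents $m+1$ and $(m+1)/m$ (using the companion bound $\mathbb{E}\int_{\mathbb{T}}\rho_\epsilon u_\epsilon^{2(m+1)}$) and weight the $\epsilon$-term by $\rho_\epsilon^{(m-1)(\gamma-1)/2}u_\epsilon$ so the leftover exponent is $1+m(\gamma-1)$, exactly as your ``companion bounds'' remark anticipates.
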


\begin{proof}
We use the approximate entropy equality with the entropy $\eta_{m}$ for arbitrary positive integers $m$ and test functions $\varphi(x) = 1$ and $\psi(t) = \mathbbm{1}_{[T, T + 1]}(t)$. We obtain after taking expectations:
\begin{align*}
\bE \int_{\mathbb{T}} \eta_{m}(\bd{U}_{\epsilon}(T + 1)) &+ \bE \int_{T}^{T + 1} \int_{\mathbb{T}} \alpha q_{\epsilon}\partial_{q}\eta_{m}(\bd{U}_{\epsilon}) dx dt + \bE \int_{T}^{T + 1} \int_{\mathbb{T}} \epsilon \langle D^{2}\eta_m(\bd{U}_{\epsilon})\partial_{x}\bd{U}_{\epsilon}, \partial_{x}\bd{U}_{\epsilon}\rangle dx dt \\
&= \bE \int_{\mathbb{T}} \eta_{m}(\bd{U}_{\epsilon}(T)) + \mathbb{E} \int_{T}^{T + 1} \int_{\mathbb{T}} \frac{1}{2} \partial_{q}^{2} \eta_m(\bd{U}_{\epsilon})G^{2}(\bd{U}_{\epsilon}) dx dt.
\end{align*}
Then, by using stationarity and the fact that $D^{2}\eta_{m}$ is positive semidefinite, we obtain using the assumption on the noise \eqref{noiseassumption} and Proposition \ref{etamboundalg} that: 
\begin{align}\label{directbound}
\nonumber \alpha c_{m}\bE \int_{\mathbb{T}} \rho_{\epsilon}\Big(u_\epsilon^{2m} + \rho_\epsilon^{(m - 1)(\gamma-1)}u_\epsilon^{2}\Big) dx &\le \mathbb{E} \int_{\mathbb{T}} \alpha q_{\epsilon} \partial_{q}\eta_{m}(\bd{U}_{\epsilon}) dx \\
&\le \mathbb{E} \int_{\mathbb{T}} \frac{1}{2} \partial^{2}_{q}\eta_{m}(\bd{U}_{\epsilon}) G^{2}(\bd{U}_{\epsilon}) dx \le C_{m} \mathbb{E} \int_{\mathbb{T}} \eta_{m - 1}(\bd{U}_\epsilon),
\end{align}
and similarly,
\begin{equation}\label{epsbound}
\epsilon \mathbb{E} \int_{\mathbb{T}} \langle D^{2}\eta_{m}(\bd{U}_{\epsilon})\partial_{x}\bd{U}_{\epsilon}, \partial_{x}\bd{U}_{\epsilon} \rangle \le C_{m} \mathbb{E} \int_{\mathbb{T}} \eta_{m - 1}(\bd{U}_{\epsilon}). 
\end{equation}

Note, due to \eqref{directbound} and \eqref{epsbound} and the explicit expression for $\langle D^{2}\eta_{m}(\bd{U}_{\epsilon}) \partial_{x}\bd{U}_{\epsilon}, \partial_{x}\bd{U}_{\epsilon} \rangle$, that \eqref{etambounds} and \eqref{gradmbounds} will follow for every integer $m\geq 0$ if we prove the bound in \eqref{etaboundsm} for every integer $m\geq 0$.
This will be done via induction.
For the inductive step, we assume that for some positive integer $m$, we have 
\begin{align}\label{inductive}
    \mathbb{E} \int_{\mathbb{T}} \eta_{j}(\boldsymbol{U}_\epsilon) \leq C_m, \qquad \forall 0\leq j \leq m
\end{align}
and then prove that the same holds when $m$ is replaced by $m+1$. Note that the base case $m=0$ of \eqref{inductive} is true since $\eta_{0}(\bd{U}) = \rho$ and so by conservation of mass, $\displaystyle \mathbb{E} \int_{\mathbb{T}} \eta_{0}(\bd{U}) = M$.

First, thanks to  \eqref{directbound} and \eqref{epsbound} and the algebraic bound on the entropy dissipation in Proposition \ref{dissipationalg}, the assumption \eqref{inductive} also implies that we have the following bounds for all $0\leq j\leq m$:
\begin{multline}\label{inductive1}
\mathbb{E} \int_{\mathbb{T}} \rho_\epsilon \Big(u_\epsilon^{2(j+1)} + \rho_\epsilon^{j(\gamma-1)}u_\epsilon^{2}\Big) dx \le C_{m} ,\qquad
\mathbb{E} \int_{\mathbb{T}} \rho_\epsilon^{1+j(\gamma-1)} dx \le C_{m}\\
\epsilon \mathbb{E} \int_{\mathbb{T}} \Big(u_\epsilon^{2j} + \rho_\epsilon^{j(\gamma-1)}\Big)\rho_\epsilon^{\gamma - 2}|\partial_{x}\rho_\epsilon|^{2} + \epsilon \mathbb{E} \int_{\mathbb{T}} \Big(u_\epsilon^{2j} + \rho_\epsilon^{j(\gamma-1)}\Big) \rho_\epsilon |\partial_{x}u_\epsilon|^{2} \le C_{m}.
\end{multline}

Before we proceed, we observe that Proposition \ref{etamboundalg} and  \eqref{directbound} applied recursively gives us,
\begin{align*}
\bE\int_{\mathbb{T}}  \eta_{m+1} (\bd{U}_\epsilon) &\leq C_m\bE\int_{\mathbb{T}} \rho_{\epsilon}u_\epsilon^{2(m+1)} +\rho_\epsilon^{1+(m+1)(\gamma-1)} \le C_{m} \mathbb{E} \int_{\mathbb{T}} \eta_{m}(\bd{U}_\epsilon) + C_m\bE\int_{\mathbb{T}} \rho_\epsilon^{1+(m+1)(\gamma-1)}\\
  & \le C_{m} \mathbb{E} \int_{\mathbb{T}} \eta_{0}(\bd{U}_\epsilon) + C_m\sum_{k=1}^{m+1}\bE\int_{\mathbb{T}} \rho_\epsilon^{1+k(\gamma-1)} 
   \le C_{m,M}\left(1+ \bE\int_{\mathbb{T}} \rho_\epsilon^{1+(m+1)(\gamma-1)} \right).
\end{align*}
Hence, our {\bf aim} above boils down to proving that  $\displaystyle \bE\int_{\mathbb{T}}\rho_\epsilon^{1+(m+1)(\gamma-1)}$ is bounded under the assumption \eqref{inductive} (and thus \eqref{inductive1}).

The idea behind showing the inductive step 
is to test the weak formulation with a special test function $\varphi$ (see \eqref{testbogovskiim}) constructed using the Bogovskii operator. We use the entropy-flux pair: $\eta = \rho u$, $H = \rho u^{2} + \kappa\rho^{\gamma}$ in \eqref{entropyeq} to obtain the following weak formulation for any deterministic test function $\varphi \in C^{2}(\mathbb{T})$:
\begin{multline*}
\mathbb{E} \int_{\mathbb{T}} \rho_{\epsilon} u_{\epsilon}(T + 1) \varphi - \mathbb{E} \int_{\mathbb{T}} \rho_{\epsilon} u_{\epsilon}(T) \varphi \\
=  \mathbb{E} \int_{T}^{T + 1} \int_{\mathbb{T}} \rho_{\epsilon} u_{\epsilon}^{2} \partial_{x}\varphi dx dt + \mathbb{E} \int_{T}^{T + 1} \int_{\mathbb{T}} {\kappa} \rho_{\epsilon}^{\gamma} \partial_{x}\varphi - \mathbb{E} \int_{T}^{T + 1} \int_{\mathbb{T}} \alpha \rho_{\epsilon} u_{\epsilon} \varphi(x) dx dt + \epsilon \mathbb{E} \int_{T}^{T + 1} \int_{\mathbb{T}} \rho_{\epsilon} u_{\epsilon} \partial^{2}_{x}\varphi.
\end{multline*}
Then, by stationarity, we have that for all $t \ge 0$:
\begin{equation}\label{weakbogovskii}
\mathbb{E} \int_{\mathbb{T}} \kappa \rho_{\epsilon}^{\gamma} \partial_{x}\varphi = -\mathbb{E} \int_{\mathbb{T}} \rho_{\epsilon} u_{\epsilon}^{2} \partial_{x}\varphi dx dt + \mathbb{E} \int_{\mathbb{T}} \alpha \rho_{\epsilon} u_{\epsilon} \varphi(x) dx - \epsilon \mathbb{E} \int_{\mathbb{T}} \rho_{\epsilon} u_{\epsilon} \partial^{2}_{x}\varphi.
\end{equation}
To obtain a higher moment estimate on the density, we will use the Bogovskii operator in 1D and substitute the following test function:
\begin{equation}\label{testbogovskiim}
\varphi = \int_0^x \left(\rho_{\epsilon}^{m(\gamma-1)} - \int_{\mathbb{T}} \rho_{\epsilon}^{m(\gamma-1)} dx \right)dx.
\end{equation}
 Since
\begin{equation*}
\mathbb{E} \int_{\mathbb{T}} \kappa \rho_{\epsilon}^{\gamma} \partial_{x}\left(\int_0^x\rho_{\epsilon}^{m(\gamma-1)}\right) = \kappa \mathbb{E} \int_{\mathbb{T}} \rho_{\epsilon}^{\gamma + m(\gamma-1)},
\end{equation*}
this gives us, for $s=m(\gamma-1)$
\begin{equation}\label{Idefm}
\begin{split}
   \kappa \mathbb{E} \int_{\mathbb{T}} \rho_{\epsilon}^{1 + (m+1)(\gamma-1)} dx&=\kappa \mathbb{E} \int_{\mathbb{T}} \rho_{\epsilon}^{\gamma + m(\gamma-1)} dx\\
&= \kappa \mathbb{E} \int_{\mathbb{T}} \rho_{\epsilon}^{\gamma} \left(\int_{\mathbb{T}} \rho_{\epsilon}^{s}\right) dx - \mathbb{E} \int_{\mathbb{T}} \rho_{\epsilon}^{1 + s} u_\epsilon^{2}  dx
\\
&+ \mathbb{E} \int_{\mathbb{T}} \rho_{\epsilon} u_{\epsilon}^{2} \left(\int_{\mathbb{T}} \rho_{\epsilon}^{s}\right) dx + \mathbb{E} \int_{\mathbb{T}} \alpha \rho_{\epsilon} u_{\epsilon} \varphi(x) dx - \epsilon \mathbb{E} \int_{\mathbb{T}} \rho_\epsilon u_\epsilon \partial_{x}(\rho_\epsilon^{s}) dx \\
&:= I_{1} + I_{2} + I_{3} + I_{4} + I_{5}.
\end{split}
\end{equation}

We estimate the terms on the right-hand side of \eqref{Idefm} under the assumption \eqref{inductive} as follows.

\medskip

\noindent \textbf{Term I1.} 
For Term I1, we claim that for any $s>0$ we have the estimate:
\begin{equation}\label{I1est}
|I_{1}| \le \delta \mathbb{E} \int_{\mathbb{T}} \rho_\epsilon^{\gamma + m(\gamma-1)} dx + C_{m,\delta}.
\end{equation}
If $0 < s \le 1$, we can use the fact that $\displaystyle \int_{\mathbb{T}} \rho_\epsilon^{s} dx \le M^{s}$ almost surely, by conservation of mass and Young's inequality to immediately deduce this inequality. 

In the case where $s > 1$, we establish \eqref{I1est} by considering $r>1$ to be chosen later, and estimating $I_1$ as follows:
\begin{multline*}
\mathbb{E} \int_{\mathbb{T}} \rho_\epsilon^{\gamma} \left(\int_{\mathbb{T}} \rho_\epsilon^{s} \right) dx \le M^{(r - 1)/r} \mathbb{E} \int_{\mathbb{T}} \rho_\epsilon^{\gamma} \left(\int_{\mathbb{T}} \rho_\epsilon^{{ sr - (r-1)}}\right)^{1/r} dx = C_{r} \mathbb{E}\left[\left(\int_{\mathbb{T}} \rho_\epsilon^{\gamma}\right) \left(\int_{\mathbb{T}} \rho_\epsilon^{sr - (r-1)}\right)^{1/r}\right] \\
\le \frac{\delta}2 \mathbb{E} \left(\int_{\mathbb{T}} \rho_\epsilon^{\gamma}\right)^{\frac{p}{p - 1}} + C_{\delta} {{\mathbb{E}\left(\int_{\mathbb{T}} \rho_\epsilon^{sr-{(r-1)}} dx\right)^{p/r}}}.
\end{multline*}
Let $\displaystyle p = \frac{\gamma + s}{s}$ and $\displaystyle \frac{p}{p - 1} = \frac{\gamma + s}{\gamma}$ and then choose $r$ so that $1 < \displaystyle r < p = \frac{\gamma + s}{s}$. We thus conclude that
\begin{equation*}
|I_{1}| \le \frac{\delta}2 \mathbb{E} \int_{\mathbb{T}} \rho_\epsilon^{\gamma + s} + C_{\delta} \mathbb{E} \int_{\mathbb{T}} \rho_\epsilon^{\gamma + s - \frac{p}{r}{(r-1)}} \le \delta \mathbb{E} \int_{\mathbb{T}} \rho_\epsilon^{\gamma + s} + C_{\delta},
\end{equation*}
by Young's inequality $\rho_\epsilon^{\gamma + s - \frac{p{(r-1)}}{r}} \le \frac{\delta}2\rho_\epsilon^{\gamma + s} + C_{\delta}$, where we note that this inequality applies since $s,r > 1$ and hence the exponent is positive: $\displaystyle \gamma + s - \frac{p}{r} {(r-1)}\ge \gamma - \frac{\gamma}{s} + s - 1 > 0$.
\medskip

\noindent \textbf{Term I2.} Observe that, since $s=m(\gamma-1)$, we have
\begin{align*}
\mathbb{E} \int_{\mathbb{T}} \rho_\epsilon^{1 + s}u_\epsilon^{2} dx &= \mathbb{E} \int_{\mathbb{T}} \rho_\epsilon^{\frac{1}{m + 1}} u_\epsilon^{2} \rho_\epsilon^{s + \frac{m}{m + 1}} \le C_\delta\mathbb{E} \int_{\mathbb{T}} \rho_\epsilon u_\epsilon^{2(m + 1)} dx + { \delta}\mathbb{E} \int_{\mathbb{T}} \rho_\epsilon^{1 + \frac{(m + 1)s}{m}} dx  \\
&=\mathbb{E} \int_{\mathbb{T}} \rho_\epsilon u_\epsilon^{2(m + 1)} dx + { \delta}\mathbb{E} \int_{\mathbb{T}} \rho_\epsilon^{m(\gamma-1)+\gamma} dx\\
&\le C_{m,\delta}+ { \delta}\mathbb{E} \int_{\mathbb{T}} \rho_\epsilon^{m(\gamma-1)+\gamma} dx,
\end{align*}
thanks to our induction assumption \eqref{inductive1}.
\medskip

\noindent \textbf{Term I3.} We estimate again thanks to \eqref{inductive1},
\begin{align*}
\mathbb{E} \int_{\mathbb{T}} \rho_\epsilon u_\epsilon^{2} \left(\int_{\mathbb{T}} \rho^{s}_\epsilon \right) dx &\le \mathbb{E} \left(\left(\int_{\mathbb{T}} \rho_\epsilon^{s} dx\right) \left(\int_{\mathbb{T}} \rho_\epsilon dx\right)^{\frac{m}{m + 1}} \left(\int_{\mathbb{T}} \rho_\epsilon u_\epsilon^{2(m + 1)} dx\right)^{\frac{1}{m + 1}}\right) \\
&\le M^{\frac{m}{m + 1}} \left(\mathbb{E} \int_{\mathbb{T}} \rho_\epsilon u_\epsilon^{2(m + 1)} dx + \mathbb{E} \left(\int_{\mathbb{T}} \rho_\epsilon^{s} dx\right)^{\frac{m + 1}{m}}\right) \\
&{{\le C\left(1 + \mathbb{E} \int_{\mathbb{T}} \rho_\epsilon^{\frac{(m + 1)s}{m}} dx\right).}}
\end{align*}
Since 
\begin{align}\label{numerologym}
    \frac{(m + 1)s}{m} = (m+1)(\gamma-1)= m(\gamma-1)+\gamma-1< m(\gamma-1)+\gamma,
\end{align}
we have by Young's inequality:
\begin{equation*}
|I_{3}| \le \delta \mathbb{E} \int_{\mathbb{T}} \rho_\epsilon^{\gamma + m(\gamma-1)} dx + C_{\delta}.
\end{equation*}

\medskip

\noindent \textbf{Term I4.} We compute for $s=m(\gamma-1)$ that
\begin{align*}
\left|\mathbb{E} \int_{\mathbb{T}} \rho_\epsilon u_\epsilon \varphi(x) dx\right| &\le \mathbb{E} \left(\left(\int_{\mathbb{T}} \rho_\epsilon u_\epsilon^{m + 1} dx\right)^{\frac{1}{m + 1}} \left(\int_{\mathbb{T}} \rho_\epsilon dx\right)^{\frac{m}{m + 1}} \|\varphi\|_{L^{\infty}(\mathbb{T})}\right) \\
&\le M^{\frac{m}{m + 1}} \mathbb{E}\left(\left(\int_{\mathbb{T}} \rho_\epsilon u_\epsilon^{m + 1} dx\right)^{\frac{1}{m + 1}} \|\varphi\|_{L^{\infty}(\mathbb{T})}\right) \\
&{{\le M^{\frac{m}{m + 1}}\left(\mathbb{E} \int_{\mathbb{T}} \rho_\epsilon u_\epsilon^{m + 1} dx + \mathbb{E}\Big(\|\varphi\|^{\frac{m + 1}{m}}_{W^{1, (m + 1)/m}(\mathbb{T})}\Big)\right)}} \\
&\le C_{m}\left(1 + \mathbb{E} \int_{\mathbb{T}} \rho_\epsilon^{\frac{(m + 1)s}{m}} dx\right),
\end{align*}
where we used the inductive assumption \eqref{inductive1} and the following
\begin{align*}
    \|\varphi\|_{W^{1,(m+1)/m}}^{\frac{m + 1}{m}} = \left\|\rho_\epsilon^{m(\gamma-1)}-\int_{\mathbb{T}}\rho_\epsilon^{m(\gamma-1)}\right\|_{L^\frac{m + 1}{m}(\mathbb{T})}^{\frac{m + 1}{m}}\leq C_m \int_{\mathbb{T}}\rho_\epsilon^{(m+1)(\gamma-1)}.
\end{align*}
Again, since we have $\displaystyle \frac{(m + 1)s}{m} < \gamma + s$ by \eqref{numerologym}, we conclude by Young's inequality that
\begin{equation*}
|I_{4}| \le \delta \mathbb{E} \int_{\mathbb{T}} \rho_\epsilon^{\gamma + m(\gamma-1)} dx + C_{\delta}.
\end{equation*}

\noindent \textbf{Term I5.} Finally, we use the gradient bound to conclude for  $s=m(\gamma-1)$ that
\begin{align*}
\epsilon \left|\mathbb{E} \int_{\mathbb{T}} \rho_\epsilon u_\epsilon \partial_{x}(\rho_\epsilon^{s}) dx\right| &= s\epsilon \left|\mathbb{E} \int_{\mathbb{T}} \rho_\epsilon^{s}u_\epsilon \partial_{x}\rho_\epsilon dx\right| = \epsilon \left|\mathbb{E} \int_{\mathbb{T}} \Big(\rho_\epsilon^{1 - \frac{\gamma}{2}} \rho_\epsilon^{s - (m - 1)\frac{(\gamma-1)}{2}}\Big) \Big((\partial_{x}\rho_\epsilon) \rho_\epsilon^{\frac{\gamma}{2} - 1} \rho_\epsilon^{(m - 1)\frac{(\gamma-1)}{2}}u_{\epsilon}\Big)\right| \\
&\le \delta \mathbb{E} \int_{\mathbb{T}} \rho_\epsilon^{2 - \gamma+2s - (m - 1)(\gamma-1)} dx + C_\delta \mathbb{E} \int_{\mathbb{T}} \rho_\epsilon^{(m - 1)(\gamma-1)}u_\epsilon^{2}\Big(\rho_\epsilon^{\gamma - 2}|\partial_{x}\rho_{\epsilon}|^{2}\Big) dx.
\end{align*}
In the first integral we observe that for $s=m(\gamma-1)$, the exponent ${{2 - \gamma + 2s - (m - 1)(\gamma-1)}=1+m(\gamma-1)}$.
In the second integral, we estimate $\rho_\epsilon^{(m - 1)(\gamma-1)}u_\epsilon^{2} \le C_{m}(\rho_\epsilon^{m(\gamma-1)} + u_\epsilon^{2m})$ by using Young's inequality with exponents $m$ and $m/(m - 1)$. Thanks to \eqref{inductive1} we hence obtain,
\begin{align*}
\epsilon\left|\mathbb{E} \int_{\mathbb{T}} \rho_\epsilon u_\epsilon \partial_{x}(\rho_\epsilon^{s}) dx\right| &\le \delta \mathbb{E} \int_{\mathbb{T}} \rho_\epsilon^{1 +m(\gamma-1)} ds + C_{m,\delta} \mathbb{E} \int_{\mathbb{T}} \Big(\rho_\epsilon^{m (\gamma-1)} + u_\epsilon^{2m}\Big) \rho_\epsilon^{\gamma - 2} |\partial_{x}\rho_\epsilon|^{2} dx \\
&\le C_{m,\delta} + \delta \mathbb{E} \int_{\mathbb{T}} \rho_\epsilon^{1 + m(\gamma-1)} dx.
\end{align*}
Thus, for $\gamma>1$, we have
\begin{equation*}
|I_{5}| \le \delta \mathbb{E} \int_{\mathbb{T}} \rho_{\epsilon}^{\gamma + m(\gamma-1)} + C_{m,\delta}.
\end{equation*}

\medskip

\noindent \textbf{Conclusion of the proof.} Combining all of the estimates of $I_{1}$ through $I_{5}$ in \eqref{Idefm}, we hence obtain:
\begin{equation*}
\mathbb{E} \int_{\mathbb{T}} \rho_{\epsilon}^{1 + (m+1)(\gamma-1)} dx = \mathbb{E} \int_{\mathbb{T}} \rho_{\epsilon}^{\gamma + m(\gamma-1)} dx \le \delta \mathbb{E} \int_{\mathbb{T}} \rho_{\epsilon}^{\gamma + m(\gamma-1)} dx + C_{m,\delta}.
\end{equation*}
Hence, taking $\delta$ sufficiently small, we conclude the proof.
\end{proof}

\subsection{Additional estimates that are uniform in $\epsilon$.}\label{additionalest} Using the fundamental uniform entropy estimates in Proposition \ref{uniformeps2}, we can derive additional estimates that are uniform in $\epsilon$, which will be important in the limit passage as $\epsilon_{N} \to 0$. The first estimate is a uniform moment bound on the entropy and flux $(\eta_{m}, H_{m})$. 

\begin{proposition}\label{fluxmomenteps}
    Let $(\rho_{\epsilon}, q_{\epsilon})$ be the $\epsilon$ level statistically stationary solutions to the approximate $\epsilon$ level problem in \eqref{approxsystem_ep}. Then, for every positive integer $m$, every $1 \le p < \infty$, and for all $t \ge 0$:
    \begin{equation*}
    \mathbb{E} \int_{\mathbb{T}} |\eta_{m}(\bd{U}_{\epsilon})(t)|^{p} \le C(m, p), \qquad \mathbb{E} \int_{\mathbb{T}} |H_{m}(\bd{U}_{\epsilon})(t)|^{p} \le C(m, p),
    \end{equation*}
    for a constant $C(m, p)$ depending only on $m$ and $p$, and \textit{independent of both} $\epsilon > 0$ and $t \ge 0$. 
\end{proposition}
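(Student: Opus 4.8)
The strategy is to reduce the $L^p$ moment bounds on $\eta_m$ and $H_m$ to the first-moment entropy bounds of Proposition \ref{uniformeps2}, using the purely algebraic comparison estimates of Lemma \ref{momentidentity}. Recall that Lemma \ref{momentidentity} states, for any $s \ge 1$,
\begin{equation*}
|\eta_m(\bd{U})|^s \le C(m,s,p)\big(\eta_0(\bd{U}) + \eta_p(\bd{U})\big), \qquad |H_m(\bd{U})|^s \le C(m,s,p)\big(\eta_0(\bd{U}) + \eta_p(\bd{U})\big),
\end{equation*}
for $p$ sufficiently large (namely $p \ge ms + (s-1)/(\gamma-1)$ for the entropy, and $p \ge (m + 1/2)s + (s-1)/(\gamma-1)$ for the flux). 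The point is that these are deterministic pointwise inequalities valid for any $\bd{U} = (\rho,q) \in [0,\infty)\times\R$, hence they hold when evaluated at $\bd{U}_\epsilon(\omega,t,x)$ for almost every $(\omega,t,x)$.

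\textbf{Main steps.} First, given $m$ and $p \ge 1$, choose a positive integer $p'$ large enough that $p' \ge mp + (p-1)/(\gamma-1)$ and $p' \ge (m + \tfrac12)p + (p-1)/(\gamma-1)$ simultaneously (this is possible since $\gamma > 1$ is fixed); this makes Lemma \ref{momentidentity} applicable with $s = p$ and with target index $p'$. Second, integrate the pointwise bound over $\mathbb{T}$ and take expectations:
\begin{equation*}
\mathbb{E}\int_{\mathbb{T}} |\eta_m(\bd{U}_\epsilon)(t)|^p \, dx \le C(m,p,p')\left(\mathbb{E}\int_{\mathbb{T}} \eta_0(\bd{U}_\epsilon)(t)\, dx + \mathbb{E}\int_{\mathbb{T}} \eta_{p'}(\bd{U}_\epsilon)(t)\, dx\right),
\end{equation*}
and similarly for $H_m$. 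Third, invoke Proposition \ref{uniformeps2}: the bound \eqref{etaboundsm} gives $\mathbb{E}\int_{\mathbb{T}} \eta_{p'}(\bd{U}_\epsilon)(t)\,dx \le C_{p'}$ with a constant independent of both $\epsilon$ and $t \ge 0$, while $\mathbb{E}\int_{\mathbb{T}}\eta_0(\bd{U}_\epsilon)(t)\,dx = \mathbb{E}\int_{\mathbb{T}}\rho_\epsilon(t)\,dx = M$ (total mass, normalized to $1$) is conserved and hence trivially bounded uniformly in $\epsilon$ and $t$. Combining these yields the claimed bounds with $C(m,p) := C(m,p,p')\,(M + C_{p'})$, which depends only on $m$ and $p$ (through the fixed choice of $p'$ determined by $m$, $p$, and $\gamma$) and is independent of $\epsilon$ and $t$.

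\textbf{Expected obstacle.} There is essentially no analytic difficulty here — the proof is a bookkeeping exercise chaining Lemma \ref{momentidentity} with Proposition \ref{uniformeps2}. The only point requiring a small amount of care is verifying that the integer $p'$ can indeed be chosen to satisfy both numerical constraints from Lemma \ref{momentidentity} at once and that the resulting constant genuinely does not secretly depend on $\epsilon$ or $t$; this is immediate once one observes that every constant entering the estimate comes either from the deterministic algebraic lemmas (depending only on $m$, $p$, $p'$, $\gamma$) or from the $t$- and $\epsilon$-uniform bound \eqref{etaboundsm}. One should also note that the measurability and finiteness of the integrals is not in question, since $\rho_\epsilon > 0$ almost everywhere (Proposition \ref{rhoepspositive}) and $\bd{U}_\epsilon$ has the regularity guaranteed by the previous section.
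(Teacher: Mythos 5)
Your proposal is correct and follows exactly the paper's own argument: the paper's proof is the one-line combination of the uniform entropy bounds \eqref{etaboundsm} from Proposition \ref{uniformeps2} (together with conservation of mass for $\eta_0$) with the algebraic moment estimates of Lemma \ref{momentidentity}. Your additional bookkeeping about choosing an integer $p'$ satisfying both exponent constraints is a correct, if minor, elaboration of what the paper leaves implicit.
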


\begin{proof}
This follows directly from combining the bounds \eqref{etaboundsm} which hold for every positive integer $m$, obtained in Proposition \ref{uniformeps2}, with the algebraic identities in Lemma \ref{momentidentity}. 
\end{proof}

As a direct corollary, we can obtain uniform boundedness of the moments of the state variables, the density and momentum, independently of $\epsilon$. 

\begin{corollary}\label{momentrhoq}
    There exists a constant $C_{p}$ depending only $p$ (and independent of $\epsilon$ and $t \ge 0$), such that for all $1 \le p < \infty$ and for all $t \ge 0$:
    \begin{equation*}
    \mathbb{E} \int_{\mathbb{T}} |\rho_{\epsilon}(t)|^{p} \le C_{s}, \qquad \mathbb{E} \int_{\mathbb{T}} |q_{\epsilon}(t)|^{p} \le C_{p}.
    \end{equation*}
\end{corollary}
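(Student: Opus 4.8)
\textbf{Proof proposal for Corollary \ref{momentrhoq}.}

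The plan is to extract the two pointwise-in-$\bd{U}$ algebraic bounds relating powers of $\rho$ and $q$ to the special entropies $\eta_m$, and then invoke Proposition \ref{fluxmomenteps}, which already supplies uniform-in-$\epsilon$ and uniform-in-$t$ control of $\mathbb{E}\int_{\mathbb{T}} |\eta_m(\bd{U}_\epsilon)(t)|^{p'}$ for every positive integer $m$ and every $1 \le p' < \infty$. So the corollary will follow once the relevant powers of $\rho$ and $q$ are dominated by a single $\eta_m$ (or a finite sum thereof).

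First I would handle the density. By the lower algebraic bound in Proposition \ref{etamboundalg}, for any positive integer $m$ we have $\eta_m(\bd{U}) \ge c_m \rho \cdot \rho^{m(\gamma-1)} = c_m \rho^{1+m(\gamma-1)}$, so that $\rho^{1+m(\gamma-1)} \le C_m \eta_m(\bd{U})$ pointwise. Given a target exponent $p \ge 1$, choose $m$ large enough that $1 + m(\gamma-1) \ge p$; this is possible since $\gamma > 1$. Then, using conservation of mass $\int_{\mathbb{T}} \rho_\epsilon(t)\,dx = 1$ and Young's inequality (or simply $\rho^p \le \rho + \rho^{1+m(\gamma-1)}$ on the two regions $\rho \le 1$ and $\rho > 1$), we get $\int_{\mathbb{T}} \rho_\epsilon^p(t) \le 1 + C_m \int_{\mathbb{T}} \eta_m(\bd{U}_\epsilon)(t)$, and taking expectations and applying Proposition \ref{fluxmomenteps} with $p'=1$ yields $\mathbb{E}\int_{\mathbb{T}} \rho_\epsilon^p(t) \le C_p$ uniformly in $\epsilon$ and $t$.

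Next I would handle the momentum $q_\epsilon = \rho_\epsilon u_\epsilon$. Again by the lower bound in Proposition \ref{etamboundalg}, $\eta_m(\bd{U}) \ge c_m \rho\, u^{2m}$. Since $q^{2m} = \rho^{2m} u^{2m} = \rho^{2m-1} \cdot (\rho u^{2m})$, and $\rho^{2m-1} \le \rho^{1 + (2m-1)(\gamma-1)/(\gamma-1)\cdot\ldots}$ is controlled by a higher density moment, I would write $|q|^{2m} \le \rho^{2m-1}\,(\rho u^{2m})$ and apply Young's inequality with conjugate exponents to split into a density-power term (controlled by the previous paragraph) and a term bounded by $\rho u^{2m} \le C_m \eta_m(\bd{U})$. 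Concretely, $|q|^{2m} \le \tfrac12 \rho^{r} + \tfrac12 (\rho u^{2m})^{2}\le \tfrac12\rho^r + C_m \eta_m(\bd U)^2$ for a suitable finite power $r = r(m)$, or more simply one uses the bound $|q| = \rho^{1/2} \cdot \rho^{1/2} |u| \le \rho + \rho u^2 \le \rho + C_1 \eta_1(\bd U)$ for $p$ up to $2$ and then iterates/interpolates for general $p$ by choosing $m$ with $2m \ge p$. In all cases the right-hand side is a finite combination of $\rho_\epsilon$-powers and $\eta_{m'}(\bd{U}_\epsilon)$-powers with $m'$ depending only on $p$, each of which has expectation bounded uniformly in $\epsilon$ and $t$ by Proposition \ref{fluxmomenteps} and the density estimate just proved. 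Taking expectations gives $\mathbb{E}\int_{\mathbb{T}} |q_\epsilon(t)|^p \le C_p$.

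I do not expect a genuine obstacle here: the corollary is essentially bookkeeping once Proposition \ref{fluxmomenteps} is in hand. The only mild care needed is to track, for a given $p$, which index $m$ and which Young's-inequality exponents are required so that every term on the right-hand side is of the form $\rho_\epsilon^{\text{power}}$ or $\eta_{m}(\bd{U}_\epsilon)^{\text{power}}$ with the power finite and $m$ finite; since $\gamma > 1$, every power of $\rho$ and $q$ is reached by some finite $m$, so the argument closes for all $1 \le p < \infty$.
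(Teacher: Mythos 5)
Your proposal is correct and follows essentially the same route as the paper: both arguments rest on the pointwise bound $\rho u^{2m}+\rho^{1+m(\gamma-1)}\le c_m\,\eta_m(\bd{U})$ from Proposition \ref{etamboundalg} together with the uniform entropy moment bounds of Proposition \ref{fluxmomenteps}, and then dispose of the momentum term by an elementary splitting (your pointwise Young's inequality versus the paper's H\"older step on the integrals is only a cosmetic difference in bookkeeping).
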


\begin{proof}
    Using the algebraic bounds in Proposition \ref{etamboundalg}: 
    \begin{equation*}
    0 \le \rho u^{2m} + \rho^{1 + m(\gamma-1)} \le c_m \eta_{m}(\bd{U}),
    \end{equation*}
    for a positive constant $c_{m}$. So the following bounds follow immediately from the previous Proposition \ref{fluxmomenteps} 
    for all $1 \le p< \infty$:
    \begin{equation*}
    \mathbb{E} \int_{\mathbb{T}} |\rho_{\epsilon} u_{\epsilon}^{2p}(t)| \le C_{p}, \quad \text{ and } \quad \mathbb{E} \int_{\mathbb{T}} |\rho_{\epsilon}(t)|^{p} \le C_{p}, \qquad \text{ for all } t \ge 0,
    \end{equation*}
    where the constant $C_{p}$ is independent of $t \ge 0$ and $\epsilon > 0$. 
    Then, using H\"{o}lder's inequality, for all $1 \le p < \infty$: 
    \begin{equation*}
    \mathbb{E} \int_{\mathbb{T}} |\rho_{\epsilon}^{2}u_{\epsilon}^{2}|^{p} \le C_{p} \qquad \text{ for all } t \ge 0,
    \end{equation*}
    for some uniform constant $C_{p}$ independent of $\epsilon$ and $t \ge 0$, which establishes the desired result. 
\end{proof}

We next establish uniform local-in-time H\"{o}lder continuity bounds on the approximate solutions $\bd{U}_{\epsilon} = (\rho_{\epsilon}, q_{\epsilon})$. To do this, we will use the uniform moment bounds in Proposition \ref{fluxmomenteps} and Corollary \ref{momentrhoq} to deduce an extension of the result in Proposition \ref{uniformeps2} to a maximum in time locally.

\begin{proposition}\label{momentsup}
    { For every $m\geq 1$}, there exists a constant $C_{ m}(t_2 - t_1)$ that is independent of $\epsilon$ such that
    \begin{equation}\label{momentsupbase}
    \mathbb{E}\sup_{t \in [t_1, t_2]} \int_{\mathbb{T}} \eta_{m}(\bd{U}_{\epsilon})(t) \le C_{ m}(t_2 - t_1).
    \end{equation}
    In addition, there exists a constant $C(t_2 - t_1, p)$ that is independent of $\epsilon$ such that for all $1 \le p < \infty$: 
    \begin{equation*}
    \mathbb{E}\sup_{t \in [t_1, t_2]} \int_{\mathbb{T}} |\eta_{m}(\bd{U}_{\epsilon})|^p \le C(t_2 - t_1, p), \qquad \mathbb{E} \sup_{t \in [t_1, t_2]} \int_{\mathbb{T}} |H_m(\bd{U}_{\epsilon})|^p \le C(t_2 - t_1, p),
    \end{equation*}
    \begin{equation*}
    \mathbb{E} \sup_{t \in [t_{1}, t_{2}]} \int_{\mathbb{T}} |\rho_{\epsilon}(t)|^p \le C(t_2 - t_1, p), \qquad \mathbb{E} \sup_{t \in [t_1, t_2]} \int_{\mathbb{T}} |q_{\epsilon}(t)|^p \le C(t_2 - t_1, p).
    \end{equation*}
\end{proposition}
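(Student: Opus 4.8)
The plan is to take the approximate entropy equality \eqref{entropyeq} with $\eta = \eta_{m}$, test function $\varphi(x) = 1$, and a smooth nonnegative $\psi_{\delta}$ that approximates $\mathbbm{1}_{[t_{1}, t_{2}]}$, to write, after passing $\delta \to 0$, for $t \in [t_{1}, t_{2}]$:
\begin{equation*}
\int_{\mathbb{T}} \eta_{m}(\bd{U}_{\epsilon})(t) dx = \int_{\mathbb{T}} \eta_{m}(\bd{U}_{\epsilon})(t_{1}) dx - \int_{t_{1}}^{t} \int_{\mathbb{T}} \Big(\alpha q_{\epsilon}\partial_{q}\eta_{m} + \epsilon\langle D^{2}\eta_{m}\partial_{x}\bd{U}_{\epsilon}, \partial_{x}\bd{U}_{\epsilon}\rangle - \tfrac{1}{2}\partial_{q}^{2}\eta_{m}G^{2}\Big) dx ds + M_{\epsilon}(t),
\end{equation*}
where $M_{\epsilon}(t) = \int_{t_{1}}^{t}\int_{\mathbb{T}} \partial_{q}\eta_{m}(\bd{U}_{\epsilon})\Phi(\bd{U}_{\epsilon}) dx \, dW$ is the martingale part. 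Taking a supremum over $t \in [t_{1}, t_{2}]$ and expectation, the $\epsilon$-dissipation term drops since $D^{2}\eta_{m}$ is positive semidefinite, the $\alpha q_{\epsilon}\partial_{q}\eta_{m}$ term is nonnegative (by Proposition \ref{etamboundalg}) and is therefore also dropped, and the first term on the right is bounded by $C_{m}$ uniformly using Proposition \ref{fluxmomenteps} (with $p = 1$). For the $\partial_{q}^{2}\eta_{m} G^{2}$ term, I would use Proposition \ref{etamboundalg} together with $G^{2}(\bd{U}_{\epsilon}) \le A_{0}\rho_{\epsilon}^{2}$ from \eqref{Gbound} to bound it pointwise by $C_{m}\eta_{m-1}(\bd{U}_{\epsilon})$, whose time-integral over $[t_{1}, t_{2}]$ has expectation bounded by $C_{m}(t_{2} - t_{1})$ again by Proposition \ref{fluxmomenteps}.

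The remaining term is the martingale $M_{\epsilon}(t)$, for which I would apply the Burkholder–Davis–Gundy inequality:
\begin{equation*}
\mathbb{E}\sup_{t \in [t_{1}, t_{2}]} |M_{\epsilon}(t)| \le C \, \mathbb{E}\left(\int_{t_{1}}^{t_{2}} \sum_{k = 1}^{\infty} \left(\int_{\mathbb{T}} \partial_{q}\eta_{m}(\bd{U}_{\epsilon}) G_{k}^{\epsilon}(\bd{U}_{\epsilon}) dx\right)^{2} dt\right)^{1/2}.
\end{equation*}
Using $|G_{k}^{\epsilon}(\bd{U}_{\epsilon})| \le \alpha_{k}\rho_{\epsilon}$ from \eqref{Gepsbound}, the algebraic bound $|\partial_{q}\eta_{m}(\bd{U}_{\epsilon})| \le C_{m}(1 + |u_{\epsilon}|^{2m-1} + \rho_{\epsilon}^{(2m-1)\theta})$ (which one extracts from the expansion of $\eta_{m}$ in Proposition \ref{etamboundalg}, or via Proposition \ref{otheretaH}), and Cauchy–Schwarz plus $\sum_{k}\alpha_{k}^{2} \le A_{0}$, the quadratic variation integrand is controlled by $C_{m}\left(\int_{\mathbb{T}} \rho_{\epsilon}(1 + |u_{\epsilon}|^{2m-1} + \rho_{\epsilon}^{(2m-1)\theta}) dx\right)^{2}$, which in turn is $\le C_{m}(1 + \eta_{m}(\bd{U}_{\epsilon})$-type integrals$)$. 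After a Hölder step in time on $[t_{1}, t_{2}]$, the BDG bound becomes $\le C_{m}(t_{2} - t_{1})^{1/2}\big(1 + \sup_{t}\mathbb{E}\int_{\mathbb{T}}(\eta_{m} + \eta_{m}^{2})\big)^{1/2}$, and all these expectations are finite and $\epsilon$-uniform by Proposition \ref{fluxmomenteps}. This gives \eqref{momentsupbase}.

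For the higher-moment statements, I would repeat the argument but apply It\^{o}'s formula to $t \mapsto \left(\int_{\mathbb{T}}\eta_{m}(\bd{U}_{\epsilon})(t) dx\right)^{p}$, or more simply bootstrap: raise the identity above to the $p$-th power, take $\sup_{t}$ and expectation, handle the drift terms by Hölder and Proposition \ref{fluxmomenteps}, and handle the martingale term by BDG applied to $\left(\int_{\mathbb{T}}\eta_{m}\right)^{p}$, whose quadratic variation brings down a factor $\left(\int_{\mathbb{T}}\eta_{m}\right)^{2p-2}$ times the previous integrand; a Young's inequality then absorbs $\frac{1}{2}\mathbb{E}\sup_{t}\left(\int_{\mathbb{T}}\eta_{m}\right)^{p}$ into the left side. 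The bounds for $|H_{m}(\bd{U}_{\epsilon})|^{p}$, $|\rho_{\epsilon}|^{p}$, and $|q_{\epsilon}|^{p}$ then follow from the $\sup$-in-time $\eta_{m}$-bound exactly as in the proofs of Proposition \ref{fluxmomenteps} and Corollary \ref{momentrhoq}, using Lemma \ref{momentidentity} and Proposition \ref{etamboundalg} with $m$ chosen large enough relative to $p$. The main obstacle I anticipate is the bookkeeping in the BDG estimate for the $p$-th moment case: one must verify that the power of $\left(\int_{\mathbb{T}}\eta_{m}\right)$ appearing after bounding the quadratic variation is exactly $2p - 1$ (so that, after Cauchy–Schwarz in $\omega$ against a $\sup_{t}\left(\int_{\mathbb{T}}\eta_{m}\right)^{p}$ factor and a Young's inequality, what is left is $\mathbb{E}\sup_{t}\left(\int_{\mathbb{T}}\eta_{m}\right)^{p-1}$-type terms already controlled by induction on $p$ via Proposition \ref{fluxmomenteps}), and that the time-integrated remainder remains $\epsilon$-uniform; once the correct exponent is tracked, the absorption closes cleanly.
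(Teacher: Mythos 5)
Your argument for the base case \eqref{momentsupbase} is essentially the paper's: write the entropy balance \eqref{entropyeq} with $\varphi\equiv 1$ on $[t_{1},t]$, bound $\mathbb{E}\int_{\mathbb{T}}\eta_{m}(\bd{U}_{\epsilon})(t_{1})$ by stationarity, control $\tfrac12\partial_{q}^{2}\eta_{m}G^{2}\le C_{m}\eta_{m-1}$ via Proposition \ref{etamboundalg} and \eqref{Gepsbound}, and treat the martingale by BDG together with $|\partial_{q}\eta_{m}G_{k}^{\epsilon}|\le C_{m}\alpha_{k}(\eta_{0}+\eta_{m})$ and the moment bounds of Proposition \ref{fluxmomenteps}. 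The only cosmetic difference is that you discard the damping and $\epsilon$-dissipation terms by sign, whereas the paper keeps them and bounds the dissipation using the stationary estimate $\epsilon\,\mathbb{E}\int_{\mathbb{T}}\langle D^{2}\eta_{m}\partial_{x}\bd{U}_{\epsilon},\partial_{x}\bd{U}_{\epsilon}\rangle\le C_{m}$ (from \eqref{epsbound} and Proposition \ref{uniformeps2}); your sign bookkeeping is consistent with It\^{o}'s formula for the viscous system, and in any case the stationarity bound makes the term harmless, so either treatment is fine.

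For the higher-moment claims, however, your primary route is both heavier than necessary and aimed at the wrong quantity. Applying It\^{o}'s formula to $t\mapsto\bigl(\int_{\mathbb{T}}\eta_{m}(\bd{U}_{\epsilon})\bigr)^{p}$ (or raising the identity to the $p$-th power) controls $\mathbb{E}\sup_{t}\bigl(\int_{\mathbb{T}}\eta_{m}\bigr)^{p}$, which by Jensen's inequality on the unit-mass torus is dominated by, and does not dominate, the stated quantity $\mathbb{E}\sup_{t}\int_{\mathbb{T}}|\eta_{m}|^{p}$; so the absorption scheme you sketch, even if the exponent bookkeeping closes, does not yield the proposition as stated. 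The paper avoids all of this: by Lemma \ref{momentidentity}, $|\eta_{m}(\bd{U})|^{p}\le C\bigl(\eta_{0}(\bd{U})+\eta_{p'}(\bd{U})\bigr)$ pointwise for any integer $p'\ge mp+\tfrac{p-1}{\gamma-1}$, so $\mathbb{E}\sup_{t}\int_{\mathbb{T}}|\eta_{m}|^{p}$ reduces immediately to the base case \eqref{momentsupbase} applied with $m$ replaced by $p'$ (plus conservation of mass for the $\eta_{0}$ part), and the same one-line reduction handles $|H_{m}|^{p}$, $|\rho_{\epsilon}|^{p}$, $|q_{\epsilon}|^{p}$ via Lemma \ref{momentidentity} and Proposition \ref{etamboundalg}. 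You in fact invoke exactly this mechanism for $H_{m}$, $\rho_{\epsilon}$, $q_{\epsilon}$ at the end of your proposal; apply it to $|\eta_{m}|^{p}$ itself and the It\^{o}-for-$p$-th-powers detour, together with the exponent-tracking worry you flag, disappears entirely.
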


\begin{proof}
    It suffices to show the estimate \eqref{momentsupbase}, since the remaining estimates follow using the algebraic identities on the entropy in Proposition \ref{etamboundalg} {and Lemma \ref{momentidentity}}, as in the proofs of Proposition \ref{fluxmomenteps} and Corollary \ref{momentrhoq}. So we prove the estimate in \eqref{momentsupbase}. To do this, we use the entropy formulation \eqref{entropyeq} with a test function $\varphi = 1$, and we hence obtain for $t \ge t_1$:
    \begin{align*}
    \int_{\mathbb{T}} \eta_{m}(\bd{U}_{\epsilon})(t) = \int_{\mathbb{T}} \eta_{m}(\bd{U}_{\epsilon})(t_{1}) &- \int_{t_{1}}^{t} \int_{\mathbb{T}} \alpha q_{\epsilon}\partial_{q}\eta(\bd{U}_{\epsilon}) + \int_{t_{1}}^{t} \int_{\mathbb{T}} \partial_{q}\eta(\bd{U}_{\epsilon})\bd{\Phi}^{\epsilon}(\bd{U}_{\epsilon}) dW(t) \\
    &+ \int_{t_{1}}^{t} \int_{\mathbb{T}} \frac{1}{2} \partial_{q}^{2}\eta(\bd{U}_{\epsilon})G_{\epsilon}^{2}(\bd{U}_{\epsilon}) + \epsilon \int_{t_{1}}^{t} \int_{\mathbb{T}} \langle D^{2}\eta(\bd{U}_{\epsilon})\partial_{x}\bd{U}_{\epsilon}, \partial_{x}\bd{U}_{\epsilon} \rangle.
    \end{align*}
    We then take a supremum over $t \in [t_1, t_2]$ and expectation, and obtain the following terms.

    \begin{itemize}
        \item First, we note that by Proposition \ref{uniformeps2}:
        \begin{equation*}
        \mathbb{E}\int_{\mathbb{T}} \eta_{m}(\bd{U}_{\epsilon})(t_{1}) \le C_{m}.
        \end{equation*}
        \item Next, by the algebraic bounds in Proposition \ref{etamboundalg} and Young's inequality, we have 
        \begin{equation*}
        |q_{\epsilon}\partial_q\eta_{m}(\bd{U}_{\epsilon})| \le C_{m}\rho(u^{2m} + \rho^{(m - 1)(\gamma-1)} u^{2}) \le C_{m}\rho(u^{2m} + \rho^{m(\gamma-1)} )  \le C_{m}\eta_{m}(\bd{U}), 
        \end{equation*}
        \begin{equation*}
        |\partial^{2}_{q}\eta(\bd{U}_{\epsilon})G^{2}(\bd{U}_{\epsilon})| \le { A_0|\partial^{2}_{q}\eta(\bd{U}_{\epsilon})\rho_\epsilon^2|  } \leq  C_{m}\eta_{m - 1}(\bd{U}_{\epsilon}).
        \end{equation*}
        Therefore, we estimate: 
        \begin{align*}
        \mathbb{E}\int_{t_{1}}^{t_{2}} \int_{\mathbb{T}} |\alpha q_{\epsilon}\partial_{q}\eta(\bd{U}_{\epsilon})| + |\partial^{2}_{q}\eta(\bd{U}_{\epsilon})G^{2}_{\epsilon}(\bd{U}_{\epsilon})| &\le C_{m}\int_{t_{1}}^{t_{2}} \left(\mathbb{E} \int_{\mathbb{T}} \eta_{m - 1}(\bd{U}_{\epsilon}) + \eta_{m}(\bd{U}_{\epsilon})\right) \\
        &\le C_{m}(t_{2} - t_{1}),
        \end{align*}
        by Proposition \ref{uniformeps2}.
        \item Recall that by stationarity (for example, by combining inequality \eqref{epsbound} with Proposition \ref{uniformeps2}), we have that $\displaystyle \epsilon \mathbb{E} \int_{\mathbb{T}} \langle D^{2}\eta_{m}(\bd{U}_{\epsilon})\partial_{x}\bd{U}_{\epsilon}, \partial_{x}\bd{U}_{\epsilon} \rangle(t) \le C_{m}$ for a constant $C_{m}$ that is independent of $t \ge 0$ and $\epsilon$. Therefore,
        \begin{equation*}
        \epsilon \bE\int_{t_{1}}^{t_{2}} \langle D^{2}\eta_{m}(\bd{U}_{\epsilon})\partial_{x}\bd{U}_{\epsilon}, \partial_{x}\bd{U}_{\epsilon}\rangle \le C_m(t_2 - t_1).
        \end{equation*}
        \item Finally, using the Burkholder-Davis-Gundy inequality, we estimate that
        \begin{equation*}
        \mathbb{E}\sup_{t \in [t_1, t_2]} \left|\int_{t_{1}}^{t} \left(\int_{\mathbb{T}} \partial_{q}\eta(\bd{U}_{\epsilon})\bd{\Phi}^{\epsilon}(\bd{U}_{\epsilon}) dx\right)dW(t)\right| \le \mathbb{E} \left|\int_{t_{1}}^{t_{2}} \sum_{k = 1}^{\infty} \left(\int_{\mathbb{T}} \partial_{q}\eta_{m}(\bd{U}_{\epsilon}) G_{k}^{\epsilon}(\bd{U}_{\epsilon}) dx\right)^{2} dt\right|^{1/2}.
        \end{equation*}
        Using the algebraic bounds in Proposition \ref{etamboundalg}:
        \begin{align*}
        |\partial_{q}\eta_{m}(\bd{U}_{\epsilon})G_{k}^{\epsilon}(\bd{U}_{\epsilon})| &\le C_{m}\alpha_{k}\rho_{\epsilon}(|u_{\epsilon}|^{2m - 1} + \rho_{\epsilon}^{2(m - 1)\theta} |u_{\epsilon}|) \\
        &\le C_{m}\alpha_{k}\rho_{\epsilon}(1 + |u_{\epsilon}|^{2m} + \rho_{\epsilon}^{m(\gamma-1)}) \le C_{m}\alpha_{k}\Big(\eta_{0}(\bd{U}_{\epsilon}) + \eta_{m}(\bd{U}_{\epsilon})\Big).
        \end{align*}
        Hence, using \eqref{Gepsbound}:
        \begin{equation*}
        \mathbb{E} \sup_{t \in [t_1, t_2]} \left|\int_{t_{1}}^{t} \int_{\mathbb{T}} \partial_{q}\eta(\bd{U}_{\epsilon}) \bd{\Phi}^{\epsilon}(\bd{U}_{\epsilon}) dW\right| \le C_{m}A_{0}\mathbb{E} \int_{t_{1}}^{t_{2}} \int_{\mathbb{T}} \Big[\Big(\eta_{0}(\bd{U}_{\epsilon})\Big)^{2} + \Big(\eta_{m}(\bd{U}_{\epsilon})\Big)^{2}\Big] \le C_m(t_2 - t_1),
        \end{equation*}

        by the uniform moment bounds on the entropies in Proposition \ref{fluxmomenteps}.
    \end{itemize}
    Hence, we deduce that
    \begin{equation*}
    \mathbb{E} \sup_{t \in [t_1, t_2]} \int_{\mathbb{T}} \eta_{m}(\bd{U}_{\epsilon}) \le C_{m}\Big(1 + (t_2 - t_1)\Big). 
    \end{equation*}
\end{proof}

Finally, we conclude this section on uniform in $\epsilon$ bounds on the statistically stationary solutions by deriving the following uniform bound on H\"{o}lder continuity in time, for the approximate statistically stationary solutions $(\rho_{\epsilon}, q_{\epsilon})$.

\begin{proposition}\label{Hnegative2}
    For each $\beta \in (0, 1/4)$ and every $T > 0$, there exists a constant $C_{\beta, T}$ independent of $\epsilon$ such that
    \begin{equation*}
    \mathbb{E}\|(\rho_{\epsilon}, q_{\epsilon})\|_{C^{\beta}(0, T; H^{-2}(\mathbb{T}))} \le C_{\beta, T},
    \end{equation*}
where $H^{-2}(\mathbb{T})$ denotes the dual of $H^{2}(\mathbb{T})$.
\end{proposition}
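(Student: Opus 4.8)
The plan is to use the equations \eqref{approxsystem_ep} satisfied by $(\rho_{\epsilon}, q_{\epsilon})$ in their integral form, bounding the time increments of each term in the weak $H^{-2}(\mathbb{T})$ norm, and then assembling a Kolmogorov-type continuity estimate. Writing the system in mild/integral form, for $s < t$ in $[0, T]$ we have
\begin{equation*}
\rho_{\epsilon}(t) - \rho_{\epsilon}(s) = -\int_{s}^{t} \partial_{x}q_{\epsilon}\, dr + \epsilon \int_{s}^{t} \partial_{x}^{2}\rho_{\epsilon}\, dr,
\end{equation*}
\begin{equation*}
q_{\epsilon}(t) - q_{\epsilon}(s) = -\int_{s}^{t} \partial_{x}\!\left(\frac{q_{\epsilon}^{2}}{\rho_{\epsilon}} + \kappa \rho_{\epsilon}^{\gamma}\right) dr - \alpha \int_{s}^{t} q_{\epsilon}\, dr + \epsilon \int_{s}^{t} \partial_{x}^{2}q_{\epsilon}\, dr + \int_{s}^{t} \bd{\Phi}^{\epsilon}(\rho_{\epsilon}, q_{\epsilon})\, dW.
\end{equation*}
I would estimate each piece. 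The key point is that all spatial derivatives land on test functions after pairing with $H^{2}(\mathbb{T})$: since $\partial_{x}: L^{1}(\mathbb{T}) \to H^{-1}(\mathbb{T}) \subset H^{-2}(\mathbb{T})$ and $\partial_{x}^{2}: L^{1}(\mathbb{T}) \to H^{-2}(\mathbb{T})$ are bounded, the first-order flux terms are controlled by $L^{1}(\mathbb{T})$ norms of $q_{\epsilon}$, $q_{\epsilon}^{2}/\rho_{\epsilon} \le C\eta_{1}(\bd{U}_{\epsilon})$, and $\rho_{\epsilon}^{\gamma} \le C\eta_{1}(\bd{U}_{\epsilon})$, all of which have uniform-in-$\epsilon$ moment bounds via Proposition \ref{fluxmomenteps} and Corollary \ref{momentrhoq} (and their supremum-in-time versions in Proposition \ref{momentsup}). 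The only place the $\epsilon$-dependent dissipation appears is in $\epsilon \int_{s}^{t} \partial_{x}^{2}(\rho_{\epsilon}, q_{\epsilon})\, dr$, and this pairs against $H^{2}$ test functions to leave only $\epsilon \|(\rho_{\epsilon}, q_{\epsilon})\|_{L^{1}(\mathbb{T})}$, again uniformly bounded (note $\epsilon \le 1$). Thus the deterministic terms contribute $\mathbb{E}\|(\rho_{\epsilon}(t), q_{\epsilon}(t)) - (\rho_{\epsilon}(s), q_{\epsilon}(s))\|_{H^{-2}(\mathbb{T})}^{p} \le C_{p,T} |t - s|^{p}$ for any $p \ge 1$ by H\"older's inequality in time combined with the sup-in-time moment bounds.

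For the stochastic integral term, I would use the Burkholder--Davis--Gundy inequality in the Hilbert space $H^{-2}(\mathbb{T})$ (or even $L^{2}(\mathbb{T})$, which embeds continuously), giving
\begin{equation*}
\mathbb{E}\left\|\int_{s}^{t} \bd{\Phi}^{\epsilon}(\rho_{\epsilon}, q_{\epsilon})\, dW\right\|_{H^{-2}(\mathbb{T})}^{p} \le C_{p}\, \mathbb{E}\left(\int_{s}^{t} \sum_{k=1}^{\infty} \|G_{k}^{\epsilon}(\rho_{\epsilon}, q_{\epsilon})\|_{H^{-2}(\mathbb{T})}^{2}\, dr\right)^{p/2} \le C_{p}\, \mathbb{E}\left(\int_{s}^{t} \|\bd{G}^{\epsilon}(\rho_{\epsilon}, q_{\epsilon})\|_{L^{2}(\mathbb{T})}^{2}\, dr\right)^{p/2},
\end{equation*}
and then \eqref{A0eps} bounds $\|\bd{G}^{\epsilon}(\rho_{\epsilon}, q_{\epsilon})\|_{L^{2}(\mathbb{T})}^{2} \le A_{0} \|\rho_{\epsilon}\|_{L^{2}(\mathbb{T})}^{2}$, which by Corollary \ref{momentrhoq} (sup version, Proposition \ref{momentsup}) has uniform moments of every order. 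This gives $\mathbb{E}\|\cdots\|_{H^{-2}(\mathbb{T})}^{p} \le C_{p,T}|t-s|^{p/2}$. Choosing $p$ large enough (say $p > 4$), the Kolmogorov continuity theorem (see e.g.\ the version for Banach-space-valued processes) yields $\mathbb{E}\|(\rho_{\epsilon}, q_{\epsilon})\|_{C^{\beta}(0,T; H^{-2}(\mathbb{T}))}^{p} \le C_{p, \beta, T}$ for every $\beta < \frac{1}{2} - \frac{1}{p}$; letting $p \to \infty$ covers all $\beta < 1/2$, in particular all $\beta \in (0, 1/4)$ as claimed. The final bound on $\mathbb{E}\|\cdot\|_{C^{\beta}}$ (rather than its $p$-th power) follows by Jensen's inequality.

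The main obstacle — though it is more bookkeeping than a genuine difficulty — is making sure that \emph{all} the moment bounds invoked are uniform in $\epsilon$ \emph{and} hold with a supremum in time on a fixed window $[0, T]$. The raw stationarity estimates in Proposition \ref{uniformeps2} and Proposition \ref{fluxmomenteps} are pointwise in $t$, so to integrate the flux terms $\int_{s}^{t}(\cdots)\, dr$ and then take supremum over $s, t$ one genuinely needs the sup-in-time upgrade provided by Proposition \ref{momentsup}; I would cite that proposition for exactly this purpose. A secondary point to handle with care is that the nonlinearity $q_{\epsilon}^{2}/\rho_{\epsilon}$ must be controlled without any lower bound on $\rho_{\epsilon}$ that degenerates as $\epsilon \to 0$ — but this is precisely why one estimates it by $\eta_{1}(\bd{U}_{\epsilon})$ via the algebraic bound $q^{2}/\rho = \rho u^{2} \le C\eta_{1}(\bd{U})$ from Proposition \ref{etamboundalg}, which bypasses the vacuum issue entirely. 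Everything else is routine: H\"older in time, BDG, the embedding $L^{2}(\mathbb{T}) \hookrightarrow H^{-2}(\mathbb{T})$, and Kolmogorov's theorem.
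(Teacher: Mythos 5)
Your proposal is correct and follows essentially the same route as the paper: estimate the $H^{-2}(\mathbb{T})$ increments from the integral form of \eqref{approxsystem_ep}, control the deterministic fluxes by $\eta_{1}$ and the noise by BDG together with \eqref{A0eps}, invoke the sup-in-time moment bounds of Proposition \ref{momentsup} uniformly in $\epsilon$, and conclude via the Kolmogorov continuity criterion. The only (harmless) difference is that the paper works with fourth moments of $\langle \bd{U}_{\epsilon}(t_2)-\bd{U}_{\epsilon}(t_1),\varphi\rangle$, which already yields the stated range $\beta\in(0,1/4)$, whereas your use of arbitrary $p$-th moments gives the slightly stronger conclusion $\beta<1/2$.
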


\begin{proof}
    We can establish this bound by using the Kolmogorov continuity criterion. Namely, it suffices to show that for each deterministic $\varphi \in H^{2}(\mathbb{T})$ with $\|\varphi\|_{H^{2}(\mathbb{T})} \le 1$, and for all times $0 \le t_1 \le t_2 \le T$:
    \begin{equation}\label{increments}
    \mathbb{E} |\langle \rho_{\epsilon}(t_2) - \rho_{\epsilon}(t_1), \varphi \rangle|^{4} \le C_T|t_1 - t_2|^{2}, \qquad \mathbb{E} |\langle q_{\epsilon}(t_2) - q_{\epsilon}(t_1), \varphi\rangle|^{4} \le C_T|t_1 - t_2|^{2}.
    \end{equation}
    
    Using the weak formulation (which follows from the entropy equality \eqref{entropyeq} for the approximate system \eqref{approxsystem_ep}):
    \begin{equation*}
    \mathbb{E}\left|\int_{\mathbb{T}} \Big(\rho_{\epsilon}(t_2) - \rho_{\epsilon}(t_1)\Big)\varphi\right|^{4} \le C\left(\mathbb{E}\left|\int_{t_1}^{t_2} \int_{\mathbb{T}} q_{\epsilon} \partial_{x}\varphi\right|^{4} + \mathbb{E} \left|\int_{t_1}^{t_2} \int_{\mathbb{T}} \rho_{\epsilon} \partial_{x}^{2}\varphi\right|^{4}\right) \le C|t_2 - t_1|^{4} \le CT^2|t_2 - t_1|^{2},
    \end{equation*}
  using Proposition \ref{momentsup} and the fact that $\|\varphi\|_{H^{2}(\mathbb{T})} \le 1$. Similarly, we estimate using the weak formulation, that
    \begin{multline*}
    \mathbb{E} \left|\int_{\mathbb{T}} \Big(q_{\epsilon}(t_2) - q_{\epsilon}(t_1)\Big)\varphi\right|^{4} \\
    \le C\Bigg(\mathbb{E} \left|\int_{t_1}^{t_2} \int_{\mathbb{T}} \frac{q_{\epsilon}^{2}}{\rho_{\epsilon}} \partial_{x}\varphi\right|^{4} + \mathbb{E} \left|\int_{t_1}^{t_2} \int_{\mathbb{T}} \kappa \rho_{\epsilon}^{\gamma} \partial_{x}\varphi\right|^{4} + \mathbb{E} \left|\int_{t_1}^{t_2} \int_{\mathbb{T}} \epsilon q_{\epsilon} \partial_{x}^{2}\varphi\right|^{4} + \mathbb{E} \left|\int_{t_1}^{t_2} \int_{\mathbb{T}} \bd{\Phi}^{\epsilon}(\rho_{\epsilon}, q_{\epsilon}) \varphi dx dW_\epsilon(t)\right|^{4}\Bigg).
    \end{multline*}
    We can then estimate, using Proposition \ref{momentsup}:
    \begin{align*}
    \mathbb{E} \left|\int_{t_1}^{t_2} \int_{\mathbb{T}} \frac{q_{\epsilon}^{2}}{\rho_{\epsilon}} \partial_{x}\varphi\right|^{4} &\le \mathbb{E} \left|\int_{t_{1}}^{t_{2}} \|\eta_{1}(\bd{U}_{\epsilon})\|_{L^{2}(\mathbb{T})}\right|^{4} \le |t_2 - t_1|^{4} \cdot \mathbb{E} \sup_{t \in [t_1, t_2]} \left(\int_{\mathbb{T}} |\eta_{1}(\bd{U}_{\epsilon})|^{2}\right)^{2} \\
    &\le |t_2 - t_1|^{4}\left(\mathbb{E} \sup_{t \in [t_1, t_2]} \int_{\mathbb{T}} |\eta_{1}(\bd{U}_{\epsilon})|^{4}\right) \le C_T |t_2 - t_1|^{4}.
    \end{align*}
    We can similarly use the higher moment bounds on the supremum in time of $(\rho_{\epsilon}, q_{\epsilon})$ in Proposition \ref{momentsup} to deduce that
    \begin{equation*}
    \mathbb{E}\left|\int_{t_{1}}^{t_{2}} \int_{\mathbb{T}} \kappa \rho_{\epsilon}^{\gamma} \partial_{x}\varphi\right|^{4} + \mathbb{E}\left|\int_{t_{1}}^{t_{2}} \int_{\mathbb{T}} \epsilon q_{\epsilon} \partial_{x}^{2}\varphi\right|^{4} \le C_T|t_2 - t_1|^{4}.
    \end{equation*}
    Finally, we use the BDG inequality, \eqref{Gepsbound}, \eqref{GN}, and $\|\varphi\|_{L^{\infty}(\mathbb{T})} \le C\|\varphi\|_{H^{2}(\mathbb{T})} \le C$, to estimate:
    \begin{align*}
    \mathbb{E} \left|\int_{t_{1}}^{t_{2}} \int_{\mathbb{T}} \bd{\Phi}^{\epsilon}(\rho_{\epsilon}, q_{\epsilon}) \varphi dx dW_\epsilon(t)\right|^{4} &\le \mathbb{E}\left|\int_{t_{1}}^{t_{2}} \sum_{k = 1}^{\infty} \left(\int_{\mathbb{T}} G_{k}^{\epsilon}(\rho_{\epsilon}, q_{\epsilon})\varphi dx\right)^{2} dt\right|^{2} \le C\mathbb{E} \left(\int_{t_{1}}^{t_{2}} \sum_{k = 1}^{\infty} \int_{\mathbb{T}} |G_{k}^{\epsilon}(\rho_{\epsilon}, q_{\epsilon})|^{2}\right)^{2} \\
    &\le C A_{0} |t_2 - t_1|^{2} \cdot \mathbb{E}  \sup_{t \in [t_1, t_2]} \int_{\mathbb{T}} \rho_{\epsilon}^{2} \le C_{T}|t_2 - t_1|^{2}.
    \end{align*}

    This establishes both estimates in \eqref{increments}, and completes the proof of the proposition via the Kolmogorov continuity criterion.
\end{proof}

\section{Passage to the limit: $\epsilon_{N} \to 0$}

We now carry out the final limit passage in the stationary solutions $(\rho_{\epsilon}, q_{\epsilon}, W_{\epsilon})$ as $\epsilon_{N} \to 0$. Note that the laws of $(\rho_{\epsilon}, q_{\epsilon})$ are tight in $C_{loc}(\R^{+}; H^{-3}(\mathbb{T}))$ by the uniform equicontinuity estimate in Proposition \ref{Hnegative2}. However, having almost sure strong convergence in the very weak space $C(0, T; H^{-3}(\mathbb{T}))$ is not enough to pass to the limit in the approximate entropy equality \eqref{entropyeq}, in order to obtain the entropy inequality \eqref{entropyineq} for the limiting solution $(\rho, q)$ to the original problem. In particular, to pass to the limit as $\epsilon_{N} \to 0$ in the entropy equality \eqref{entropyeq}, we must be able to pass to the limit in the composition of continuous functions with the solution itself, as we must pass to the limit in $\eta(\bd{U}_{\epsilon})$ and $H(\bd{U}_{\epsilon})$ for entropy-flux pairs $(\eta, H)$. To do this, we will appeal to theory of \textit{Young measures}, which gives a way of passing to the limit in compositions with continuous functions, at the price of making the solution measure-valued, so at every point $(t, x) \in [0, \infty) \times \mathbb{T}$, the solution is a measure on the state space $[0, \infty) \times \R$ for the density and momentum. 
However, we will be able to reduce the measure-valued limit to an actual real-valued function using a reduction of Young measure argument that is standard in the literature for the compressible isentropic Euler equations in 1D. See Section 5.2 in \cite{BerthelinVovelle} and Section I.5 in \cite{LPS96}. This will then allow us to pass to the limit as $\epsilon_{N} \to 0$ in the entropy equality and complete the proof.

In this section, we will hence give an exposition on (random) Young measures in the context of the current problem. Then, we will use these results to apply the Skorohod representation theorem to the approximate solutions in $\epsilon$ to obtain a limiting (measure-valued) solution in the limit. We will then appeal to usual reduction of Young measure arguments for the compressible isentropic Euler equations, which involves using a functional equation for the limiting solution, in order to show that our limiting stationary solution to the original problem is genuinely function-valued. Finally, we pass to the limit in the entropy equality as $\epsilon_{N} \to 0$ to conclude the proof, and hence obtain a statistically stationary weak martingale solution to the original problem \eqref{maineqn}.

\subsection{An exposition on Young measures}

Let $(X, \lambda)$ be a $\sigma$-finite measure space, and let $(E, \mathcal{B}(E))$ be a topological space $E$ equipped with the Borel $\sigma$-algebra, namely the $\sigma$-algebra generated by all open and closed sets in the topology of $E$. Then, recall that a \textit{measurable function} $f: X \to E$ is a function for which $f^{-1}(A)$ is a measurable subset of $X$ for every measurable subset $A \subset E$. For concreteness and to elucidate the notation, we remark that the approximate solutions $(\rho_{\epsilon}, q_{\epsilon})$ are measurable functions from $X = [0, \infty) \times \mathbb{T}$ (spacetime) to $E = [0, \infty) \times \mathbb{R}$ (the set of admissible values for the density and momentum). We hence make the distinction between a measurable function and a Young measure, which unlike a genuine real-valued function, is probability measure-valued at each point $x \in X$. Specifically, we have the following definition.

\begin{definition}
    A \textbf{Young measure} is a map $\nu: X \to \mathcal{P}(E)$, where $\mathcal{P}(E)$ is the set of probability measures on $E$ equipped with the weak-star topology. A Young measure is required to be \textit{weakly-star measurable} in the sense that for every continuous and bounded function $\varphi \in C_{b}(E)$, the function
    \begin{equation*}
    f_{\varphi}(x) = \langle \varphi, \nu_{x}\rangle := \int_{E} \varphi(p) d\nu_{x}(p)
    \end{equation*}
    is a measurable map from $X$ to $E$, where we denote the (probability measure) value of $\nu$ at the point $x \in X$ by $\nu_{x}$. 
\end{definition}

For more information, we direct the reader to Sections 4.1 and 4.2 of \cite{BerthelinVovelle}.

Note that a Young measure is a generalization of a pointwise-defined function. In the case where $f: X \to E$ is a pointwise-defined measurable function, we can associate to $f$ the corresponding Young measure $\nu^f$ such that:
\begin{equation*}
\nu^{f}_{x} = \delta_{f(x)},
\end{equation*}
where $\delta_{f(x)}$ is a Dirac-delta probability measure on $E$ centered at $f(x) \in E$. In this case, we can view the action of integrating against the Young measure as function composition, since
\begin{equation*}
\langle \varphi, \nu^{f}_{x} \rangle = \int_{E} \varphi(p) d\delta_{f(x)}(p) = \varphi(f(x)), \qquad \text{ for all } x \in X \text{ and } \varphi \in C_{b}(E). 
\end{equation*}

We can define the space of Young measures $\mathcal{V}$ on the space $X$ taking values in $\mathcal{P}(E)$, and we equip this space with the (vague) weak-star topology of convergence. Namely, we have the following definition of convergence.

\begin{definition}\label{vague}
A \textbf{sequence of Young measure $\{\nu_{n}\}_{n = 1}^{\infty}: X \to \mathcal{P}(E)$ converges weakly-star to a limiting Young measure $\nu$} if 
\begin{equation*}
\int_{X} \psi(x) \int_{E} \varphi(p) d\nu_{x}(p) d\lambda(x) \to \int_{X} \psi(x) \int_{E} \varphi(p) d\nu_x(p) d\lambda(x), \quad \text{ for all } \varphi \in C_{b}(E) \text{ and } \psi \in C_{c}^{\infty}(X).
\end{equation*}
\end{definition}

We refer the reader to Section 2.8 of \cite{BFH18} for more information about Young measures, and for more exposition about the ideas above. 

Finally, we note that since we are considering a stochastic problem, we must also consider a notion of random Young measures.

\begin{definition}\label{Vdef}
Let $(\Omega, \mathcal{F}, \mathbb{P})$ be a probability space. A \textbf{random (probabilistic) Young measure} is a measurable map from $\Omega \to \mathcal{V}$, where $\mathcal{V}$ is the space of Young measures on $X$ taking values in $\mathcal{P}(E)$, equipped with the (vague) weak-star topology of convergence.
\end{definition}

Next, we will apply the theory of probabilistic Young measures to the current problem, in terms of expressing the approximate solutions $(\rho_{\epsilon}, q_{\epsilon})$ in terms of Young measures. For $\epsilon > 0$ and any $\omega\in \Omega$, these are genuine pointwise-defined functions from $(t, x) \in [0, \infty) \times \mathbb{T}$ to $(\rho_{\epsilon}, q_{\epsilon}) \in [0, \infty) \times \R$. For the purpose of using probabilistic Young measures, it will often be more convenient to consider these approximate solutions in terms of the density and the \textit{fluid velocity} (instead of the fluid momentum), so that the approximate solutions are $(\rho_{\epsilon}, u_{\epsilon})$, since the entropy functions $\eta_{m}$ are polynomials in $\rho$ and $u$ (rather than $\rho$ and $q$).

A technical difficulty here is defining the fluid velocity, $u_{\epsilon} = q_{\epsilon}/\rho_{\epsilon}$ when there is vacuum $\rho_{\epsilon} = 0$. For this purpose, we will define the phase space $[0, \infty) \times \R$ with a different topology that identifies all points with $\{\rho = 0\}$ as the same. 
This is in the spirit of Section 2.3 in \cite{LFW07}, and we introduce the following definitions that follow the notation of \cite{LFW07}.

\begin{definition}\label{Htopology}
Let $H := (0, \infty) \times \R \subset \R^{2}$ be equipped with the usual topology of Euclidean space (the subspace topology). Consider $\overline{\mathcal{H}} := [0, \infty) \times \R$ as a compactification of $H$, and define the space $C_{b}(\overline{\mathcal{H}})$ of functions $f$ on $\overline{\mathcal{H}}$ such that:
    \begin{itemize}
        \item $f$ is a continuous function on $[0, \infty) \times \R$ in the usual sense.
        \item $f$ is constant along the vacuum set $V := \{(\rho, u) : \rho = 0\}$.
        \item The function $f_{\infty}(z) := \lim_{r \to \infty} f(rz)$ is a bounded continuous function on $S^{1} \cap ([0, \infty) \times \R)$, where $S^{1}$ is the unit circle in $\R^{2}$. 
    \end{itemize}
We then endow the space $\overline{\mathcal{H}}$ with the following topology of convergence, where $x_{n} \to x$ in $\overline{\mathcal{H}}$ if $f(x_n) \to f(x)$ for all $f \in C_{b}(\overline{\mathcal{H}})$. Note that with this topology, $C_{b}(\overline{\mathcal{H}})$ coincides with the continuous bounded functions on $\overline{\mathcal{H}}$. 
\end{definition}

\begin{remark}
    This compactification $\overline{\mathcal{H}}$ can be thought of as a closed half 2-sphere with the space $H$ being the open half 2-sphere, the vacuum set $V$, being reduced to a single point on the circular boundary of the half 2-sphere, and all upper half plane points at infinity being the rest of the circular boundary of the half 2-sphere. Hence, topologically, the compactification reduces all points in $V$ to be the ``same" point, which makes sense, as points in $V$ are indistinguishable (since the fluid velocity $u$ does not have a well-defined value and momentum is 0 at vacuum). 
\end{remark}

The Young measures for the approximate solutions $(\rho_{\epsilon}, u_{\epsilon})$ will be defined on $(t, x) \in [0, \infty) \times \mathbb{T}$, and will take values in $\mathcal{P}(\overline{\mathcal{H}})$. In the next subsection, we explicitly define these Young measures and then state results about compactness criteria for these approximate Young measures.

\subsection{Young measures for approximate solutions and compactness results}

We consider the approximate solutions $(\rho_{\epsilon}, u_{\epsilon})$ for each parameter $\epsilon > 0$, defined for $(t, x) \in [0, \infty) \times \mathbb{T}$. These are random (genuinely pointwise-defined) functions taking values in $\overline{\mathcal{H}}$, where we recall that vacuum points with $\rho = 0$ in $\overline{\mathcal{H}}$ are considered to be indistinguishable. Thus, we can define an associated probabilistic Young measure for each approximate solution $(\rho_{\epsilon}, u_{\epsilon})$, by
\begin{equation*}
\nu_{\epsilon} := \delta_{(\rho_{\epsilon}, u_{\epsilon})}.
\end{equation*}
These are random Young measures on the sigma-finite measure space $[0, \infty) \times \mathbb{T}$, with respect to the (range) space $\overline{\mathcal{H}}$. The goal will be to show that this sequence of random Young measures is tight, as random processes taking values in the space $\mathcal{V}$ of Young measures from $[0, \infty) \times \mathbb{T}$ to ${\mathcal{P}(\overline{\mathcal{H}})}$.

This requires compactness results for Young measures. Namely, recall that (deterministic) Young measures $\nu_{n}$ converge to $\nu$ in the (vague) weak-star topology of $\mathcal{V}$ if
\begin{multline}\label{Hbarconv}
\int_{[0, \infty) \times \mathbb{T}} \psi(t, x) \int_{\overline{\mathcal{H}}} \varphi(p) d(\nu_{n})_{t, x}(p) d\lambda(t, x) \to \int_{[0, \infty) \times \mathbb{T}} \psi(t, x) \int_{\overline{\mathcal{H}}} \varphi(p) d\nu_{t, x}(p) d\lambda(t, x), \\
\qquad \text{ for all } \psi \in C_{c}^{\infty}([0, \infty) \times \mathbb{T}) \text{ and } \varphi \in C_{b}(\overline{\mathcal{H}}),
\end{multline}
where we recall the definition of $C_{b}(\overline{\mathcal{H}})$ from Definition \ref{Htopology}. We have the following result on compactness of deterministic Young measures, adapted to the specific function spaces and range spaces we are considering.

\begin{proposition}\label{youngcompact}
    Let $\{C_{m}\}_{m = 1}^{\infty}$ be a monotonically increasing sequence of positive constants. The set of Young measures from $[0, \infty) \times \mathbb{T}$ with range space $ \overline{\mathcal{H}}$ satisfying:
    \begin{equation}\label{setm}
    \bigcap_{m = 1}^{\infty} \left\{\nu\in \mathcal{V}:\int_{[0, m] \times \mathbb{T}} \int_{\overline{\mathcal{H}}} \eta_{E}(p) d\nu_{t, x}(p) d\lambda(t, x) \le C_{m}\right\},
    \end{equation}
    where $\eta_{E}(\rho, u) := \displaystyle \frac{1}{2} \rho u^{2} + \frac{\kappa}{\gamma - 1} \rho^{\gamma}$ is the energy functional (the first entropy), is compact in $\mathcal{V}$. 
\end{proposition}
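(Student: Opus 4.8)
The plan is to prove compactness of the set in \eqref{setm} by first reducing to tightness of the associated family of measures on a fixed time window, and then invoking a Prokhorov-type compactness criterion for Young measures. Concretely, recall that a sequence of Young measures $\{\nu_n\}$ from $[0,\infty)\times\mathbb{T}$ with range space $\overline{\mathcal{H}}$ converges in the vague weak-star topology of $\mathcal{V}$ if \eqref{Hbarconv} holds. Since $\overline{\mathcal{H}}$ is a compact metrizable space (it is the compactification from Definition \ref{Htopology}, which identifies the vacuum set to a point and adds the ``points at infinity'' along the half-circle, and $C_b(\overline{\mathcal{H}})$ is separable), the space $\mathcal{P}(\overline{\mathcal{H}})$ equipped with the weak-star topology is itself a compact metrizable space. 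Therefore, the only obstruction to compactness in $\mathcal{V}$ (which is the space of Young measures with the vague topology over a $\sigma$-finite base space) comes from a possible loss of mass escaping to time infinity: there is no loss of mass in the range variable $p$ because $\overline{\mathcal{H}}$ is already compact. The moment bound in \eqref{setm} is precisely designed to prevent escape of mass in the time variable.

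First I would make this precise via a diagonalization over the exhausting sequence of time intervals $[0,m]\times\mathbb{T}$. Fix $m$. On the finite-measure space $[0,m]\times\mathbb{T}$, the restriction of each $\nu$ in the set \eqref{setm} defines an element of the space of Young measures with finite base measure and compact range $\overline{\mathcal{H}}$; identifying each such Young measure $\nu$ with the measure $\bar\nu \in \mathcal{P}([0,m]\times\mathbb{T}\times\overline{\mathcal{H}})$ on the product (normalized by dividing by $\lambda([0,m]\times\mathbb{T})$, or just keeping the fixed total mass $\lambda([0,m]\times\mathbb{T})$), the vague weak-star convergence of Young measures is equivalent to weak convergence of these product measures against $\psi(t,x)\varphi(p)$ with $\psi\in C_c^\infty$ and $\varphi\in C_b(\overline{\mathcal{H}})$, hence (by a density/Stone--Weierstrass argument on $[0,m]\times\mathbb{T}\times\overline{\mathcal{H}}$ together with the fixed total mass) against all of $C_b([0,m]\times\mathbb{T}\times\overline{\mathcal{H}})$. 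Since $[0,m]\times\mathbb{T}\times\overline{\mathcal{H}}$ is compact, the family $\{\bar\nu\}$ is automatically tight, so by Prokhorov's theorem it is relatively compact, and the limit of any convergent subsequence is again the product-measure form of a Young measure (the fact that the $(t,x)$-marginal is preserved as $\lambda$ passes to the limit because $\psi$ ranges over all continuous functions on the compact slab). Moreover, since $\eta_E$ is a lower semicontinuous nonnegative function on $\overline{\mathcal{H}}$ (it is continuous on $H$ and $+\infty$-valued or finite at the compactification boundary, but in any case the integral functional $\nu \mapsto \int\int \eta_E\, d\nu_{t,x}\,d\lambda$ is lower semicontinuous with respect to the vague topology when tested against nonnegative lower semicontinuous integrands), the bound $\int_{[0,m]\times\mathbb{T}}\int_{\overline{\mathcal{H}}}\eta_E(p)\,d\nu_{t,x}(p)\,d\lambda \le C_m$ is preserved in the limit. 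Thus the set $\{\nu:\int_{[0,m]\times\mathbb{T}}\int_{\overline{\mathcal{H}}}\eta_E\,d\nu_{t,x}\,d\lambda\le C_m\}$ is sequentially compact in the vague topology restricted to $[0,m]\times\mathbb{T}$.

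Next I would run the standard diagonal extraction: given any sequence $\{\nu_n\}$ in the set \eqref{setm}, extract a subsequence converging on $[0,1]\times\mathbb{T}$, then a further subsequence converging on $[0,2]\times\mathbb{T}$, and so on; the diagonal subsequence converges on $[0,m]\times\mathbb{T}$ for every $m$, hence converges in $\mathcal{V}$ (recall that convergence in the vague topology on the $\sigma$-finite base $[0,\infty)\times\mathbb{T}$ is exactly convergence when restricted to each compact time slab, by the support properties of $\psi\in C_c^\infty([0,\infty)\times\mathbb{T})$). The limit Young measure $\nu_\infty$ then satisfies $\int_{[0,m]\times\mathbb{T}}\int_{\overline{\mathcal{H}}}\eta_E\,d(\nu_\infty)_{t,x}\,d\lambda\le C_m$ for every $m$ by the lower semicontinuity just noted, so $\nu_\infty$ lies in the set \eqref{setm}. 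This establishes that \eqref{setm} is sequentially compact, and since $\mathcal{V}$ is metrizable (the vague topology over a $\sigma$-finite base with compact metrizable range is metrizable), sequential compactness is equivalent to compactness.

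The main obstacle — and the only genuinely non-formal point — is ensuring that no mass is lost, neither in the range variable nor in the time variable, when passing to the limit, i.e. that the limiting object is still an honest Young measure (pointwise-a.e.\ a \emph{probability} measure, with the correct $(t,x)$-marginal equal to $\lambda$) rather than a sub-probability or a measure that has leaked to $t=\infty$. Compactness of $\overline{\mathcal{H}}$ handles the range variable automatically, which is exactly the reason the compactification in Definition \ref{Htopology} was introduced; and the energy bound in \eqref{setm}, uniform on every slab $[0,m]\times\mathbb{T}$, is what rules out leakage in time, since testing against $\psi\equiv 1$ on $[0,m]$ (approximated by $C_c^\infty$ functions) recovers the full mass $\lambda([0,m]\times\mathbb{T})$ in the limit. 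One should also double-check the lower semicontinuity claim for the $\eta_E$-integral functional against the vague topology; this is standard (it follows by writing $\eta_E$ as an increasing limit of bounded continuous functions on $\overline{\mathcal{H}}$ and applying monotone convergence together with \eqref{Hbarconv}), and I would state it as a lemma or cite Section 2.8 of \cite{BFH18} and Sections 4.1--4.2 of \cite{BerthelinVovelle}. With these two points in hand, the proof is a routine application of Prokhorov's theorem plus diagonalization.
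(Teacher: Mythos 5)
Your overall skeleton (slab-wise Prokhorov, diagonalization over the sets $[0,m]\times\mathbb{T}$, and lower semicontinuity of the energy functional to keep the limit in the set \eqref{setm}) is the same as the paper's, but the claim on which your tightness step rests --- that $\overline{\mathcal{H}}$ is a compact metrizable space, so that tightness on each slab is ``automatic'' and the energy bound is only needed for closedness --- is a genuine gap. As defined in Definition \ref{Htopology}, $\overline{\mathcal{H}}$ is the set $[0,\infty)\times\R$ with the coarser topology induced by $C_b(\overline{\mathcal{H}})$: the vacuum line is collapsed to a point, but no ideal points at infinity are adjoined, and the space is \emph{not} compact. For instance $f(\rho,u)=\rho^{2}/(1+\rho^{2}+u^{2})$ lies in $C_b(\overline{\mathcal{H}})$ (it vanishes on the vacuum and its radial limits, $a^{2}$ along the direction $(a,b)$, are continuous on the half-circle), yet $f(n,0)\to 1$ while $f<1$ everywhere on $\overline{\mathcal{H}}$, so the sequence $(n,0)$ has no limit point; consequently the constant-in-$(t,x)$ Dirac Young measures $\nu_n=\delta_{(n,0)}$ admit no subsequence converging in $\mathcal{V}$ to a $\mathcal{P}(\overline{\mathcal{H}})$-valued Young measure (the limit would have to satisfy $\langle f,\nu_{t,x}\rangle=1$ a.e., impossible for a probability measure on $\overline{\mathcal{H}}$). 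If tightness were automatic, all of $\mathcal{V}$ would be compact and the constants $C_m$ irrelevant, which this example rules out --- and indeed it is exactly a family with $\eta_E(n,0)\to\infty$. Even under the ``closed half-sphere'' reading suggested by the paper's remark (adjoining the boundary circle at infinity), your argument only produces a limit valued in probability measures on the enlarged space, which may charge the ideal points at infinity and hence need not be an element of $\mathcal{V}$; ruling out concentration at infinity again requires the uniform energy bound, so it is not automatic in either reading.

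The paper uses the hypothesis precisely at this point: by Chebyshev, $\lambda\ltimes\nu_n|_{[0,m]\times\mathbb{T}}$ assigns mass at most $C_m/R$ to the complement of $[0,m]\times\mathbb{T}\times K_R$, where $K_R=\eta_E^{-1}([0,R])$, and the key observation is that $K_R$ is compact in the topology of $\overline{\mathcal{H}}$ --- not in the Euclidean topology, since it contains the entire vacuum line, but that line is a single point of $\overline{\mathcal{H}}$. That is the tightness Prokhorov needs on each slab; with it, the remainder of your argument (diagonal extraction, identification of the $(t,x)$-marginal, and lower semicontinuity of $\nu\mapsto\int_{[0,m]\times\mathbb{T}}\int_{\overline{\mathcal{H}}}\eta_E\,d\nu_{t,x}\,d\lambda$ to conclude the limit stays in \eqref{setm}, a point the paper leaves implicit) goes through. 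One small repair inside your closedness step: $\min(\eta_E,R)$ is \emph{not} in $C_b(\overline{\mathcal{H}})$, because its radial limit equals $R$ along interior directions but $0$ at $(0,\pm 1)$, so it fails the continuity requirement on the half-circle; use instead truncations of the form $\chi_R(\eta_E)\,\eta_E$ with a smooth cutoff $\chi_R$ (as in the paper's momentum-convergence argument), which do belong to $C_b(\overline{\mathcal{H}})$ and increase pointwise to $\eta_E$.
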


\begin{proof}
    Consider a sequence $\{\nu_{n}\}_{n = 1}^{\infty}$ of Young measures in the set \eqref{setm}, and consider the restriction of the Young measures to each set $[0, m] \times \mathbb{T}$, which we denote by $\nu_{n}|_{[0, m] \times \mathbb{T}}$ . We claim that for each fixed $m$, there exists a subsequence along which $\nu_{n_{k}}|_{[0, m] \times \mathbb{T}}$ converges weakly-star to some $\nu_{[0, m] \times \mathbb{T}}$ in the sense that
    \begin{multline}\label{weaklystar0m}
    \int_{[0, m] \times \mathbb{T}} \psi(t, x) \int_{\overline{\mathcal{H}}} \varphi(p) d(\nu_{n})_{t, x}(p) d\lambda(t, x) \to \int_{[0, m] \times \mathbb{T}} \psi(t, x) \int_{\overline{\mathcal{H}}} \varphi(p) d\nu_{t, x}(p) d\lambda(t, x), \\
    \text{ for all } \psi \in C([0, m] \times \mathbb{T}) \text{ and } \varphi \in C_{b}(\overline{\mathcal{H}}). 
    \end{multline}
    To show this, it suffices by {Prokhorov's} theorem to show that the $\lambda\ltimes\nu_{n}|_{[0, m] \times \mathbb{T}}$, 
    are tight as finite measures on $[0, m] \times \mathbb{T}\times \overline{\mathcal{H}}$. To do this, consider the set:
    \begin{equation*}
    K_{R} := \eta_{E}^{-1}([0, R]) = \{(\rho, u) \in \overline{\mathcal{H}} : 0 \le \eta_{E}(\rho, u) \le R\}. 
    \end{equation*}
    Note that this set $K_{R}$ is compact in $\overline{\mathcal{H}}$ under the topology of $K_{R}$ defined in Definition \ref{Htopology} (even though $K_{R}$ is not compact with respect to the usual subspace topology of $[0, \infty) \times \mathbb{T}$ since $\eta_{E}(0, u) = 0$ for all $u \in \R$). Hence, by \eqref{setm}: 
    \begin{equation*}
    \lambda\ltimes\nu_{n}|_{[0, m] \times \mathbb{T}}\Big(([0, m] \times \mathbb{T} \times K_{R})^{c}\Big) \le \frac{C_{m}}{R},
    \end{equation*}
    which shows tightness of the measures $\lambda\ltimes\nu_{n}|_{[0, m] \times \mathbb{T}}$, and hence shows that along a subsequence, $\lambda\ltimes\nu_{n_{k}}|_{[0, m] \times \mathbb{T}}$ converges to some limiting $\lambda\ltimes\nu|_{[0, m] \times \mathbb{T}}$ weakly-star in the sense of \eqref{weaklystar0m}.

    Hence, we can show compactness of the original set \eqref{setm} in the original sigma finite (but infinite measure) space of $[0, \infty) \times \mathbb{T}$ by using a diagonalization argument, namely, extract a convergence subsequence converging as Young measures on $[0, 1] \times [0, \infty)$, and then in general successively refine the subsequence to get convergence on larger and larger domains $[0, m] \times [0, \infty)$. We can use a diagonalization procedure to extract a convergent subsequence for the whole space, and it is easy to see using the definition of convergence \eqref{weaklystar0m} that the limiting Young measures on $[0, m] \times \mathbb{T}$ must agree for different values of $m$ on their overlap. This concludes the proof. 
\end{proof}

We can extend this to a tightness result for \textit{probabilistic} Young measures on $[0, \infty) \times \mathbb{T}$ taking values in $\mathcal{P}(\overline{\mathcal{H}})$. 

\begin{proposition}\label{randomyoung}
    Let $\{\nu_{n}\}_{n = 1}^{\infty}$ be a sequence of probabilistic Young measures on $[0, \infty) \times \mathbb{T}$ taking values in $\mathcal{P}(\overline{\mathcal{H}})$, satisfying for all positive integers $m$:
    \begin{equation*}
    \mathbb{E} \int_{[0, m] \times \mathbb{T}} \int_{\overline{\mathcal{H}}} \eta_{E}(p) d(\nu_{n})_{t, x}(p) d\lambda(t, x) \le \alpha_{m},
    \end{equation*}
    for some positive constants $\alpha_{m}$. Then, the sequence $\{\nu_{n}\}$ is tight in $\mathcal{V}$. 
\end{proposition}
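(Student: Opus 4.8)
The plan is to deduce the tightness of the probabilistic Young measures $\{\nu_n\}$ from the deterministic compactness result in Proposition \ref{youngcompact}, using a standard Markov/Chebyshev argument combined with the monotonicity of the constant sequence appearing in \eqref{setm}. First I would fix an arbitrary $\delta > 0$ and construct a compact set $\mathcal{K}_\delta \subset \mathcal{V}$ with $\mathbb{P}(\nu_n \in \mathcal{K}_\delta) \ge 1 - \delta$ uniformly in $n$. The natural candidate is
\begin{equation*}
\mathcal{K}_\delta := \bigcap_{m = 1}^{\infty} \left\{\nu \in \mathcal{V} : \int_{[0, m] \times \mathbb{T}} \int_{\overline{\mathcal{H}}} \eta_{E}(p) \, d\nu_{t, x}(p) \, d\lambda(t, x) \le C_{m}^{\delta}\right\},
\end{equation*}
where $C_m^\delta$ is a monotonically increasing sequence of constants (in $m$) chosen so that $C_m^\delta \ge \alpha_m 2^m / \delta$; then Proposition \ref{youngcompact} guarantees $\mathcal{K}_\delta$ is compact in $\mathcal{V}$. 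The choice of the exponential weight $2^m$ is what makes the complement probabilities summable in $m$.

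Next I would estimate the probability of the complement. By subadditivity and Chebyshev's inequality (for nonnegative random variables), for each fixed $m$,
\begin{equation*}
\mathbb{P}\left(\int_{[0, m] \times \mathbb{T}} \int_{\overline{\mathcal{H}}} \eta_{E}(p) \, d(\nu_n)_{t, x}(p) \, d\lambda(t, x) > C_{m}^{\delta}\right) \le \frac{1}{C_{m}^{\delta}} \, \mathbb{E} \int_{[0, m] \times \mathbb{T}} \int_{\overline{\mathcal{H}}} \eta_{E}(p) \, d(\nu_n)_{t, x}(p) \, d\lambda(t, x) \le \frac{\alpha_m}{C_m^\delta} \le \frac{\delta}{2^m}.
\end{equation*}
Summing over $m \ge 1$ gives $\mathbb{P}(\nu_n \notin \mathcal{K}_\delta) \le \sum_{m = 1}^\infty \delta/2^m = \delta$, uniformly in $n$. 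Since $\delta > 0$ was arbitrary, this establishes tightness of $\{\nu_n\}$ in $\mathcal{V}$.

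A small technical point that should be addressed is that the set defining $\mathcal{K}_\delta$ must genuinely be a Borel (or at least universally measurable) subset of $\mathcal{V}$ so that the probabilities above make sense; this follows because, for each $m$, the map $\nu \mapsto \int_{[0,m]\times\mathbb{T}} \int_{\overline{\mathcal{H}}} \eta_E(p)\, d\nu_{t,x}(p)\, d\lambda(t,x)$ is lower semicontinuous on $\mathcal{V}$ with respect to the vague weak-star topology (being a supremum over truncations $\eta_E \wedge N$, each of which is continuous by Definition \ref{vague}), hence Borel measurable, so its sublevel sets are Borel and $\mathcal{K}_\delta$ is a countable intersection of Borel sets. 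The main obstacle — though it is more a matter of care than of depth — is ensuring that the deterministic compactness statement of Proposition \ref{youngcompact} is being invoked with the correct bookkeeping: the constants $C_m^\delta$ must form a monotonically increasing sequence (which one can always arrange by replacing $C_m^\delta$ with $\max_{j \le m} \alpha_j 2^j/\delta$ without affecting the estimate), and the compactness is in the vague weak-star topology on the $\sigma$-finite space $[0,\infty)\times\mathbb{T}$, obtained via the diagonalization argument in the proof of Proposition \ref{youngcompact}. Everything else is a routine application of Chebyshev's inequality and subadditivity.
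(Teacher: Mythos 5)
Your proof is correct and follows essentially the same argument as the paper: define the candidate compact set via Proposition \ref{youngcompact} with constants $C_m = \alpha_m 2^m/\delta$, apply Chebyshev's inequality on each $[0,m]\times\mathbb{T}$ block, and sum the geometric series to bound the complement probability by $\delta$ uniformly in $n$. The extra remarks on measurability and monotone rearrangement of the constants are fine but not needed beyond what the paper already does implicitly.
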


\begin{proof}
    This follows by combining the deterministic compactness result in Proposition \ref{youngcompact} with Chebychev's inequality. Consider an arbitrary $\varepsilon > 0$. For each $m$, we define $C_{m} := \alpha_{m}2^{m} \varepsilon^{-1}$ so that
    \begin{equation*}
    \mathbb{P}\left(\int_{[0, m] \times \mathbb{T}} \int_{\overline{\mathcal{H}}} \eta_{E}(p) d(\nu_{n})_{t, x}(p){ d\lambda(t,x)} > C_{m}\right) < \varepsilon 2^{-m} \qquad \text{ for all positive integers } m \text{ and } n,
    \end{equation*}
    by Chebychev's inequality. Then, the set
    \begin{equation*}
    K_{\varepsilon} := {\bigcap_{m = 1}^{\infty}} \left\{\nu \in \mathcal{V} : \int_{[0, m] \times \mathbb{T}} \int_{\overline{\mathcal{H}}} \eta_{E}(p) d\nu_{t, x}(t, x){d\lambda(x,t)} \le C_{m}\right\}
    \end{equation*}
    is a compact set in $\mathcal{V}$ by Proposition \ref{youngcompact}, and furthermore,
    \begin{equation*}
    \mathbb{P}(\nu_{n} \in K_{\varepsilon}^{c}) \le \sum_{m = 1}^{\infty} \varepsilon 2^{-m} = \varepsilon. 
    \end{equation*}
    This establishes the tightness claim.
\end{proof}

Finally, we end this section by discussing the convergence of continuous and bounded functionals of Young measures, in both the deterministic and stochastic settings, which is referred to as \textit{momentum convergence}. Specifically, we have the following deterministic result on momentum convergence.

\begin{proposition}
    Let $\nu_{n} \to \nu$ in $\mathcal{V}$ be a sequence of Young measures $[0, \infty) \times \mathbb{T} \to \mathcal{P}(\overline{\mathcal{H}})$ satisfying
    \begin{equation}\label{supn}
    \sup_{n \ge 1} \int_{[0, T] \times \mathbb{T}} \int_{\overline{\mathcal{H}}} |\eta(p)|^{s} d(\nu_{n})_{t,x}(p) d\lambda(t, x) \le C_{T},
    \end{equation}
    for some finite $T > 0$, and for some continuous and bounded function $\eta: [0, \infty) \times \R \to \R$ (with respect to the usual Euclidean topology) satisfying one of the following two cases:
    \begin{itemize}
        \item \textbf{Case 1.} For all $(\rho, u) \in (0, \infty) \times \R$, $\lim_{r \to \infty} \eta(r(\rho, u))$ is either $-\infty$, $0$, or $\infty$, and $\eta(0, u) = 0$ for all $u \in \R$.
        \item \textbf{Case 2.} $\eta \in C_{b}(\overline{\mathcal{H}})$ in the sense of Definition \ref{Htopology}.
    \end{itemize}
    Then,
    \begin{equation}\label{absetas}
    \int_{[0, T] \times \mathbb{T}} \int_{\overline{\mathcal{H}}} |\eta(p)|^{s} d\nu_{t, x}(p) d\lambda(t, x) \le C_{T},
    \end{equation}
    for the same constant $C_{T}$, and furthermore, for all $\varphi \in L^{\frac{s}{s - r}}([0, T] \times \mathbb{T})$ and $1 \le r < s$:
    \begin{equation}\label{momentumconv}
    \int_{[0, T] \times \mathbb{T}} \int_{\overline{\mathcal{H}}} \varphi(t, x) (\eta(p))^{r} d(\nu_{n})_{t, x}(p) d\lambda(t, x) \to \int_{[0, T] \times \mathbb{T}} \int_{\overline{\mathcal{H}}} \varphi(t, x) (\eta(p))^{r} d\nu_{t, x}(p) d\lambda(t, x).
    \end{equation}
\end{proposition}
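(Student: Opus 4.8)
The plan is to reduce the statement to two independent assertions: first, the lower-semicontinuity bound \eqref{absetas}, and second, the momentum convergence \eqref{momentumconv}. Both are standard consequences of the theory of Young measures (see Section 4 of \cite{BerthelinVovelle} and Section 2.8 of \cite{BFH18}), but they require slightly different arguments depending on whether $\eta$ falls under Case 1 or Case 2.

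For the bound \eqref{absetas}, I would argue by truncation. In both cases, $|\eta(p)|^{s}$ is a nonnegative function that is lower semicontinuous with respect to the topology of $\overline{\mathcal{H}}$: in Case 2 this is immediate since $\eta \in C_b(\overline{\mathcal{H}})$, and in Case 1 one checks that $|\eta|^s$, extended by its recession behavior at infinity and by $0$ on the vacuum set $V$, is lower semicontinuous on $\overline{\mathcal{H}}$ precisely because the limits $\lim_{r\to\infty}\eta(r(\rho,u))$ all lie in $\{-\infty,0,+\infty\}$. For each $M > 0$, the truncated function $\min(|\eta(p)|^{s}, M)$ can be approximated from below by functions in $C_b(\overline{\mathcal{H}})$; applying the weak-star convergence $\nu_n \to \nu$ in the sense of \eqref{Hbarconv} against such test functions together with a nonnegative $\psi_m \le \mathbbm{1}_{[0,m]\times\mathbb{T}}$, and then using \eqref{supn}, I obtain $\int_{[0,T]\times\mathbb{T}} \int_{\overline{\mathcal{H}}} \min(|\eta(p)|^s, M)\, d\nu_{t,x}(p)\, d\lambda(t,x) \le C_T$; letting $M \to \infty$ via monotone convergence yields \eqref{absetas}.

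For \eqref{momentumconv}, fix $1 \le r < s$ and $\varphi \in L^{s/(s-r)}([0,T]\times\mathbb{T})$. Writing $\eta_n(t,x) := \int_{\overline{\mathcal{H}}} (\eta(p))^{r} d(\nu_n)_{t,x}(p)$ and similarly $\eta_\infty$, the bound \eqref{supn} and Hölder's inequality show that the sequence $\{(\eta(p))^{r}\}$ is uniformly integrable against the measures $\lambda \ltimes \nu_n$ on $[0,T]\times\mathbb{T}\times\overline{\mathcal{H}}$ (since $|\eta|^r$ has a bounded $L^{s/r}$ norm, with $s/r > 1$). Uniform integrability upgrades the weak-star Young measure convergence for bounded continuous test functions to convergence against $(\eta(p))^{r}$: for any $M$, split $(\eta(p))^{r} = \min((\eta(p))^r, M)_+ - \min((-\eta(p))^r, M)_+ + (\text{tail})$, where the bounded continuous pieces pass to the limit by \eqref{Hbarconv} and the tails are controlled uniformly in $n$ by the uniform $L^{s/r}$ bound and the truncation. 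This gives $\eta_n \to \eta_\infty$ in the appropriate local weak sense; testing against $\varphi \in L^{s/(s-r)}$ — which is the conjugate exponent to $s/r$ — and again invoking the uniform $L^{s/r}$ bound on $\eta_n$ to handle the integrability of the product, one concludes \eqref{momentumconv}. The main technical point to handle with care is the passage from Case-1-type functions (which are only continuous in the Euclidean sense and may blow up at infinity, hence are \emph{not} in $C_b(\overline{\mathcal{H}})$) to genuine elements of $C_b(\overline{\mathcal{H}})$: here one exploits that $\eta$ is continuous away from vacuum, that $\eta(0,u)=0$, and that the recession limits are $\pm\infty$ or $0$, so that composing $\eta$ with a suitable bounded increasing reparametrization (e.g. $\arctan$) produces a function in $C_b(\overline{\mathcal{H}})$ whose level-set structure matches that of $\eta$; the uniform moment bound \eqref{supn} then lets one recover the unbounded statement by monotone/dominated convergence in the truncation parameter $M$.

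I expect the main obstacle to be precisely this reconciliation of the two topologies — verifying that $|\eta|^s$ and $(\eta)^r$ are lower semicontinuous (resp.\ "approximable by $C_b(\overline{\mathcal{H}})$ functions up to uniformly-small tails") on the compactified space $\overline{\mathcal{H}}$ under hypotheses that are phrased in terms of the Euclidean topology and the recession behavior, rather than directly in terms of $C_b(\overline{\mathcal{H}})$. Once that identification is in place, the remaining steps (Prokhorov/uniform integrability, truncation, monotone convergence, Hölder with the conjugate exponent $s/(s-r)$) are routine.
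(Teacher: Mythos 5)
Your overall architecture matches the paper's proof: truncate so that the weak-star convergence of Young measures (which only sees test functions in $C_{b}(\overline{\mathcal{H}})$) applies, use the uniform moment bound \eqref{supn} to control the truncation error uniformly in $n$, recover \eqref{absetas} by monotone convergence in the truncation parameter, and extend \eqref{momentumconv} from nice $\varphi$ to $\varphi \in L^{\frac{s}{s-r}}$ by H\"older and density. However, there is a concrete flaw in the truncations you propose. Membership in $C_{b}(\overline{\mathcal{H}})$ (Definition \ref{Htopology}) requires not only that the radial limit $f_{\infty}(z)=\lim_{r\to\infty}f(rz)$ exist for each direction, but that $f_{\infty}$ be a \emph{continuous} function on $S^{1}\cap([0,\infty)\times\R)$. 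Under Case 1 the recession limits of $\eta$ may jump between $-\infty$, $0$ and $+\infty$ as the direction varies, so for your hard truncation $\min((\eta)^{r}_{+},M)$ the limit function $f_{\infty}$ takes values in $\{0,M\}$ and can be discontinuous in the direction; the same objection applies to the $\arctan$ reparametrization you suggest at the end ($f_{\infty}$ takes values in $\{-\pi/2,0,\pi/2\}$). So the "bounded continuous pieces" in your splitting are not, in general, legitimate test functions for \eqref{Hbarconv}, and your proposed resolution of the two-topologies issue (which you correctly identify as the crux) does not actually close it. Relatedly, lower semicontinuity of the extension of $|\eta|^{s}$ at boundary points where the recession limit is $\pm\infty$ is not implied by radial limits alone, though this part is not essential to your argument.

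The paper's remedy is a multiplicative cutoff composed with $\eta$ itself: with $\chi_{R}$ smooth, equal to $1$ on $[-R,R]$ and supported in $[-2R,2R]$, the functions $\chi_{R}(\eta(p))\,\eta(p)$, $\chi_{R}(\eta(p))(\eta(p))^{r}$, $\chi_{R}(\eta(p))|\eta(p)|^{s}$ all have directional limit identically zero (the cutoff kills the $\pm\infty$ directions, and the $0$ directions are already zero), vanish on the vacuum set because $\eta(0,u)=0$, and hence lie in $C_{b}(\overline{\mathcal{H}})$ for every Case 1 entropy. With this choice your remaining steps go through exactly as you outline: the truncated integrals converge by weak-star Young measure convergence, the tails are bounded by $R^{-(s-r)}\int\int|\varphi|\,|\eta|^{s}$ uniformly in $n$ via \eqref{supn}, monotone convergence in $R$ gives \eqref{absetas}, and the passage from bounded continuous $\varphi$ to $\varphi\in L^{\frac{s}{s-r}}([0,T]\times\mathbb{T})$ is the density argument you implicitly invoke. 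So the gap is localized to the choice of approximants, but as written the key step would fail for a general $\eta$ satisfying only the Case 1 hypotheses.
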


\begin{proof}
    We follow the proof of Proposition 4.3 in \cite{BerthelinVovelle}, and make some additional comments. First, we use the fact that $\nu_{n} \to \nu$ in $\mathcal{V}$, in the sense of \eqref{Hbarconv}. This immediately gives us convergence against functionals in $C_{b}(\overline{\mathcal{H}})$ in the sense of Definition \ref{Htopology}, namely the result for Case 2. \textit{However, $\eta$ might not be in $C_{b}(\overline{\mathcal{H}})$, in the first case above where $\lim_{r \to \infty} \eta(r(\rho, u)) = -\infty, 0, \infty$ for each $(\rho, u) \in (0, \infty) \times \R$ and $\eta(0, u) = 0$ for all $u \in \R$.}
    
    However, using a radially symmetric compactly supported smooth truncation function $\chi \in C_{c}^{\infty}(\R)$ that is decreasing radially, even, with $\chi(z) = 1$ on $[-1, 1]$ and $\chi(z) = 0$ for $|z| \ge 2$, we note that $\chi_{R}(\eta(p)) \eta(p)$ for $\chi_{R}(z) := \chi(p/R)$ is indeed in $C_{b}(\overline{\mathcal{H}})$, since the limit radially at infinity is just zero in all directions in the first case above for $\eta$. Hence, for any $1 \le r < s$:
    \begin{multline}\label{momentumconvmid}
    \int_{[0, T] \times \mathbb{T}} \int_{\overline{\mathcal{H}}} \varphi(t, x) \chi_{R}(\eta(p)) (\eta(p))^{r} d(\nu_{n})_{t, x}(p) d\lambda(t, x) \to \int_{[0, T] \times \mathbb{T}} \int_{\overline{\mathcal{H}}} \varphi(t, x) \chi_{R}(p) (\eta(p))^{r} d\nu_{t, x}(p) d\lambda(t, x), \\
    \qquad \text{ for all $R$},
    \end{multline}
    and setting $\varphi = 1$:
    \begin{multline}\label{chiRabsconv}
    \int_{[0, T] \times \mathbb{T}} \int_{\overline{\mathcal{H}}} \chi_{R}(\eta(p)) |\eta(p)|^{s} d(\nu_{n})_{t, x}(p) d\lambda(t, x) \to \int_{[0, T] \times \mathbb{T}} \int_{\overline{\mathcal{H}}} \chi_{R}(p) |\eta(p)|^{s} d\nu_{t, x}(p) d\lambda(t, x), \quad \text{ for all $R$}.
    \end{multline}
    So using \eqref{supn} and \eqref{chiRabsconv}, by monotone convergence:
    \begin{equation*}
    \int_{[0, T] \times \mathbb{T}} \int_{\overline{\mathcal{H}}} |\eta(p)|^{r} d\nu_{t, x}(p) d\lambda(t, x) = \lim_{R \to \infty} \int_{[0, T] \times \mathbb{T}} \int_{\overline{\mathcal{H}}} \chi_{R}(\eta(p)) |\eta(p)|^{r} d\nu_{t, x}(p)d\lambda(t, x) \le C_{T},
    \end{equation*}
    which establishes \eqref{absetas}. To show \eqref{momentumconv}, we use \eqref{momentumconvmid}. We can estimate that for $1 \le r < s$:
    \begin{align*}
   & \left|\int_{[0, T] \times \mathbb{T}} \int_{\overline{\mathcal{H}}} \varphi(t, x) (1 - \chi_{R}(\eta(p))) (\eta(p))^{r} d(\nu_{n})_{t, x}(p) d\lambda(t, x) \right| \\
    &\le \frac{1}{R^{s - r}} \int_{[0, T] \times \mathbb{T}} \int_{\overline{\mathcal{H}}} |\varphi(t, x)| \cdot |\eta(p)|^{s} d(\nu_{n})_{t, x}(p) d\lambda(t, x) \to 0,\quad{ \text{ as }R\to\infty},
    \end{align*}
    \textit{uniformly in $n$}, by the given bounds and the fact that $\varphi \in C_{b}([0, T] \times \mathbb{T})$. A similar convergence holds for the limiting Young measure $\nu$, which gives us the desired convergence \eqref{momentumconv} for $\varphi \in C_{b}([0, T] \times \mathbb{T})$, using \eqref{supn} and \eqref{absetas}. We can then extend the convergence \eqref{momentumconv} to $\varphi \in L^{\frac{s}{s - r}}([0, T] \times \mathbb{T})$ using a density argument.
\end{proof}

We can extend this momentum convergence result to probabilistic Young measures, as follows.

\begin{proposition}\label{momentumconvrandom}
    Let $\nu_{n} \to \nu$ almost surely, as probabilistic Young measures from $[0, T] \times \mathbb{T}$ to $\mathcal{P}_{1}(\overline{\mathcal{H}})$, satisfying the following uniform bound:
    \begin{equation*}
    \sup_{n \ge 1} \mathbb{E} \int_{[0, T] \times \mathbb{T}} \int_{\overline{\mathcal{H}}} (\eta(p))^{s} d(\nu_{n})_{t, x}(p) d\lambda(t, x) \le C_{T},
    \end{equation*}
    for some finite $T > 0$ and for some continuous bounded function $\eta: [0, \infty) \times \R \to \R$ such that either
    \begin{itemize}
        \item \textbf{Case 1.} For all $(\rho, u) \in (0, \infty) \times \R$, $\lim_{r \to \infty} \eta(r(\rho, u))$ is either $-\infty$, $0$, or $\infty$, and $\eta(0, u) = 0$ for all $u \in \R$.
        \item \textbf{Case 2.} $\eta \in C_{b}(\overline{\mathcal{H}})$ in the sense of Definition \ref{Htopology}. 
    \end{itemize}
    Then, for the same constant $C_{T}$ as above:
    \begin{equation*}
    \mathbb{E} \int_{[0, T] \times \mathbb{T}} \int_{\overline{\mathcal{H}}} (\eta(p))^{s} d\nu_{t, x}(p) d\lambda(t, x) \le C_{T},
    \end{equation*}
    and for all $\varphi \in L^{\frac{s}{s - r}}([0, T] \times \mathbb{T})$ and for all $1 \le r < s$ and $1 \le \delta < s/r$:
    \begin{equation*}
    \lim_{n \to \infty} \mathbb{E} \left|\int_{[0, T] \times \mathbb{T}} \int_{\overline{\mathcal{H}}} \varphi(t, x) (\eta(p))^{r} d(\nu_{n})_{t, x}(p) d\lambda(t, x) - \int_{[0, T] \times \mathbb{T}} \int_{\overline{\mathcal{H}}} \varphi(t, x) (\eta(p))^{r} d\nu_{t, x}(p) d\lambda(t, x)\right|^{\delta} = 0.
    \end{equation*}
\end{proposition}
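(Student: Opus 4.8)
The plan is to derive the probabilistic momentum convergence from the deterministic version (the immediately preceding proposition) by a Skorohod-type argument combined with uniform integrability. First I would apply the deterministic result pathwise: since $\nu_n \to \nu$ almost surely in $\mathcal{V}$, on the almost sure event where convergence holds we would like to invoke the deterministic proposition to conclude that for each fixed $\omega$,
\begin{equation*}
\int_{[0,T]\times\mathbb{T}} \int_{\overline{\mathcal{H}}} \varphi(t,x)(\eta(p))^r \, d(\nu_n)_{t,x}(p)\, d\lambda(t,x) \to \int_{[0,T]\times\mathbb{T}} \int_{\overline{\mathcal{H}}} \varphi(t,x)(\eta(p))^r \, d\nu_{t,x}(p)\, d\lambda(t,x),
\end{equation*}
provided the pathwise uniform bound $\sup_{n} \int_{[0,T]\times\mathbb{T}} \int_{\overline{\mathcal{H}}} (\eta(p))^s \, d(\nu_n)_{t,x}(p)\, d\lambda(t,x) < \infty$ holds for that $\omega$. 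This last bound is not guaranteed pathwise from the hypothesis, which only controls the expectation; this is the first technical point. To handle it I would introduce the random variables $X_n(\omega) := \int_{[0,T]\times\mathbb{T}} \int_{\overline{\mathcal{H}}} (\eta(p))^s \, d(\nu_n)_{t,x}(p)\, d\lambda(t,x)$, note $\sup_n \mathbb{E} X_n \le C_T$, and use Fatou's lemma together with the almost sure convergence $\nu_n \to \nu$ (applied to the truncated functionals $\chi_R(\eta)|\eta|^s$ which are in $C_b(\overline{\mathcal{H}})$, then monotone convergence in $R$) to obtain $\mathbb{E}\int_{[0,T]\times\mathbb{T}} \int_{\overline{\mathcal{H}}} (\eta(p))^s \, d\nu_{t,x}(p)\, d\lambda(t,x) \le C_T$; in particular $X_\infty(\omega) := \int \cdots d\nu < \infty$ almost surely.

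Next I would establish the pathwise convergence on a set of full measure by a careful truncation. On the almost sure event $\{\nu_n \to \nu\} \cap \{X_\infty < \infty\}$, convergence against $\varphi\,\chi_R(\eta)\,(\eta)^r$ holds for every fixed $R$ since these are bounded continuous functionals in the sense of Definition \ref{Htopology}, as in \eqref{momentumconvmid}. For the tail, the key observation is that the ``uniform in $n$'' tail control from the deterministic proposition used boundedness of the $n$-th moment, which we only have in expectation; however, to get pathwise convergence it suffices to have the tail small for $\nu$ and for $\nu_n$ along a subsequence. Here I would instead argue that convergence in expectation (the actual goal) does not require pathwise uniform tail control: I would prove $L^\delta(\Omega)$ convergence directly by establishing (i) almost sure convergence along subsequences via a truncation plus Fatou argument, and (ii) uniform integrability of the sequence $Y_n := \int_{[0,T]\times\mathbb{T}} \int_{\overline{\mathcal{H}}} \varphi(t,x)(\eta(p))^r \, d(\nu_n)_{t,x}(p)\, d\lambda(t,x)$ in $L^\delta(\Omega)$. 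For (ii), by H\"older's inequality $|Y_n| \le \|\varphi\|_{L^{s/(s-r)}} \big(\int (\eta)^s \, d(\nu_n)\big)^{r/s} \le \|\varphi\|_{L^{s/(s-r)}} X_n^{r/s}$, so $\mathbb{E}|Y_n|^\delta \le \|\varphi\|^{\delta}_{L^{s/(s-r)}} \mathbb{E} X_n^{\delta r/s}$, and since $\delta r/s < 1$ this is bounded by $\|\varphi\|^{\delta}_{L^{s/(s-r)}} (\mathbb{E} X_n)^{\delta r/s} \le \|\varphi\|^{\delta}_{L^{s/(s-r)}} C_T^{\delta r / s}$ by Jensen; for genuine uniform integrability I would bound $\mathbb{E}|Y_n|^{\delta'}$ for some $\delta < \delta' < s/r$ similarly, which suffices. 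Then (i)+(ii) and Vitali's convergence theorem yield the claimed $L^\delta(\Omega)$ convergence.

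For step (i), the pathwise convergence of $Y_n(\omega) \to Y_\infty(\omega)$: on the full-measure event above, I would split $Y_n - Y_\infty$ into the truncated part (which $\to 0$ by \eqref{momentumconvmid}) and the two tail parts $\int \varphi(1-\chi_R(\eta))(\eta)^r\, d(\nu_n)$ and $\int \varphi(1-\chi_R(\eta))(\eta)^r \, d\nu$. Each tail is bounded by $R^{-(s-r)} \|\varphi\|_{L^\infty}$ times $X_n$ (resp. $X_\infty$). Since $X_n \to X_\infty$ almost surely (again by the truncation-plus-Fatou argument applied to $|\eta|^s$, noting $\chi_R(\eta)|\eta|^s \in C_b(\overline{\mathcal{H}})$ so these converge for each $R$, and using the established finiteness of $X_\infty$ with monotone convergence — or more carefully, a dominated/Fatou argument showing $\limsup_n X_n \le X_\infty$ combined with the reverse via lower semicontinuity gives $X_n \to X_\infty$), the quantity $\sup_n X_n(\omega)$ is finite for almost every $\omega$, so choosing $R$ large makes both tails uniformly small in $n$; then sending $n\to\infty$ and then $R\to\infty$ gives $Y_n(\omega) \to Y_\infty(\omega)$.

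I expect the main obstacle to be the rigorous justification of the pathwise uniform tail bound $\sup_n X_n(\omega) < \infty$ almost surely and the almost sure convergence $X_n \to X_\infty$: the hypothesis only gives control of $\mathbb{E} X_n$, not almost sure boundedness of $\sup_n X_n$. The resolution is that $X_n \to X_\infty$ almost surely follows because $|\eta|^s$, though unbounded, can be approximated from below by the continuous bounded truncations $\chi_R(\eta)|\eta|^s$ against which $\nu_n \to \nu$ converges almost surely; combined with $\sup_n \mathbb{E} X_n < \infty$ this controls the tails in $L^1(\Omega)$, and then a diagonal/subsequence argument upgrades to almost sure convergence of $X_n$ and hence of the tails, after which Vitali's theorem closes the argument. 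All the other ingredients — H\"older, Jensen, monotone and dominated convergence, and the deterministic momentum convergence proposition — are routine once this point is handled.
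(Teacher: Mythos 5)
Your overall scaffolding (truncation by $\chi_R(\eta)$, the H\"older--Jensen bound $|Y_n|\le\|\varphi\|_{L^{s/(s-r)}}X_n^{r/s}$ with $X_n:=\int_{[0,T]\times\mathbb{T}}\int_{\overline{\mathcal{H}}}|\eta|^s\,d(\nu_n)_{t,x}\,d\lambda$, and the Fatou/monotone-convergence derivation of $\mathbb{E}X_\infty\le C_T$) is sound, and the paper itself gives no argument here beyond citing Proposition 4.5 of \cite{BerthelinVovelle}. But your step (i) has a genuine gap: you route the proof through pathwise convergence $Y_n(\omega)\to Y_\infty(\omega)$, and to make the tail terms small uniformly in $n$ for a fixed $\omega$ you need $\sup_n X_n(\omega)<\infty$ almost surely, which you try to obtain from ``$X_n\to X_\infty$ almost surely.'' Neither claim follows from the hypotheses. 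The bound $\sup_n\mathbb{E}X_n\le C_T$ does not give $\sup_n X_n<\infty$ a.s.\ (take $X_n=n\,\mathbbm{1}_{A_n}$ with independent $A_n$, $\mathbb{P}(A_n)=1/n$: the expectations are bounded but $\limsup_n X_n=\infty$ a.s.\ by Borel--Cantelli). And vague convergence $\nu_n\to\nu$ together with the truncations $\chi_R(\eta)|\eta|^s\in C_b(\overline{\mathcal{H}})$ only yields the lower-semicontinuity inequality $X_\infty\le\liminf_n X_n$ pathwise; the reverse inequality $\limsup_n X_n\le X_\infty$, which you invoke as ``a dominated/Fatou argument,'' is precisely a no-concentration statement for the $|\eta|^s$-moment and fails in general (the $s$-moment can be carried off by a vanishing set where $|\eta|$ blows up). A subsequence/diagonal argument does not repair this, because nothing in the hypotheses upgrades the $L^1(\Omega)$ control of the tails to almost sure control of $\sup_n X_n$. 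A secondary, fixable slip: your tail estimate uses $\|\varphi\|_{L^\infty}$, while $\varphi$ is only assumed in $L^{s/(s-r)}$.

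The repair is to keep your truncation but estimate the tails in expectation rather than pathwise, which also removes the need for any a.s.\ statement about $X_n$. First take $\varphi$ bounded: pointwise in $(t,x)$ one has $\int_{\overline{\mathcal{H}}}(1-\chi_R(\eta))|\eta|^r\,d(\nu_n)_{t,x}\le\min\bigl(R^{-(s-r)}h_n,\;h_n^{r/s}\bigr)$ with $h_n(t,x):=\int_{\overline{\mathcal{H}}}|\eta|^s\,d(\nu_n)_{t,x}$, so $|Y_n-Y_n^R|\le\|\varphi\|_{L^\infty}\min\bigl(R^{-(s-r)}X_n,\;X_n^{r/s}\bigr)$; since $\delta r/s<1$ you can split the exponent, $|Y_n-Y_n^R|^\delta\le\|\varphi\|_{L^\infty}^\delta R^{-(s-r)\delta_0}X_n^{(\delta-\delta_0)r/s+\delta_0}$ with $\delta_0>0$ small enough that $(\delta-\delta_0)r/s+\delta_0\le 1$, and then Jensen gives $\mathbb{E}|Y_n-Y_n^R|^\delta\le C\,R^{-(s-r)\delta_0}$ \emph{uniformly in $n$}, and likewise for the limit measure using $\mathbb{E}X_\infty\le C_T$. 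For fixed $R$ the integrand $\varphi\,\chi_R(\eta)\eta^r$ is bounded, so $Y_n^R\to Y_\infty^R$ a.s.\ and $\mathbb{E}|Y_n^R-Y_\infty^R|^\delta\to0$ by dominated convergence; the triangle inequality and $R\to\infty$ then give the claim for bounded $\varphi$. Finally pass to general $\varphi\in L^{s/(s-r)}([0,T]\times\mathbb{T})$ by density, using exactly your Jensen bound $\mathbb{E}|Y_n(\varphi)-Y_n(\varphi_k)|^\delta\le\|\varphi-\varphi_k\|_{L^{s/(s-r)}}^\delta(\mathbb{E}X_n)^{\delta r/s}\le C_T^{\delta r/s}\|\varphi-\varphi_k\|_{L^{s/(s-r)}}^\delta$ uniformly in $n$ (and the same for $\nu$). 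With this rearrangement your truncation, H\"older/Jensen, and Vitali ingredients assemble into a complete proof.
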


\begin{proof}
We refer the reader to the proof of Proposition 4.5 in \cite{BerthelinVovelle}.
\end{proof}

\subsection{The Skorohod argument for the $\epsilon_{N} \to 0$ limit passage}

Next, we pass to the limit in the $\epsilon$-level approximate solutions using a Skorohod representation theorem argument. We define the phase space: 
\begin{equation*}
\mathcal{X}_{} := \mathcal{V} \times C_{loc}([0, \infty); H^{-3}(\mathbb{T}))\cap (C_{w,loc}([0,\infty);L^\gamma(\mathbb{T})) \times C_{w,loc}([0,\infty);L^{\frac{2\gamma}{\gamma+1}}(\mathbb{T})))\times C_{loc}([0, \infty); \mathcal{U}),
\end{equation*}
where $\mathcal{V}$ is the space of Young measures on $[0,\infty)\times\mathbb{T}$ taking values in $\mathcal{P}(\bar{\mathcal{H}})$, and we consider the laws $\mu_{\epsilon}$ of the approximate solutions 
\begin{equation*}
(\nu_{\epsilon}, \bd{U}_{\epsilon}, W_{\epsilon}), \qquad \text{ for } \bd{U}_{\epsilon} := (\rho_{\epsilon}, q_{\epsilon}),
\end{equation*}
in the phase space $\mathcal{X}$. Here we recall that $\nu_\epsilon =\delta_{(\rho_\epsilon,u_\epsilon)}$. We claim that we have the following tightness result.

\begin{proposition}\label{tighteps}
    The laws $\{\mu_{\epsilon}\}_{\epsilon > 0}$ are tight as probability measures on the phase space $\mathcal{X}_{}$. 
\end{proposition}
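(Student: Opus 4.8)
The plan is to establish tightness componentwise in the product space $\mathcal{X}$, which suffices since a finite product of tight families of marginals is tight. There are four components: the Young measures $\nu_\epsilon = \delta_{(\rho_\epsilon, u_\epsilon)}$ in $\mathcal{V}$, the state variables $\bd{U}_\epsilon = (\rho_\epsilon, q_\epsilon)$ in the two path-space topologies, and the Wiener processes $W_\epsilon$ in $C_{loc}([0,\infty);\mathcal{U})$. The last is trivial since all $W_\epsilon$ have the same law (that of a fixed cylindrical Wiener process), so the corresponding family of laws is a singleton, hence tight.

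For the Young measure component, I would apply Proposition \ref{randomyoung} directly. We need the uniform bound $\tilde{\mathbb{E}} \int_{[0,m]\times\mathbb{T}} \int_{\overline{\mathcal{H}}} \eta_E(p)\, d(\nu_\epsilon)_{t,x}(p)\, d\lambda(t,x) \le \alpha_m$ for each positive integer $m$, with constants independent of $\epsilon$. Since $\nu_\epsilon = \delta_{(\rho_\epsilon, u_\epsilon)}$, the inner integral is exactly $\eta_E(\rho_\epsilon, u_\epsilon)(t,x) = \eta_1(\bd{U}_\epsilon)/2$ up to a constant, so the quantity equals $\frac{1}{2}\mathbb{E}\int_0^m \int_{\mathbb{T}} \eta_1(\bd{U}_\epsilon)\, dx\, dt$, which is bounded by $C_1 m$ by Proposition \ref{uniformeps2} (the case $m=1$ of \eqref{etaboundsm}, integrated in time using stationarity). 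Hence Proposition \ref{randomyoung} gives tightness of the $\nu_\epsilon$ in $\mathcal{V}$.

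For the $C_{loc}([0,\infty);H^{-3}(\mathbb{T}))$ component, I would combine the uniform equicontinuity bound of Proposition \ref{Hnegative2}, namely $\mathbb{E}\|(\rho_\epsilon, q_\epsilon)\|_{C^\beta(0,T;H^{-2}(\mathbb{T}))} \le C_{\beta,T}$ for $\beta \in (0,1/4)$, with the uniform moment bounds of Proposition \ref{momentsup} giving $\mathbb{E}\sup_{t\in[0,T]}(\|\rho_\epsilon(t)\|_{L^p}^p + \|q_\epsilon(t)\|_{L^p}^p) \le C(T,p)$, and in particular a uniform bound in $C(0,T;L^1(\mathbb{T}))\hookrightarrow C(0,T;H^{-1}(\mathbb{T}))$. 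Then the set $\{f : \|f\|_{C^\beta(0,T;H^{-2}(\mathbb{T}))} + \|f\|_{C(0,T;H^{-1}(\mathbb{T}))} \le M\}$ is, by Arzelà--Ascoli together with the compact embedding $H^{-1}(\mathbb{T}) \hookrightarrow\hookrightarrow H^{-3}(\mathbb{T})$, relatively compact in $C(0,T;H^{-3}(\mathbb{T}))$; a diagonalization over $T \in \mathbb{N}$ upgrades this to compactness in $C_{loc}([0,\infty);H^{-3}(\mathbb{T}))$, and Chebyshev gives tightness. For the weak-continuity components $C_{w,loc}([0,\infty);L^\gamma(\mathbb{T}))$ and $C_{w,loc}([0,\infty);L^{2\gamma/(\gamma+1)}(\mathbb{T}))$, I would use the standard criterion (see e.g. the argument in \cite{BFHM19}) that a family bounded in $L^\infty(0,T;L^p(\mathbb{T}))$ and equicontinuous into a negative Sobolev space is tight in $C_w(0,T;L^p(\mathbb{T}))$ on bounded subsets of $L^p$: here Proposition \ref{momentsup} gives $\mathbb{E}\sup_{[0,T]}\|\rho_\epsilon\|_{L^\gamma(\mathbb{T})}^\gamma \le C(T)$ and $\mathbb{E}\sup_{[0,T]}\|q_\epsilon\|_{L^{2\gamma/(\gamma+1)}(\mathbb{T})}^{2\gamma/(\gamma+1)} \le C(T)$ (the latter via Hölder from $\mathbb{E}\sup\int \rho_\epsilon u_\epsilon^2 < \infty$ and $\mathbb{E}\sup\int\rho_\epsilon^\gamma < \infty$), while Proposition \ref{Hnegative2} supplies the equicontinuity, so Chebyshev and a diagonalization in $T$ conclude.

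The main obstacle, and the step requiring the most care, is the Young measure tightness — specifically verifying that the topology on $\mathcal{V}$ (vague weak-$\star$ convergence against $C_b(\overline{\mathcal{H}})$-test functions and $C_c^\infty$ spatial test functions, Definition \ref{vague}) is compatible with the compactness criterion of Proposition \ref{youngcompact}, and that $\eta_E$ genuinely serves as a tightness function on $\overline{\mathcal{H}}$ in the sense that its sublevel sets $K_R = \eta_E^{-1}([0,R])$ are compact in the compactified topology of Definition \ref{Htopology} even though they are unbounded in the ordinary Euclidean topology (because $\eta_E$ vanishes on the entire vacuum line, which is a single point of $\overline{\mathcal{H}}$). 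This is exactly the content already packaged into Proposition \ref{youngcompact} and \ref{randomyoung}, so once the uniform energy bound is in hand the result follows; the remaining components are routine applications of the compactness machinery and the a priori estimates of the previous section. Tightness in the product then follows, and Prokhorov's theorem yields weak convergence of $\mu_\epsilon$ along a subsequence to some limiting law $\mu$ on $\mathcal{X}$.
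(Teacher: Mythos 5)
Your proposal is correct and follows essentially the same route as the paper: tightness of the Young-measure component via Proposition \ref{randomyoung} together with the uniform entropy (energy) bound, tightness of $(\rho_\epsilon,q_\epsilon)$ via the H\"older equicontinuity of Proposition \ref{Hnegative2} combined with the uniform moment bounds and the compact embeddings into $C_{loc}([0,\infty);H^{-3}(\mathbb{T}))$ and $C_{w,loc}([0,\infty);L^p(\mathbb{T}))$ (the latter being exactly \eqref{cw}), and the trivial component for $W_\epsilon$. The only cosmetic differences are that you invoke an extra $H^{-1}$ bound where the paper gets compactness directly from the H\"older bound via Arzel\`a--Ascoli, and you spell out the product-space and diagonalization steps that the paper leaves implicit.
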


\begin{proof}
    By the result in Proposition \ref{momentsup}, we have that
    \begin{equation*}
    \mathbb{E} \int_{0}^{m} \int_{\mathbb{T}} \eta_{E}(\rho_{\epsilon}, u_{\epsilon}) dx dt \le C_{m},
    \end{equation*}
    for some constant $C_{m}$, for each positive integer $m$, since ${{\eta_{1} = 2\eta_{E}}}$. Since the Young measure for the approximate solution is $\nu_{\epsilon} = \delta_{(\rho_{\epsilon}, u_{\epsilon})}$, we conclude by Proposition \ref{randomyoung} that the laws of the probabilistic Young measures $\{\nu_{\epsilon}\}_{\epsilon > 0}$ are tight in $\mathcal{V}$. Furthermore, by the compact embedding
    \begin{equation*}
    C^{\alpha}_{loc}([0, \infty); H^{-2}(\mathbb{T})) \subset \subset C_{loc}([0, \infty); H^{-3}(\mathbb{T})),
    \end{equation*}
    which is a consequence of the Arzela-Ascoli compactness theorem, and the following embedding stated in Theorem 1.8.5 in \cite{BFH18}:
    \begin{align}\label{cw}
C_{loc}^\alpha([0,\infty);H^{-2}(\mathbb{T}))\cap L^\infty([0,\infty);L^p(\mathbb{T}))\subset\subset C_{w,loc}([0,\infty);L^p(\mathbb{T})),\quad\text{for any } \alpha>0, 1<p<\infty.
    \end{align}
    we have tightness of the laws of $\bd{U}_{\epsilon} := (\rho_{\epsilon}, q_{\epsilon})$ in $C_{loc}([0, \infty); H^{-3}(\mathbb{T}))\cap (C_{w,loc}([0,\infty);L^\gamma(\mathbb{T})) \times C_{w,loc}([0,\infty);L^{\frac{2\gamma}{\gamma+1}}(\mathbb{T})))$  by the estimate in Proposition \ref{Hnegative2} and Corollary \ref{momentrhoq}. The tightness of the laws of $\{W_{\epsilon}\}_{\epsilon > 0}$ in $C_{loc}([0, \infty); \mathcal{U}))$ is immediate.
\end{proof}
\begin{remark}
    Note that due to the embedding \eqref{cw}, we can construct statistically stationary solutions in the state space $L^{p}(\mathbb{T}) \times L^{p}(\mathbb{T})$ for any $1 < p < \infty$, as described in Remark \ref{rem:anyp}.
\end{remark}
The tightness result in Proposition \ref{tighteps} allows us to use the Skorohod representation theorem since the path space $\mathcal{X}$ is a Jakubowski space. 

\begin{proposition}
    There exists a probability space $(\tilde{\Omega}, \tilde{\mathcal{F}}, \tilde{\mathbb{P}})$ and random variables $(\tilde{\nu}_{\epsilon}, \tilde{\bd{U}}_{\epsilon}, \tilde{W}_{\epsilon})$ and $({\nu}, {\bd{U}}, {W})$ taking values in $\mathcal{X}_{}$, where we will denote $\tilde{\bd{U}}_{\epsilon} := (\tilde{\rho}_{\epsilon}, \tilde{q}_{\epsilon})$ and ${\bd{U}} := ({\rho}, {q})$, such that
    \begin{equation}\label{youngconv}
    (\tilde{\nu}_{\epsilon}, \tilde{\bd{U}}_{\epsilon}, \tilde{W}_{\epsilon}) =_{d} (\nu_{\epsilon}, \bd{U}_{\epsilon}, W_{\epsilon}),
    \end{equation}
    and
    \begin{equation*}
    (\tilde{\nu}_{\epsilon}, \tilde{\bd{U}}_{\epsilon}, \tilde{W}_{\epsilon}) \to ({\nu}, {\bd{U}}, {W}), \qquad \tilde{\mathbb{P}}\text{-almost surely in the topology of $\mathcal{X}$}.
    \end{equation*}
    Furthermore, the limiting process $\{{W}_t\}_{t \ge 0}$ is a $\mathcal{U}$-Wiener process with respect to the filtration $\{{\mathcal{F}}_{t}\}_{t \ge 0}$ defined by:
    \begin{equation*}
    {\mathcal{F}}_{t} := \sigma\{\bd{U}_{}(s), {W}_{}(s) : 0 \le s \le t\}.
    \end{equation*}
\end{proposition}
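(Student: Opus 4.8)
The plan is to apply Jakubowski's version of the Skorohod representation theorem to the tight sequence of laws $\{\mu_\epsilon\}_{\epsilon > 0}$ on the path space $\mathcal{X}$, which was established in Proposition \ref{tighteps}. First I would check that $\mathcal{X}$ is indeed a Jakubowski (quasi-Polish) space: the space $\mathcal{V}$ of Young measures is separable and its topology is generated by a countable family of continuous functionals (pairings against a countable dense set of test functions $\psi \in C_c^\infty$ and $\varphi \in C_b(\overline{\mathcal{H}})$ — see \cite{BFH18, BerthelinVovelle}), the spaces $C_{loc}([0,\infty);H^{-3}(\mathbb{T}))$ and $C_{loc}([0,\infty);\mathcal{U})$ are Polish, and the weak-continuity spaces $C_{w,loc}([0,\infty);L^p(\mathbb{T}))$ admit a countable separating family of continuous functionals. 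Since a finite product of Jakubowski spaces is Jakubowski, $\mathcal{X}$ qualifies, and the Skorohod representation theorem of \cite{J97} applies along a (non-relabeled) subsequence $\epsilon_N \to 0$, producing a new probability space $(\tilde\Omega, \tilde{\mathcal{F}}, \tilde{\mathbb{P}})$ and random variables $(\tilde\nu_\epsilon, \tilde{\bd{U}}_\epsilon, \tilde W_\epsilon)$ with the same laws as $(\nu_\epsilon, \bd{U}_\epsilon, W_\epsilon)$, converging $\tilde{\mathbb{P}}$-almost surely in $\mathcal{X}$ to some limit $(\nu, \bd{U}, W)$ whose law is the weak limit of $\mu_\epsilon$.

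Next I would verify the claim about the limiting Wiener process. By equivalence of laws \eqref{youngconv}, each $\tilde W_\epsilon$ is a $\mathcal{U}$-cylindrical Wiener process; the almost sure convergence $\tilde W_\epsilon \to W$ in $C_{loc}([0,\infty);\mathcal{U})$ then forces $W$ to be a $\mathcal{U}$-cylindrical Wiener process as well (the finite-dimensional projections $W_k$ converge a.s. uniformly on compacts to Brownian motions, and independence of increments passes to the limit). To identify the filtration, define $\mathcal{F}_t := \sigma\{\bd{U}(s), W(s) : 0 \le s \le t\}$ (completed and made right-continuous in the usual way). One must show that $W$ is an $\{\mathcal{F}_t\}$-Wiener process, i.e. that for $s < t$ the increment $W(t) - W(s)$ is independent of $\mathcal{F}_s$. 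This is the standard argument: for bounded continuous functionals $\Theta$ on the relevant path segments up to time $s$, one uses the martingale property of $\tilde W_\epsilon$ with respect to its own natural filtration (inherited from the original stationary solution on the $\epsilon$ level), equivalence of laws to transfer the identity $\tilde{\mathbb{E}}[(\tilde W_\epsilon(t) - \tilde W_\epsilon(s))\Theta(\tilde{\bd{U}}_\epsilon|_{[0,s]}, \tilde W_\epsilon|_{[0,s]})] = 0$, and then passes to the limit using the a.s. convergence together with a uniform integrability bound (e.g. from the uniform moment bounds in Corollary \ref{momentrhoq} and Proposition \ref{momentsup}, and the $L^2$-isometry for the stochastic term). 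The same manipulation with the quadratic variation identity $\tilde{\mathbb{E}}[((\tilde W_\epsilon(t) - \tilde W_\epsilon(s))\otimes(\tilde W_\epsilon(t)-\tilde W_\epsilon(s)) - (t-s)\mathrm{Id})\Theta] = 0$ completes the Lévy characterization.

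The main obstacle I anticipate is the consistency between the Young measure component $\tilde\nu_\epsilon$ and the state-variable component $\tilde{\bd{U}}_\epsilon$ after transferring to the new probability space. On the original space $\nu_\epsilon = \delta_{(\rho_\epsilon, u_\epsilon)}$ is the Dirac mass at the graph of the solution, so by equivalence of laws one expects $\tilde\nu_\epsilon = \delta_{(\tilde\rho_\epsilon, \tilde u_\epsilon)}$ $\tilde{\mathbb{P}}$-almost surely, where $\tilde u_\epsilon := \tilde q_\epsilon / \tilde\rho_\epsilon$ is well-defined a.e. since the vacuum set has measure zero by Proposition \ref{rhoepspositive}. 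One must check that the map $(\rho, q) \mapsto \delta_{(\rho, q/\rho)}$ is measurable from the relevant function space into $\mathcal{V}$ (using the compactified topology on $\overline{\mathcal{H}}$ that collapses the vacuum set, from Definition \ref{Htopology}), so that the joint law of $(\nu_\epsilon, \bd{U}_\epsilon)$ is supported on the graph of this map and the support property transfers. Care is needed here precisely because $u_\epsilon$ is only defined up to the vacuum set and the $\overline{\mathcal{H}}$-topology is nonstandard. I would handle this by testing against the countable generating family for $\mathcal{V}$ and using that for each fixed test pair the functional $(t,x) \mapsto \varphi(\rho_\epsilon(t,x), u_\epsilon(t,x))$ is the pointwise evaluation of the Dirac-mass pairing. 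All other components of the statement — the a.s. convergence and equality of laws — are delivered directly by the Skorohod theorem, so the proof is short modulo this measurability/consistency verification.
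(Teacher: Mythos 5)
Your proposal is correct and follows essentially the same route as the paper, which states this proposition without a written proof, relying directly on the tightness from Proposition \ref{tighteps} together with Jakubowski's version of the Skorohod representation theorem \cite{J97} on the quasi-Polish space $\mathcal{X}$. The additional verifications you outline (the Jakubowski property of the product space, the L\'evy-characterization argument identifying $W$ as an $\{\mathcal{F}_t\}$-Wiener process, and the transfer of the Dirac structure $\tilde{\nu}_{\epsilon} = \delta_{(\tilde{\rho}_{\epsilon}, \tilde{u}_{\epsilon})}$ using the $\overline{\mathcal{H}}$-topology and the a.e.\ positivity of $\tilde{\rho}_{\epsilon}$) are exactly the standard details the paper leaves implicit and indeed uses later, so no gap remains.
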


Using the Young measures will allow us to pass to the limit in $\eta(\tilde{\bd{U}}_{\epsilon})$ to $\eta({\bd{U}})$, but the issue is that we do not know that the limiting solution $({\rho}, {q})$ is function-valued, since it is represented either by a limiting Young measure ${\nu}$, or a very weak distributional space $C_{loc}([0, \infty); H^{-3}(\mathbb{T}))$. We therefore will need to carry out a \textbf{reduction of the Young measure}, which is an argument in which we will show that the limiting Young measure corresponds to a genuine function, rather than a measure-valued solution.

\subsection{Reduction of the Young measure}

In this subsection, we will reduce the limiting Young measure ${\nu}$ {to a Dirac mass}, which means that we will show that $$\text{this Young measure $\nu$ corresponds to a genuine function, except potentially when }{\rho} = 0.
$$ In this sense, the limiting density and momentum $({\rho}, {q}_{})$ is function valued, since we recall that the Young measure is defined for density and velocity, and hence, the momentum ${q}$ is unambiguously equal to zero in the vacuum (even if the value of ${u}$ is not necessarily well-defined). 

The main ingredient in the argument for the reduction of the Young measure is the following \textit{key functional equation}, which holds for all entropy flux pairs $(\eta, H)$ and $(\hat{\eta}, \hat{H})$ arising from subquadratic $g \in \mathcal{G}$ (see Definition \ref{tildeG}), and almost surely for all almost every $(t, x) \in [0, \infty) \times \mathbb{R}$:
\begin{equation}\label{functionaleqn}
\langle \eta, {\nu} \rangle \langle \hat{H}, {\nu} \rangle - \langle \hat{\eta}, {\nu}\rangle \langle H, {\nu}\rangle = \langle \eta\hat{H} - \hat{\eta}H, {\nu} \rangle, \qquad \text{ where } \langle f, {\nu} \rangle := \int_{\overline{\mathcal{H}}} f(p) d{\nu}_{t, x}(p).
\end{equation}
This functional equation is the key ingredient for an argument for the reduction of the Young measure, which will show that the Young measure is a Dirac delta function at each $(t, x) \in [0, \infty) \times \mathbb{T}$ almost surely, except potentially on vacuum when $\rho(t, x) = 0$. Since this argument is a standard argument in the literature, we do not provide the argument here and instead refer the reader to Section 5.2 in \cite{BerthelinVovelle}, and also Section I.5 in \cite{LPS96} (see also the discussion on pg.~604 in \cite{LPS96}).

We make a few comments on how the functional equation \eqref{functionaleqn} is obtained. Since the approximate Young measures $\tilde{\nu}_{\epsilon}$ are Dirac-delta functions (namely, they arise from the approximate statistically stationary solutions $(\tilde{\rho}_{\epsilon}, \tilde {q}_{\epsilon})$ which are function valued), the functional equation \eqref{functionaleqn} holds trivially for the approximate Young measures:
\begin{equation}\label{functionaleps}
\langle \eta, \tilde{\nu}_{\epsilon}(t, x) \rangle \langle \hat{H}, \tilde{\nu}_{\epsilon}(t, x) \rangle - \langle \hat{\eta}, \tilde{\nu}_{\epsilon}(t, x) \rangle \langle H, \tilde{\nu}_{\epsilon}(t, x) \rangle = \langle \eta \hat{H} - \hat{\eta} H, \tilde{\nu}_{\epsilon}(t, x) \rangle.
\end{equation}
Hence, we can obtain \eqref{functionaleqn} by passing to the limit as $\epsilon \to 0$ in \eqref{functionaleps}, and this can be done using the div-curl lemma and Murat's lemma, exactly as in Section 5.1 in \cite{BerthelinVovelle}, since the only uniform in $\epsilon$ estimates required for this argument for obtaining \eqref{functionaleqn}, are the estimates that we have established in Proposition \ref{uniformeps2}. Hence, we will not provide the details here.

\subsection{The limiting martingale solution}

Finally, we take the limit as $\epsilon_{N} \to 0$ in the approximate entropy equality \eqref{entropyeq} at the $\epsilon$ level. Note that since $\eta$ is convex and the test functions $\varphi(x) \in C^{2}(\mathbb{T})$ and $\psi(t) \in C_{c}^{\infty}(0, \infty)$ in the entropy equality are nonnegative, we have that
\begin{equation*}
\epsilon \int_{0}^{\infty}\left(\int_{\mathbb{T}} \langle D^{2}\eta(\tilde{\bd{U}}_{\epsilon}) \partial_{x}\tilde{\bd{U}}_{\epsilon}, \partial_{x}\tilde{\bd{U}}_{\epsilon} \rangle \varphi(x) dx\right) \psi(t) dt \ge 0,
\end{equation*}
and hence, we have that $\tilde{\mathbb{P}}$-almost surely:
\begin{multline}\label{entropyineqeps}
\int_{0}^{\infty} \left(\int_{\mathbb{T}} \eta(\tilde{\bd{U}}_{\epsilon}(t)) \varphi(x) dx\right) \partial_{t}\psi(t) dt + \int_{0}^{\infty} \left(\int_{\mathbb{T}} H(\tilde{\bd{U}}_{\epsilon}) \partial_{x}\varphi(x)\right) \psi(t) dt \\
- \alpha \int_{0}^{\infty} \left(\int_{\mathbb{T}} q_{\epsilon} \partial_{q}\eta(\tilde{\bd{U}}_{\epsilon}) \varphi(x) dx\right) \psi(t) dt + \int_{0}^{\infty} \left(\int_{\mathbb{T}} \partial_{q}\eta(\tilde{\bd{U}}_{\epsilon}) \Phi(\tilde{\bd{U}}_{\epsilon}) \varphi(x) dx\right) \psi(t) d\tilde{W}_\epsilon(t) \\
+ \int_{0}^{\infty} \left(\int_{\mathbb{T}} \frac{1}{2} \partial_{q}^{2}\eta(\tilde{\bd{U}}_{\epsilon}) G^{2}(\tilde{\bd{U}}_{\epsilon}) \varphi(x) dx\right) \psi(t) dt + \epsilon \int_{0}^{\infty} \left(\int_{\mathbb{T}} \eta(\tilde{\bd{U}}_{\epsilon}) \partial_{x}^{2}\varphi dx\right) \psi(t) dt \ge 0,
\end{multline}
for all $\varphi \in C^{2}(\mathbb{T})$ and $\psi \in C_{c}^{\infty}(0, \infty)$, with $\varphi, \psi \ge 0$.

Our goal is to pass to the limit as $\epsilon \to 0$ in the entropy inequality \eqref{entropyineqeps}. To do this, we combine the results on higher moment entropy bounds in Proposition \ref{fluxmomenteps} with the probabilistic momentum convergence results in Proposition \ref{momentumconvrandom}. As of now, we already know after the reduction of the Young measure, the convergence \eqref{youngconv}, and the definition of (vague) convergence of Young measures in Definition \ref{vague} that
\begin{equation*}
\int_{[0, T] \times \mathbb{T}} \int_{\overline{\mathcal{H}}} \psi(t, x) S(p) d(\tilde{\nu}_{\epsilon})_{t,x}(p) d\lambda(t, x) \to \int_{[0, T] \times \mathbb{T}} \int_{\overline{\mathcal{H}}} \psi(t, x) S(p) d\nu_{t,x}(p) d\lambda(t, x),
\end{equation*}
for all $\psi \in C_{c}^{\infty}([0, T] \times \mathbb{T})$ and $S \in C_{b}(\overline{\mathcal{H}})$ in the sense of Definition \ref{Htopology}. We want to extend this convergence to nonlinear continuous functions of the form found in \eqref{entropyineqeps}, such as those which are potentially unbounded. In particular, we have the following convergence result.

\begin{proposition}\label{L2omegaxt}
For $\eta_{m}$ defined for $g(z) = z^{2m}$ via \eqref{etamformula}, $\eta_{m}(\tilde{\bd{U}}_{\epsilon}) \to \eta_{m}(\bd{U})$ in $L^{2}(\tilde{\Omega} \times [0, T] \times \mathbb{T})$ for all $T > 0$. 
\end{proposition}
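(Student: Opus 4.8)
The plan is to identify $\eta_m(\bd{U})$ with the momentum $\langle \eta_m, \nu \rangle$ of the limiting Young measure, and then deduce $L^2$ convergence from the probabilistic momentum convergence result of Proposition \ref{momentumconvrandom} together with the uniform bounds of Proposition \ref{fluxmomenteps}. The starting observation is that since the approximate Young measures are Dirac masses, $\tilde\nu_\epsilon = \delta_{(\tilde\rho_\epsilon, \tilde u_\epsilon)}$, we have $\langle \eta_m, \tilde\nu_\epsilon \rangle = \eta_m(\tilde{\bd U}_\epsilon)$ pointwise in $(\tilde\omega, t, x)$; and after the reduction of the Young measure, $\nu_{t,x}$ is a Dirac mass at $(\rho, u)$ for a.e.\ $(t,x)$ almost surely (away from vacuum, where $q = 0$ unambiguously and $\eta_m$ is still well-defined since $\eta_m(0,u) = 0$), so $\langle \eta_m, \nu \rangle = \eta_m(\bd{U})$ a.e. It therefore suffices to prove $\langle \eta_m, \tilde\nu_\epsilon \rangle \to \langle \eta_m, \nu \rangle$ in $L^2(\tilde\Omega \times [0,T] \times \mathbb{T})$.

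First I would verify the hypotheses of Proposition \ref{momentumconvrandom} with $\eta := \eta_1 \sim 2\eta_E$ (the energy, which is the quantity controlling tightness of the Young measures and which vanishes on the vacuum set, so it falls under Case 1), $r := m$, $s := 2m + 1$ say (any $s$ with $s/r > 2$ works), and $\delta := 2$. The required uniform bound
\begin{equation*}
\sup_{\epsilon > 0} \tilde{\mathbb{E}} \int_{[0,T] \times \mathbb{T}} \int_{\overline{\mathcal{H}}} (\eta_1(p))^{s} \, d(\tilde\nu_\epsilon)_{t,x}(p) \, d\lambda(t,x) = \sup_{\epsilon > 0} \tilde{\mathbb{E}} \int_{[0,T] \times \mathbb{T}} (\eta_1(\tilde{\bd U}_\epsilon))^{s} \, dx \, dt \le C_{T,s}
\end{equation*}
follows from equivalence of laws (Skorohod) together with the $\epsilon$-uniform entropy moment bounds in Proposition \ref{fluxmomenteps}, since $\eta_1^s$ is controlled by $\eta_E^s$ which is controlled by the $p$-th moment of $\eta_1$ for $p = s$. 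Then one must express $\eta_m$ as a power of $\eta_1$ times a bounded factor, or rather bound $|\eta_m(p)| \le C_m (\eta_1(p))^{m}$ — this is exactly the algebraic content of Lemma \ref{momentidentity} (with $s = m$, noting $\eta_0 + \eta_p$ is controlled by powers of $\eta_1$), or more directly it follows from Proposition \ref{etamboundalg}: $\eta_m(\bd{U}) \le C_m \rho(u^{2m} + \rho^{m(\gamma-1)})$, and $\rho(u^{2m} + \rho^{m(\gamma-1)}) \le C_m (\rho u^2 + \rho^\gamma)^m + C_m \rho \le C_m(\eta_1^m + \eta_0)$. So $\eta_m = (\text{bounded continuous function of } \bd{U}) \cdot \eta_1^m$ up to lower-order terms; applying Proposition \ref{momentumconvrandom} to the functional $\varphi(t,x) (\eta_1(p))^{m}$ with $\varphi = 1$ and $\delta = 2$ gives exactly $L^2(\tilde\Omega \times [0,T] \times \mathbb{T})$ convergence of $\langle \eta_m, \tilde\nu_\epsilon \rangle$, after handling the lower-order additive pieces (the $\eta_0 = \rho$ term and cross terms $\eta_j$, $j < m$) by the same argument inductively.

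The one technical subtlety — and the main obstacle — is that $\eta_m$ is not literally a power of $\eta_1$; it is a polynomial in $(\rho, u)$ whose leading behavior matches $\eta_1^m$ but which has an honest structure that must be matched to the "Case 1" hypothesis of Proposition \ref{momentumconvrandom} (continuous, bounded on $\overline{\mathcal H}$ minus the homogeneity-at-infinity caveat, vanishing on vacuum). The cleanest route is to observe that each monomial $a_{m-j} C^m_j u^{2j} \rho^{1 + 2(m-j)\theta}$ appearing in the expansion \eqref{expansion} of $\eta_m$ can itself be written as a continuous function on $\overline{\mathcal H}$ (it vanishes on $\{\rho = 0\}$ and is radially homogeneous of a fixed positive degree, so it falls in Case 1 of the momentum-convergence proposition), with the $L^{s/\text{deg}}$ bound supplied again by Proposition \ref{fluxmomenteps} via the algebraic comparisons of Proposition \ref{etamboundalg}. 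Then one applies Proposition \ref{momentumconvrandom} monomial-by-monomial, sums the finitely many resulting limits, and identifies the limit as $\langle \eta_m, \nu \rangle = \eta_m(\bd{U})$ using the reduction of the Young measure. I would also double-check that $L^2$ rather than merely $L^1$ convergence is available: this needs $\delta = 2 < s/r$, i.e.\ $s > 2m$, which is why $s = 2m+1$ (and the corresponding moment bound, which Proposition \ref{fluxmomenteps} supplies for all $p < \infty$) suffices.
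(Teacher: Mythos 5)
Your setup is the same as the paper's: identify $\eta_m(\tilde{\bd U}_\epsilon)$ and $\eta_m(\bd U)$ with $\langle \eta_m,\tilde\nu_\epsilon\rangle$ and $\langle\eta_m,\nu\rangle$ (using that $\tilde\nu_\epsilon$ is a Dirac mass and that the reduction of the Young measure makes $\nu$ function-valued off vacuum, where $\eta_m$ vanishes), check the Case~1 hypothesis, and feed the $\epsilon$-uniform moment bounds of Proposition \ref{fluxmomenteps} into Proposition \ref{momentumconvrandom}. Incidentally, the detour through $\eta_1^m$ and the monomial-by-monomial decomposition is unnecessary: Proposition \ref{etamboundalg} shows $\eta_m$ itself vanishes on vacuum and blows up (or vanishes) radially, so the paper simply takes $\eta=\eta_m$ in Proposition \ref{momentumconvrandom}, with moment bounds in every $L^s$ supplied directly by Proposition \ref{fluxmomenteps}.

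The genuine gap is in the final step. You assert that applying Proposition \ref{momentumconvrandom} with $\varphi=1$ and $\delta=2$ ``gives exactly $L^2(\tilde\Omega\times[0,T]\times\mathbb{T})$ convergence.'' It does not: the parameter $\delta$ only governs the moment in $\tilde\Omega$ of the \emph{scalar} random variable
\begin{equation*}
\int_{[0,T]\times\mathbb{T}}\int_{\overline{\mathcal H}}\varphi\,(\eta(p))^{r}\,d(\tilde\nu_\epsilon)_{t,x}(p)\,d\lambda
-\int_{[0,T]\times\mathbb{T}}\int_{\overline{\mathcal H}}\varphi\,(\eta(p))^{r}\,d\nu_{t,x}(p)\,d\lambda,
\end{equation*}
i.e.\ the absolute value sits \emph{outside} the spacetime integral. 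This is weak-type convergence (tested against deterministic $\varphi$), and no choice of $\delta$, $r$, $s$ turns it into strong convergence of the function $(\tilde\omega,t,x)\mapsto\eta_m(\tilde{\bd U}_\epsilon)$ in $L^2(\tilde\Omega\times[0,T]\times\mathbb{T})$. The paper closes exactly this gap by applying the momentum convergence twice: once with $r=1$ and arbitrary $\varphi\in L^2([0,T]\times\mathbb{T})$, yielding weak convergence $\eta_m(\tilde{\bd U}_\epsilon)\rightharpoonup\eta_m(\bd U)$ in $L^2(\tilde\Omega\times[0,T]\times\mathbb{T})$, and once with $r=2$, $\varphi=1$, yielding convergence of the $L^2$ norms $\|\eta_m(\tilde{\bd U}_\epsilon)\|_{L^2(\tilde\Omega\times[0,T]\times\mathbb{T})}\to\|\eta_m(\bd U)\|_{L^2(\tilde\Omega\times[0,T]\times\mathbb{T})}$; strong convergence then follows from the Hilbert-space fact that weak convergence together with convergence of norms implies norm convergence. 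Your proposal never performs the $r=2$ (norm) step nor invokes this upgrade, so as written it only proves a weak-type statement, not the proposition.
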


\begin{proof}
    We follow the approach of the analogous result in Proposition 5.11 in \cite{BerthelinVovelle}. First, note that since the Young measures $\tilde{\nu}_{\epsilon}$ correspond to genuine functions and since the reduction of Young measure argument shows that $\nu$ is function-valued (except potentially on vacuum which is fine, since $\eta_{m}$ is zero on vacuum), we have that
    \begin{equation*}
    \eta_m(\tilde{\bd{U}}_{\epsilon}) = \int_{\overline{\mathcal{H}}} \eta_m(p) d(\tilde{\nu}_{\epsilon})_{t, x}(p), \qquad \eta_m(\bd{U}) = \int_{\overline{\mathcal{H}}} \eta_m(p) d\nu_{t, x}(p). 
    \end{equation*}
Note that by the entropy moment bounds in Proposition \ref{fluxmomenteps}, we have that for all $1 \le s < \infty$:
    \begin{equation}\label{etams}
    {\mathbb{E}} \int_{0}^{T} \int_{\mathbb{T}} |\eta_m(\tilde{\bd{U}}_{\epsilon})|^{s} dx dt \le C_{T},
    \end{equation}
    independently of $\epsilon$. Note that by the algebraic estimates in Proposition \ref{etamboundalg}, $\eta_m$ satisfies Case 1 in Proposition \ref{momentumconvrandom}. Hence, by Proposition \ref{momentumconvrandom}, we conclude that for all $\varphi \in L^{2}([0, T] \times \mathbb{T})$
    \begin{equation*}
    {\mathbb{E}} \left|\int_{[0, T]} \int_{\mathbb{T}} \eta_m(\bd{U}) \varphi(t, x) dx dt - \int_{[0, T]} \int_{\mathbb{T}} \eta_m(\tilde{\bd{U}}_{\epsilon}) \varphi(t, x) dx dt\right|^{2} \to 0,
    \end{equation*}
    and by setting $\varphi = 1$ and $r = 2$ in Proposition \ref{momentumconvrandom}:
    \begin{equation*}
    {\mathbb{E}} \Big(\|\eta_m(\bd{U})\|_{L^{2}([0, T] \times \mathbb{T})}^{2} - \|\eta_m(\tilde{\bd{U}}_{\epsilon})\|_{L^{2}([0, T] \times \mathbb{T})}^{2}\Big) \to 0.
    \end{equation*}
    So we conclude that $\eta_m(\tilde{\bd{U}}_{\epsilon}) \rightharpoonup \eta_m(\bd{U})$ weakly in $L^{2}({ \tilde\Omega\times}[0, T] \times \mathbb{T})$, and we have convergence of the norms
    $\displaystyle \|\eta_m(\tilde{\bd{U}}_{\epsilon})\|_{L^{2}({ \tilde\Omega\times}[0, T] \times \mathbb{T})} \to \|\eta_m(\bd{U})\|_{L^{2}({ \tilde\Omega\times}[0, T] \times \mathbb{T})}$. Hence, by combining weak convergence with convergence of the norms:
    \begin{equation*}
    \eta_m(\tilde{\bd{U}}_{\epsilon}) \to \eta_m(\bd{U}), \qquad \text{ strongly in $L^{2}({ \tilde\Omega\times}[0, T] \times \mathbb{T})$. }
    \end{equation*}
   
\end{proof}

We also have almost everywhere convergence of the approximate solutions $\tilde{\bd{U}}_{\epsilon}$ along a subsequence.

\begin{proposition}\label{aeeps}
    Along a subsequence $\{\epsilon_{N_{k}}\}_{k = 1}^{\infty}$, we have that $(\tilde{\rho}_{\epsilon}, \tilde{q}_{\epsilon}) \to (\rho, q)$ almost everywhere on $\tilde\Omega \times [0, \infty) \times \mathbb{T}$.
\end{proposition}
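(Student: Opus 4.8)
The plan is to upgrade the strong $L^2$ convergence of the entropies from Proposition \ref{L2omegaxt} to almost everywhere convergence of the state variables $(\tilde\rho_\epsilon, \tilde q_\epsilon)$ themselves, by passing to a carefully chosen subsequence. The starting observation is that both the density and the momentum are themselves entropies generated by the kinetic formula \eqref{etamformula}: the density corresponds to $g(z) \equiv 1$, so that $\rho = \eta_0(\bd{U})$; and the momentum corresponds to the linear function $g(z) = z$, which lies in $\tilde{\mathcal{G}}$ (it is convex, $C^2$, and subpolynomial in the sense of \eqref{subpoly} with $m = 1$), and which generates $\eta(\rho, q) = \rho u = q$. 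Both of these therefore fall within the scope of the probabilistic momentum convergence result of Proposition \ref{momentumconvrandom}.

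First I would handle the density. Applying Proposition \ref{L2omegaxt} with $m = 0$ and using $\eta_0(\bd{U}) = \rho$ gives $\tilde\rho_\epsilon \to \rho$ strongly in $L^2(\tilde\Omega \times [0, T] \times \mathbb{T})$ for every $T > 0$. Since $L^2$ convergence yields almost everywhere convergence along a subsequence, applying this successively on the domains $[0, m] \times \mathbb{T}$ for $m = 1, 2, 3, \dots$ and extracting a diagonal subsequence, one obtains a subsequence of $\{\epsilon_N\}$ along which $\tilde\rho_\epsilon \to \rho$ for almost every $(\tilde\omega, t, x) \in \tilde\Omega \times [0, \infty) \times \mathbb{T}$.

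Next I would handle the momentum by repeating the argument of Proposition \ref{L2omegaxt} essentially verbatim, but with $\eta_m$ replaced by the entropy $\eta(\rho, q) := q$ associated to $g(z) = z$. This $\eta$ satisfies Case 1 of Proposition \ref{momentumconvrandom}: it is continuous, it vanishes on the vacuum set $\{\rho = 0\}$, and $\lim_{r \to \infty} \eta(r(\rho, u)) \in \{-\infty, 0, +\infty\}$ for every $(\rho, u) \in (0, \infty) \times \mathbb{R}$. The required uniform moment bounds $\tilde{\mathbb{E}} \int_0^T \int_{\mathbb{T}} |\tilde q_\epsilon|^s \, dx \, dt \le C_{T,s}$ for all $1 \le s < \infty$, independently of $\epsilon$, follow from Corollary \ref{momentrhoq} and Proposition \ref{momentsup}, and the reduction of the Young measure carried out in the previous subsection identifies $\langle \eta, \nu_{t,x} \rangle$ with $q(t,x)$ (equal to $0$ on vacuum, consistently with $q = \rho u$). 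Hence Proposition \ref{momentumconvrandom} delivers weak convergence $\tilde q_\epsilon \rightharpoonup q$ in $L^2(\tilde\Omega \times [0, T] \times \mathbb{T})$ together with convergence of the $L^2$ norms, and therefore strong $L^2$ convergence $\tilde q_\epsilon \to q$ on $\tilde\Omega \times [0, T] \times \mathbb{T}$ for every $T$. Extracting a further diagonal subsequence as above gives $\tilde q_\epsilon \to q$ almost everywhere on $\tilde\Omega \times [0, \infty) \times \mathbb{T}$; intersecting it with the subsequence obtained for the density produces a single subsequence along which both $\tilde\rho_\epsilon \to \rho$ and $\tilde q_\epsilon \to q$ almost everywhere, which is the claim.

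The only point that needs a little care is the treatment of the vacuum in the momentum step: one must check that the identification $\langle q, \nu_{t,x} \rangle = q(t,x)$ survives the reduction of the Young measure even though the velocity $u$ need not be well defined where $\rho = 0$. This is not an obstacle, because $q = \rho u$ extends continuously to $\overline{\mathcal{H}}$ (with value $0$ on the vacuum set) in the topology of Definition \ref{Htopology}, so that $\eta(\rho, q) = q$ genuinely satisfies the hypotheses of Proposition \ref{momentumconvrandom}; apart from this, the argument is the standard combination of $L^2$ strong convergence with a subsequential almost everywhere limit and a diagonal extraction over $T \in \mathbb{N}$.
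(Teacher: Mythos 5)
Your proposal is correct and follows essentially the same route as the paper: the paper's proof likewise applies the argument of Proposition \ref{L2omegaxt} to the entropies $\eta_{0}(\bd{U})=\rho$ and $\eta(\bd{U})=\rho u=q$, using the uniform moment bounds (via $\rho+|\rho u|\le C(\eta_{0}+\eta_{1})$ and Proposition \ref{fluxmomenteps}) to get strong $L^{2}(\tilde\Omega\times[0,T]\times\mathbb{T})$ convergence and then passing to an almost everywhere convergent subsequence. Your extra details (checking Case 1 of Proposition \ref{momentumconvrandom} for $g(z)=z$, the vacuum discussion, and the diagonal extraction over $T\in\mathbb{N}$) are consistent with what the paper leaves implicit.
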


\begin{proof}
    Note that the proof in Proposition \ref{L2omegaxt} also works to show that for the functions $\eta(\rho u) = \rho$ and $\eta(\rho, u) = \rho u$, we have that $\eta(\tilde{\bd{U}}_{\epsilon}) \to \eta(\bd{U})$ in $L^{2}(\tilde\Omega \times [0, T] \times \mathbb{T})$ for all $T > 0$. This is because we still have uniform bounds:
    \begin{equation*}
    {\mathbb{E}} \int_{0}^{T} \int_{\mathbb{T}} \Big(\tilde\rho_\epsilon + |\tilde\rho_\epsilon \tilde u_\epsilon|\Big)^{s} dx dt \le C_{s, T},
    \end{equation*}
    for all $1 \le s < \infty$, using $\rho + |\rho u| \le C\Big(\eta_{0}(\bd{U}) + \eta_{1}(\bd{U})\Big)$ and Proposition \ref{fluxmomenteps}. This establishes the result, since convergence in $L^{p}$ for $1 \le p < \infty$ implies convergence almost everywhere along a subsequence.
\end{proof}

Now, we have all of the necessary ingredients needed to pass to the limit as $\epsilon_{N} \to 0$ in the entropy inequality \eqref{entropyineqeps}. The goal is to obtain the limiting entropy inequality \eqref{entropyineq}, which is expected to hold for all entropy-flux pairs $(\eta, H)$ generated by all subpolynomial functions $g \in \tilde{\mathcal{G}}$. 

\begin{proposition}
    The entropy inequality \eqref{entropyineq} holds $\tilde{\mathbb{P}}$-almost surely for the limiting solution $(\rho, q)$ and for all entropy-flux pairs $(\eta, H)$ generated from $g \in \tilde{\mathcal{G}}$.
\end{proposition}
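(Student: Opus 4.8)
The plan is to pass to the limit $\epsilon_N \to 0$ in the approximate entropy inequality \eqref{entropyineqeps}, term by term, using the Young measure reduction together with the uniform moment estimates. First I would fix an arbitrary entropy-flux pair $(\eta, H)$ generated by a subpolynomial $g \in \tilde{\mathcal{G}}$ satisfying \eqref{subpoly} for some positive integer $m$, and fix nonnegative test functions $\varphi \in C^2(\mathbb{T})$ and $\psi \in C_c^\infty(0,\infty)$ with $\mathrm{supp}(\psi) \subset [0,T]$. By Proposition \ref{generaletaH} and Proposition \ref{otheretaH}, each of the quantities $\eta(\tilde{\bd U}_\epsilon)$, $H(\tilde{\bd U}_\epsilon)$, $q_\epsilon \partial_q \eta(\tilde{\bd U}_\epsilon)$, $\partial_q\eta(\tilde{\bd U}_\epsilon) G_k(\tilde{\bd U}_\epsilon)$, and $\partial_q^2\eta(\tilde{\bd U}_\epsilon) G^2(\tilde{\bd U}_\epsilon)$ is controlled algebraically by $\eta_0(\tilde{\bd U}_\epsilon) + \eta_{m+1}(\tilde{\bd U}_\epsilon)$ (using $m+1$ as a safe upper index so that the flux term and all derivative terms are absorbed), hence by Proposition \ref{fluxmomenteps} each is uniformly bounded in $L^s(\tilde\Omega \times [0,T] \times \mathbb{T})$ for every $1 \le s < \infty$.

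The core of the argument is then to identify the $\epsilon_N \to 0$ limit of each such composite quantity. I would argue that each of these functions (viewed as a function of $(\rho, u)$ on $\overline{\mathcal{H}}$) falls into Case 1 or Case 2 of Proposition \ref{momentumconvrandom}: they are continuous on $[0,\infty) \times \mathbb{R}$, vanish on the vacuum set $\{\rho = 0\}$ (directly from \eqref{etamformula} and the formulas \eqref{dqeta}, \eqref{d2qeta} combined with $G_k = \rho g_k$), and their radial limits at infinity are $0$, $\pm\infty$, or finite (this follows from the subpolynomial growth \eqref{subpoly}). Since the reduced limiting Young measure $\nu$ is a Dirac mass $\delta_{(\rho,u)}$ away from vacuum, and since $\eta, H, q\partial_q\eta$, etc., all vanish at vacuum, we get $\langle \eta, \nu_{t,x}\rangle = \eta(\bd U(t,x))$ a.e., and likewise for the others. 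Applying Proposition \ref{momentumconvrandom} with a suitable exponent $r < s$ and $\delta \ge 1$ then gives $\tilde{\mathbb{E}}$-strong convergence (e.g.\ in $L^2(\tilde\Omega \times [0,T] \times \mathbb{T})$) of $\eta(\tilde{\bd U}_\epsilon) \to \eta(\bd U)$, $H(\tilde{\bd U}_\epsilon) \to H(\bd U)$, $\alpha q_\epsilon \partial_q\eta(\tilde{\bd U}_\epsilon) \to \alpha q \partial_q\eta(\bd U)$, and $\tfrac12 \partial_q^2\eta(\tilde{\bd U}_\epsilon) G^2(\tilde{\bd U}_\epsilon) \to \tfrac12 \partial_q^2\eta(\bd U) G^2(\bd U)$, each integrated against the fixed test function; this is exactly the argument of Proposition \ref{L2omegaxt} applied to general $g \in \tilde{\mathcal{G}}$ rather than just $g(z) = z^{2m}$. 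The deterministic time-derivative, flux, damping, and It\^o-correction terms of \eqref{entropyineqeps} then converge to their counterparts in \eqref{entropyineq}. The artificial-viscosity term $\epsilon \int_0^\infty (\int_{\mathbb{T}} \eta(\tilde{\bd U}_\epsilon) \partial_x^2\varphi \, dx) \psi(t)\, dt$ vanishes, since $\eta(\tilde{\bd U}_\epsilon)$ is bounded in $L^1(\tilde\Omega \times [0,T] \times \mathbb{T})$ uniformly in $\epsilon$ and $\|\partial_x^2\varphi\|_{L^\infty} \|\psi\|_{L^1}$ is fixed.

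The remaining, and I expect most delicate, step is the stochastic integral term: I must show
\[
\int_0^\infty \left( \int_{\mathbb{T}} \partial_q\eta(\tilde{\bd U}_\epsilon) \Phi(\tilde{\bd U}_\epsilon) \varphi \, dx \right) \psi(t) \, d\tilde W_\epsilon(t) \longrightarrow \int_0^\infty \left( \int_{\mathbb{T}} \partial_q\eta(\bd U) \Phi(\bd U) \varphi\, dx \right) \psi(t) \, dW(t),
\]
in probability (or in $L^1(\tilde\Omega)$ after passing to a further subsequence). The standard route is to use the almost-everywhere convergence $\tilde{\bd U}_\epsilon \to \bd U$ from Proposition \ref{aeeps}, continuity of $\partial_q\eta$ and of each $G_k$ away from vacuum (with vacuum being $\tilde{\mathbb{P}} \otimes \lambda$-null, as established via \eqref{logfatou}-type control — though here at $\epsilon \to 0$ level one uses that $\rho > 0$ a.e.\ for the limit follows from the Young-measure reduction plus the fact that $\eta_1(\bd U) \in L^1$ forces $\rho^\gamma \in L^1$, hence finite a.e., while actual positivity a.e.\ of $\rho$ in the limit needs $\rho + |q| \in L^s$ and is not automatic — I would instead note that $\partial_q\eta(\bd U) \Phi(\bd U) = \partial_q\eta(\bd U)\rho\, g(\cdot)$ is continuous and vanishes where $\rho = 0$, so no issue arises), the uniform bound $\|\Phi(\tilde{\bd U}_\epsilon) \varphi\|_{L_2(\mathcal{U}; L^2)}^2 \le A_0 \|\rho_\epsilon \varphi\|_{L^2}^2$ from \eqref{Gbound} together with Corollary \ref{momentrhoq} giving $L^2(\tilde\Omega \times [0,T])$-equi-integrability of the integrand, and then invoke the standard stability lemma for stochastic integrals under joint convergence of integrands and driving Wiener processes (e.g.\ Lemma 2.6.6 in \cite{BFH18} or the analogous result used in \cite{BerthelinVovelle}). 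Since $\tilde W_\epsilon \to W$ in $C_{\mathrm{loc}}([0,\infty); \mathcal{U})$ $\tilde{\mathbb{P}}$-a.s.\ and the filtrations are compatible, this yields convergence of the stochastic integrals in probability, hence along a subsequence $\tilde{\mathbb{P}}$-a.s. Combining all term limits, and using that the inequality \eqref{entropyineqeps} holds $\tilde{\mathbb{P}}$-a.s.\ for each $\epsilon$, we pass to the limit to obtain \eqref{entropyineq} $\tilde{\mathbb{P}}$-a.s.\ for the fixed $(\eta, H)$; a countable exhaustion over a dense family of test functions $(\varphi, \psi)$ and the required class of $g$ (it suffices to verify on a countable dense subset and extend by the algebraic bounds and dominated convergence) upgrades this to hold simultaneously for all admissible $(\eta, H)$, $\varphi$, $\psi$ on a single $\tilde{\mathbb{P}}$-full set.
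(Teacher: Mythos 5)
Your overall skeleton is the paper's: pass term by term to the limit in \eqref{entropyineqeps} using the uniform moment bounds of Proposition \ref{fluxmomenteps}, kill the viscous terms, and treat the stochastic integral by the standard convergence lemma (Lemma 2.6.6 in \cite{BFH18}) after showing convergence of the integrand in probability in $L^{2}(0,T)$. The genuine gap is in your central step: you propose to apply Proposition \ref{momentumconvrandom} \emph{directly} to the general composites $\eta$, $H$, $q\partial_{q}\eta$, $\partial_{q}\eta\,G_{k}$, $\partial_{q}^{2}\eta\,G^{2}$. Two things go wrong. First, the structural hypotheses (Case 1 or Case 2) are not verified and can actually fail for admissible $g\in\tilde{\mathcal{G}}$: Case 1 requires the radial limits $\lim_{r\to\infty}\eta(r(\rho,u))$ to be $0$ or $\pm\infty$, but for a convex subpolynomial $g$ with sublinear decay at $+\infty$ (e.g.\ $g(s)=\sqrt{1+s^{2}}-s$) one gets $\eta(r\rho,ru)\to \rho/(2u)$, a finite nonzero limit, for directions with $u>0$, while $\eta$ is unbounded in other directions so Case 2 fails too; your parenthetical ``or finite'' concedes exactly the case that the proposition does not cover. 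Second, the noise composites $\partial_{q}\eta\,G_{k}(x,\rho,q)$ and $\partial_{q}^{2}\eta\,G^{2}(x,\rho,q)$ depend on $x$, so they are not functionals of $p\in\overline{\mathcal{H}}$ alone and Proposition \ref{momentumconvrandom} does not apply to them as stated. The paper sidesteps both issues: the Young-measure momentum convergence is used only for the special entropies $\eta_{m}$ (Proposition \ref{L2omegaxt}), and the general case is then obtained from the pointwise a.e.\ convergence of Proposition \ref{aeeps} together with the generalized dominated convergence theorem, the dominating sequences being $C_{g}\big(\eta_{0}(\tilde{\bd{U}}_{\epsilon})+\eta_{m}(\tilde{\bd{U}}_{\epsilon})\big)$ (or $\eta_{m\pm1}$ as appropriate), which converge strongly in $L^{2}(\tilde{\Omega}\times[0,T]\times\mathbb{T})$. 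You already cite all the ingredients of this repair, so the fix is to replace your direct appeal to Proposition \ref{momentumconvrandom} for general $g$ by this domination argument.

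A smaller but real omission: at the $\epsilon$ level the equation is driven by the regularized noise $\bd{\Phi}^{\epsilon}$, $\bd{G}^{\epsilon}$ (compactly supported in $\Lambda_{\kappa_{\epsilon}}$), whereas the limiting inequality \eqref{entropyineq} involves $\bd{\Phi}$, $\bd{G}$. Your argument treats the noise as if it were already $\bd{\Phi}$ and never shows that the truncation disappears in the limit; the paper handles this explicitly by proving $\tilde{\mathbb{E}}\int_{0}^{T}\int_{\mathbb{T}}\big(\partial_{q}\eta(\tilde{\bd{U}}_{\epsilon})(\bd{G}-\bd{G}^{\epsilon})(\tilde{\bd{U}}_{\epsilon})\big)^{2}\,dx\,dt\to 0$ via generalized dominated convergence with dominating function $\tilde{\rho}_{\epsilon}\partial_{q}\eta(\tilde{\bd{U}}_{\epsilon})$, and the same replacement is needed in the It\^{o} correction term. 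Your closing remark about a countable dense exhaustion in $(\eta,H,\varphi,\psi)$ to get a single full-measure set is a reasonable extra precaution not made explicit in the paper.
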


\begin{proof}
    We pass to the limit in each term in the approximate $\epsilon$-level entropy inequality \eqref{entropyineqeps}. To do this, consider some $(\eta, H)$ generated by $g \in \tilde{\mathcal{G}}$ satisfying \eqref{subpoly} for some positive integer $m$, and recall the algebraic bounds in Proposition \ref{generaletaH} and \ref{otheretaH}:
    \begin{equation*}
    |q\partial_{q}\eta(\bd{U})| \le C_{g}\Big(\eta_{0}(\bd{U}) + \eta_{m}(\bd{U})\Big), \qquad |G(\bd{U})\partial_{q}\eta(\bd{U})|\le C_{g}\Big(\eta_{0}(\bd{U}) + \eta_{m}(\bd{U})\Big),
    \end{equation*}
    \begin{equation}\label{algfinal}
    |G^{2}(\bd{U})\partial_{q}^{2}\eta(\bd{U})| \le C_{g}\Big(\eta_{0}(\bd{U}) + \eta_{m - 1}(\bd{U})\Big), \qquad |H(\bd{U})| \le C_{g}\Big(\eta_{0}(\bd{U}) + \eta_{m + 1}(\bd{U})\Big).
    \end{equation}
    By the continuity of $\eta$ and its derivatives, $H$, and $\bd{G}(\bd{U})$, we hence deduce that for all $T > 0$:
    \begin{equation}\label{Sconv}
    S(\tilde{\bd{U}}_{\epsilon}) \to S(\bd{U}) \quad \text{in $L^{2}(\tilde\Omega \times [0, T] \times \mathbb{T})$ }, \ \  \text{for $S = \{q\partial_{q}\eta, \bd{G}(\bd{U})\partial_{q}\eta, \rho\partial_{q}\eta, \bd{G}^{2}(\bd{U})\partial^{2}_{q}\eta, H\}$}.
    \end{equation}
    This follows by the generalized dominated convergence theorem (see Theorem 11 in Section 4.4 of \cite{RF23}), combined with the almost everywhere convergence in Proposition \ref{aeeps}, and the convergence in Proposition \ref{L2omegaxt} combined with the algebraic bounds \eqref{algfinal}. For example, for $q\partial_{q}\eta(\bd{U}_{\epsilon})$, we note that $|q\partial_{q}\eta(\tilde{\bd{U}}_{\epsilon})| \le C_{g}\Big(\eta_{0}(\tilde{\bd{U}}_{\epsilon}) + \eta_{m}(\tilde{\bd{U}}_{\epsilon})\Big)$,
    \begin{equation*}
    {\mathbb{E}} \int_{0}^{T} \int_{\mathbb{T}} \Big(\eta_{0}(\tilde{\bd{U}}_{\epsilon}) + \eta_{m}(\tilde{\bd{U}}_{\epsilon})\Big)^{2} dx dt \to {\mathbb{E}} \int_{0}^{T} \int_{\mathbb{T}} \Big(\eta_{0}(\bd{U}) + \eta_{m}(\bd{U})\Big)^{2} dx dt,
    \end{equation*}
    and $|q\partial_{q}\eta(\bd{U}) - q\partial_{q}\eta(\tilde{\bd{U}}_{\epsilon})| \to 0$ almost everywhere on $\tilde\Omega \times [0, T] \times \mathbb{T}$ by Proposition \ref{aeeps}.

    The convergence in \eqref{Sconv} is sufficient to pass to the limit in the terms in the approximate entropy inequality \eqref{entropyineqeps}. We only explicitly comment on the passage of the stochastic integral term here. Consider some $\psi \in C_{c}^{\infty}(0, \infty)$, and note that it has support in $[0, T]$ for sufficiently large $T$. By standard results on convergence of stochastic integrals (see \cite{Ben}, Lemma 2.1 in \cite{DGHT}, and Lemma 2.6.6 in \cite{BFH18}), it suffices to show that
    \begin{equation}\label{convinprob}
    \left(\int_{\mathbb{T}} \partial_{q}\eta(\tilde{\bd{U}}_{\epsilon}) \bd{G}^{\epsilon}(\tilde{\bd{U}}_{\epsilon}) \varphi(x) dx\right) \psi(t) \to \left(\int_{\mathbb{T}} \partial_{q}\eta(\bd{U}) \bd{G}(\bd{U}) \varphi(x) dx\right) \psi(t), \text{ in probability in $L^{2}(0, T)$}.
    \end{equation}
    To show this, note that
    \begin{multline*}
    {\mathbb{E}} \int_{0}^{T} \left(\int_{\mathbb{T}} \Big(\partial_{q}\eta(\bd{U}) \bd{\Phi}(\bd{U}) - \partial_{q}\eta(\tilde{\bd{U}}_{\epsilon}) \bd{\Phi}(\tilde{\bd{U}}_{\epsilon})\Big) \varphi(x) dx\right)^{2} (\psi(t))^{2} dt \\
    \le \|\psi\|_{L^{\infty}(0, T)}^{2} \|\varphi\|_{L^{2}(\mathbb{T})}^{2} {\mathbb{E}} \int_{0}^{T} \int_{\mathbb{T}} \Big(\partial_{q}\eta(\bd{U})\bd{G}(\bd{U}) - \partial_{q}\eta(\tilde{\bd{U}}_{\epsilon}) \bd{G}^{\epsilon}(\tilde{\bd{U}}_{\epsilon})\Big)^{2} dx dt \to 0,
    \end{multline*}
    as $\epsilon \to 0$. This follows by \eqref{Sconv} and the fact that $\displaystyle {\mathbb{E}} \int_{0}^{T} \int_{\mathbb{T}} \Big(\partial_{q}\eta(\bd{U}_{\epsilon}) (\bd{G}(\tilde{\bd{U}}_{\epsilon}) - \bd{G}^{\epsilon}(\tilde{\bd{U}}_{\epsilon})\Big)^{2} \to 0$ by generalized dominated convergence with the dominating function $\tilde{\rho}_{\epsilon}\partial_{q}\eta(\tilde{\bd{U}}_{\epsilon})$ using \eqref{Gbound} and \eqref{A0eps} (see Theorem 11 of Section 4.4 in \cite{RF23}). This completes the proof of the main theorem in Theorem \ref{mainthm}.
\end{proof}

\section*{Acknowledgments}

J. Kuan was partially supported by the National Science Foundation under the Mathematical Sciences Postdoctoral Research Fellowship (DMS-2303177).
K. Tawri was partially supported by the National Science Foundation grant DMS-2407197. K. Trivisa gratefully acknowledges the support by the National Science Foundation under the grants DMS-2231533 and DMS-2008568.
\printbibliography
\end{document}